\documentclass[10pt,twocolumn]{IEEEtran} 

\usepackage{amsmath,xurl}
\usepackage{dsfont}
\usepackage{graphicx}
\usepackage{tikz,ifthen}
\usepackage{hyperref}
\usepackage{epsfig,amssymb,amsbsy,verbatim,array,enumerate}
\usepackage{pstricks,psfrag,theorem,cite,paralist,bm,caption,subcaption}
\usepackage{enumitem,wasysym}
\usepackage{amssymb}

\usepackage{parskip}
\let\intern=\iftrue

\def\la{\lambda}
\def\E{\mathbb{E}}
\def\P{\mathbb{P}}
\def\R{\mathbb{R}}

\def\N{\mathbb{N}}

\def\H{\mathbb{H}}

\def\Ical{\mathcal{I}}
\def\Lcal{\mathcal{L}}
\def\Hcal{\mathcal{H}}

\def\Rcal{\mathcal{R}}

\def\Vcal{\mathcal{V}}

\def\Scal{\mathcal{S}}

\def\Tcal{\mathcal{T}}
\def\Ical{\mathcal{I}}

\def\x{\mathsf{x}}

\def\s{\sigma}
\def\g{\gamma}
\def\a{\alpha}
\def\b{\beta}

\def\G{\Gamma}

\def\nn{\nonumber}

\def\phi{\varphi}

\newcommand{\sst}{\scriptscriptstyle}

\newcommand{\half}   {{\frac{1}{2}}}

\newcommand{\one}{{\mathds{1}}}

\newtheorem{theorem}{Theorem}
\newtheorem{lemma}{Lemma}
\newtheorem{corollary}{Corollary}
\newtheorem{definition}{Definition}

\newtheorem{proposition}{Proposition}
\newtheorem{remark}{Remark}

\newlength{\figwidth}


\newenvironment{spacedtable}[2]{%
\begingroup
\renewcommand{\arraystretch}{#1}
  \setlength{\tabcolsep}{#2}
}{%
  \endgroup
}
\begin{document}
\title{Seasonal Statistics of Shannon Rate in a
Dynamical Poisson-Voronoi Cellular Network} 

\author{Sanjoy Kumar Jhawar\textsuperscript{1,2}
\thanks{\hspace{-0.11in}\textsuperscript{1}T\'{e}l\'{e}com Paris, \textsuperscript{2}INRIA Paris and \textsuperscript{3}Ecole Normale Superieure Paris. \\ \hspace{0.45in}E-mail: {\tt sanjoy.jhawar@telecom-paris.fr}.} and 
Fran\c{c}ois Baccelli\textsuperscript{1,2,3}
\thanks{\hspace{-0.11in}E-mail: {\tt francois.baccelli@inria.fr}.}}
\maketitle
\begin{abstract}
In this work we consider a dynamical cellular communication network in which mobile base stations (BSs) are modeled as a homogeneous Poisson point process on $\R^2$. Each base station moves at a constant speed in a random direction. A typical user connects to the nearest base station and it experiences variable signal and interference powers depending on the distance of all the stations. Along the motion of the stations, the user swaps its serving station, and such an event is called a {\em handover}. We are interested in the performance evaluation of the system under some classical and tropical metrics of interest at different time of events, inducing handovers, maximal proximity of serving station, nearest interferer at closest or farthest distance with respect to the user or at any typical time epoch. The main results of the paper are closed or integral form expressions
for the basic metrics of interest, in particular coverage probability
and Shannon rate at these epochs. We can make an analogy with ``seasons'' based on the fluctuations of signal and interference power. Strong or mild signal or interference power correspond to different seasons of Shannon rate along the evolution of the system. We also provide a complete comparison study of the metrics at
interest at these epochs.
\end{abstract}
\begin{IEEEkeywords}
Wireless communication, handover,  stochastic geometry, dynamical communications, performance evaluation, Shannon rate, dimensioning, seasonal statistics, time series of Shannon rate, tropical SINR.
\end{IEEEkeywords}
\captionsetup{font=footnotesize}
\section{Introduction}
Modern wireless communication systems rely or will rely on the large scale deployment of vehicular and in particular non-terrestrial base stations (BS). Due to the mobility of such BSs, it is extremely important to asses the performance at all epochs of this process and find out ways to mitigate challenges posed by certain types of ``good'' or ``bad'' epochs or configurations in such systems. The most important performance metric for the quality of service is the distribution of the Shannon rate experienced by a typical user in the system, which is a function of signal-to-interference-plus-noise-ratio (SINR) at any time instant. Our results essentially deal with the SINR distribution at various ``typical epochs'' of interest, and their comparison.

To characterize the time evolution of Shannon rate, we define a dynamical communication network model with random locations of BSs on the Euclidean plane. This randomness is complemented by their random motion, and this makes the time evolution of the SINR and Shannon rate as a stochastic process. As a first step we derive our results only for the dynamical communication model on 2D, even though our motivation comes from 3D non-terrestrial networks with a spherical structure. The present paper is hence essentially a first step in the direction
of the analysis of this higher dimension problem. 

We use stochastic geometry as a central tool in our analysis, which plays an important role in developing suitable models to understand many statistical features of key performance metrics in such systems. As we will see, it allows one to derive ensemble averages of the system properties, including at such specific epochs. 

In this article we focus on determining performance metrics of dynamical cellular communication systems at specific typical epochs of interest. For example, consider a time epoch when the power of the signal is maximum. This corresponds to the time when the serving BS is at its minimal distance from the user in its motion. The SINR at this typical epoch is measured using the distance to the serving BS and that of all others. To measure the performance of the system, we take an average over all possible configurations of the system at such epochs, instead of taking averages at all times. 

We first define the dynamical cellular communication model on $\R^2$. The BSs are distributed as a homogeneous Poisson point process and all move at constant speed $v$ and with a uniformly chosen and independent random direction. A user located at the origin ($o$) connects to the nearest BS at all time instants and changes its connection to whichever is the closest. This phenomenon is called a {\em handover}. Due to the motion of every station, the user encounters changes in the distance to the serving station, along the spatio-temporal evolution of the system. These temporal variations result in a continuous change in the performance metrics in terms of signal and interference power. We are in particular interested in determining the behavior of the system performance at specific time epochs whenever the user sees an abrupt change, for example at a handover epoch, at a time when the signal power is locally maximum, or at a time when the nearest interferer is at its closest or farthest distance. 

Our investigation is based on analyzing the fluctuations of the SINR with the additive total interference by considering interference as noise to the signal received from the serving station. As we will see, this is governed by the joint distribution of the distance to the nearest station, contributing to the signal power, as well as the distance to all other stations, contributing to the  interference power. On the other hand, their comparison at different typical epochs is mainly governed by the comparison of the distribution of the distance to the serving BS or the closest interferer depending on the typical epochs. As a special case, we also use the maxitive (or tropical) interference, with a goal to capture the effect of the strongest interferer on the performance evaluation.

\subsection{Motivation}
Most of the literature about performance evaluation of wireless communication systems only deals with a snapshot analysis. For example, in a cellular system, under Poisson setting, it is
is well known that the random variable for the
distance to the nearest BS is distributed as Rayleigh. This is not the case at a handover epoch, as the system is conditioned on the existence of two BSs at equal distance from the typical user. As a result, the previous works fails to capture the performance of the system at such specific epochs, for example at a time when the signal or interference power is locally maximum or minimum. Our objective is to formally analyze a spatio-temporal model with the goal to capture the evolution of system performance at the typical epochs of interest listed before. As already mentioned, knowing a potentially extreme behavior at specific epochs can help the operator to prepare and dimension the system accordingly in all these best and perhaps more importantly worst case scenarios. From the system design perspective, it is also important to compare the performance at these time epochs. 

\subsection{Illustration for seasonal statistics}
As the title of this article suggests, one can better visualize this interplay between the evolving strong or weak signal and interference powers, as the evolution of ``seasons'' over time, where there is a unique source of signal power, and a joint source of interference power. The entire analysis in based on the identification of typical events corresponding to {\em max-signal}, {\em min-signal}, {\em min-interference} or {\em max-interference}, and  probabilistic features of these seasonal statistics. Our comparison results focus on the coverage probability and  the Shannon rate. Even though the time evolution of the interplay between strong or weak signal and interference power resembles the dynamics of seasons, in contrast,  our spatio-temporal model is highly non-periodic. For instance, it can be seen to possibly possess two max-interference seasons without even having a max-signal in between. Also, there can be two max-signal seasons and a max-interference season in between, and vice versa.

In our context, the temporal evolution or the time series of the Shannon rate, can be better visualized in terms of Figure~\ref{fig:Shannon-rate}. In the tropical case, the interference is considered to be from the maximum interferer only, and the effect of such a reduction can be seen in the entire analysis, from seasonal statistics to their comparison. Our analysis and results provide insight about the worst case scenarios at extreme seasons and can help the system designer to dimension the network based on this, rather on the typical time. This also prepares the system to be more proactive than reactive for the time around such typical epochs. This essentially allows one to derive recommendation to the operator to dimension the system based on the extreme behavior at those typical epochs, rather than the average one. Let us stress two important facts pertaining to the term ``season"
(a) throughout the paper, this term has to be understood 
as some extreme point in the season (e.g.``peak of winter", here
a handover event, where by construction, signal power is locally minimal
and interference power locally maximal) rather than a time interval (the whole winter,
or some interval centered on the handover);
(b) seasons are in now way periodic here, with periodicity
replaced by a Markov structure here. 

As we will see later, in our context, seasons mean specific epochs where an
inflection, for instance a peak of the evolution of the Shannon rate is observed, rather than intervals where this rate is better or worse. Archetypes will be when signal power is locally maximal or when interference power is locally maximal. With this convention in mind, seasons can be refined further than what we do in the present paper, based e.g. on
certain derivatives. For instance, we could distinguish a local maximum of signal power when
interference power is decreasing, from a local maximum where interference power is increasing.
The techniques developed here clearly allow one to analyze such refinements, but this is left for future research. 

The Figure~\ref{fig:trajectory} depicts a realization of the location and trajectories of the BSs.
\begin{figure}[ht!]
\centering    \includegraphics[width=0.4\textwidth]{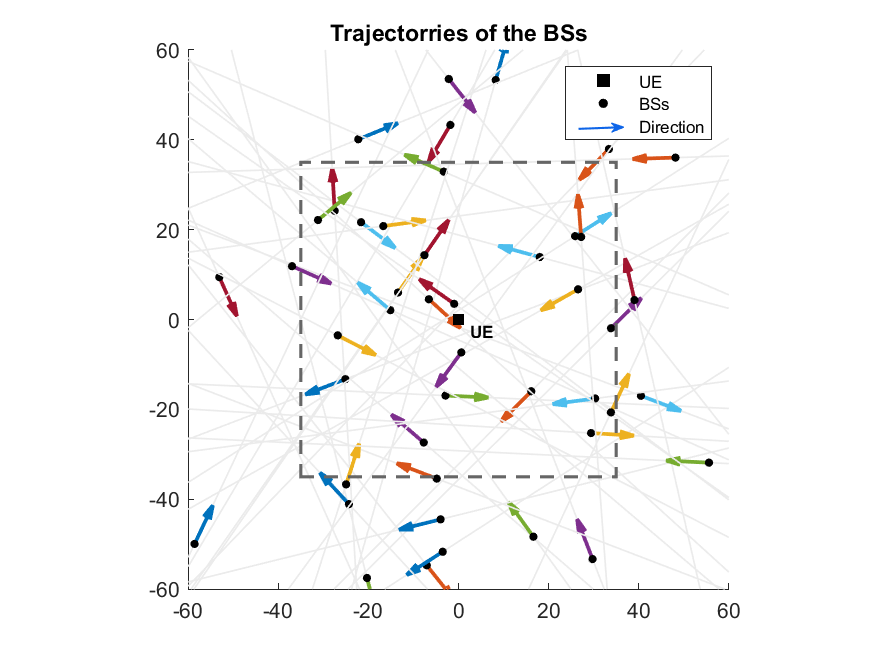}
\caption{Trajectory of BSs (grey lines) in a finite window with respect to a fixed user at the origin. The black dots are the initial locations of the BSs, distributed as a homogeneous PPP and the arrows are for the direction of motion.} 
\label{fig:trajectory} 
\end{figure}
The evolution of the distance to the nearest and second nearest BSs, and the corresponding time series of the Shannon rates (or rates) are shown in Figure~\ref{fig:Shannon-rate}, for the simple case of tropical SIR with power law attenuation for different attenuation exponent $\a$ and without fading. In the interference limited no-fading case, the Shannon rate at time $t$ is defined as
\[
\Rcal(t):=B\ln\left(1+ \frac{H(t)^{-\a}}{R(t)^{-\a}}\right) \mbox{ bps},
\]
where $H(t)$ and $R(t)$ are the distance to the nearest and second nearest BS, respectively, at time $t$, $\a$ is the exponent of the path-loss attenuation and $B=b/{\ln 2}$, with $b$ being the bandwidth. Depending on the time-evolution of the ratio of the distance to the serving BS and nearest interferer there are high or low ``seasons'' of Shannon rate along the evolution of the system. We are interested in  the characteristics of it at the seasons alluded before and depicted with vertical lines in Figure~\ref{fig:Shannon-rate}. The non-periodicity of the seasons and their sequential but random appearance are also evident even in the simple tropical model without noise and fading, whereas handover epochs are the worst seasons in terms of Shannon rate experienced by the user. It can be seen as a one-to-one ``tug of war'' between the signal and tropical interference power over time. The local extreme nature of Shannon rate is very clear at handover and max-signal epochs, where as there is only a slight
change of its slope at max-interference and min-interference epochs.
The time series of the Shannon rate turns out to be even more complex and infinitely dependent, while moving beyond tropical scenario with or without fading and noise. As we will see later in Remark~\ref{remark:DDM}, in a ``dual dynamical model'' with a mobile user, which encounters the identical seasonality along its motion in a straight line, with BSs being fixed.
\begin{figure}[ht!]
\centering
\includegraphics[width=0.5\textwidth]{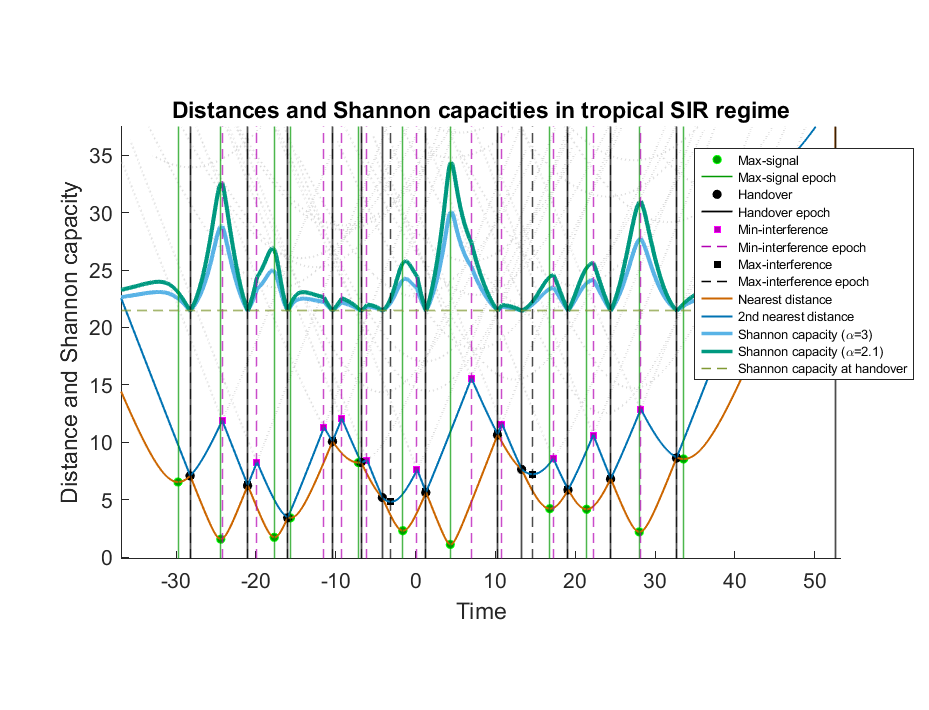}
\caption{Time series of distance (unit: meters) to the nearest (in red) and second nearest (in blue) BS(s) and that of Shannon rates (unit: bps) and typical epochs for $\a=2.1$ (in light green) and $3$ (in sky blue) in the tropical case without fading. The distance to all other BSs are shown with light grey curves in the background. The Shannon rate plots are vertically elevated for better visibility. The seasons are marked with vertical lines, and they appear non-periodically, for example: max-signal, handover, max-signal, min-interference, handover, max-interference and so on (counting from the left to right). } 
\label{fig:Shannon-rate} 
\end{figure}
\subsection{Contributions}
Below, we list out the main contributions made in this article. Most of our results are based on the power law attenuation function  $\ell(r)= r^{-\a}$ and Rayleigh fading, where $\a\geq 2$ is the path-loss exponent.  
\begin{enumerate}
    \item To set the stage for the performance evaluation, we identify and formalize the specific time epochs of interest within the spatio-temporal evolution of the system in Section~\ref{section:prelims}. This also includes the determination of the intensity of such epochs and the corresponding distance characteristics, which are established in Appendix~\ref{section:A-prelim}.
    \item To determine the distribution of the interference power, we derive the distribution of the point process of distance of the BSs, other than the serving one at these epochs, which in our case turns out to be conditionally Poisson, in Lemma~\ref{lemma:etappp}.
    \item This leads to a  determination of the coverage probability and Shannon rate at each of the typical epochs for both non-tropical and tropical scenarios in Section~\ref{section:results}, mainly in the form of Theorem~\ref{theorem:GPM-WF}-~\ref{theorem:GPM-NF2}.
    \item An analytic comparison of the performance metrics (coverage probability and Shannon rate) is also shown to hold between different typical epochs for both non-tropical and tropical scenarios in Subsection~\ref{subsection:comparison}. These comparisons are also validated by plotting these closed form or integral form expressions with respect to the user QoS parameter $\tau$ in Section~\ref{sec-numerical}.
    \item We also prove a scale-invariance property for the coverage probabilities and Shannon rates at these typical epochs in the interference limited case, in Subsection~\ref{subsection:SI}.
    \item We manage to establish some closed or integral form expressions for the coverage probabilities, as presented in Section~\ref{sec-closedforms}. A significant take away is that, in the interference limited regime with fading factor and with path-loss exponent $\a=4$, the coverage probability at any of the typical epoch can be represented in terms of a non-linear function of the coverage probability at a typical time.
    \item As we will see later in Subsection~\ref{subsection:example-AF}, it can be adapted to a more general class of attenuation functions and fading factors. Some of the comparison results hold true for the SINR defined using attenuation function other than the power law $\ell(r)=r^{-\a}$, for which we provide an example of bounded attenuation function $\ell(r)=(1+r)^{-\a}$ and a counter example as well of step attenuation function with given radius. 
    \end{enumerate}
\subsection{Organization of the article}
This article starts with the initial description of the underlying system model and a set of questions of interest in Section~\ref{section:system}. Section~\ref{section:prelims} is dedicated to mathematical preliminaries, including a complete description of the system variables of interest. The proofs of the main results for the last section are presented in Appendix~\ref{section:A-prelim}. Section~\ref{section:results} contains all the major results about the performance evaluation of the system at different typical events or equivalently seasons. This also includes the comparison among these QoS at typical events. Section~\ref{sec-closedforms} highlights some of the closed form or simple integral form expressions of the coverage probabilities in some of the regimes, due to specific values of the path-loss exponent $\a$. We present the plots of the closed form or integral form expressions of coverage probabilities with respect to different threshold parameter $\tau$ in Section~\ref{sec-numerical}. The proof of the main results about the performance evaluation and their expressions at different typical events are presented in Appendix~\ref{proofs-wf}, for the case with fading. The proofs for the case without fading is presented in Appendix~\ref{section:PWFa}. Appendix~\ref{section:CPE} is dedicated to the proof of the comparison among the coverage probabilities at those typical epochs in cases with  and without fading. The comparison of additive interference, as well as the tropical interference at different typical times is presented in Appendix~\ref{Int-order}. Lastly, the proof for scale invariance property in the interference limited regime is given in Appendix~\ref{section:SI}.
\subsection{Literature review}
The first closed form expressions for Shannon rate in cellular wireless communication systems using stochastic geometry were obtained in~\cite{Andrews-etal}, when base stations are modelled as a Poisson point process.
The analysis of cellular networks using tools from stochastic geometry gained significant traction following~\cite{Andrews-etal}, and also~\cite{Andrews-Gupta-Dhillon}.
Building on this foundation, performance evaluation of heterogeneous networks was explored in ~\cite{Dhillon2012},~\cite{Jo2012} and~\cite{ElSawy2013}. 

This was extended to the setting of satellite networks using stochastic geometry approach by Okati et. al. in~\cite{Okati-thesis} and~\cite{okati}  addressing the unique geometric and statistical challenges posed by non-terrestrial infrastructures. This line of work laid important groundwork for analyzing low Earth orbit (LEO) constellations, where the spatial distribution of satellites and their motion relative to ground users introduce non-trivial interference and coverage characteristics.
The characterization of satellite constellations themselves has also received considerable attention. Choi et al.~\cite{Choi2022ANA} proposed a constellation model grounded in stochastic geometry, enabling the analysis of coverage and connectivity for large-scale LEO deployments. This was further generalized to multi-altitude constellation architectures~\cite{Multi}, where satellites operating at different heights create layered interference environments. LEO satellite based
communication systems are also studied in~\cite{Talgat-Alouni2020}. In a recent
work~\cite{Lee-Park-2026}, the authors evaluated the coverage probability
for NTN system with LEO satellites in a heterogeneous environment.

The dynamic nature of satellite on the orbits has motivated the development of dedicated mobility and motion models. Baccelli and Zuyev~\cite{Baccelli_Zuyev} introduced a dual dynamical model that jointly accounts for the motion of user and the stochastic geometry of static terrestrial network. The question about handover probability in both the dual dynamical models and their equivalence is studied by Banagar et al. in~\cite{HO-Prob}. Correlation structure of SNR in such a mobility model was studied in~\cite{SNR}. Complementing this, the random direction model~\cite{RDM_nain} provides a tractable abstraction for user or node mobility, and has found application in both terrestrial and non-terrestrial network analysis. Random geometric graph with similar mobility of random direction model is studied in~\cite{diaz2007dynamic}. 

The work~\cite{Nitin-Baccelli} and \cite{Lin2012} examines the impact of mobility on network performance, characterizing how node movement affects interference statistics and spatial reuse. Gong et al.~\cite{Gong2010} further incorporated fading in terms of mobility and channel randomness shapes coverage and outage probabilities. These contributions highlight that a complete performance analysis must account for both the spatial and temporal dimensions of network dynamics.
A closely related thread concerns the temporal correlation of interference, which is of central importance when evaluating link-layer protocols and re-transmission schemes. Ganti and Haenggi~\cite{Ganti-Haenggi-SP} and, Krishnan and Dhillon~\cite{KD} provided a rigorous treatment of this temporal correlation of interference and mitigation of the adversity provided by the later in Poisson networks. In a new paradigm, the effect mobility on the association policies in terms of Doppler shift and the performance evaluation is explored in~\cite{balakrishnan2026doppler}. Finally, questions of resource sharing and data rate allocation in spatially distributed networks have been addressed by Madadi et al.~\cite{Pranav-etal}, who studied shared data rate models that account for the stochastic geometry of the network alongside fairness and efficiency constraints. In a two-tier cellular system handover management has been studied in~\cite{Arshad-Alouni17}. The notion of maximal power association in angular setting is studied in~\cite{Armeniakos-etal}. The current research is based on the handover analysis for the mobility model in~\cite{FB-SKJ} and as an initial ingredient about distance characteristics and Palm distributions are borrowed from there. The distance characteristics of similar interest in Poisson Voronoi tessellations are also studied in~\cite{Multi-point}.  
\section{system Model and problem formulation}~\label{section:system}
\subsection{System Model}
In this article, we work on a  dynamical wireless communication network model based on a Poisson point process on $\R^2$, as introduced in~\cite{FB-SKJ}.
Consider a collection of BSs located at points of a homogeneous Poisson point process $\Phi_\la\equiv \Phi= \sum_{i}\delta_{X_i}$ on $\R^2$ with intensity $\la$. Suppose each of the BS moves at constant speed $v$, in a uniformly and independently chosen random direction. The BSs do not change their direction along the rest of the course. Suppose the directions are given by a collection of i.i.d. random variables $\{\Theta_i\}_i$. A user located at the origin stays connected to the nearest BS at any time, using a {\em distance based association rule}. During this connection period, the serving BS provides the downlink signal to the user and the rest of the BSs contribute to the total interference experienced by the user. The user changes its connection from one BS to next over time. Such a change of BS is called a {\em handover}. The time epochs of these handover events form a point process on the time axis.

For any BS starting at a location $X\in\Phi$ and moving in the direction $\Theta$, it is located at $X^t:=X+Vt$, at time $t$, where $V=v(\cos \Theta,\sin \Theta)$ and $v$ is the modulus of the speed. We denote the point process corresponding to the new locations as $\Phi^t:=\sum_{i}\delta_{X^t_i}$. It is well known from the displacement theorem \cite[Section 5.5]{kingman} that for all $t$, $\Phi^t$ is a homogeneous Poisson point process with intensity $\la$. We always write the location of the nearest BS as $X_0^t$.
The time varying distance of a BS starting at $X=(|X|\cos\Psi,|X|\sin\Psi)$ is given by the function
\begin{equation}
d_X(t):=\vert|X^t\vert|=\left(||X||^2+2vt\cos\Xi+v^2t^2\right)^{\half},
\label{eq:distancef}
\end{equation}
where $\Xi:= (\Theta-\Psi) \;\,\mbox{mod}(2\pi)$. Note that $\Xi\stackrel{d}{\sim} \mbox{U}[-\pi, \pi)$. The curve of the distance function will be used throughout the article. As we will see later, these curves are the branches of certain hyperbolas on the the upper half space $\H^+:=\R\times \R^+$.  

\begin{remark}[Dual dynamical model]~\label{remark:DDM}
As it was shown in~\cite{FB-SKJ}, this dynamical model is equivalent to the dual dynamical model, where the BSs are fixed but a mobile user is moving on a straight line at a constant speed. All the characteristics related to handovers, as seen in~\cite{FB-SKJ}, at other typical epochs, and also the corresponding performance evaluations, are the same for both dynamical models.
\end{remark}

In the first model, in principle, at any time instant, the power of the signal emitted from each BS is a function of the distance from the user, using the power law  attenuation function $\ell(\vert|X\vert|)= \vert|X\vert|^{-\a}$ for a BS located at $X$, where $\a\geq 2$ is the path-loss exponent. In a more realistic scenario, one can incorporate a fading factor, given by a random variable with a given distribution. In the modelling of wireless communications, the performance of the serving BS is usually measured in terms of the SINR experienced by the user at time $t$, which is 
\begin{equation}
    \mbox{SINR}(t):= \frac{\rho \vert|X_0^t\vert|^{-\a}}{\s^2+\sum_{Y\in \Phi^t\setminus \{X_0^t\}} \rho_Y \vert|Y\vert|^{-\a}},
    \label{eq:SINRt}
\end{equation}
where $\s^2$ is the thermal noise power, $Y\in \Phi^t\setminus \{X_0^t\}$ are the locations of all other BSs at time $t$ and $\rho, \rho_Y$ are the i.i.d. fading factors. These can be considered as Rayleigh fading, but more general classes of fading distributions can cover a large set of use cases. All other performance metrics, namely, {\em signal-to-noise-ratio} (SNR) and {\em signal-to-interference-ratio} (SIR) are defined similarly both in the fading and no-fading cases. 

In addition, we also define a more restrictive version of the performance metrics using tropical interference, by this we mean that we replace the additive interference by maxitive interference over $\Phi^t$, see~\cite{Baccelli09now} and~\cite{Haenggi-Ghanti}. In this case we are interested in the {\em signal-to-tropical interference-plus-noise-ratio} (STINR) defined as
\begin{equation}
     \mbox{STINR}(t):= \frac{\rho \vert|X_0^t\vert|^{-\a}}{\s^2+\max_{Y\in \Phi^t\setminus \{X_0^t\}} \rho_Y \vert|Y\vert|^{-\a}},
    \label{eq:STINRt}
\end{equation}
and similarly for the no-fading case. We can define {\em signal-to-tropical interference-ratio} (STIR) similarly, both in the fading and no-fading cases. The Shannon rate is the metric of interest here, which is proportional to $\ln(1+\mbox{SINR})$.
\subsection{Problem formulation: performance metrics}
In this article, building upon the theory developed  in the previous work~\cite{FB-SKJ}, we focus on the performance evaluation of the system using various metrics of interest. Suppose the user QoS requirement is $\tau>0$. At any time $t$, we say that a typical user is $\tau$-covered by the system with coverage probability
\begin{equation}
    p^c(\tau,\mu,\la,\a)(t):=\P(\Scal(t)>\tau),
\end{equation}
at time $t$, where $\Scal\in \{\mbox{SNR, SIR, SINR, STIR, STINR}\}$. In all these cases, we define the Shannon rate as
\begin{align}
\Rcal(\mu,\la,\a)(t):=\E[B\ln(1+\Scal(t))] \mbox{ bps},
\label{eq:Rcal}
\end{align}
at time $t$, where $B=b/{\ln 2}$, with $b$ being the bandwidth. We use natural log $\ln$ in the definition of the Shannon rate in (\ref{eq:Rcal}), instead of $\log_2$ for convenience in later computational tractability.

The main results in the article are about the coverage probability and average data rate experienced by the user at the origin, at the different typical epochs alluded to above, namely, at a typical {\em min signal \& max interference (mS-MI) epoch}, {\em max signal (MS) epoch}, {\em max interference (MI) epoch}, {\em min interference (mI) epoch} and at any {\em typical time epoch}. All these scenarios are described in Figure~\ref{figure:BPP} using the collection of distance functions (\ref{eq:distancef}) described in~\cite{FB-SKJ}. We describe these scenarios in Table~\ref{tab:abb} in terms of proximity of the serving BS and all others, and also with in our seasonal analogy as follows:
\begin{enumerate}
\item \textbf{\em Min signal \& max interference (mS-MI) epoch}: At a  handover epoch the user experiences a comparable interference power component compared to the signal received from the serving station, which is a time of weakest signal and strongest interference.
\item \textbf{\em Max signal (MS) epoch}: At such an epoch, the power of the signal is maximal, as the serving BS is at its nearest position.  
\item \textbf{\em Max interference (MI) epoch}: At such an epoch, the nearest interferer is at the closest position with respect to the user. 
\item \textbf{\em Min interference (MI) epoch}: Such an epoch corresponds to the time when the nearest interferer gets swapped with another interferer. We will also call this event a {\em tropical interference handover}. 
\item \textbf{\em Typical time epoch (t)}: It correspond to the system at any other time epoch. At this time, there exists a BS at a Rayleigh distance. This leads to classical snapshot  analysis of the system as in~\cite{Andrews-etal}. 
\end{enumerate}
Table~\ref{tab:abb} summarizes the scenarios at different typical epochs, their correspondence with seasons, along with the notations for the variables for the distances used later in the signal and interference components. 
\begin{table}[ht!]
\centering
\begin{spacedtable}{1.2}{2pt}
\begin{tabular}{|c|c|c|c|c|}
\hline
Abbreviation & Meaning & Implication & Distance & Ext  \\
&{\em (Seasons)}&&variables&\\\hline 
{\em mS-MI} & Min signal and & Signal handover & $H_{\sst \Vcal}$ & $\sst{\Vcal}$   \\
& max interference  &  & $H_{\sst \Vcal}$, $\{D_i\}_i$ & \\
\hline
{\em MS} & Max signal & Nearest position & $H_{\sst S}$ & $\sst S$ \\
& & of the serving BS & $R_{\sst S}, \{D_i\}_i$ & \\
\hline
& & Nearest position & $R_{\sst I}$ & $\sst I$ \\ 
{\em MI} & Max interference & of the closest & $H_{\sst I}$, $\{D_i\}_i$ & \\
&&interferer &  & \\
\hline
& & Tropical & $R_{\sst \Ical}$ & $\sst \Ical$\\
{\em mI} & Min interference &interference & $H_{\sst \Ical}$, $H_{\sst \Ical}$, $\{D_i\}_i$& \\
&&handover & & \\
\hline
{\em t} & Typical time & -  & $R$ & $t$\\ 
& &  & $\{D_i\}_i$ & \\
\hline
\end{tabular}
\end{spacedtable}
\captionsetup{width=0.98\linewidth}
\caption{(a). The first three column together represent the abbreviated name, meaning and implication of each typical epoch. (b). In each cell of the variable column, the top variable corresponds to distance to the serving BS and the bottom variables correspond to distance to the interferers. (c). The last column contains the subscript corresponding to the typical epoch. A typical epoch, denoted using the subscript $t$, correspond to the system at any typical time.}
\label{tab:abb}
\end{table}
This article has three major parts. In the first part, we determine the probability distribution of several distance characteristics at various typical epochs, using the analysis in~\cite{FB-SKJ} and especially the Palm distribution, Theorem~5.4, therein. Next, using these distributions, we establish the coverage probability using a threshold parameter $\tau$ corresponding to the user QoS, under different metrics of interest. This also includes the average Shannon rate perceived by the user with respect to different metrics. The last part focuses on the comparison of coverage probabilities and Shannon rates for various regime at different typical epochs. Moreover, this also includes a scale invariance property of the performance metrics in the interference limited regime. 

\subsection{Strategy}
The results in this article can be considered as a continuation of the theory developed in~\cite{FB-SKJ}. Along with the Palm probability distributions of quantities or distances useful for the distribution of SINR, in  
Section~\ref{section:prelims}, we identify and develop characteristics of few more ingredients of interest with the help of the spatio-temporal evolution of the distances of serving BS and other BSs. The distribution of these distances allows us to determine the distribution of SINR and Shannon rate at those typical events. Leveraging the properties of the Poisson point process, the probability generating functional (PGFl) of max-shot noise in particular, we provide integral form expressions or even closed form expressions in specific cases. We manage to compare the key performance indicators in terms of Laplace order or stochastic domination, with the help of the same ordering among some of the distances at typical epochs.  
\section{Preliminaries}~\label{section:prelims}
In the evaluation of the performance metrics at different time epochs, we need the probability distribution of system characteristics, for example, the distance to the serving station, to the nearest interferer(s) and that of all others at these epochs. This invites us to make use of the spatio-temporal evolution of the distance functions of all network participants, provided by the {\em radial bird particle process} developed in~\cite{FB-SKJ}. In the following subsections, we recall some of the constructions from there and devise new tools for deriving the distribution of the distances of interest at these different typical time epochs. Without loss of generality, we assume that $v=1$, since the distance distribution is invariant with respect to the speed parameter $v$, as seen in~\cite[Lemma~5.6]{FB-SKJ}, and also observed in (\ref{eq:headb}) later in this article. We borrow some of the notation and names from the paper~\cite{FB-SKJ}, or explicitly define some others otherwise. We consider this section as a preliminary to our main objective of this article. 
\subsection{Radial bird particle process}
Consider the particle process 
$\Hcal_c:= \sum_i\delta_{C_i}$, on the upper half plane $\H^+$ where $C_i:=\{\left(t,d_{X_i}(t)\right): t\in \R\}$, and $d_{X_i}(\cdot)$ is the distance function of the BS starting at $X_i$, for a homogeneous Poisson point process $\Phi_\la=\Phi= \sum_i\delta_{X_i}$. We call the infinite closed set $C_i$ a {\em radial bird} and 
\[
\Hcal_c:= \sum_i\delta_{C_i}
\]
the {\em radial bird particle process}. For each $X\in \Phi$, we define
\[
T_X:=\arg\inf_{t\in \R}\{d_X(t)\} \mbox{ and } H_X:=d_X(T_X).
\]
It turns out that
\begin{align}
(T_X, H_X) &= \left(-|X| \cos\Xi,|X| \,\vert\sin\Xi\vert \right),\nn
\end{align}
which is the time-space location corresponding to global minimum of the curve of $d_X(\cdot)$, where $\Xi$ is the relative angle of motion of the BS starting at $X$. In case of $v\neq 1$, we have 
\begin{align}
(T_X, H_X) &:=\left(-\frac{|X|}{v} \cos\Xi,|X| \,\vert\sin\Xi\vert \right).
\label{eq:headb}
\end{align}
Using~\cite[Lemma~4.3]{FB-SKJ}, $\Hcal:=\sum_i \delta_{(T_i,H_i)}$, forms a homogeneous Poisson point process on $\H^+$ with intensity measure $\nu$, where $(T_i,H_i)\equiv (T_{X_i},H_{X_i})$ and  ${\rm d}\nu:={\rm d} t\otimes 2\la\, {\rm d} h$. We call $\Hcal$ the {\em head point process}.  

We rewrite the radial bird particle process as
$\Hcal_c:= \sum_i\delta_{(T_i,H_i),C_i}$
by associating the infinite set $C_i$ to each point $(T_i,H_i)$ of $\Hcal$. The point process $\Hcal$ and the intricate properties of the distance function as a branch of a hyperbola work as the backbone of our analysis in the rest of the article. The following Figure~\ref{figure:BPP} represents a local realization of the radial bird particle process $\Hcal_c$, in which the intersection points on the vertical line at typical epochs represent the distance of the serving BS and the interferers.  
\begin{figure}[ht!]
    \centering
\begin{tikzpicture}[scale=1,every node/.style={scale=0.6}]
\pgftransformxscale{0.65}  
\pgftransformyscale{0.65}    
    \draw[thick,<->] (-2.2, 0) -- (5.2, 0) node[right] {$t$};
    \draw[->] (-2.3, 1) -- (-2.3, 2) node[above] {$h$};
    \draw[domain=-2:5, smooth] plot (\x, {(\x*\x+2.5)^0.5});
    \draw[](-0.1,1.57) node{$\bullet$};
    \draw[domain=-2:5, smooth] plot (\x, {(\x*\x-3*\x+2.5)^0.5});
    \draw[](1.51,0.5) node{$\bullet$};
    \draw[->] (1.51, 0) -- (1.51, 5);
    \node[rotate=45, anchor=south] at (1.25,-0.88) {typical MS};
    \draw[](1.51,1.12) node{$\bullet$};
    \draw[](1.51,1.78) node{$\bullet$};
    \draw[](1.51,2.2) node{$\bullet$};
    \draw[](1.51,2.57) node{$\bullet$};
    \draw[](1.51,2.60) node{$\bullet$};
    \draw[cyan, ->] (0.96, 0) -- (0.96, 5);
     \node[cyan,rotate=45, anchor=south] at (0.7,-0.88) {typical MI};
    \draw[cyan](0.96,0.73) node{$\bullet$};
    \draw[cyan](0.96,1) node{$\bullet$};
    \draw[cyan](0.96,1.85) node{$\bullet$};
    \draw[cyan](0.96,2.02) node{$\bullet$};
    \draw[cyan](0.96,2.25) node{$\bullet$};
    \draw[cyan](0.96,3.1) node{$\bullet$};
    \draw[blue, ->] (0.18, 0) -- (0.18, 5);
     \node[blue,rotate=45, anchor=south] at (-0.2,-1) {typical mS-MI};
    \draw[blue](0.18,1.28) node{$\bullet$};
    \draw[blue](0.18,1.42) node{$\bullet$};
    \draw[blue](0.18,1.6) node{$\bullet$};
    \draw[blue](0.18,3) node{$\bullet$};
    \draw[blue](0.18,3.85) node{$\bullet$};
    \draw[domain=-2:5, smooth] plot (\x, {(\x*\x-2*\x+2)^0.5});
    \draw[](0.95,1) node{$\bullet$};
    \draw[domain=-2:5, smooth] plot (\x, {(\x*\x+2.1*\x+1.2)^0.5});
    \draw[](-1.05,0.31) node{$\bullet$};
    \draw[domain=-2:5, smooth] plot (\x, {(\x*\x-6*\x+10)^0.5});
    \draw[](3,1) node{$\bullet$};
     \draw[magenta,->] (2.8, 0) -- (2.8, 5);
    \node[magenta,rotate=45, anchor=south] at (2.6,-0.88) {typical mI};
    \draw[magenta](2.8,1.02) node{$\bullet$};
    \draw[magenta](2.8,1.38) node{$\bullet$};
    \draw[magenta](2.8,2.05) node{$\bullet$};
    \draw[magenta](2.8,3.2) node{$\bullet$};
    \draw[magenta](2.8,3.85) node{$\bullet$};
    \draw[domain=-2:5, smooth] plot (\x, {(\x*\x-8*\x+16.5)^0.5});
    \draw[](4,0.7) node{$\bullet$};
     \draw[red, ->] (-0.55, 0) -- (-0.55, 5);
     \node[red,rotate=45, anchor=south] at (-0.9,-0.9) {typical time};
    \draw[red](-0.55,0.6) node{$\bullet$};
    \draw[red](-0.55,1.67) node{$\bullet$};
    \draw[red](-0.55,1.85) node{$\bullet$};
    \draw[red](-0.55,2.1) node{$\bullet$};
    \draw[red](-0.55,3.7) node{$\bullet$};
    \draw[red](-0.55,4.6) node{$\bullet$};
    \end{tikzpicture}
\captionsetup{width=0.9\linewidth}
    \caption{The blue nodes ($\blue{\bullet}$), black nodes ($\bullet$), red nodes ($\red{\bullet}$), cyan nodes ($\cyan{\bullet}$) and magenta nodes ($\magenta{\bullet}$), respectively, on individual vertical blue, black, red, cyan and magenta lines, form the point processes corresponding to the distance of all the mobile BSs at {\em min-signal max-interference}, {\em max-signal}, {\em typical time}, {\em max-interference} and {\em min-interference} epochs, respectively.}
    \label{figure:BPP}
\end{figure}
\subsection{Lower envelope}
The lower envelope is defined as the random closed subset  
\[
\Lcal_e:=\{(t, L(t)): t\in \R\},
\]
of $\H$, where $L(t):=\inf_{i\in \N}\{d_{X_i}(t)\}$ for $t\in \R$. The lower envelope $\Lcal_e$ correspond to the lowest part of the radial bird particle process in Figure~\ref{figure:BPP}. Physically, $\Lcal_e$ represents the time evolution of the distance to the nearest BS from the user, which contributes to the signal component. It also encodes the information about the distance of the nearest interferer at handover events.
\subsection{The k-th lower envelope and its characterization}
Other than the lower envelope $\Lcal_e$, it will be important to look at the second layer or the $k$-th layer above the lower envelope. The second layer is defined as
    \[
    \Lcal^{(2)}_e:=\left\{\left(t, L^{(2)}(t)\right):t\in \R\right\},
    \]
    where,
    \[
    L^{(2)}(t):=\inf\big\{\{d_{X_i}(t)\}_{i\in \N}\setminus \{L(t)\}\big\},
    \]
and we call it the {\em second lower envelope}. This is obtained by peeling off the part of the  curves which lies on $\Lcal_e$, see Figure~\ref{figure:BPP}. This object is interesting since the dominant contribution to the  interference power is given by the distances measured with respect to $\Lcal^{(2)}_e$. For $k\geq 2$ the $k$-th envelope is defined as
\[
\Lcal^{(k)}_e :=\left\{\left(t, L^{(k)}(t)\right):t\in \R\right\},
\]
where,
\[
L^{(k)}(t):=\inf\left\{\{d_{X_i}(t)\}_{i\in \N}\setminus \left\{L^{(i)}(t)\right\}_{1\leq i\leq k-1}\right\},
\]
where $L^{(1)}(t):=L(t)$ and $\Lcal^{(1)}_e:=\Lcal_e$. For $(s,h)\in \H^+$, $U_h^s$ denotes the upper half ball of radius $h$ and center at $(s,h)$. Let us recall two main geometric properties from~\cite[Subsection~4.2 and the Observation~4.17]{FB-SKJ} which can be summarized as follows:
\begin{enumerate}
\item \label{partial} Suppose $(s,h)\in C_{(T,H)}$, for the set particle $C_{(T,H)}$ located at a point $(T,H)\in \Hcal$, corresponding to a BS starting at $X \in \Phi$. Then $(T,H)\in \partial U_h^s$, see Figure~\ref{fig:dist2head}.  
\begin{figure}[ht!]
\centering
\begin{tikzpicture}[scale=1,every node/.style={scale=0.7}]
\pgftransformxscale{0.7}  
    \pgftransformyscale{0.7}    
    \draw[->] (-2, 0) -- (3.2, 0) node[right] {$t$};
    \draw[-] (0, 0) -- (0, 1.58);
    \draw[-] (0, 0) -- (1.51, 0.5);
    \draw[blue, domain=-1:3, smooth] plot (\x, {(\x*\x-3*\x+2.5)^0.5});
    \draw[blue](1.51,0.5) node{$\bullet$};
    \draw[](2.2,0.35) node{$(T,H)$};
    \draw[red] (-1.58,0) arc (180:0:1.58);
    \draw[](0,-0.31) node{$(s,0)$};
    \draw[](0.3,1.9) node{$(s,h)$};
    \end{tikzpicture}
    \caption{$(s,h)\in C_{(T,H)} \Leftrightarrow (T,H)\in \partial U_h^s$.}
\label{fig:dist2head}
\end{figure}
\item \label{in-out} Let $(s,h), (T,H)\in\mathbb{H}^+$. Let $C_{(T,H)}$ be the radial bird with its head at $(T,H)$.
Let $\hat{h}$ be the height at which the radial bird $C_{(T,H)}$ intersects the vertical line $t=s$. 
Then $(T,H)\in U_{h}^{s}$ if and only if $\hat{h}<h$, as depicted in Figure~\ref{fig:in-out}.

\begin{figure}[ht!]
\centering
\begin{tikzpicture}[scale=1,every node/.style={scale=0.7}]
\pgftransformxscale{0.7}  
\pgftransformyscale{0.7}    
    \draw[->] (-2, 0) -- (3, 0) node[right] {$t$};
    \draw[<-] (0, 3) -- (0, 0) node[below] {$(s,0)$};
    \draw[<->] (0.1, 0) -- (0.1, 1.1);
    \draw[](0.25,0.5) node{$\hat{h}$};
    \draw[red, domain=-0.5:3.7, smooth] plot (\x, {(\x*\x-5*\x+2.5*2.5+0.2)^0.5});
    \draw[red](2.5,0.447) node{$\bullet$};
    \draw[](-0.6,1.8) node{$(s,h)$};
    \draw[](0,1.58) node{$\bullet$};
    \draw[](1,0.5) node{$U^{s}_{h}$};
    \draw[] (-1.58,0) arc (180:0:1.58);
   \draw[blue, domain=-2:1.5, smooth] plot (\x, {(\x*\x+2.1*\x+1.2)^0.5});
    \draw[blue](-1.05,0.31) node{$\bullet$};
    \draw[](-0.9,0.8) node{$(T,H)$};
    \end{tikzpicture}
    \captionsetup{width=0.9\linewidth}
    \caption{All birds with head point inside and outside the half-ball of radius $h$, intersect the vertical line $t=s$ below and above the level $h$, respectively, as presented in Property~\ref{in-out}}
\label{fig:in-out}
\end{figure}
\end{enumerate}
\begin{remark}[$v\neq 1$]
Without loss of generality, we perform all our analysis for $v=1$. In case of $v\neq 1$, we have open upper half ellipse $E^{s,v}_h$, in place of the open upper half ball $U^s_h$, leading to slight change in the geometric arguments.
\end{remark}
This leads to the following characterization of points lying on the $k$-th lower envelope $\Lcal_e^{(k)}$, and the proof of which is given in Subsection~\ref{subsection:Lk}.
\begin{proposition}
Suppose $k\geq 1$. For any point $(s,h)\in \cup_{i\in \N}C_i$ that lies on $\Lcal^{(k)}_e$:
\begin{enumerate}
\item \label{nb} there exist exactly $(k-1)$ many head points in $U^s_h$, i.e., $\left\{(s,h)\in \Lcal^{(k)}_e\right\}= \left\{\Hcal(U^s_h)=k-1\right\}$,
\item \label{b} the fact that the point $(s,h)$ is an intersection of two radial birds, implies that $\left\{(s,h)\in \Lcal^{(k)}_e\right\}= \left\{\Hcal\left(\overline{U^s_h}\right)=k+1\right\}$.
\end{enumerate}
\label{proposition:Lk}
\end{proposition}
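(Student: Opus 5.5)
The plan is to translate membership in $\Lcal^{(k)}_e$ into a statement about how many radial birds cross the vertical line $t=s$ strictly below level $h$. Then Property~\ref{in-out} converts this count into a count of head points of $\Hcal$ inside the open half ball $U^s_h$. Throughout we work on the almost sure event where no three radial birds meet at a common point. We also use that all $d_{X_i}(s)$, $i\in\N$, are distinct except at countably many times $s$, where exactly two coincide. Both facts follow from $\Hcal$ being a Poisson process with diffuse intensity $\nu$: a coincidence is a codimension-one constraint on pairs of head points, and a triple coincidence is codimension two.

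\textbf{Part~\ref{nb}.} Fix $(s,h)$ on some bird $C_j$. By definition of the successive envelopes, $(s,h)\in\Lcal^{(k)}_e$ means that $h=d_{X_j}(s)$ is the $k$-th smallest value among $\{d_{X_i}(s)\}_i$. Equivalently, exactly $k-1$ indices $i\neq j$ satisfy $d_{X_i}(s)<h$. When two values coincide, the set-removal definition of $L^{(k)}$ must be read with multiplicity; I will make this convention explicit.

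By Property~\ref{in-out}, applied to each bird $C_i$ with $\hat h=d_{X_i}(s)$, the condition $d_{X_i}(s)<h$ holds if and only if $(T_i,H_i)\in U^s_h$. Moreover, by Property~\ref{partial}, the head $(T_j,H_j)$ of the bird through $(s,h)$ lies on $\partial U^s_h$, not inside it. Hence the number of head points in the open half ball $U^s_h$ is exactly $k-1$. Reading these equivalences backwards gives the converse inclusion, which proves the event identity.

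\textbf{Part~\ref{b}.} Suppose $(s,h)$ is an intersection point of two birds $C_j$ and $C_{j'}$ lying on $\Lcal^{(k)}_e$, for instance the switching point between $\Lcal^{(k)}_e$ and $\Lcal^{(k+1)}_e$. Property~\ref{partial} places both heads $(T_j,H_j)$ and $(T_{j'},H_{j'})$ on $\partial U^s_h$. Conversely, a bird whose head lies on $\partial U^s_h$ passes through $(s,h)$, by the reverse direction of Property~\ref{partial}, which follows from the explicit hyperbola form of $d_X$. By the no-triple-intersection property there are therefore exactly two heads on the boundary. Adding these to the $k-1$ interior heads from Part~\ref{nb} gives $\Hcal(\overline{U^s_h})=k+1$.

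\textbf{Main obstacle.} The main difficulty is the bookkeeping at ties. At a crossing point the value $h$ is simultaneously the $k$-th and $(k+1)$-th smallest, so the envelope definitions must count with multiplicity. The event in Part~\ref{b} must then be understood as restricted to crossing points, that is, to points counted in $\Lcal^{(k)}_e\cap\Lcal^{(k+1)}_e$. The almost-sure genericity also has to be justified. For that step I would first try a Campbell–Mecke computation: the second factorial moment measure of $\Hcal$, combined with the fact that three circles $\partial U^s_h$ through three heads force a measure-zero condition, should show that a triple coincidence has probability zero.
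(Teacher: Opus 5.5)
Your proposal is correct and follows the paper's own argument. Property~\ref{in-out} turns the birds crossing $t=s$ strictly below $h$ into the head points of $\Hcal$ in the open half ball $U^s_h$, and Property~\ref{partial} puts the two crossing birds' heads on $\partial U^s_h$ for part~2; your handling of genericity (no triple crossings) and of ties is more careful than the paper's brief proof.

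One adjustment to your tie convention. If you read the envelopes with multiplicity, a crossing of levels $k$ and $k+1$ also lies on $\Lcal^{(k+1)}_e$, where part~1 would wrongly predict $k$ interior heads instead of $k-1$. Instead, assign each crossing to the lower level $k$ only. This is exactly the convention the paper uses through the events $\{\Hcal(U^{\hat S}_{\hat H})=0\}$ and $\{\Hcal(U^{\hat S}_{\hat H})=1\}$ in the definitions of $\Vcal$ and $\Vcal_{\sst \Ical}$.
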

\subsection{Characteristics of min signal \& max interference (mS-MI)}
The time epochs of local minimum signal and local maximum interference (mS-MI) correspond to the handover point process, as seen in~\cite{FB-SKJ}. Therein the handover point process is defined as 
\begin{align}
\Vcal&:=\sum_{(T_i,H_i)\in \Hcal}\;\;\sum_{(T_j,H_j)\in \Hcal\,\mbox{:}\, T_j <  T_i} \delta_{\hat S}\, \one_{A(\hat S, \hat H)},
\label{eq:Vcal}
\end{align}
where $A(\hat S, \hat H):= \left\{\Hcal(U^{\hat S}_{\hat H})=0\right\}$ and $(\hat S,\hat H)$ is the intersection of two radial birds $C_i$ and $C_j$ corresponding to $(T_i,H_i)$ and $(T_j,H_j)$, respectively. The intensity of the handover point process $\Vcal$ is calculated in~\cite[Theorem~4.26]{FB-SKJ} as $\la_{\sst \Vcal}=\frac{4\sqrt{\la}}{\pi}$.

At a handover event, there exist two BSs at equal distance from the user. One of them corresponds to the serving BS and the other one to the nearest interferer. From~\cite[Lemma~5.6]{FB-SKJ}, the typical handover distance, denoted by $H_{\sst \Vcal}$, follows a Nakagami distribution with parameter  $\left(\frac{3}{2}, \frac{3}{2\la\pi}\right)$, with density  
\begin{equation}
f_{H_{\sst \Vcal}}(h)=4\pi\la^{3/2} h^2 e^{-\la\pi h^2},\text{ for } h\geq 0.
\label{eq:pdfhatH0}
\end{equation}
\subsection{Characteristics of max signal (MS)}
At a typical time of maximum signal power, the serving BS is at its closest distance to the user. We denote the point process of max-signal epoch on the time axis by $\Vcal_{\sst S}$. Intensity of such a point process is $\la_{\sst S}=\sqrt{\la}$, as derived in~\cite[Lemma~5.8]{FB-SKJ}. We write the corresponding head point process $\Hcal_{\sst I}$ as
\begin{equation}
\Hcal_{\sst S}:=\sum_{j\in \N}\delta_{(T^{\sst S}_j, H^{\sst S}_j)} = \sum_{i\in \N}\delta_{(T_i, H_i)} \one_{\left\{(T_i, H_i)\in \Lcal_e \right\}}.\nn
\end{equation}
Let $H_{\sst S}$ denote the distance to the nearest position of a BS. It is known from~\cite[Lemma~5.11]{FB-SKJ} that $H_{\sst S}$ has a density
\begin{align}
f_{H_{\sst S}}(h)= 2 \la^\half \; e^{-\la\pi h^2}, \text{ for } h\geq 0.\nn
\end{align}
Let $R_{\sst S}$ be the distance to the nearest interferer at a typical max-signal epoch. The following result is about the joint density of $H^2_{\sst S}$ and $R^2_{\sst S}$ and the proof of which is given in Appendix~\ref{subsection:JHRS}.
\begin{proposition}
For any $\beta, \gamma\geq 0$, the joint distribution of $H^2_{\sst S}$ and $R^2_{\sst S}$ is given by the joint Laplace transform 
\[
\E^0_{\Vcal_{\sst S}}\left[e^{-\g H_{\sst S}^2-\b R_{\sst S}^2}\right] {=} \left(1{+}\frac{\beta}{\la\pi}\right)^{-1}\!\! \left(1{+}\frac{\g+\beta}{\la\pi}\right)^{-1/2}.
\]
\label{proposition:JHRS}
\end{proposition}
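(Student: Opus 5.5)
The plan is to work directly with the head point process $\Hcal$, which is a homogeneous Poisson process on $\H^+$ with intensity $\nu$, ${\rm d}\nu={\rm d}t\otimes 2\la\,{\rm d}h$. A max-signal epoch is the time coordinate of a head point $(T_i,H_i)$ lying on the lower envelope $\Lcal_e$. By Proposition~\ref{proposition:Lk} with $k=1$, this is the event $\{\Hcal(U^{T_i}_{H_i})=0\}$: no other head point lies in the open upper half ball of radius $H_i$ centred at $(T_i,H_i)$ in the sense of the paper's notation, i.e.\ the ball centred at $(T_i,0)$.

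\textbf{Step 1 (Palm distribution via Slivnyak--Mecke).} For a nonnegative function $g$, I would write the Campbell--Mecke formula for the marked process $\Vcal_{\sst S}$. By stationarity in $t$ and Slivnyak's theorem, the Palm expectation reads
\[
\la_{\sst S}\,\E^0_{\Vcal_{\sst S}}[g]=\int_0^\infty 2\la\,\E\big[g(h,\Hcal)\,\one_{\{\Hcal(U^0_h)=0\}}\big]\,{\rm d}h .
\]
Here $\Hcal$ is an independent copy of the Poisson process and the head point sits at $(0,h)$. The area of the half ball is $\pi h^2/2$, so its $\nu$-mass is $\la\pi h^2$. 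This gives $\P(\Hcal(U^0_h)=0)=e^{-\la\pi h^2}$, which recovers $\la_{\sst S}=\sqrt{\la}$ and the density $f_{H_{\sst S}}(h)=2\la^{1/2}e^{-\la\pi h^2}$.

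\textbf{Step 2 (conditional law of $R_{\sst S}$).} Conditionally on $H_{\sst S}=h$ and on the void event, the remaining head points form a Poisson process with intensity $\nu$ restricted to the complement of $U^0_h$. By Property~\ref{in-out}, a bird crosses the vertical line $t=0$ below level $r$ exactly when its head lies in $U^0_r$. Hence, for $r>h$,
\[
\{R_{\sst S}>r\}=\{\Hcal(U^0_r\setminus U^0_h)=0\},
\]
and therefore
\[
\P(R_{\sst S}>r\mid H_{\sst S}=h)=e^{-\la\pi(r^2-h^2)} .
\]
In other words, $R_{\sst S}^2-H_{\sst S}^2$ is exponential with rate $\la\pi$ and independent of $H_{\sst S}$. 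The main delicate point is justifying this conditional independence. Since $U^0_h\subset U^0_r$, the conditioning event and the increment event concern disjoint regions of a Poisson process, so the independence follows.

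\textbf{Step 3 (Laplace transform).} From Step 1, $H_{\sst S}^2$ has density $\la^{1/2}u^{-1/2}e^{-\la\pi u}$, which is a Gamma law with shape $1/2$ and rate $\la\pi$. Its Laplace transform at $\g+\b$ is $(1+(\g+\b)/(\la\pi))^{-1/2}$. Writing
\[
\g H_{\sst S}^2+\b R_{\sst S}^2=(\g+\b)H_{\sst S}^2+\b\,(R_{\sst S}^2-H_{\sst S}^2)
\]
and using the independence from Step 2, the exponential factor contributes $(1+\b/(\la\pi))^{-1}$. Multiplying the two factors yields the claimed joint Laplace transform.
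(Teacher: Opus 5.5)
Your proof is correct, but it is organised differently from the paper's.

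\textbf{The paper's route.} The paper never conditions on $H_{\sst S}$. It writes $R_{\sst S}$ through a second sum over the head $(T_j,H_j)$ of the would-be nearest interferer, with $R_{i,j}^2=(T_i-T_j)^2+H_j^2$ and the indicator that $U^{T_i}_{R_{i,j}}$ contains no head other than $(T_i,H_i)$. It then applies the second-order factorial Campbell--Mecke formula on $\Hcal^{2,\neq}$; the single void factor $e^{-\la\pi R_{i,j}^2}$ absorbs the void of $U^{T_i}_{H_i}$. Finally it integrates $(t_j,h_j)$ over $(U^0_{h_i})^c$ in polar coordinates, and then integrates over $h_i$.

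\textbf{Your route.} You use only the first-order Slivnyak--Mecke formula at the serving head. You then read $R_{\sst S}$ off as a contact distance of the remaining Poisson process through the half-annulus $U^0_r\setminus U^0_h$. This is more elementary, and it exposes the structure behind the product form: $H_{\sst S}^2\sim\mathrm{Gamma}(1/2,\la\pi)$ and $R_{\sst S}^2-H_{\sst S}^2\sim\mathrm{Exp}(\la\pi)$, independently. Hence $f_{R_{\sst S}\mid H_{\sst S}=h}(r)=2\la\pi r\,e^{-\la\pi(r^2-h^2)}$ for $r>h$, and the Nakagami$(3/2)$ law of $R_{\sst S}$ follows at once.

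\textbf{A discrepancy in the paper.} Your conditional density differs from the one printed in the corollary right after the proposition. That expression integrates to $1/2$. Once normalised, it would make $R_{\sst S}^2$ exponential, contradicting both the proposition and Corollary~\ref{cor:RS}. Your density is the one the proposition's Laplace transform encodes. It also agrees with the integrand in the paper's own proof, which yields the joint density $4\pi\la^{3/2}r\,e^{-\la\pi r^2}$ on $\{0<h<r\}$.

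\textbf{What the paper's approach buys.} In the pair-based computation the interfering head $(T_j,H_j)$ stays explicit, so functionals of that head come for free. The same template is also reused essentially verbatim for the MI and mI epochs (Propositions~\ref{proposition:JHRI} and~\ref{proposition:JHRTI}), where the second head lies inside rather than outside the half ball.
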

\vspace{-0.2in}
The conditional probability density of $R_{\sst S}$, given $H_{\sst S}=h$ can be obtained using inverse Laplace transform. 
\begin{corollary}
The conditional pdf of the random variable $R_{\sst S}$ given $H_{\sst S}=h$ is
\[
f_{R_{\sst S}\vert H_{\sst S}=h}(r)= \la^{\half} r (r^2-h^2)^{-\half} e^{-\la\pi(r^2-h^2)} \one_{\{r>h\}}.
\]
\end{corollary}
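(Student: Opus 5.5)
The plan is to read off the conditional law directly from the product structure of the joint Laplace transform in Proposition~\ref{proposition:JHRS}, without any further geometry. Set $X:=H^2_{\sst S}$, $Y:=R^2_{\sst S}$ and $U:=Y-X\geq 0$; note that $U>0$ a.s.\ because the nearest interferer is strictly farther than the serving BS at a max-signal epoch. Writing $\g H_{\sst S}^2+\b R_{\sst S}^2=(\g+\b)X+\b U$, the transform of Proposition~\ref{proposition:JHRS} becomes the joint transform of $(X,U)$ evaluated at $(\g+\b,\b)$. Since $(\g+\b,\b)$ ranges over all pairs $(s_1,s_2)$ with $s_1\geq s_2\geq 0$, which contains an open set, the resulting identity will determine the law of $(X,U)$ by analyticity of the Laplace transform.

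The key step is the factorization. The right-hand side is a product of one factor depending only on $\g+\b$ and one depending only on $\b$. Hence $\E^0_{\Vcal_{\sst S}}[e^{-s_1X-s_2U}]$ splits as a function of $s_1$ times a function of $s_2$. This gives independence of $X$ and $U$ under the Palm measure, and also identifies each marginal as a Gamma law with rate $\la\pi$.

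\begin{itemize}
\item \textbf{Consistency check.} The marginal of $X$ must match the density $f_{H_{\sst S}}(h)=2\la^{\half}e^{-\la\pi h^2}$ recalled above. By that density, $H^2_{\sst S}$ is Gamma with shape $\tfrac12$. So I will check at this point that the factor attached to $X$ is the one consistent with $f_{H_{\sst S}}$.
\item \textbf{Law of $U$.} The remaining factor then gives the law of $U$, which I will invert explicitly.
\end{itemize}

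By independence, the conditional law of $U$ given $H_{\sst S}=h$ is simply the law of $U$. The last step is the change of variables $r=\sqrt{h^2+u}$ for $r>h$, with $\mathrm{d}u=2r\,\mathrm{d}r$. This turns $f_U(u)\,\mathrm{d}u$ into a density in $r$ of the shape $c\,r\,(r^2-h^2)^{\kappa}e^{-\la\pi(r^2-h^2)}\one_{\{r>h\}}$. To reach the stated expression $\la^{\half}r(r^2-h^2)^{-\half}e^{-\la\pi(r^2-h^2)}$, the $\b$-factor must be the one with exponent $-\tfrac12$, i.e.\ $U$ must be Gamma with shape $\tfrac12$.

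I expect the main obstacle to be exactly this bookkeeping of which factor carries which variable. It decides both the exponent $\kappa$ of $(r^2-h^2)$ and the constant $c$. As a safeguard I would normalize at the end using $\int_h^\infty r(r^2-h^2)^{-\half}e^{-\la\pi(r^2-h^2)}\,\mathrm{d}r=\tfrac12\Gamma(\tfrac12)(\la\pi)^{-\half}$. This integral equals $1/(2\la^{\half})$, so the stated expression integrates to $\tfrac12$ rather than to $1$. The prefactor $\la^{\half}$ therefore does not normalize a density with exponent $-\tfrac12$; the normalizing constant would be $2\la^{\half}$. Combined with the consistency check above, this suggests a discrepancy with Proposition~\ref{proposition:JHRS} as stated. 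Before finalizing the constant I would re-derive the conditional density directly: from the joint density of $(H_{\sst S},R_{\sst S})$ in Appendix~\ref{subsection:JHRS}, divide by $f_{H_{\sst S}}(h)$ and compare with the result of the inversion.
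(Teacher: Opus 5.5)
You are using the right machinery, which is exactly what the paper means by ``inverse Laplace transform.'' But you stop before the one step that matters, and that step disproves the statement rather than proving it.

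\textbf{The inversion.} There is no bookkeeping ambiguity. In $(\gamma+\beta)X+\beta U$, the variable $X=H_{\sst S}^2$ carries $\gamma+\beta$ and $U=R_{\sst S}^2-H_{\sst S}^2$ carries $\beta$. So in Proposition~\ref{proposition:JHRS} the factor $(1+\frac{\gamma+\beta}{\lambda\pi})^{-1/2}$ belongs to $X$, as your own consistency check with $f_{H_{\sst S}}$ confirms. The factor $(1+\frac{\beta}{\lambda\pi})^{-1}$ belongs to $U$. Hence $U\sim\mathrm{Exp}(\lambda\pi)$, independent of $H_{\sst S}$, and your change of variables $u=r^2-h^2$ gives
\[
f_{R_{\sst S}\vert H_{\sst S}=h}(r)=2\lambda\pi\, r\, e^{-\lambda\pi(r^2-h^2)}\one_{\{r>h\}}.
\]
The exponent on $(r^2-h^2)$ is $0$, not $-\tfrac12$.

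\textbf{Independent check.} The same formula follows directly from Lemma~\ref{lemma:etappp}. Given $H_{\sst S}=h$, the other distances are Poisson on $(h,\infty)$ with intensity $2\pi\lambda r\,\mathrm{d}r$, so $\P(R_{\sst S}>r\mid H_{\sst S}=h)=e^{-\lambda\pi(r^2-h^2)}$. Integrating this conditional density against $f_{H_{\sst S}}$ reproduces Proposition~\ref{proposition:JHRS} exactly. So nothing is inconsistent inside Proposition~\ref{proposition:JHRS}. The error is in the stated corollary itself, and in the matching entry of Table~\ref{tab:tdensity}.

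\textbf{Your proposed repair.} Changing the constant to $2\lambda^{1/2}$ does not fix it; the exponent is what is wrong. The normalized formula $2\lambda^{1/2}r(r^2-h^2)^{-1/2}e^{-\lambda\pi(r^2-h^2)}$ corresponds to $U\sim\Gamma(\tfrac12,\lambda\pi)$. That would give $R_{\sst S}^2=X+U\sim\Gamma(1,\lambda\pi)$, i.e.\ $R_{\sst S}$ Rayleigh. This contradicts both the $\gamma=0$ case of Proposition~\ref{proposition:JHRS} and Corollary~\ref{cor:RS}, which give $R_{\sst S}^2\sim\Gamma(\tfrac32,\lambda\pi)$.

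\textbf{What is missing.} Your normalization computation is correct: the stated expression integrates to $\tfrac12$, which already shows the statement cannot hold as written. What you need is to commit to that conclusion. Replace ``I would re-derive'' with the explicit inversion above, which gives a disproof together with the correct conditional density.
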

\begin{corollary}
Under the Palm probability measure with respect to $\Vcal_{\sst S}$, the random variable $R_{\sst S}$ follows a Nakagami distribution with parameters $\left(\frac{3}{2}, \frac{3}{2\la\pi}\right)$.
\label{cor:RS}
\end{corollary}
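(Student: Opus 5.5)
The plan is to set $\g=0$ in the joint Laplace transform of Proposition~\ref{proposition:JHRS} and read off the law of $R^2_{\sst S}$. For every $\b\geq 0$ this gives
\[
\E^0_{\Vcal_{\sst S}}\left[e^{-\b R_{\sst S}^2}\right]=\left(1+\frac{\b}{\la\pi}\right)^{-1}\left(1+\frac{\b}{\la\pi}\right)^{-1/2}=\left(1+\frac{\b}{\la\pi}\right)^{-3/2}.
\]
This is the Laplace transform of a Gamma distribution with shape $3/2$ and rate $\la\pi$. Laplace transforms on $[0,\infty)$ determine the law, so under the Palm probability $R^2_{\sst S}$ has density
\[
\frac{(\la\pi)^{3/2}}{\Gamma(3/2)}\, u^{1/2} e^{-\la\pi u}, \qquad u\geq 0.
\]

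Next I change variables $u=r^2$, with Jacobian $2r$, to get the density of $R_{\sst S}$. Using $\Gamma(3/2)=\sqrt{\pi}/2$, this gives
\[
f_{R_{\sst S}}(r)=2r\cdot\frac{2(\la\pi)^{3/2}}{\sqrt{\pi}}\, r\, e^{-\la\pi r^2}=4\pi\la^{3/2} r^2 e^{-\la\pi r^2}, \qquad r\geq 0.
\]
This is exactly the density (\ref{eq:pdfhatH0}), which the paper identifies as Nakagami with parameters $\left(\frac{3}{2},\frac{3}{2\la\pi}\right)$. One can confirm the parameters directly: shape $m=3/2$ and spread $\Omega=\E[R_{\sst S}^2]=(3/2)/(\la\pi)$.

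As a cross-check, I would integrate the previous corollary's conditional density against $f_{H_{\sst S}}$:
\[
\int_0^r 2\la^{\half} e^{-\la\pi h^2}\,\la^{\half} r (r^2-h^2)^{-\half} e^{-\la\pi(r^2-h^2)}\,{\rm d}h
=2\la r e^{-\la\pi r^2}\int_0^r (r^2-h^2)^{-\half}{\rm d}h .
\]
Since $\int_0^r (r^2-h^2)^{-1/2}\,{\rm d}h=\pi/2$, the right-hand side equals $\la\pi r e^{-\la\pi r^2}$.

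This does not match $4\pi\la^{3/2} r^2 e^{-\la\pi r^2}$ (the $\la$-powers even disagree), so the conditional density as printed in the previous corollary seems to be off by a normalisation factor. For that reason I would rely on the Laplace-transform route, which depends only on Proposition~\ref{proposition:JHRS}.

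The only real obstacle is bookkeeping: matching the Gamma and Nakagami parametrisations, and not letting the apparent normalisation issue in the conditional density propagate into this proof.
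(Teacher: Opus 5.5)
Your argument is correct and is exactly the paper's: setting $\g=0$ in Proposition~\ref{proposition:JHRS} gives $\left(1+\frac{\b}{\la\pi}\right)^{-3/2}$, so $R_{\sst S}^2$ is Gamma with shape $3/2$ and rate $\la\pi$, which is the stated Nakagami law. You are right to flag the printed conditional density, but the defect is more than normalisation: the factorisation in Proposition~\ref{proposition:JHRS} shows that $R_{\sst S}^2-h^2\sim\mathrm{Exp}(\la\pi)$ given $H_{\sst S}=h$, i.e.\ $f_{R_{\sst S}\vert H_{\sst S}=h}(r)=2\la\pi r\,e^{-\la\pi(r^2-h^2)}\one_{\{r>h\}}$, and this density does integrate against $f_{H_{\sst S}}$ to $4\pi\la^{3/2}r^2e^{-\la\pi r^2}$.
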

\subsection{Characteristics of a typical time}
Suppose $R$ denotes the distance to the serving BS at a typical time. Using Poissonianity, it is well known that $R$ is distributed as Rayleigh with pdf $f_R(r):=2\la\pi r \, e^{-\la\pi r^2}$.
\subsection{Characteristics of max interference (MI)}
Recall that the head point process is $\Hcal:=\sum_{i\in \N}\delta_{(T_i,H_i)}$. Suppose $\Hcal_{\sst I}\leq \Hcal$, i.e.,  $Supp(\Hcal_{\sst I})\subset Supp(\Hcal)$, is the collection of points from $\Hcal$ restricted to the second lower envelope $\Lcal^{(2)}_e$, i.e.,  $\Hcal_{\sst I}:=\Hcal\vert_{\Lcal^{(2)}_e}$. In this case, in short, we use $\sst I$ in the subscript or superscript in our notation, to mean max interference. We write the corresponding head point process $\Hcal_{\sst I}$ as
\begin{equation}
\Hcal_{\sst I}:=\sum_{j\in \N}\delta_{(T^{\sst I}_j, H^{\sst I}_j)} = \sum_{i\in \N}\delta_{(T_i, H_i)} \one{\left\{(T_i, H_i)\in \Lcal^{(2)}_e \right\}},
\label{eq:Hcal_I}
\end{equation}
where we use the notation $(T^{\sst I}_j, H^{\sst I}_j)$ for a head point on $\Lcal_e^{(2)}$. Observe that from Proposition~\ref{proposition:Lk} with $k=2$,
\begin{equation}
\left\{(T_i, H_i)\in \Lcal^{(2)}_e\right\}=\left\{\Hcal(U^{T_i}_{H_i})=1\right\},
\label{eq:Hcal_I1}
\end{equation}
see Figure~\ref{fig:max-T-Interference}. Also notice that, for $(T_i, H_i)\in \Lcal^{(2)}_e$, it is not necessary for the radial bird at $(T_i, H_i)$ to incur a handover. The sequence of points in $\Hcal_{\sst I}$ gives rise to a stationary point process, say $\Vcal_{\sst I}$ on $\R$, consisting of the abscissas of the points in $\Hcal_{\sst I}$. 
\begin{figure}[ht!]
\centering
\begin{center}
\begin{tikzpicture}
[scale=1, every node/.style={scale=0.7}]
\pgftransformxscale{0.2}  
\pgftransformyscale{0.2}   
\draw[->] (-9, 0) -- (7.5, 0) node[right] {$t$};
\draw[->] (-6.5, 0) -- (-6.5, 10) node[above] {$h$};
\draw[red, domain=-7:5.5, smooth] plot (\x, {(\x*\x+3.7^2)^0.5});
\draw[red](0,3.7) node{$\bullet$};   
\draw[](0.6,4.8) node{$(t,h)$};
\draw[blue, domain=-7:5.5, smooth] plot (\x, {(\x*\x-4*\x+2^2+ 1.2^2)^0.5});
\draw[blue](2,1.2) node{$\bullet$};    
\draw[blue, domain=-7:5.5, smooth] plot (\x, {(\x*\x+9*\x+4.5^2+2^2)^0.5});
\draw[blue](-4.5,2) node{$\bullet$}; 
\draw[red] (-3.7,0) arc (180:0:3.7);
\end{tikzpicture}
\captionsetup{width=0.85\linewidth}
\caption{The radial bird in red correspond to a BS that partly contributes the maximum interference to the user. The upper half ball $U^t_h$ contains exactly one point of $\Hcal$.}
\label{fig:max-T-Interference}
\end{center}
\end{figure}
Formally,
\begin{align}
\Vcal_{\sst I}&:= \sum_{j\in \N\,\mbox{:}\, (T^{\sst I}_j, H^{\sst I}_j)\in \Hcal_{\sst I}}\delta_{T^{\sst I}_j},\nn
\end{align}
is the point process of the abscissas of the points from $\Hcal_{\sst I}$ on the time axis, which we call the {\em max interference point process}. The time-stationarity of the point processes $\Hcal_{\sst I}$ and $\Vcal_{\sst I}$ is inherited from that of $\Hcal$. We can re-write $\Vcal_{\sst I}$ using (\ref{eq:Hcal_I}) and (\ref{eq:Hcal_I1}) as 
\begin{align}
\Vcal_{\sst I}& =  \!\!\!\!\!\!\!\sum_{(T_i, H_i)\in \Hcal} \!\!\!\!\!\!\delta_{T_i} \one{\left\{(T_i, H_i){\in} \Lcal^{(2)}_e\right\}} =  \!\!\!\!\!\!\! \sum_{(T_i, H_i)\in \Hcal} \!\!\!\!\!\! \delta_{T_i}\one{\left\{\Hcal(U^{T_i}_{H_i}){=}1\right\}}.
\label{eq:Acal_I2}
\end{align}
In the following result, we determine the intensity of the point process $\Vcal_{\sst I}$, proof of which is provided in the Appendix~(\ref{subsection:L-IVH}).
\begin{lemma}
    The intensity of the max interference point process $\Vcal_{\sst I}$ is $\la_{\sst I}=\frac{\sqrt \la}{2}$.
    \label{lem:IVH}
\end{lemma}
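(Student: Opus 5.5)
The plan is to compute $\la_{\sst I}$ as the mean number of points of $\Vcal_{\sst I}$ in the unit time window $[0,1]$, starting from the representation (\ref{eq:Acal_I2}):
\[
\la_{\sst I}=\E\Bigl[\sum_{(T_i,H_i)\in\Hcal}\one_{\{T_i\in[0,1]\}}\,\one_{\{\Hcal(U^{T_i}_{H_i})=1\}}\Bigr].
\]
Because $\Vcal_{\sst I}$ is stationary, as inherited from $\Hcal$, this expectation is indeed the intensity.

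\textbf{Step 1: reduce to an integral.} Since $\Hcal$ is a Poisson point process with intensity measure $\nu$, ${\rm d}\nu={\rm d}t\otimes 2\la\,{\rm d}h$, I would apply the Mecke (Slivnyak) formula to the sum. The head point $(T_i,H_i)$ lies on the boundary $\partial U^{T_i}_{H_i}$ and not in the open half ball, so it does not count itself. Removing it under the reduced Palm distribution therefore leaves the law of $\Hcal$ unchanged, and we get
\[
\la_{\sst I}=\int_0^1\!\!\int_0^\infty 2\la\,\P\bigl(\Hcal(U^t_h)=1\bigr)\,{\rm d}h\,{\rm d}t .
\]

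\textbf{Step 2: evaluate the Poisson probability.} The measure of the half ball is
\[
\nu(U^t_h)=2\la\cdot\tfrac{\pi h^2}{2}=\la\pi h^2 ,
\]
so $\Hcal(U^t_h)$ is Poisson with mean $\la\pi h^2$. Hence
\[
\P\bigl(\Hcal(U^t_h)=1\bigr)=\la\pi h^2 e^{-\la\pi h^2}.
\]

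\textbf{Step 3: integrate.} The integrand does not depend on $t$, so the $t$-integral contributes a factor $1$. Using $\int_0^\infty h^2e^{-ah^2}{\rm d}h=\sqrt{\pi}/(4a^{3/2})$ with $a=\la\pi$,
\[
\la_{\sst I}=2\la\cdot\la\pi\cdot\frac{\sqrt\pi}{4(\la\pi)^{3/2}}=\frac{\sqrt\la}{2}.
\]

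\textbf{Main obstacle.} The only delicate point is justifying the identity (\ref{eq:Hcal_I1}), $\{(T_i,H_i)\in\Lcal^{(2)}_e\}=\{\Hcal(U^{T_i}_{H_i})=1\}$. This follows from Proposition~\ref{proposition:Lk} with $k=2$. One also needs that, almost surely, no other head point falls on the boundary $\partial U^{T_i}_{H_i}$, which holds because $\nu$ is diffuse. With these two facts in hand, the open/closed ball distinction does not affect the computation.
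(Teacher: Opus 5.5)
Your proposal is correct and takes the paper's approach: it also starts from the representation of $\Vcal_{\sst I}$ via $\one\{\Hcal(U^{T_i}_{H_i})=1\}$, applies the Campbell--Mecke formula with intensity $2\la\,{\rm d}h\,{\rm d}t$, uses $\P(\Hcal(U^t_h)=1)=\la\pi h^2e^{-\la\pi h^2}$, and evaluates the resulting Gamma-type integral to obtain $\sqrt{\la}/2$. Your two extra remarks are details the paper leaves implicit: the head point lies on $\partial U^{T_i}_{H_i}$, so removing it does not change the count, and other head points hit that boundary with probability zero.
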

This enables us to define the Palm distribution with respect to the point process $\Vcal_{\sst I}$, equivalently with respect to a typical max interference (MI) event.
Let us denote the distance of the nearest interferer at typical epoch of {\em MI} as $H_{\sst I}$. In the following, we derive the Palm probability distribution of $H_{\sst I}$ with respect to $\Vcal_{\sst I}$. The proof of this is given in  Appendix~\ref{subsection:T-IVHd}.
\begin{proposition}
     The Palm probability distribution of $H_{\sst I}$ with respect to $\Vcal_{\sst I}$ is a Nakagami distribution with parameters $\left(\frac{3}{2}, \frac{3}{2\la\pi}\right)$.
    \label{prop:IVHd}
\end{proposition}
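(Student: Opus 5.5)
We compute the Palm distribution of $H_{\sst I}$ through the mass transport (Mecke/Campbell) formula for the stationary point process $\Vcal_{\sst I}$, using the representation (\ref{eq:Acal_I2}). For a bounded measurable $g\geq 0$ we write
\[
\la_{\sst I}\,\E^0_{\Vcal_{\sst I}}[g(H_{\sst I})] = \E\Big[\sum_{(T_i,H_i)\in\Hcal} \one_{\{T_i\in[0,1]\}}\, g(H_i)\,\one_{\{\Hcal(U^{T_i}_{H_i}\setminus\{(T_i,H_i)\})=1\}}\Big].
\]
Here we use that at an MI epoch the abscissa is $T_i$ and the nearest interferer sits at its minimal distance $H_i$, so $H_{\sst I}=H_i$. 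We also use that the event in (\ref{eq:Hcal_I1}) refers to the open half ball, which does not contain its own boundary point.

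Since $\Hcal$ is Poisson with intensity ${\rm d}t\otimes 2\la\,{\rm d}h$, the Slivnyak–Mecke formula turns the right-hand side into
\[
\int_0^1\!\!\int_0^\infty g(h)\,\P\big(\Hcal(U^t_h)=1\big)\,2\la\,{\rm d}h\,{\rm d}t,
\]
with the Palm point removed. The next step is to compute $\nu(U^t_h)$. The half ball of radius $h$ has Lebesgue area $\pi h^2/2$, so $\nu(U^t_h)=2\la\cdot \pi h^2/2=\la\pi h^2$. Poissonianity then gives
\[
\P\big(\Hcal(U^t_h)=1\big)=\la\pi h^2 e^{-\la\pi h^2}.
\]
The unnormalized density of $H_{\sst I}$ is therefore $2\la\cdot\la\pi h^2 e^{-\la\pi h^2}=2\pi\la^2h^2e^{-\la\pi h^2}$.

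Integrating this gives $2\pi\la^2\cdot \frac{\sqrt{\pi}}{4(\la\pi)^{3/2}}=\frac{\sqrt\la}{2}$. This recovers Lemma~\ref{lem:IVH} as a consistency check and provides the normalization. Dividing by $\la_{\sst I}$ yields
\[
f_{H_{\sst I}}(h)=4\pi\la^{3/2}h^2e^{-\la\pi h^2},
\]
which is exactly the Nakagami$\left(\frac32,\frac{3}{2\la\pi}\right)$ density (\ref{eq:pdfhatH0}).

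The main obstacle is justifying the identification of $\Vcal_{\sst I}$ and $H_{\sst I}$ with head points satisfying $\Hcal(U^{T_i}_{H_i})=1$. This requires Proposition~\ref{proposition:Lk} with $k=2$ applied at the head point itself. One must check that a head point lying on $\Lcal^{(2)}_e$ is indeed a local minimum of $L^{(2)}$ attained by the nearest interferer. One must also check that almost surely no ties occur, namely no other head on $\partial U^{T_i}_{H_i}$ and no simultaneous intersection; this holds because such configurations have $\nu$-measure zero. The remaining computations are routine.
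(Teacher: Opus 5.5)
Your proposal is correct and follows essentially the paper's argument: both apply the Slivnyak--Mecke formula to the head process $\Hcal$ with the indicator $\one_{\{\Hcal(U^{T_i}_{H_i})=1\}}$, use $\nu(U^t_h)=\la\pi h^2$, and normalize by $\la_{\sst I}=\sqrt{\la}/2$. The only difference is cosmetic: you keep a general test function $g$ and read off the density directly, whereas the paper takes $g(h)=e^{-\g h^2}$ and identifies the Laplace transform $\left(1+\frac{\g}{\la\pi}\right)^{-3/2}$ of $H_{\sst I}^2$.
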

Let $R_{\sst I}$ be the distance of the serving BS under the Palm probability of $\Vcal_{\sst I}$, where $R_{\sst I}$ and $H_{\sst I}$ are dependent. 
In the following, we give the joint Laplace transform of $H^2_{\sst I}, R^2_{\sst I}$ characterizing their joint distribution. The proof is given in Appendix~\ref{subsection:T-JHRI}. 
\begin{proposition}
For any $\beta, \gamma\geq 0$, the joint distribution of $H^2_{\sst I}$ and $R^2_{\sst I}$ is given by the joint Laplace transform 
\[
\E^0_{\Vcal_{\sst I}}\left[e^{-\g H_{\sst I}^2-\b R_{\sst I}^2}\right] {=}\frac{2\la\pi}{\beta}\!\!\left[\!\left(1+\frac{\g}{\la\pi}\right)^{-\half}\!\!\!\!\!\!\!{-}\left(1+\frac{\g+\beta}{\la\pi}\right)^{-\half}\right].
\]
\label{proposition:JHRI}
\end{proposition}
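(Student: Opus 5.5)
The plan is to compute the joint transform by writing the Palm expectation with respect to $\Vcal_{\sst I}$ through the Campbell--Mecke formula for the head point process $\Hcal$, using the representation (\ref{eq:Acal_I2}). Then I condition on the single head point lying in $U^{T}_{H}$.

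\textbf{Step 1 (Palm representation).} By (\ref{eq:Acal_I2}), a typical MI epoch corresponds to a head point $(T_i,H_i)\in\Hcal$ with $\Hcal(U^{T_i}_{H_i})=1$. At this epoch the nearest interferer is at distance $H_{\sst I}=H_i$. By Property~\ref{in-out}, the serving BS is the unique BS whose head lies in $U^{T_i}_{H_i}$; by Proposition~\ref{proposition:Lk} with $k=2$, every other head lies outside $\overline{U^{T_i}_{H_i}}$.

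\textbf{Step 2 (serving distance from the head).} By Property~\ref{partial}, if a head $(T',H')$ lies in $U^{T_i}_{H_i}$, the height of its bird on the vertical line $t=T_i$ equals the Euclidean distance from $(T_i,0)$ to $(T',H')$. Hence $R_{\sst I}^2=(T_i-T')^2+H'^2$.

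\textbf{Step 3 (Campbell--Mecke and Slivnyak).} By stationarity in time, for a unit time window,
\[
\la_{\sst I}\,\E^0_{\Vcal_{\sst I}}\big[e^{-\g H_{\sst I}^2-\b R_{\sst I}^2}\big]=\int_0^\infty 2\la\, e^{-\g h^2}\,\E\big[e^{-\b R^2}\one\{\Hcal(U^0_h)=1\}\big]\,{\rm d}h ,
\]
where Slivnyak's theorem lets the remaining points be an independent copy of $\Hcal$.

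\textbf{Step 4 (conditional law inside the half ball).} $\Hcal(U^0_h)$ is Poisson with mean $2\la\cdot\pi h^2/2=\la\pi h^2$. Given that it equals $1$, the point is uniform in the half disc. For a uniform point in a disc, the squared distance to the centre is uniform on $[0,h^2]$. Therefore
\[
\E\big[e^{-\b R^2}\one\{\Hcal(U^0_h)=1\}\big]=\la\pi h^2e^{-\la\pi h^2}\cdot\frac{1-e^{-\b h^2}}{\b h^2}.
\]
As a consistency check, setting $\b=\g=0$ recovers $\la_{\sst I}=\int_0^\infty 2\la^2\pi h^2e^{-\la\pi h^2}{\rm d}h=\sqrt\la/2$, which is Lemma~\ref{lem:IVH}. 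It also recovers the Nakagami marginal $4\pi\la^{3/2}h^2e^{-\la\pi h^2}$ of Proposition~\ref{prop:IVHd}.

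\textbf{Step 5 (Gaussian integrals).} Dividing by $\la_{\sst I}$, the factor $h^2$ cancels and we get
\[
\frac{4\pi\la^{3/2}}{\b}\int_0^\infty\big(e^{-(\la\pi+\g)h^2}-e^{-(\la\pi+\g+\b)h^2}\big)\,{\rm d}h .
\]
Using $\int_0^\infty e^{-ah^2}{\rm d}h=\tfrac{\sqrt\pi}{2}a^{-1/2}$, this equals $\frac{2\la\pi}{\b}\big[(1+\g/\la\pi)^{-1/2}-(1+(\g+\b)/\la\pi)^{-1/2}\big]$, as claimed.

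\textbf{Main obstacle.} The delicate point is Step 2 together with the geometric identification in Step 1. One must justify that the only bird below height $H_i$ at time $T_i$ is the one headed in $U^{T_i}_{H_i}$, so that it is the serving BS. One must also check that its distance at time $T_i$ is the radial distance of its head from $(T_i,0)$. Both follow from Properties~\ref{partial} and~\ref{in-out}, and ties occur with probability zero. After that, everything reduces to elementary Poisson computations.
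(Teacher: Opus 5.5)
Your proof is correct and takes essentially the same route as the paper. Both arguments apply Campbell--Mecke over head points with $\Hcal(U^{T_i}_{H_i})=1$, identify $R_{\sst I}^2=(T_i-T_j)^2+H_j^2$ for the unique head in the half ball, use that this squared distance is uniform on $[0,h^2]$ (the paper's $J_\beta(h)=\beta^{-1}(1-e^{-\beta h^2})$), and finish with Gaussian integrals. The only cosmetic difference is that the paper applies the second-order factorial Campbell--Mecke formula to the pair of heads directly, while you use the first-order formula with Slivnyak and then condition on the Poisson count in the half ball being one; the two are equivalent.
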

\begin{corollary}
Conditioned on $H_{\sst I}=h$, the random variable $h^{-2}R^2_{\sst I}$ is uniformly distributed on $[0,1]$.
\label{corollary:RbyH}
\end{corollary}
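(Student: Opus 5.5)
Corollary~\ref{corollary:RbyH} follows from Proposition~\ref{proposition:JHRI} by reading off the conditional Laplace transform of $R^2_{\sst I}$ given $H^2_{\sst I}$. The idea is to factor the joint transform as an integral over the marginal law of $H^2_{\sst I}$ (known from Proposition~\ref{prop:IVHd}) against a kernel that we recognize as the Laplace transform of a uniform variable on $[0,h^2]$. Uniqueness of Laplace transforms then identifies the conditional law.

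First we write the Nakagami$(\tfrac32,\tfrac{3}{2\la\pi})$ law of $H_{\sst I}$ in terms of $u=H^2_{\sst I}$. From the density (\ref{eq:pdfhatH0}), $u$ has density $g(u)=2\la^{3/2}\pi\, u^{1/2}e^{-\la\pi u}$ on $u\ge0$, which is a Gamma$(3/2,\la\pi)$ law. Its Laplace transform is $\int_0^\infty e^{-\g u}g(u)\,{\rm d}u=(1+\g/(\la\pi))^{-3/2}$.

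Next we take the candidate conditional law: given $H^2_{\sst I}=u$, $R^2_{\sst I}\sim \mathrm{U}[0,u]$, so that $\E[e^{-\b R^2_{\sst I}}\mid H^2_{\sst I}=u]=(1-e^{-\b u})/(\b u)$. Under this candidate, the joint transform becomes
\[
\int_0^\infty e^{-\g u}\,\frac{1-e^{-\b u}}{\b u}\,2\la^{3/2}\pi\,u^{1/2}e^{-\la\pi u}\,{\rm d}u
=\frac{2\la^{3/2}\pi}{\b}\int_0^\infty u^{-1/2}\left(e^{-(\la\pi+\g)u}-e^{-(\la\pi+\g+\b)u}\right){\rm d}u .
\]
Using $\int_0^\infty u^{-1/2}e^{-cu}\,{\rm d}u=\sqrt{\pi/c}$, the right-hand side equals
\[
\frac{2\la^{3/2}\pi^{3/2}}{\b}\left[(\la\pi+\g)^{-1/2}-(\la\pi+\g+\b)^{-1/2}\right]
=\frac{2\la\pi}{\b}\left[\Big(1+\tfrac{\g}{\la\pi}\Big)^{-1/2}-\Big(1+\tfrac{\g+\b}{\la\pi}\Big)^{-1/2}\right],
\]
which is exactly the expression in Proposition~\ref{proposition:JHRI}.

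Finally, the joint Laplace transform on $\R_+^2$ determines the joint law of $(H^2_{\sst I},R^2_{\sst I})$. The pair (marginal $g$, uniform kernel) therefore reproduces the true joint law, and the conditional law of $R^2_{\sst I}$ given $H_{\sst I}=h$ is uniform on $[0,h^2]$ for almost every $h$. Equivalently, $h^{-2}R^2_{\sst I}\sim \mathrm{U}[0,1]$.

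The only delicate point is recognizing the uniform kernel, i.e.\ seeing that the difference of the two square-root terms divided by $\b$ is $\int_0^1$ of a derivative in $\b$. If guessing fails, the systematic alternative is to differentiate the joint transform in $\b$: this gives $\la\pi\,\E$-type expressions $\frac{\partial}{\partial\b}$ from which $\E[e^{-\g H^2-\b R^2}]$ can be written as $\int_0^1 \E[e^{-(\g+s\b)H^2}]\,{\rm d}s$ after checking that $\frac{\la\pi}{\b}\int_{\g}^{\g+\b}(\la\pi)^{-1}\tfrac12\,(1+x/(\la\pi))^{-3/2}{\rm d}x$ matches. This exhibits $R^2_{\sst I}=UH^2_{\sst I}$ with $U$ uniform and independent of $H^2_{\sst I}$.
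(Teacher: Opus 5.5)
Your argument is correct, but it takes a different route from the paper's. The paper never leaves the Campbell--Mecke representation. In the proof of Proposition~\ref{proposition:JHRI} the joint transform is written as $4\la^{3/2}\pi\int_0^\infty e^{-(\g+\la\pi)h^2}J_\b(h)\,{\rm d}h$, with $J_\b(h)=\frac{2}{\pi}\int_{U^0_h}e^{-\b(t^2+y^2)}\,{\rm d}y\,{\rm d}t=\frac{1-e^{-\b h^2}}{\b}$. Since $4\la^{3/2}\pi h^2e^{-\la\pi h^2}=f_{H_{\sst I}}(h)$, the integrand equals $e^{-\g h^2}f_{H_{\sst I}}(h)\cdot\frac{1-e^{-\b h^2}}{\b h^2}$, and the uniform kernel is read off before the $h$-integration is done. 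Geometrically, the serving BS's head is the unique head point in the half-disc $U^{T}_{h}$, so it is uniformly distributed there. The squared distance from a uniform point of a half-disc of radius $h$ to its centre $(T,0)$ is uniform on $[0,h^2]$.

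You instead treat the closed forms of Propositions~\ref{prop:IVHd} and~\ref{proposition:JHRI} as black boxes. You check that $(V,UV)$, with $V\sim\mathrm{Gamma}(3/2,\la\pi)$ and $U\sim\mathrm{U}[0,1]$ independent, has the same joint transform; your Gamma density and the $\sqrt{\pi/c}$ step are right. You then conclude by uniqueness of bivariate Laplace transforms.

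What each route buys:
\begin{itemize}
\item The paper's route explains where the uniformity comes from, and it transfers verbatim to Proposition~\ref{proposition:JHRTI}.
\item Yours needs nothing beyond the stated formulas. It makes explicit the uniqueness and ``almost every $h$'' steps that the paper's ``this implies'' leaves implicit. It also yields the slightly sharper fact that $R^2_{\sst I}/H^2_{\sst I}$ is independent of $H_{\sst I}$.
\end{itemize}

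Two small repairs.
\begin{itemize}
\item Your fallback sketch is garbled and off by a factor of~$2$. The clean identity is $\int_0^1\bigl(1+\frac{\g+s\b}{\la\pi}\bigr)^{-3/2}{\rm d}s=\frac{1}{\b}\int_\g^{\g+\b}\bigl(1+\frac{x}{\la\pi}\bigr)^{-3/2}{\rm d}x$. This equals the right-hand side of Proposition~\ref{proposition:JHRI} exactly and is a one-line version of your main computation.
\item The density you cite, (\ref{eq:pdfhatH0}), is that of $H_{\sst \Vcal}$. It is usable here only because Proposition~\ref{prop:IVHd} gives $H_{\sst I}$ the same law, so cite that proposition instead.
\end{itemize}
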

\begin{corollary}
The marginal pdf of  $R_{\sst I}$ is given by
\begin{align}
    f_{R_{\sst I}}(r) &= 4\la\pi r \; \mathrm{erfc}(r\sqrt{\la\pi}),\nn
\end{align}
for $r\geq 0$, where $\mathrm{erfc}(z)=\frac{2}{\sqrt{\pi}}\int_z^\infty e^{-z^2}{\rm d}z$.
%
\end{corollary}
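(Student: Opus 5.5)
The plan is to obtain the marginal law of $R_{\sst I}$ by mixing its conditional law given $H_{\sst I}$ over the Palm law of $H_{\sst I}$. Both ingredients are already available: Proposition~\ref{prop:IVHd} and Corollary~\ref{corollary:RbyH}. No new geometric argument on the radial bird particle process should be needed.

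First, by Corollary~\ref{corollary:RbyH}, conditioned on $H_{\sst I}=h$, the variable $R^2_{\sst I}$ is uniform on $[0,h^2]$. Changing variables from $R^2_{\sst I}$ to $R_{\sst I}$ gives the conditional density
\[
f_{R_{\sst I}\vert H_{\sst I}=h}(r)=\frac{2r}{h^2}\,\one_{\{0\le r\le h\}}.
\]
Second, by Proposition~\ref{prop:IVHd}, under the Palm measure of $\Vcal_{\sst I}$ the variable $H_{\sst I}$ is Nakagami$\left(\frac32,\frac{3}{2\la\pi}\right)$. Its density is therefore the same as (\ref{eq:pdfhatH0}), namely $4\pi\la^{3/2}h^2e^{-\la\pi h^2}$.

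Combining the two, the factor $h^2$ cancels:
\[
f_{R_{\sst I}}(r)=\int_r^\infty \frac{2r}{h^2}\,4\pi\la^{3/2}h^2e^{-\la\pi h^2}\,{\rm d}h
=8\pi\la^{3/2}\,r\int_r^\infty e^{-\la\pi h^2}\,{\rm d}h .
\]
The substitution $u=h\sqrt{\la\pi}$ turns the remaining Gaussian tail integral into $(\la\pi)^{-1/2}\frac{\sqrt\pi}{2}\,\mathrm{erfc}(r\sqrt{\la\pi})$. This yields $4\la\pi r\,\mathrm{erfc}(r\sqrt{\la\pi})$, as claimed. As a sanity check, I would verify that this density integrates to $1$, using $\int_0^\infty x\,\mathrm{erfc}(x)\,{\rm d}x=\frac14$.

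The only delicate point is bookkeeping: applying the change of variables from $R^2_{\sst I}$ to $R_{\sst I}$ correctly, and keeping the constants in the erfc normalization straight.

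An alternative route is to set $\g=0$ in Proposition~\ref{proposition:JHRI}. This gives the Laplace transform of $R^2_{\sst I}$ as $\frac{2\la\pi}{\beta}\left[1-\left(1+\frac{\beta}{\la\pi}\right)^{-1/2}\right]$, which can then be inverted. That inversion is more cumbersome, so I would use it only as a cross-check of the direct computation above.
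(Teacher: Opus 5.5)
Your proposal is correct and follows essentially the paper's route. The paper obtains this marginal by mixing the uniform conditional law of $R^2_{\sst I}$ given $H_{\sst I}$ (Corollary~\ref{corollary:RbyH}, established in the proof of Proposition~\ref{proposition:JHRI}) against the Nakagami$\left(\frac32,\frac{3}{2\la\pi}\right)$ density of $H_{\sst I}$ from Proposition~\ref{prop:IVHd}, exactly as you do, and your constants, the erfc normalization, and the check $\int_0^\infty x\,\mathrm{erfc}(x)\,{\rm d}x=\frac14$ are all right.
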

\subsection{Characteristics of min interference (mI)}
The tropical interference is the contribution from the second closest BS only. We are interested in analyzing the scenario when the tropical interference is minimum, and this happens at an epoch when the second closest BS is swapped. Two types of swaps can happen, namely, swaps given by pure handovers between nearest serving BS and nearest interferer, and swaps between second and third closest interferer. We call the first type a {\em signal handover} and second type a {\em tropical interference handover}. We denote the tropical interference handover point process by $\Vcal_{\sst \Ical}$. In this case, in short, we use $\sst \Ical$ in the subscript or superscript in some of our notations as it corresponds to tropical interference. 

We say that an intersection point $(s,h)$ of two radial birds is a tropical interference handover if $(s,h)\in \Lcal^{(2)}_e$. Using the characterization from Proposition~\ref{proposition:Lk}, the tropical interference handover point process is defined as
\begin{equation} 
\Vcal_{\sst \Ical}:=\sum_{(T_i,H_i)\in \Hcal}\;\;\sum_{(T_j,H_j)\in \Hcal\,\mbox{:}\, T_j <  T_i} \delta_{\hat S}\, \one_{A_1(\hat S, \hat H)},
\label{eq:VcalIHPP}
\end{equation}
where $A_1(\hat S, \hat H):= \{\Hcal(U^{\hat S}_{\hat H})=1\}$ similarly to (\ref{eq:Vcal}). The following result is about the intensity of the point process $\Vcal_{\sst \Ical}$, which is proved later in Appendix~\ref{subsection:L-VTI}.
\begin{lemma}
    The intensity of the tropical interference handover point process $\Vcal_{\sst \Ical}$ is $\la_{\sst \Ical}=\frac{6\sqrt{\la}}{\pi}$.
    \label{lem:VTI}
\end{lemma}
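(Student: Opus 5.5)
The plan is to compute $\la_{\sst \Ical}$ exactly as the handover intensity $\la_{\sst \Vcal}=\frac{4\sqrt\la}{\pi}$ was obtained in~\cite[Theorem~4.26]{FB-SKJ}. The only change is that the void event $A(\hat S,\hat H)$ is replaced by the one-point event $A_1(\hat S,\hat H)$. By time-stationarity, $\la_{\sst \Ical}=\E[\Vcal_{\sst \Ical}([0,1])]$. Starting from the double sum in (\ref{eq:VcalIHPP}), I will apply the bivariate Mecke (Slivnyak) formula for the Poisson process $\Hcal$ with intensity $\nu={\rm d}t\otimes 2\la\,{\rm d}h$:
\[
\la_{\sst \Ical}=\int\!\!\int_{t_j<t_i} \one_{\{\hat S\in[0,1]\}}\,\P\big(\Hcal(U^{\hat S}_{\hat H})=1\big)\,\nu({\rm d}(t_i,h_i))\,\nu({\rm d}(t_j,h_j)),
\]
where $(\hat S,\hat H)$ is the intersection of the two birds with heads $(t_i,h_i)$ and $(t_j,h_j)$.

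First, the intersection point is well defined and unique. Since $d^2(t)=H^2+(t-T)^2$, the difference of the squared distance functions of two birds is affine in $t$, so two birds meet at exactly one point. By Property~\ref{partial}, both heads lie on $\partial U^{\hat S}_{\hat H}$, which is $\nu$-null. Hence, under the Palm measure, the added heads do not affect the count in the open half ball. The count $\Hcal(U^{\hat S}_{\hat H})$ is then Poisson with mean $2\la\cdot\frac{\pi\hat H^2}{2}=\la\pi\hat H^2$, so
\[
\P\big(\Hcal(U^{\hat S}_{\hat H})=1\big)=\la\pi\hat H^2e^{-\la\pi\hat H^2}.
\]
For the handover process the corresponding factor was $e^{-\la\pi\hat H^2}$.

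Second, I change variables from the two heads to $(s,h,\phi_1,\phi_2)$, where $(s,h)$ is the intersection point and $(t_k,h_k)=(s+h\cos\phi_k,\,h\sin\phi_k)$ with $\phi_k\in(0,\pi)$. The ordering $t_j<t_i$ fixes the order of the angles. This map is four-dimensional to four-dimensional. Because the map is homogeneous of degree one in $h$ (with $s$ a translation), the Jacobian has the form $h^2 g(\phi_1,\phi_2)$ and does not depend on $s$. The $s$-integral over $[0,1]$ then gives $1$, and the angular integral gives a constant $C=\int g$.

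Computing $C$ directly is the main obstacle. I would avoid it by calibrating against the handover result, since the same change of variables gives
\[
\la_{\sst \Vcal}=4\la^2 C\int_0^\infty h^2e^{-\la\pi h^2}{\rm d}h=4\la^2C\cdot\frac{1}{4\la^{3/2}\pi}=\frac{C\sqrt\la}{\pi}.
\]
Setting this equal to $\frac{4\sqrt\la}{\pi}$ forces $C=4$. If an independent check is wanted, I would compute the determinant explicitly and integrate $g$ over the ordered angles.

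Finally, applying the same computation with the extra factor $\la\pi h^2$ and using $\int_0^\infty h^4e^{-ah^2}{\rm d}h=\frac{3\sqrt\pi}{8a^{5/2}}$ with $a=\la\pi$, I expect
\[
\la_{\sst \Ical}=4\la^2\cdot 4\cdot\la\pi\cdot\frac{3}{8\la^{5/2}\pi^2}=\frac{6\sqrt\la}{\pi},
\]
which is the claimed value.
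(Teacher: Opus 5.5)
Your proof is correct, and it takes a genuinely different route from the paper.

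\textbf{The paper's route.} The paper also starts from the bivariate Mecke formula, but it stays in head coordinates. It restricts $t_1\in[0,1]$, which counts pairs with $T_i\in[0,1]$ rather than $\hat S\in[0,1]$; the two counts agree in expectation only by translation invariance. It then substitutes $t=t_1-t_2$ and uses the explicit formula (\ref{eq:hat-h}) for $\hat h^2$. The $t$-integral is evaluated through the Cauchy--Schl\"{o}milch integral $J(a,b)$ and its parameter-derivative combination $I(a,b)$, which gives $\frac{1}{2\sqrt{\la}}e^{-\la\pi(h_1^2\vee h_2^2)}\left(1+2\la\pi(h_1^2\vee h_2^2)\right)$. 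Finally it integrates over $(h_1,h_2)$ using the $\max$ structure.

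\textbf{Your route.} You change variables to the intersection point and the angles of the two heads on $\partial U^{s}_{h}$, which is a Blaschke--Petkantschin-type decomposition. Because the one-point probability depends only on $h$, the intensity factors into a universal angular constant times a one-dimensional radial integral, and no special integrals are needed. The restriction $s\in[0,1]$ also matches the definition of $\Vcal_{\sst \Ical}$ directly.

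\textbf{The calibration step.} Calibrating against $\la_{\sst \Vcal}=\frac{4\sqrt{\la}}{\pi}$ is legitimate, since (\ref{eq:Vcal}) and (\ref{eq:VcalIHPP}) differ only in the event. It does, however, make your proof depend on \cite[Theorem~4.26]{FB-SKJ}. You can remove that dependence cheaply. The Jacobian is exactly $h^2\,|\cos\phi_1-\cos\phi_2|$, equivalently $h\,|t_1-t_2|$. Its integral over ordered angles is $\int_0^\pi\int_0^{\phi_2}(\cos\phi_1-\cos\phi_2)\,{\rm d}\phi_1\,{\rm d}\phi_2=4$, which confirms $C=4$ independently.

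\textbf{What each approach buys.} Your route generalizes at once. Replacing $\la\pi h^2e^{-\la\pi h^2}$ by $\frac{(\la\pi h^2)^{k-1}}{(k-1)!}e^{-\la\pi h^2}$ gives the $k$-th order swap intensity $\frac{8\sqrt{\la}\,\Gamma(k+\frac{1}{2})}{\pi^{3/2}(k-1)!}$ in one line. This recovers $\frac{4\sqrt{\la}}{\pi}$ and $\frac{6\sqrt{\la}}{\pi}$ for $k=1,2$, and it is exactly the $k$-Voronoi extension mentioned after the lemma. The radial factor $h^2\cdot\la\pi h^2e^{-\la\pi h^2}$ also gives the Palm law of $H_{\sst \Ical}$ in Proposition~\ref{proposition:TIH} for free, whereas the paper redoes the Cauchy--Schl\"{o}milch computation to get it. The paper's route, in exchange, is self-contained in the original coordinates, and its intermediate $(h_1,h_2)$-integrand is reused in the subsequent Palm computations.
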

\begin{remark}
This result about the intensity of $\Vcal_{\sst \Ical}$, i.e., the frequency of tropical interference handovers is, $\la_{\sst \Ical}=\frac{6\sqrt{\la}}{\pi}$. It essentially derives the handover frequency in the 2-Voronoi tessellation of a Poisson point process. The $2$-Voronoi tessellation was studied in~\cite{Comp} from the perspective of Cooperative Multi-Point (CoMP) cellular networks. Our technique can be used to  generalize the results to the $k$-Voronoi tessellations for any $k>2$. 
\end{remark}
Let us denote the distance of a typical tropical interference handover as $H_{\sst \Ical}$. In the following, we give the Palm probability distribution of $H_{\sst \Ical}$ with respect to $\Vcal_{\sst \Ical}$ and the proof is given in Appendix~\ref{subsection:T-TIH}.
\begin{proposition}
     The Palm probability distribution of $H_{\sst \Ical}$ with respect to $\Vcal_{\sst \Ical}$ is a Nakagami distribution with parameters $\left(\frac{5}{2}, \frac{5}{2\la\pi}\right)$.
    \label{proposition:TIH}
\end{proposition}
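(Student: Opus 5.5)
The plan is to compute the Palm distribution through the Campbell--Mecke formula applied to the definition (\ref{eq:VcalIHPP}) of $\Vcal_{\sst \Ical}$, mirroring the computation of the handover intensity and of $H_{\sst\Vcal}$ in~\cite[Theorem~4.26, Lemma~5.6]{FB-SKJ} and the proof of Lemma~\ref{lem:VTI}.

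\textbf{Step 1 (Palm expectation as a double sum).} For a measurable $g\ge 0$, stationarity gives
\[
\la_{\sst \Ical}\,\E^0_{\Vcal_{\sst\Ical}}[g(H_{\sst\Ical})]=\E\Big[\sum_{i}\sum_{j: T_j<T_i}\one_{\{\hat S\in[0,1]\}}\,g(\hat H)\,\one_{A_1(\hat S,\hat H)}\Big],
\]
where $(\hat S,\hat H)$ is the intersection point of the radial birds with heads $(T_i,H_i)$ and $(T_j,H_j)$. I apply the bivariate Mecke formula for the Poisson process $\Hcal$ with intensity $\nu$. By Property~\ref{partial}, both heads lie on $\partial U^{\hat S}_{\hat H}$, so they do not belong to the open half ball. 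Hence, after removing the two points, the event $A_1$ is $\{\Hcal(U^{\hat S}_{\hat H})=1\}$ evaluated for a Poisson process independent of them. Its probability is
\[
\nu(U^s_h)\,e^{-\nu(U^s_h)} = \la\pi h^2 e^{-\la\pi h^2},
\]
since $\nu(U^s_h)=2\la\cdot\pi h^2/2=\la\pi h^2$.

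\textbf{Step 2 (change of variables).} I pass from the pair of heads $\big((t_1,h_1),(t_2,h_2)\big)$ to the coordinates $(s,h,\theta_1,\theta_2)$. Here $(s,h)$ is the intersection point, and the heads sit at $(s+h\cos\theta_k,\,h\sin\theta_k)$ on the half circle of radius $h$ centered at $(s,0)$. This is exactly the parametrization already used for the handover process $\Vcal$. There the same integral, with $e^{-\la\pi h^2}$ in place of the Poisson probability above, yields a density in $h$ proportional to $h^2 e^{-\la\pi h^2}$, i.e.\ the Nakagami$(3/2,\cdot)$ law of (\ref{eq:pdfhatH0}). The Jacobian and the angular integral are identical here. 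The only change is the extra factor $\la\pi h^2$ coming from requiring exactly one head point in $U^s_h$ instead of none. The integral over $s\in[0,1]$ contributes $1$.

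\textbf{Step 3 (identify the law).} The Palm density is therefore proportional to $h^4 e^{-\la\pi h^2}$, which is the Nakagami density with parameters $\left(\frac52,\frac{5}{2\la\pi}\right)$. As a consistency check, the normalizing constant must reproduce $\la_{\sst\Ical}$ of Lemma~\ref{lem:VTI}. With $a=\la\pi$,
\[
a\int_0^\infty h^4e^{-ah^2}\,{\rm d}h=\tfrac32\int_0^\infty h^2e^{-ah^2}\,{\rm d}h ,
\]
so $\la_{\sst\Ical}=\tfrac32\cdot\frac{4\sqrt\la}{\pi}=\frac{6\sqrt\la}{\pi}$, as required.

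\textbf{Main obstacle.} The delicate step is Step~2, namely justifying the change of variables carefully. One must check that each unordered pair of intersecting birds with $T_j<T_i$ is counted exactly once at its relevant intersection point, since two hyperbola branches may intersect twice. One must also check that the Jacobian factor (a power of $h$ times a function of the angles) is the same as in~\cite{FB-SKJ}. I would reuse that computation verbatim rather than redo it, so that only the Poisson void probability is replaced by the Poisson one-point probability.
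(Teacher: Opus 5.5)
Your proof is correct, but Step~2 takes a different route from the paper's. The paper stays in the Cartesian coordinates $(t_1-t_2,h_1,h_2)$ and uses the explicit formula (\ref{eq:hat-h}) for $\hat h^2$. It evaluates the $t$-integral of $\la\pi\hat h^2e^{-(\gamma+\la\pi)\hat h^2}$ via the Cauchy--Schl\"omilch identity and differentiation in the parameters. It then integrates the resulting functions of $h_1^2\vee h_2^2$ to get $\E^0_{\Vcal_{\sst\Ical}}[e^{-\gamma H_{\sst\Ical}^2}]=(1+\gamma/(\la\pi))^{-5/2}$.

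You instead observe that the void weight $e^{-\la\pi \hat h^2}$ of $\Vcal$ and the one-point weight $\la\pi\hat h^2e^{-\la\pi\hat h^2}$ of $\Vcal_{\sst\Ical}$ depend on the pair of heads only through $\hat h$. So everything reduces to the push-forward onto $\hat h$ of $4\la^2\,{\rm d}t_1{\rm d}h_1{\rm d}t_2{\rm d}h_2$ restricted to ordered pairs with $\hat s\in[0,1]$.

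You should not lean on \cite{FB-SKJ} having used the angular parametrization; the paper itself, in the proof of Lemma~\ref{lem:VTI}, works in Cartesian coordinates. You also do not need it. Either compute the Jacobian yourself: it is $h^2|\cos\theta_1-\cos\theta_2|$, and its angular integral over ordered pairs is $4$, which reproduces $\la_{\sst\Vcal}=4\sqrt\la/\pi$. Or, more simply, note that the Nakagami$(\frac32,\frac{3}{2\la\pi})$ law of $H_{\sst\Vcal}$ already forces the push-forward to be $\la_{\sst\Vcal}\,4\pi\la^{3/2}h^2\,{\rm d}h=16\la^2h^2\,{\rm d}h$, after dividing out the positive factor $e^{-\la\pi h^2}$.

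Your worry about double intersections is unfounded. Since $d_X(t)^2=(t-T)^2+H^2$, two squared-distance curves differ by an affine function of $t$. Hence two birds with $T_i\neq T_j$ meet exactly once, at the $\hat s$ given in the paper.

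Your route explains structurally why the answer is Nakagami: the density is $h^2$ times the Poisson weight. It gives $\la_{\sst\Ical}=\frac32\la_{\sst\Vcal}$ without special-function computations. It also extends at once to swaps on the $k$-th envelope: the weight $(\la\pi h^2)^{k-1}e^{-\la\pi h^2}/(k-1)!$ yields Nakagami$(k+\frac12,\frac{2k+1}{2\la\pi})$, which is the $k$-Voronoi generalization the paper alludes to. The paper's route is self-contained and delivers the Laplace transform directly. Its inner integrals are reused in Lemma~\ref{lem:VTI} and in the joint law of Proposition~\ref{proposition:JHRTI}.
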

Let us denote the distance to the serving BS at this typical epoch by $R_{\sst \Ical}$, which is dependent on $H_{\sst \Ical}$. The following result is about the joint distribution of $H^2_{\sst \Ical}, R^2_{\sst \Ical}$ and the proof is given in Appendix~\ref{subsection:T-JHRTI}.
\begin{proposition}
For any $\beta, \gamma\geq 0$, the joint distribution of $H^2_{\sst \Ical}$ and $R^2_{\sst \Ical}$ is given by the joint Laplace transform 
\[
\E^0_{\Vcal_{\sst \Ical}}\left[e^{-\g H_{\sst \Ical}^2-\b R_{\sst \Ical}^2}\right]
{=}\frac{2\la\pi}{3\beta}\!\!\left[\!\left(1+\frac{\g}{\la\pi}\right)^{-\frac{3}{2}}\hspace{-0.17in} - \left(1+\frac{\g+\beta}{\la\pi}\right)^{-\frac{3}{2}}\right].
\]
\label{proposition:JHRTI}
\end{proposition}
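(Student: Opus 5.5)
The plan is to reuse the Campbell--Mecke argument behind Lemma~\ref{lem:VTI} and Proposition~\ref{proposition:TIH}, keeping track of one additional mark: the height at which the serving bird crosses the vertical line through the handover epoch. By the Palm--Campbell formula and the definition (\ref{eq:VcalIHPP}),
\[
\la_{\sst \Ical}\,\E^0_{\Vcal_{\sst \Ical}}\big[g(H_{\sst \Ical},R_{\sst \Ical})\big]=\E\Big[\sum_{i}\sum_{j:T_j<T_i}\one_{\{\hat S\in[0,1]\}}\one_{A_1(\hat S,\hat H)}\,g(\hat H,R(\hat S,\hat H))\Big],
\]
with $g(h,r)=e^{-\g h^2-\b r^2}$. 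Here $R(\hat S,\hat H)$ is the height at which the bird of the unique head point in $U^{\hat S}_{\hat H}$ crosses $t=\hat S$. By Proposition~\ref{proposition:Lk} with $k=2$, this bird is the lower envelope at time $\hat S$, so this height is the serving distance. I will apply the bivariate Mecke formula to the pair $(T_i,H_i),(T_j,H_j)$. The two heads lie on $\partial U^{\hat S}_{\hat H}$ (geometric property~\ref{partial}), so under the Poisson property the remaining process $\Hcal$ restricted to the open half ball is still Poisson with mean $\nu(U^{\hat S}_{\hat H})=2\la\cdot\pi\hat H^2/2=\la\pi\hat H^2$, independently of the pair.

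Next I condition on $A_1$. Given exactly one point of $\Hcal$ in $U^s_h$, that point is uniformly distributed in the half disc, since $\nu$ is a multiple of Lebesgue measure. For $v=1$ its bird satisfies $d(t)^2=(t-T)^2+H^2$, so its crossing height at $t=s$ satisfies $R^2=(T-s)^2+H^2$, the squared distance from $(s,0)$ to the head. For a uniform point in a half disc of radius $h$, this squared radius is uniform on $[0,h^2]$. Hence, conditionally on $\hat H=h$, the variable $R_{\sst \Ical}^2/h^2$ is $\mathrm{U}[0,1]$, which is the analogue of Corollary~\ref{corollary:RbyH}. This gives
\[
\E\big[e^{-\b R^2_{\sst \Ical}}\mid H_{\sst \Ical}=h\big]=\frac{1-e^{-\b h^2}}{\b h^2}.
\]

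The remaining integration over the pair geometry (the Jacobian and the $\la\pi h^2e^{-\la\pi h^2}$ factor from $\P(A_1)$) is exactly what produced the marginal law of $H_{\sst \Ical}$ in Proposition~\ref{proposition:TIH}, so I do not redo it. Thus $U=H_{\sst \Ical}^2$ is Gamma$(5/2,\la\pi)$-distributed, and
\[
\E^0_{\Vcal_{\sst \Ical}}\big[e^{-\g H_{\sst \Ical}^2-\b R_{\sst \Ical}^2}\big]=\frac{(\la\pi)^{5/2}}{\b\,\Gamma(5/2)}\int_0^\infty u^{1/2}\big(e^{-(\la\pi+\g)u}-e^{-(\la\pi+\g+\b)u}\big)\,{\rm d}u .
\]
Evaluating with $\Gamma(3/2)/\Gamma(5/2)=2/3$ gives $\frac{2\la\pi}{3\b}\big[(1+\frac{\g}{\la\pi})^{-3/2}-(1+\frac{\g+\b}{\la\pi})^{-3/2}\big]$, as claimed.

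The main obstacle is justifying the conditional independence rigorously. Specifically, I must check that, under the Mecke-reduced measure, the event $A_1$ and the location of the interior point depend on the pair only through $(\hat S,\hat H)$, and that no other factor of $g$ depends on that interior point. I expect this to follow as in the proof of Proposition~\ref{proposition:JHRI}: after Mecke, the interior of $U^{\hat S}_{\hat H}$ is a fresh Poisson process, and $g$ depends only on $\hat H$ and on that single interior point.
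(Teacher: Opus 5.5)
Your proposal is correct and follows essentially the same route as the paper: a bivariate Campbell--Mecke formula over the pair of birds, where the single interior head of $U^{\hat S}_{\hat H}$ contributes $\la\pi\hat h^2 e^{-\la\pi\hat h^2}\cdot\frac{1-e^{-\b\hat h^2}}{\b\hat h^2}$. This is exactly the paper's factor $e^{-\la\pi\hat h^2}\la\pi J_\b(\hat s,\hat h)$, and it encodes the conditional uniformity of $R^2_{\sst\Ical}$ on $[0,\hat h^2]$. The only difference is in the last step: you integrate against the $\mathrm{Gamma}(5/2,\la\pi)$ law of $H^2_{\sst\Ical}$ taken from Proposition~\ref{proposition:TIH}, whereas the paper recomputes the triple integral via $\pi\la^{3/2}\iiint e^{-(\la\pi+\eta)\hat h^2}=(1+\eta/\la\pi)^{-3/2}$; both give the stated formula.
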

\begin{corollary}
Conditioned on $H_{\sst \Ical}=h$, the random variable $h^{-2}R^2_{\sst \Ical}$ is uniformly distributed on $[0,1]$.
\label{corollary:RbyHIcal}
\end{corollary}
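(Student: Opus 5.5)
The plan is to read Corollary~\ref{corollary:RbyHIcal} directly off the joint Laplace transform in Proposition~\ref{proposition:JHRTI}, with the marginal law of $H_{\sst \Ical}$ supplied by Proposition~\ref{proposition:TIH}. Writing $\mu:=\la\pi$, the bracket in Proposition~\ref{proposition:JHRTI} becomes an integral once we use
\[
\left(1+\tfrac{\g}{\mu}\right)^{-\frac32}-\left(1+\tfrac{\g+\beta}{\mu}\right)^{-\frac32}
=\frac{3}{2\mu}\int_0^{\beta}\left(1+\tfrac{\g+u}{\mu}\right)^{-\frac52}{\rm d}u .
\]
The joint transform then reads
\[
\E^0_{\Vcal_{\sst \Ical}}\!\left[e^{-\g H_{\sst \Ical}^2-\b R_{\sst \Ical}^2}\right]=\frac{1}{\beta}\int_0^\beta\left(1+\tfrac{\g+u}{\mu}\right)^{-\frac52}{\rm d}u .
\]

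Next I use Proposition~\ref{proposition:TIH}. The Nakagami$(\frac52,\frac{5}{2\la\pi})$ law means $H^2_{\sst \Ical}\sim\mathrm{Gamma}(\frac52,\mu)$, so $\E[e^{-sH^2_{\sst \Ical}}]=(1+s/\mu)^{-5/2}$. Substituting this into the integrand gives
\[
\E^0_{\Vcal_{\sst \Ical}}\!\left[e^{-\g H_{\sst \Ical}^2-\b R_{\sst \Ical}^2}\right]=\E\!\left[e^{-\g H^2}\,\frac{1}{\beta}\int_0^\beta e^{-uH^2}{\rm d}u\right],
\]
where $H^2$ has that Gamma law.

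Now compare this with a candidate model. Let $U\sim\mathrm{U}[0,1]$ be independent of $H$. Then
\[
\E\!\left[e^{-\beta UH^2}\,\middle|\,H\right]=\frac{1-e^{-\beta H^2}}{\beta H^2}=\frac1\beta\int_0^\beta e^{-uH^2}{\rm d}u .
\]
So the pair $(H^2,UH^2)$ has exactly the joint Laplace transform of $(H^2_{\sst \Ical},R^2_{\sst \Ical})$ for all $\beta,\g\ge0$. Joint Laplace transforms of nonnegative random vectors determine their law, so $(H^2_{\sst \Ical},R^2_{\sst \Ical})\stackrel{d}{=}(H^2,UH^2)$. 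It follows that, given $H_{\sst \Ical}=h$, the ratio $R^2_{\sst \Ical}/h^2$ is $\mathrm{U}[0,1]$. A regular conditional version exists because we are on $\R^2_+$.

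The only step that needs care is the algebra in the first paragraph, namely checking that the prefactor $\frac{2\mu}{3\beta}$ combines with $\frac{3}{2\mu}$ to give exactly $\frac1\beta$. Beyond that the argument is routine, and it exactly parallels Corollary~\ref{corollary:RbyH}, with the exponents $\frac12,\frac32$ there replaced by $\frac32,\frac52$ here. As an alternative check, one can invert the transform in $\beta$ for fixed $\g$ and obtain the conditional density $h^{-2}\one_{\{0<r^2<h^2\}}$ for $R^2_{\sst \Ical}$.
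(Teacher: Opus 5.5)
Your argument is correct, but it runs in the opposite direction from the paper's.

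\textbf{The paper's route.} The paper obtains the uniformity inside the proof of Proposition~\ref{proposition:JHRTI}, before $\hat h$ is integrated out. After the Campbell--Mecke formula, the integrand carries the factor $e^{-(\gamma+\lambda\pi)\hat h^2}\lambda\pi J_\beta(\hat s,\hat h)$, where $J_\beta(\hat s,\hat h)=\frac{2}{\pi}\int_{U^{\hat s}_{\hat h}}e^{-\beta \hat r^2}{\rm d}t_3\,{\rm d}h_3=\frac{1}{\beta}(1-e^{-\beta\hat h^2})$. Normalizing by its value $\hat h^2$ at $\beta=0$ gives $(1-e^{-\beta\hat h^2})/(\beta\hat h^2)$, which is the Laplace transform of $\mbox{U}[0,\hat h^2]$.

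Geometrically, the unique head point in the open half-disk $U^{\hat s}_{\hat h}$ is uniformly located there by the Poisson property. Its squared distance to $(\hat s,0)$, which equals the squared user--BS distance at time $\hat s$, is then uniform on $[0,\hat h^2]$. The closed form of Proposition~\ref{proposition:JHRTI} is obtained afterwards by integrating this conditional transform against the law of $\hat H$.

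\textbf{Your route.} You take that closed form as a black box and rewrite the difference of $-\frac32$ powers as $\frac{3}{2\lambda\pi}\int_0^\beta(1+\frac{\gamma+u}{\lambda\pi})^{-5/2}{\rm d}u$; the constants do combine to $1/\beta$. You then recognize the mixture representation $(H^2,UH^2)$ with $H^2\sim\mathrm{Gamma}(\frac52,\lambda\pi)$ and $U$ uniform and independent. Uniqueness of bivariate Laplace transforms on $\R_+^2$ followed by disintegration finishes the proof.

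\textbf{Comparison.} Your approach is purely analytic and modular. It makes explicit the identification step (uniqueness of the joint law, then the regular conditional law) that the paper only asserts. It also shows that Proposition~\ref{proposition:TIH} is not a separate input: the marginal law of $H^2_{\sst \Ical}$ falls out of the identification, or from the $\beta\to0$ limit.

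The paper's approach explains \emph{why} the ratio is uniform. It is also how the joint transform is computed in the first place, so it does not presuppose Proposition~\ref{proposition:JHRTI}; yours inverts that computation.

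A minor point: your abbreviation $\mu:=\lambda\pi$ clashes with the paper's fading parameter $\mu$.
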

\begin{corollary}
The marginal pdf of  $R_{\sst \Ical}$ is given by
\begin{align}
    f_{R_{\sst \Ical}}(r) &= \frac{4\la\pi r}{3} \left(\mathrm{erfc}(r\sqrt{\la\pi})+ 2r \sqrt{\la} e^{-\la\pi r^2}\right), \mbox{ for } r\geq 0.\nn
\end{align}
\end{corollary}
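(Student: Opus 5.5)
The plan is to obtain the marginal density of $R_{\sst \Ical}$ by mixing. Corollary~\ref{corollary:RbyHIcal} gives the conditional law of $R_{\sst \Ical}$ given $H_{\sst \Ical}$, and Proposition~\ref{proposition:TIH} gives the law of $H_{\sst \Ical}$. So I would write $R^2_{\sst \Ical}=U H^2_{\sst \Ical}$ with $U$ uniform on $[0,1]$ and independent of $H_{\sst \Ical}$, and integrate out $H_{\sst \Ical}$. This avoids inverting the Laplace transform of Proposition~\ref{proposition:JHRTI} directly. That transform could still serve as a check: setting $\g=0$ should reproduce the Laplace transform of $R^2_{\sst \Ical}$ obtained below.

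\textbf{Step 1: density of $H_{\sst \Ical}$.} The Nakagami density with parameters $(m,\Omega)$ is $\frac{2m^m}{\Gamma(m)\Omega^m}h^{2m-1}e^{-mh^2/\Omega}$. With $m=\frac52$ and $\Omega=\frac{5}{2\la\pi}$ we have $m/\Omega=\la\pi$, and $\Gamma(5/2)=\frac{3\sqrt\pi}{4}$. This yields
\[
f_{H_{\sst \Ical}}(h)=\tfrac{8}{3}\la^{5/2}\pi^2 h^4 e^{-\la\pi h^2},\qquad h\ge 0 .
\]

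\textbf{Step 2: conditional density of $R_{\sst \Ical}$.} Since $R^2_{\sst \Ical}$ is uniform on $[0,h^2]$ given $H_{\sst \Ical}=h$, a change of variables gives $f_{R_{\sst \Ical}\vert H_{\sst \Ical}=h}(r)=\frac{2r}{h^2}\one_{\{r<h\}}$.

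\textbf{Step 3: mixing.} Combining the two steps,
\[
f_{R_{\sst \Ical}}(r)=\int_r^\infty \frac{2r}{h^2}\,f_{H_{\sst \Ical}}(h)\,{\rm d}h=\tfrac{16}{3}\la^{5/2}\pi^2 r\int_r^\infty h^2e^{-\la\pi h^2}{\rm d}h .
\]

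\textbf{Step 4: evaluating the integral.} This is the only step needing care. Set $a=\la\pi$ and integrate by parts, writing $h^2e^{-ah^2}=h\cdot he^{-ah^2}$:
\[
\int_r^\infty h^2e^{-ah^2}{\rm d}h=\frac{re^{-ar^2}}{2a}+\frac{1}{2a}\int_r^\infty e^{-ah^2}{\rm d}h=\frac{re^{-ar^2}}{2a}+\frac{\sqrt\pi}{4a^{3/2}}\,\mathrm{erfc}(r\sqrt a).
\]
Substituting back, the two terms become $\frac{8}{3}\la^{3/2}\pi r^2e^{-\la\pi r^2}$ and $\frac43\la\pi r\,\mathrm{erfc}(r\sqrt{\la\pi})$. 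Factoring out $\frac{4\la\pi r}{3}$ gives exactly the claimed expression.

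As a final sanity check, I would verify that the resulting density integrates to $1$. Equivalently, $\E[R^2_{\sst \Ical}]=\frac12\E[H^2_{\sst \Ical}]=\frac{5}{4\la\pi}$, which should agree with minus the $\b$-derivative at $0$ of the transform in Proposition~\ref{proposition:JHRTI} with $\g=0$.
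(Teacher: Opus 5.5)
Your proposal is correct and follows essentially the paper's route. The paper states this corollary without a written proof, but it is meant to follow by mixing the uniform conditional law of $R^2_{\sst \Ical}/H^2_{\sst \Ical}$ (Corollary~\ref{corollary:RbyHIcal}) against the Nakagami$(\frac52,\frac{5}{2\la\pi})$ density of $H_{\sst \Ical}$, and your integration by parts checks out term by term. One small quibble: in your closing sanity check, the density integrating to $1$ and the second-moment identity $\E[R^2_{\sst \Ical}]=\frac{5}{4\la\pi}$ are different checks, not equivalent ones, although both do hold.
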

%
%
\subsection{Comparison of distances: Laplace transform and MGF order}
\begin{definition}[Laplace transform order $\leq_{\textnormal{L}}$]
We say that two non-negative random variables $X,Y$ satisfy {\em Laplace order} 
\[
X\leq_{\textnormal{L}}Y, \;\;\mbox{ if }\;\; \E_X\left[e^{-\gamma X}\right] \geq  \E_Y\left[e^{-\gamma Y}\right],
\]
for any $\g\geq 0$. The inequality is flipped because of the negative exponent.    
\end{definition}
\begin{definition}[Moment generating function order $\leq_{\textnormal{mgf}}$]
We say that two non-negative random variables $X,Y$ satisfy moment generating function order ({\em MGF order}) 
\[
X\leq_{\textnormal{mgf}}Y, \;\;\mbox{ if }\;\; \E_X\left[e^{\gamma X}\right] \leq  \E_Y\left[e^{\gamma Y}\right],
\]
for any $\g\geq 0$.    
\end{definition}
Similarly to the Laplace order $H_{\sst \Vcal}\geq_{\textnormal{L}} R \geq_{\textnormal{L}}H_{\sst S}$,
form~\cite[Lemma~5.12]{FB-SKJ}, we also have the following orderings.
\begin{lemma}
The typical distances satisfy the following ordering
\begin{enumerate}
\item $H_{\sst\Ical}\geq_{\textnormal{L}} H_{\sst I}$ and  $R_{\sst\Ical}\geq_{\textnormal{L}} R_{\sst I}$,
\item $H_{\sst\Ical}\geq_{\textnormal{mgf}} H_{\sst I}$ and $R_{\sst\Ical}\geq_{\textnormal{mgf}} R_{\sst I}$,
\item  $H_{\sst \Vcal}\geq_{\textnormal{mgf}} R \geq_{\textnormal{mgf}}H_{\sst S}$.
\end{enumerate}
\label{lemma:Laplace_order}
\end{lemma}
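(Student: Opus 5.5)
Every distribution involved is explicit, so the plan is to compute or compare the relevant transforms directly, working throughout with the squared distances. By Proposition~\ref{prop:IVHd}, $H^2_{\sst I}\sim\Gamma(3/2,\la\pi)$. By Proposition~\ref{proposition:TIH}, $H^2_{\sst\Ical}\sim\Gamma(5/2,\la\pi)$. From (\ref{eq:pdfhatH0}), $H^2_{\sst\Vcal}\sim\Gamma(3/2,\la\pi)$. The density $f_R$ gives $R^2\sim\Gamma(1,\la\pi)$, and $f_{H_{\sst S}}$ gives $H^2_{\sst S}\sim\Gamma(1/2,\la\pi)$. Corollaries~\ref{corollary:RbyH} and~\ref{corollary:RbyHIcal} supply the mixtures
\[
R^2_{\sst I}\stackrel{d}{=}U H^2_{\sst I},\qquad R^2_{\sst\Ical}\stackrel{d}{=}U' H^2_{\sst\Ical},
\]
with $U,U'$ uniform on $[0,1]$ and independent of the heights.

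The key observation is that $\Gamma(a,\la\pi)$ with $a<b$ is dominated by $\Gamma(b,\la\pi)$ in the usual stochastic order. This follows from additivity: $\Gamma(b)\stackrel{d}{=}\Gamma(a)+\Gamma(b-a)$ with independent nonnegative summands. Hence
\[
H^2_{\sst S}\le_{st}R^2\le_{st}H^2_{\sst \Vcal},\qquad H^2_{\sst I}\le_{st}H^2_{\sst\Ical}.
\]
Because $x\mapsto\sqrt x$ is increasing, the same dominations hold for the distances themselves. Multiplying by an independent uniform preserves the usual order, so $R^2_{\sst I}\le_{st}R^2_{\sst\Ical}$ and thus $R_{\sst I}\le_{st}R_{\sst\Ical}$.

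The usual stochastic order implies $\E[\phi(X)]\le\E[\phi(Y)]$ for every increasing $\phi$. Taking $\phi(x)=-e^{-\g x}$ gives all the Laplace-order claims in item 1, and taking $\phi(x)=e^{\g x}$ gives all the MGF-order claims in items 2 and 3. The MGFs of these distances are finite because the tails are Gaussian. The Laplace ordering $H_{\sst\Vcal}\ge_{\rm L}R\ge_{\rm L}H_{\sst S}$ from~\cite[Lemma~5.12]{FB-SKJ} is recovered as a consistency check.

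As an alternative cross-check for the squared variables, the closed forms can be compared directly. Setting $\b=0$ in Proposition~\ref{proposition:JHRI} gives $\E[e^{-\g H^2_{\sst I}}]=(1+\g/\la\pi)^{-1/2}$. Setting $\g=0$ and $\b=s$ gives
\[
\E[e^{-sR^2_{\sst I}}]=\frac{2\la\pi}{s}\Big(1-(1+s/\la\pi)^{-1/2}\Big),
\]
and the analogue from Proposition~\ref{proposition:JHRTI} gives
\[
\frac{2\la\pi}{3s}\Big(1-(1+s/\la\pi)^{-3/2}\Big).
\]
After substituting $x=s/\la\pi$, comparing these reduces to the elementary inequality
\[
3\big(1-(1+x)^{-1/2}\big)\ge 1-(1+x)^{-3/2}.
\]
This is checked by writing $u=(1+x)^{-1/2}\in(0,1]$: it becomes $u^3-3u+2=(u-1)^2(u+2)\ge0$.

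The main obstacle is a subtlety rather than a difficulty: the usual-order argument must be stated for the distances, not only for their squares. The ordering of the squares transfers through the monotone map $\sqrt{\cdot}$, which is why the proof goes through stochastic domination rather than transform identities. This also explains why both orders follow uniformly from a single argument.
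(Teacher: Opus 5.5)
Your argument is correct, and it takes a different route from anything in the paper, which gives no proof of this lemma. The paper only asserts it by analogy with \cite[Lemma~5.12]{FB-SKJ}. The closest tools it uses are:
\begin{itemize}
\item the closed-form Laplace transforms of the \emph{squared} distances (Propositions~\ref{prop:IVHd}, \ref{proposition:TIH}, \ref{proposition:JHRI}, \ref{proposition:JHRTI});
\item the single-crossing density comparisons of Appendix~\ref{section:CPE}, e.g.\ $f_{H_{\sst\Ical}}\le f_{H_{\sst I}}$ exactly on $[0,\sqrt{3/(2\la\pi)}]$, which are closely tied to the lrd statement of Lemma~\ref{lemma:LRD_order}.
\end{itemize}

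Your route combines three steps: Gamma additivity for the squared heights, transfer through the increasing map $\sqrt{\cdot}$, and the independent uniform factor from Corollaries~\ref{corollary:RbyH} and~\ref{corollary:RbyHIcal}. This gives the usual stochastic order for all the distances at once, so the Laplace and MGF claims follow together. It also settles $R_{\sst I}$ versus $R_{\sst\Ical}$ without touching the erfc marginals that a density-crossing proof would have to handle.

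One refinement to your closing remark. For item~1 the transform route would in fact also reach the distances, because $x\mapsto e^{-\g\sqrt{x}}$ is completely monotone:
\[
e^{-\g\sqrt{x}}=\int_0^\infty e^{-tx}\,\nu_\g({\rm d}t),\qquad \nu_\g({\rm d}t)=\frac{\g}{2\sqrt{\pi}}\,t^{-3/2}e^{-\g^2/(4t)}\,{\rm d}t .
\]
Hence a Laplace order between squares integrates to one between distances. Passing from squares to distances is a real obstacle only for the MGF claims in items~2 and~3, and that is where the stochastic order is genuinely needed.

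There is one slip in your optional cross-check. The value $\b=0$ is a removable singularity of the formula in Proposition~\ref{proposition:JHRI}, and letting $\b\to0$ gives $\E^0_{\Vcal_{\sst I}}[e^{-\g H^2_{\sst I}}]=(1+\g/\la\pi)^{-3/2}$, not $(1+\g/\la\pi)^{-1/2}$. This agrees with Proposition~\ref{prop:IVHd} and with your own identification $H^2_{\sst I}\sim\Gamma(3/2,\la\pi)$. That line is not used anywhere, so the proof is unaffected. Your comparison of $R^2_{\sst I}$ and $R^2_{\sst\Ical}$ via $(u-1)^2(u+2)\ge 0$ is correct.
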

\begin{definition}[Likelihood ratio dominance order $\leq_{\textnormal{lrd}}$]
We say that two non-negative random variables $X,Y$ satisfy  likelihood ratio dominance order ({\em lrd order}), 
\[
X\leq_{\textnormal{lrd}} Y \mbox{ if } \frac{f_Y(z)}{f_X(z)} \mbox{ is an increasing function }
\]
on their common support. Note that lrd order implies stochastic domination~\cite{ShakedShanthikumar2007}.
\end{definition}
The following lrd ordering will be useful in comparing the coverage probabilities in the tropical case, without fading.
\begin{lemma}
The typical distances satisfy the following ordering
$R_{\sst S} \stackrel{d}{=} H_{\sst \Vcal}\leq_{\textnormal{lrd}}R_1$ and $H_{\sst I}\leq_{\textnormal{lrd}} H_{\sst \Ical}$.
\label{lemma:LRD_order}
\end{lemma}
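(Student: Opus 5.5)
The plan is to reduce every claim to an explicit comparison of densities. All the variables involved have Nakagami-type densities sharing the same Gaussian factor $e^{-\la\pi z^2}$, so each likelihood ratio collapses to a power of $z$, and monotonicity can be read off directly.

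\emph{Equality in law $R_{\sst S}\stackrel{d}{=}H_{\sst \Vcal}$.} Corollary~\ref{cor:RS} states that, under the Palm measure of $\Vcal_{\sst S}$, $R_{\sst S}$ is Nakagami with parameters $\left(\frac{3}{2},\frac{3}{2\la\pi}\right)$. By (\ref{eq:pdfhatH0}), $H_{\sst \Vcal}$ has this same law, with density $4\pi\la^{3/2}z^2e^{-\la\pi z^2}$. If one prefers a self-contained check, set $\g=0$ in Proposition~\ref{proposition:JHRS}. This gives $\E^0_{\Vcal_{\sst S}}[e^{-\b R_{\sst S}^2}]=(1+\b/(\la\pi))^{-3/2}$, which is the Laplace transform of a Gamma$(3/2,\la\pi)$ variable, i.e.\ of $H_{\sst \Vcal}^2$. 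Since $z\mapsto z^2$ is injective on $\R^+$, the two laws coincide.

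\emph{First lrd comparison.} I read $R_1$ as the distance from the user to the nearest interferer (the second nearest BS) at a typical time. By Poissonianity of $\Phi^t$, $R_1$ has the Gamma-type density
\[
f_{R_1}(z)=2(\la\pi)^2z^3e^{-\la\pi z^2}.
\]
The likelihood ratio is therefore
\[
\frac{f_{R_1}(z)}{f_{H_{\sst \Vcal}}(z)}=\frac{2(\la\pi)^2}{4\pi\la^{3/2}}\,z,
\]
which is increasing on the common support $(0,\infty)$. Hence $H_{\sst \Vcal}\leq_{\textnormal{lrd}}R_1$, and combined with the equality in law, $R_{\sst S}\leq_{\textnormal{lrd}}R_1$. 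This identification of $R_1$ is the one step I would double-check against the paper's notation. The alternative reading, $R_1=R$ (Rayleigh), gives the ratio $\propto z^{-1}$, which is decreasing and contradicts the claim. The natural candidate consistent with the statement is thus the second-nearest distance.

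\emph{Second lrd comparison.} By Proposition~\ref{prop:IVHd}, $H_{\sst I}$ is Nakagami$\left(\frac{3}{2},\frac{3}{2\la\pi}\right)$, with density $c_1z^2e^{-\la\pi z^2}$. By Proposition~\ref{proposition:TIH}, $H_{\sst \Ical}$ is Nakagami$\left(\frac{5}{2},\frac{5}{2\la\pi}\right)$, with density $c_2z^4e^{-\la\pi z^2}$. Here $c_1,c_2$ are normalizing constants; they can be obtained from $\int_0^\infty z^{2m-1}e^{-\la\pi z^2}dz=\Gamma(m)/(2(\la\pi)^m)$, but their values are irrelevant. The ratio is
\[
\frac{f_{H_{\sst \Ical}}(z)}{f_{H_{\sst I}}(z)}=\frac{c_2}{c_1}\,z^2,
\]
which is increasing, so $H_{\sst I}\leq_{\textnormal{lrd}}H_{\sst \Ical}$.

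More generally, Nakagami laws sharing the same scale $\Omega/m=1/(\la\pi)$ are lrd-ordered by their shape parameter $m$. I would state this as a one-line observation covering both parts; nothing beyond elementary calculus is needed.
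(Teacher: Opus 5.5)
Your proof is correct and matches what the paper relies on. The paper gives no separate argument for this lemma; it follows from the explicit densities in Table~\ref{tab:tdensity}, where $R_1$ is indeed the distance to the second-nearest BS at a typical time, with density $2\la^2\pi^2 z^3 e^{-\la\pi z^2}$, so your reading of $R_1$ is the right one. Because of the shared factor $e^{-\la\pi z^2}$, the likelihood ratios reduce to $\frac{\pi\sqrt{\la}}{2}\,z$ and $\frac{2\pi\la}{3}\,z^2$, both increasing, exactly as you argue.
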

\subsection{Summary of densities of the typical distances}
For ease of reading and navigation, we provide the list (Table~\ref{tab:tdensity}) for  probability densities of the distances of interest and which will be essential in the performance evaluation later.
\begin{table}[ht!]
\centering
\begin{spacedtable}{1.2}{2pt}
\begin{tabular}{|c|c|c|}
\hline
Epoch & Variable & Distribution \& PDF\\  \hline
mS-MI& $H_{\sst \Vcal}$ & \textbf{$\mbox{Na}\left(\frac{3}{2}, \frac{3}{2\la\pi}\right)$},\quad  $f_{H_{\sst \Vcal}}(h)=4\pi\la^{3/2} h^2 e^{-\la\pi h^2}$  \\\hline
MS & $H_{\sst S}$ & \textbf{$\mbox{Na}\left(\frac{1}{2}, \frac{1}{2\la\pi}\right)$}, \quad $f_{H_{\sst S}}(h)=2\la^{1/2} e^{-\la\pi h^2}$  \\
& $R_{\sst S}$ &   $f_{R_{\sst S}\vert H_{\sst S}=h}(r)=  \frac{\la^{1/2} r e^{-\la\pi(r^2-h^2)}}{(r^2-h^2)^{1/2}} \one_{\{r> h\}}
$\\
& & \textbf{$\mbox{Na}\left(\frac{3}{2}, \frac{3}{2\la\pi}\right)$},\quad  $f_{R_{\sst S}}(h)=4\pi\la^{3/2} h^2 e^{-\la\pi h^2}$ \\\hline
mI & $R^2_{\sst I}$ & \textbf{$\mbox{U}[0,H^2_{\sst I}]$},\quad  $f_{R^2_{\sst I}\vert H_{\sst I}=h}(r)= h^{-2}\one_{[0, h^2]}(r)$  \\
& $R_{\sst I}$ & $f_{R_{\sst I}}(r) = 4\la\pi r \; \mathrm{erfc}(r\sqrt{\la\pi})$\\
& $H_{\sst I}$ & \textbf{$\mbox{Na}\left(\frac{3}{2}, \frac{3}{2\la\pi}\right)$},\quad   $f_{H_{\sst I}}(h)=4\pi\la^{3/2} h^2 e^{-\la\pi h^2}$ \\ \hline
MI & $R^2_{\sst \Ical}$ & \textbf{$\mbox{U}[0,H^2_{\sst \Ical}]$},\quad   $f_{R^2_{\sst \Ical}\vert H_{\sst \Ical}=h}(r)= h^{-2}\one_{[0, h^2]}(r)$ \\
& $R_{\sst \Ical}$ & $f_{R_{\sst \Ical}}(r) = \frac{4\la\pi r}{3} \left(\mathrm{erfc}(r\sqrt{\la\pi})+ 2r \sqrt{\la} e^{-\la\pi r^2}\right)$\\
& $H_{\sst \Ical}$ & \textbf{$\mbox{Na}\left(\frac{5}{2}, \frac{5}{2\la\pi}\right)$},\quad  $ f_{H_{\sst \Ical}}(h)=\frac{8}{3}\pi^2\la^{5/2} h^4 e^{-\la\pi h^2}$  \\\hline
t& $R$ & \textbf{Ray$\left(\frac{1}{\sqrt{2\la\pi}}\right)$},\quad   $f_R(h)=2\la\pi h \, e^{-\la\pi h^2}$  \\
& $R_1$ &   $f_{R_1\vert R=r}(h)=2\la\pi h \, e^{-\la\pi(h^2-r^2)}\one_{\{h\geq r\}}$   \\
&  &  $f_{R_1}(h)=2\la^2\pi^2 h^3 \, e^{-\la\pi h^2}$   \\\hline
\end{tabular}
\end{spacedtable}
\captionsetup{width=0.98\linewidth}
\caption{Densities of various typical distances. The notations (a). \textbf{$\mbox{Na}(m,\omega)$} stands for the Nakagami distributed random variable with parameters $(m,\omega)$, (b). \textbf{$\mbox{Ray}(\theta)$} stands for the Rayleigh  distributed random variable with parameter $\theta$, (c). $\mbox{\textbf{U}}[a,b]$ stands for the uniform random variable on $[a,b]$.}
\label{tab:tdensity}
\end{table}
\subsection{Distances of all other stations}
Given the distance $H \in \{H_{\sst \Vcal}, H_{\sst S}, H_{\sst I}, H_{\sst \Ical}, R\}$ at different typical epochs, let us define the distances of all other BSs can be represented as a one dimensional point process 
$\eta_H=\sum_i\delta_{D_i}$,
on $(H,\infty)$. The following result can be shown to hold using arguments similar to those in~\cite[Lemma~4.34]{FB-SKJ}:
\begin{lemma}
Conditioned on $H=h$, under the Palm probability measure of the respective typical epoch, the point process $\eta_h$ is a Poisson point process on $(h, \infty)$ with intensity measure of density $2\pi\la r\, {\rm d}r$.  
\label{lemma:etappp}
\end{lemma}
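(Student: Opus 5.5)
The plan is to work on the head point process $\Hcal$ and use its Poisson property together with the two geometric facts recalled before Proposition~\ref{proposition:Lk}. By time-stationarity, I would place the typical epoch at time $s=0$ and write $h$ for the relevant distance at that epoch. For a bird with head $(T,H)$, the distance to the user at time $0$ is $d(0)=\sqrt{T^2+H^2}$, by Property~\ref{partial}: the head lies on the half circle of radius $d(0)$ centred at $(0,0)$.

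The first step is to write, for each type of epoch, the Palm expectation of a functional $F(\eta_h)$ via the Campbell--Little--Mecke formula. For the handover-type epochs $\Vcal$ and $\Vcal_{\sst \Ical}$ this is a second-order (two-fold) Mecke formula over pairs of heads $(T_i,H_i),(T_j,H_j)$, as in (\ref{eq:Vcal}) and (\ref{eq:VcalIHPP}). For $\Vcal_{\sst S}$ and $\Vcal_{\sst I}$ it is a first-order formula over single heads, as in (\ref{eq:Acal_I2}). For the typical time it is simply the stationary distribution. In each case, after applying Slivnyak's theorem, the distinguished head(s) are removed and the remaining configuration is again a Poisson process $\Hcal'$ with intensity $\nu$. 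The indicator of the epoch then reads $\{\Hcal'(U^0_h)=k\}$ with $k\in\{0,1\}$. In the typical-time case the analogous indicator is that no head lies in $U^0_R$ other than the one on its boundary.

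For the MI and mI epochs, where $k=1$, the head inside $U^0_h$ is the serving BS. I would therefore also condition on its location, which fixes $R_{\sst I}$ or $R_{\sst \Ical}$. The key observation is that the indicator, and the conditioning variables $h$ and possibly the inner head, depend only on $\Hcal'$ restricted to $\overline{U^0_h}$. The functional $F(\eta_h)$, by Property~\ref{in-out}, depends only on $\Hcal'$ restricted to $\H^+\setminus\overline{U^0_h}$: heads outside the half ball are exactly the birds crossing the line $t=0$ above level $h$. By the independence of a Poisson process on disjoint sets, the Palm expectation factorizes, and the normalizing factor cancels in the conditional law given $H=h$. This shows that, conditionally on $H=h$, $\eta_h$ is the image of a Poisson process of intensity $\nu$ on $\H^+\setminus\overline{U^0_h}$ under the map $(T,H)\mapsto\sqrt{T^2+H^2}$.

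The last step is the mapping theorem. The preimage of $(r,r+{\rm d}r)$ is a half annulus of area $\pi r\,{\rm d}r$. Since the density of $\nu$ is $2\la$, the image intensity is $2\pi\la r\,{\rm d}r$ on $(h,\infty)$, which is the statement of Lemma~\ref{lemma:etappp}.

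The main obstacle I expect is the handover-type epochs. There the Palm measure is defined through intersection points of pairs of birds rather than through heads, so one must check two things. First, the change of variables from the pair of heads to $(\hat S,\hat H)$ plus angular parameters produces a Jacobian depending only on the two distinguished heads. Second, the constraint "both birds pass through $(0,h)$" places these heads on $\partial U^0_h$, a $\nu$-null set, so they do not interfere with the factorization. I would handle this exactly as in \cite[Lemma~4.34]{FB-SKJ}, which treats the case $\Vcal$, by disintegrating the two-fold Mecke integral along the circle $\partial U^0_h$ before applying independence.
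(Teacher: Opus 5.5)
Your proposal is correct and follows essentially the same route as the paper, which gives no argument beyond deferring to \cite[Lemma~4.34]{FB-SKJ}. Your Mecke/Slivnyak reduction, the independence of $\Hcal$ on $\overline{U^0_h}$ and on its complement, and the mapping $(T,H)\mapsto\sqrt{T^2+H^2}$ are the same ingredients the paper uses in its Palm computations in Appendix~\ref{section:A-prelim}. The obstacle you anticipate for $\Vcal$ and $\Vcal_{\sst \Ical}$ does not really arise: after the two-fold Mecke formula, the inner expectation over $\Hcal'$ already factorizes into a function of $\hat h$ times $\E[F(\eta_{\hat h})]$, so no Jacobian or disintegration along $\partial U^{\hat s}_{\hat h}$ is needed.
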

\section{Performance evaluation \& comparison}~\label{section:results}
For simplicity, let us assume that the both transmit power of each of the transmitter and the combined channel gain is $1$.

\subsection{System with fading}
\subsubsection{Performance metrics}
The main quantities of interest are SINR and its tropical version, STINR, at the typical  epochs, defined by 
\[
\mbox{SINR}:=\frac{S}{\s^2+I},\;   \mbox{STINR}:=\frac{S}{\s^2+\Tcal}, 
\]
aligning with the definition (\ref{eq:SINRt}) and (\ref{eq:STINRt}), where $\s^2$ is the thermal noise power, $S$ is the received signal power to the user at the corresponding typical epoch 
\begin{equation}
S:=\begin{cases}
    \rho H_{\sst \Vcal}^{-\a} & \text{ at typical mS-MI epoch},\\
     \rho H_{\sst S}^{-\a}& \text{ at typical MS epoch},\\
     \rho R_{\sst I}^{-\a} & \text{ at typical MI epoch },\\
     \rho R_{\sst \Ical}^{-\a} & \text{ at typical mI epoch},\\
     \rho R^{-\a} & \text{ at any typical epoch},
\end{cases} 
\label{eq:Int}
\end{equation}
using the random variable for the distance to the serving station, listed in Table~\ref{tab:abb}, and $I$ and $\Tcal$ are the additive and maxitive (or tropical) interference power.  Given $H= H_{\sst \Vcal}, H_{\sst S}, R_{\sst I}, R_{\sst \Ical}, R$, the interference $I$ and the tropical interference $\Tcal$ are defined in terms of the distances $H_{\sst I}$ or $H_{\sst \Ical}$ to the nearest interferer(s), and distances $\{D_i\}_{i\in \N}$ to all others, at the respective places in the proof of our results. The signal-to-interference-ratio (\textnormal{SIR}), signal-to-tropical interference-ratio (\textnormal{STIR}) and signal-to-noise-ratio (\textnormal{SNR}) are defined naturally. 

The factors $\rho$ used for the signal and $\{\rho_i\}_{i\in \N}$ that will be used for the interference correspond to the fading of the channels from the transmitters, which are assumed to be i.~i.~d. and distributed as Exp($\mu$), for $\mu>0$. The Table~\ref{table:CP_metrics} contains the formulas for the coverage probability metrics (SINR and STINR) in the scenario with fading.
\begin{table}[ht!]
\centering
\begin{spacedtable}{1.2}{2pt}
\begin{tabular}{|c|c|c|c|}
\hline%
Epoch & Variables & SINR (additive) & STINR (tropical)\\
 -&- & $(\bar I:= \sum \rho_i D_i^{-\a})$ &   $(\bar I:= \max \rho_i D_i^{-\a})$ \\\hline
{\em mS-MI} & $H_{\sst \Vcal}$ & $\frac{\rho H_{\sst \Vcal}^{-\a}}{\s^2+ \bar I}$ & $\frac{\rho H_{\sst \Vcal}^{-\a}}{\s^2+\bar I}$    \\
&   $H_{\sst \Vcal}, \{D_i\}_{i}$ & &  \\\hline
{\em MS} & $H_{\sst S}$ & $\frac{\rho H_{\sst S}^{-\a}}{\s^2+ \bar I}$ & $\frac{\rho H_{\sst S}^{-\a}}{\s^2+\bar I}$    \\
&   $\{D_i\}_{i}$ & &  \\\hline
{\em MI} & $R_{\sst I}$ & $\frac{\rho R_{\sst I}^{-\a}}{\s^2+\rho' H_{\sst I}^{-\a}+\bar I}$ & $\frac{\rho R_{\sst I}^{-\a}}{\s^2+ \rho' H_{\sst I}^{-\a} \vee\bar I}$    \\
&   $H_{\sst I},\{D_i\}_{i}$ &   & \\\hline
{\em mI} & $R_{\sst \Ical}$ & $\frac{\rho R_{\sst \Ical}^{-\a}}{\s^2{+}(\rho'{+}\rho'') H_{\sst \Ical}^{{-}\a}{+}\bar I}$ & $\frac{\rho R_{\sst \Ical}^{-\a}}{\s^2{+}(\rho'{\vee} \rho'') H_{\sst \Ical}^{{-}\a} {\vee} \bar I}$    \\
&   $H_{\sst \Ical},\{D_i\}_{i}$ & &   \\\hline
{\em t}  & $R$ & $\frac{\rho R^{-\a} }{\s^2+\bar I}$ & $\frac{\rho R^{-\a}}{\s^2+\bar I}$  \\
& $\{D_i\}_{i}$ &  &     \\\hline
\end{tabular}
\end{spacedtable}
\captionsetup{width=0.95\linewidth}
\caption{(a). First column represents the typical epochs. (b). In the second column, we just mention the variables corresponding to the signal and nearest interferer(s) and the distance of the rest of the interferer. (c). The third and fourth column contains the formula for the coverage probability metrics, namely SINR and STINR, in the additive and tropical interference scenario.}
\label{table:CP_metrics}
\end{table}

At all the typical epochs, the coverage probability and Shannon rate or data rate are defined using $\Scal\in \{\mbox{SINR, STINR, SNR, SIR, STIR}\}$ as 
\[
p^c(\tau,\mu,\la,\a):= \P^0(\Scal\geq \tau),\]
and 
\[
\Rcal(\mu,\la,\a):=\E^0[B\ln(1+\Scal)],
\]
respectively, where $B=b\log_2e$, with $b$ being the bandwidth of the channel, which we assume to be a constant. Here $\P^0$ is the corresponding Palm probability measure and $\E^0$ is the Palm expectation with respect to $\P^0$. 
\begin{remark}[Shannon rate]
Note that for any metric $\Scal$, the Shannon rate $\Rcal$ can be written in general in terms of the coverage probability as derived in~\cite{Andrews-etal}
\begin{align}
\Rcal &= B\int_0^\infty  \frac{1}{1+z} \P^0(\Scal\geq z) {\rm d}z. 
\label{eq:RcalG}
\end{align}
We treat this as the formula for the Shannon rate, without stating explicitly in our results and hence it is enough to determine the coverage probabilities in all these cases in both cases with and without fading.
\end{remark}
In the following, we determine the coverage probability and the average data rate using different performance metrics $\Scal\in \{\mbox{SNR, SIR, SINR, STIR, STINR}\}$, at typical mS-MI, MS and  typical time epoch. We present the proof in Appendix~\ref{proofs-wf}.
\begin{theorem}
The coverage probability with respect to Palm probability distribution of the typical mS-MI, MS and typical time epoch is given by 
\begin{align}
p^c(\tau,\mu,\la,\a)&=  \E^0_H\left[e^{-\mu\tau H^{\a}\s^2} \Lcal_{I_{H}}(\mu\tau H^{\a})\right],
\label{eq:GCP}
\end{align}
where $H \in \{H_{\sst \Vcal}, H_{\sst S},R\}$.
\label{theorem:GPM-WF}
\end{theorem}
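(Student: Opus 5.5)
The plan is to condition on the distance $H$ to the serving station under the relevant Palm measure, and then use the exponential law of the signal fading $\rho$. Here $H\in\{H_{\sst \Vcal},H_{\sst S},R\}$, and $I_H:=\sum_i \rho_i D_i^{-\a}$ denotes the interference generated by the point process $\eta_H=\sum_i\delta_{D_i}$ of distances of all other stations. At these three epochs, Table~\ref{table:CP_metrics} shows that there is no separately singled-out nearest interferer. At a typical MS epoch and a typical time, all non-serving distances are the points $D_i$. At a typical mS-MI epoch, the second station sits at the same distance $H_{\sst \Vcal}$, so $D_1 = H_{\sst\Vcal}$ sits on the boundary of $(H,\infty)$. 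We treat that point as part of $\eta_H$, since a single point on the boundary does not affect the Poisson description used below. So in all three cases $\mbox{SINR}=\rho H^{-\a}/(\s^2+I_H)$.

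First, by the tower property under $\P^0$,
\[
p^c=\E^0_H\Big[\P^0\big(\rho\geq \tau H^{\a}(\s^2+I_H)\,\big|\,H\big)\Big].
\]
Second, $\rho\sim\mathrm{Exp}(\mu)$ is independent of $H$, of $\eta_H$ and of $\{\rho_i\}$. Conditioning further on $(\eta_H,\{\rho_i\})$ therefore gives the inner probability $e^{-\mu\tau H^{\a}\s^2}e^{-\mu\tau H^{\a} I_H}$. Third, we take the conditional expectation over the interference given $H=h$. This is exactly the Laplace transform $\Lcal_{I_h}(s)=\E^0[e^{-sI_h}\mid H=h]$ evaluated at $s=\mu\tau h^{\a}$, and it yields (\ref{eq:GCP}).

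For completeness, one would also record the explicit form of $\Lcal_{I_h}$. By Lemma~\ref{lemma:etappp}, conditioned on $H=h$, $\eta_h$ is a Poisson point process on $(h,\infty)$ with density $2\pi\la r$, and the marks $\rho_i$ are i.i.d. $\mathrm{Exp}(\mu)$ and independent of it. The PGFl of a Poisson process then gives
\[
\Lcal_{I_h}(s)=\exp\Big(-2\pi\la\int_h^\infty \frac{s r^{-\a}}{\mu+s r^{-\a}}\, r\,{\rm d}r\Big).
\]
This expression is not needed for the statement as written, but it makes (\ref{eq:GCP}) computable.

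The main obstacle is justifying the conditional structure under the Palm measure at each epoch. Specifically, one must check that, given $H=h$, the fading variables are independent of the geometry and the remaining distances form the Poisson process of Lemma~\ref{lemma:etappp}. The delicate case is the mS-MI epoch, where the second equidistant station must be handled carefully: we either include it as the boundary point $D_1 = h$ or account for it with an additional factor $\mu/(\mu+s h^{-\a})$. I would first follow the paper's table convention, under which it is absorbed into $I_H$. If the paper's $I_H$ instead excludes that point, the Laplace transform simply factorizes with this extra term, and the stated formula still holds with $I_H$ understood as the total interference.
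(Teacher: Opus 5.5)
Your proof is correct and is essentially the paper's argument: condition on $H=h$, use that the signal fading $\rho\sim\mathrm{Exp}(\mu)$ is independent of $I_h$ to get $e^{-\mu\tau h^{\a}\s^2}\Lcal_{I_h}(\mu\tau h^{\a})$, and then average over $H$ under the Palm measure. One correction to your side remark: the equidistant interferer at mS-MI is not absorbed by the Poisson description of $\eta_h$, since the paper's $I_h$ contains an extra term $h^{-\a}$ times an independent $\mathrm{Exp}(\mu)$ fading, so your explicit PGFl formula must be multiplied by $\mu/(\mu+sh^{-\a})$ there (this gives the factor $1/(1+\tau)$ in $p^c_{\sst\Vcal}$), although this does not affect the theorem as stated.
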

In the case of typical max-interference (MI) and typical min-interference (mI), there exists an interferer at distance $H \in \{H_{\sst I}, H_{\sst \Ical}\}$ and a serving BS is at distance $R\in \{R_{\sst I}, R_{\sst \Ical}\}$ such that $R\leq H$ a.s., and all other BSs are farther than $H$.  Subsequently the performance metrics are given by the following result, the proof of which is given in Section~\ref{proofs-wf}.
\begin{theorem}
The coverage probability with respect to Palm probability distribution of typical mI and MI epoch is given by 
\begin{align}
p^c(\tau,\mu,\la,\a)&= \E^0_{H,R}\left[e^{-\mu\tau R^{\a}\s^2} \Lcal_{I_{H}}(\mu\tau R^{\a})\right],
\label{eq:GCP2}
\end{align}
where $R\in \{R_{\sst I}, R_{\sst \Ical}\}$, $H \in \{H_{\sst I}, H_{\sst \Ical}\}$.
\label{theorem:GPM-WF2}
\end{theorem}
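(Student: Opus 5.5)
Following the proof of Theorem~\ref{theorem:GPM-WF}, I condition on the geometry at the typical epoch and then average. Under the Palm measure of $\Vcal_{\sst I}$ (resp.\ $\Vcal_{\sst \Ical}$) the configuration consists of three parts. The serving BS is at distance $R\in\{R_{\sst I},R_{\sst \Ical}\}$. One interferer (for MI), or two interferers (for mI), sit at distance $H\in\{H_{\sst I},H_{\sst \Ical}\}$ with $R\le H$. The remaining BSs form the process $\eta_H=\sum_i\delta_{D_i}$ on $(H,\infty)$, which by Lemma~\ref{lemma:etappp} is, conditionally on $H=h$, a Poisson point process with intensity $2\pi\la r\,{\rm d}r$.

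I also need conditional independence: given $(H,R)$, the process $\eta_H$ and the i.i.d.\ fading marks should not depend on $R$. I will justify this from Proposition~\ref{proposition:Lk} with $k=2$. The defining event $\{\Hcal(U^{T}_{H})=1\}$ (resp.\ $A_1$) only constrains head points inside the closed half-ball $\overline{U^T_H}$. By Property~\ref{in-out}, $R$ is determined by the single head point inside the ball, while $\eta_H$ is produced by the head points outside it. For a Poisson process these two sets of head points are independent. This is the same Slivnyak-type argument used for Lemma~\ref{lemma:etappp}, and I will take it as given in that form.

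I define $I_H$ as the total interference: the contribution of the interferer(s) at distance $H$, namely $\rho' H^{-\a}$ for MI or $(\rho'+\rho'')H^{-\a}$ for mI, plus $\bar I=\sum_i\rho_i D_i^{-\a}$. For the tropical version, sums are replaced by maxima as in Table~\ref{table:CP_metrics}.

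The computation proceeds in three steps.
\begin{enumerate}
\item Condition on $(H,R)=(h,r)$ and on all interference terms, and use $\rho\sim\mathrm{Exp}(\mu)$ independent of them:
\[
\P^0(\rho r^{-\a}\ge \tau(\s^2+I_h)\mid I_h)=e^{-\mu\tau r^{\a}\s^2}e^{-\mu\tau r^{\a} I_h}.
\]
\item Take the expectation over $I_h$ given $(h,r)$. Since the law of $I_h$ does not depend on $r$, this gives $e^{-\mu\tau r^\a\s^2}\Lcal_{I_h}(\mu\tau r^\a)$.
\item Integrate against the joint law of $(H,R)$, which is given by Proposition~\ref{proposition:JHRI}, Proposition~\ref{proposition:JHRTI} and Corollaries~\ref{corollary:RbyH} and~\ref{corollary:RbyHIcal}. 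This yields \eqref{eq:GCP2}.
\end{enumerate}

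The Laplace transform $\Lcal_{I_h}$ can also be made explicit. In the additive case it factorizes as $\E[e^{-s\rho' h^{-\a}}]^{m}\exp\!\big(-2\pi\la\int_h^\infty (1-\tfrac{\mu}{\mu+s r^{-\a}})r\,{\rm d}r\big)$ by the PGFl, with $m=1$ for MI and $m=2$ for mI. In the tropical case the event is a maximum below a threshold, and the corresponding max-shot-noise PGFl gives $\P(\max\le x)=\exp(-2\pi\la\int_h^\infty e^{-\mu x r^\a}r\,{\rm d}r)$. The coverage expression is then obtained by integrating this against the exponential tail of $\rho$ instead of using a Laplace transform. I would remark that the theorem's notation covers the tropical case in this sense.

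The main obstacle is justifying the conditional independence of $\eta_H$ (and the fading marks) from $R$ given $H$ under the Palm measure. I will handle it through the half-ball decomposition described above, reducing it to the independence property of Poisson processes on disjoint sets. Steps 1 to 3 are routine.
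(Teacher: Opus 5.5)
Your proposal is correct and follows the paper's proof. You condition on $(H,R)=(h,r)$, use $\rho\sim\mathrm{Exp}(\mu)$ to get $e^{-\mu\tau r^{\a}\s^2}\Lcal_{I_h}(\mu\tau r^{\a})$, and then average over the joint Palm law of $(H,R)$. Your half-ball argument for the conditional independence of $\eta_H$ and the fading marks from $R$ given $H$ makes explicit a step the paper uses tacitly. Also, steps 1--2 use only the independence of $\rho$ from the interference, so they apply verbatim to the tropical interference with $\Lcal_{\Tcal_h}$, and no separate treatment is needed there.
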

\begin{remark}
In this case, the interference $I_H$ (or $\Tcal_H$ for tropical interference) is computed slightly differently with respect to the distance $H \in \{H_{\sst I}, H_{\sst \Ical}\}$ to the nearest interferer. All our computations are based on determining the Laplace transform $\Lcal_{I_{h}}(\g)$ of the interference or its tropical version $\Lcal_{\Tcal_{h}}(\g)$ for some parameter $\g$, given  $H=h$, the distribution of $H$, along with the help of Lemma~\ref{lemma:etappp}. 
\end{remark}
\subsection{System without fading}
The main quantities of interest are the SINR and its tropical version STINR, at the typical epochs, are given by 
\[
\mbox{SINR}:=\frac{S}{\s^2+I},
\;  \mbox{STINR}:=\frac{S}{\s^2+\Tcal}, 
\]
aligning with the definition (\ref{eq:SINRt}) and (\ref{eq:STINRt}), where the power of the received signal is defined as
\begin{equation}
S:=\begin{cases}
     H_{\sst \Vcal}^{-\a} & \text{ at typical mS-MI epoch},\\
     H_{\sst S}^{-\a}& \text{ at typical MS epoch},\\
      R_{\sst I}^{-\a} & \text{ at typical MI epoch },\\
      R_{\sst \Ical}^{-\a} & \text{ at typical mI epoch},\\
      R^{-\a} & \text{ at any typical epoch},
\end{cases} 
\label{eq:Int-nf}
\end{equation}
using the random variable for the distance to the serving station, listed in Table~\ref{tab:abb}, $H_{\sst \Vcal}, H_{\sst S}, R_{\sst I}, R_{\sst \Ical}, R$, for the corresponding cases, respectively. The additive interference $I$ and the maxitive or tropical interference $\Tcal$ are introduced similarly to the case with fading, at the respective places in the proof of our results.  
In the following, we determine the coverage probability and the Shannon rate using different performance metrics  $\Scal\in \{\mbox{SINR, STINR, SNR, SIR, STIR}\}$, at different typical epochs of mS-MI, MS and typical time, as follows: 
\begin{theorem}
The coverage probability with respect to Palm probability distribution of epoch of interest, is given by 
\begin{align}
p^c(\tau,\la,\a)&= \E^0_{H}\left[F_{I_{H}}\left(0\vee (H^{-\a}/{\tau}-\s^2)\right)\right], 
\label{eq:GCP-NF}
\end{align}
where $H\in\{H_{\sst \Vcal}, H_{\sst S}, R\}$ in the non-tropical case and $H=H_{\sst \Vcal}$ in the tropical case, are the distance to the serving station.
\label{theorem:GPM-NF}
\end{theorem}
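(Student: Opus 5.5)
The plan is to condition on the distance to the serving station and reduce the statement to the conditional law of the interference. Fix the epoch (mS-MI, MS or typical time) and let $H\in\{H_{\sst \Vcal}, H_{\sst S}, R\}$ denote the distance to the serving BS under the corresponding Palm measure $\P^0$. With no fading, the event $\{\mbox{SINR}\geq\tau\}$ is, by definition, $\{H^{-\a}\geq \tau(\s^2+I_H)\}$, i.e.
\[
\{\mbox{SINR}\geq \tau\}=\left\{I_H\leq H^{-\a}/\tau-\s^2\right\}.
\]
When $H^{-\a}/\tau-\s^2<0$ this event is empty, since $I_H\geq 0$. Hence it coincides with $\{I_H\leq 0\vee(H^{-\a}/\tau-\s^2)\}$ up to the null event $\{I_H=0\}$. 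This event is null because $\eta_h$ is a.s. infinite, so $I_H>0$ a.s. Conditioning on $H=h$ then gives
\[
\P^0(\mbox{SINR}\geq\tau\mid H=h)=F_{I_h}\left(0\vee(h^{-\a}/\tau-\s^2)\right),
\]
and integrating against the law of $H$ from Table~\ref{tab:tdensity} gives (\ref{eq:GCP-NF}).

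The real content lies in identifying $I_h$ and its distribution $F_{I_h}$. The steps depend on the epoch.

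For the typical time and MS epochs, all BSs other than the serving one are at distances given by the points of $\eta_h$. By Lemma~\ref{lemma:etappp}, conditionally on $H=h$, $\eta_h$ is a Poisson point process on $(h,\infty)$ with density $2\pi\la r$. So $I_h=\sum_{D\in\eta_h}D^{-\a}$ is a Poisson shot noise whose law depends only on $h$. For the MS epoch I will note that the nearest interferer $R_{\sst S}$ is simply the smallest point of $\eta_h$; this is consistent with the conditional density in the Corollary of Proposition~\ref{proposition:JHRS}, which integrates against Lemma~\ref{lemma:etappp}.

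For the mS-MI epoch there is an extra interferer at the same distance $H_{\sst \Vcal}$. I will therefore set $I_h=h^{-\a}+\sum_{D\in\eta_h}D^{-\a}$, so that $F_{I_h}$ is a shift of the shot-noise distribution.

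In the tropical case, only mS-MI is claimed. There the maximal interferer is deterministic given $h$: it is the BS at distance $h$, since all points of $\eta_h$ exceed $h$. Thus $\Tcal_h=h^{-\a}$, and $F_{\Tcal_h}$ is a step function at $h^{-\a}$. Then $\P^0(\mbox{STINR}\geq\tau\mid H=h)=\one\{h^{-\a}/\tau-\s^2\geq h^{-\a}\}$, which is of the stated form.

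The main obstacle is to justify that, under each Palm measure, the pair consisting of $H$ and the remaining configuration has exactly this conditional structure: the required independence, and the fact that no other BS lies at distance below $H$. For this I will rely on Lemma~\ref{lemma:etappp} together with the characterization of the epochs through Proposition~\ref{proposition:Lk} (empty half-ball $U^s_h$), so that measurability and conditioning are legitimate. The remaining care is the boundary issue between $\geq$ and $>$, which is handled by the continuity of $F_{I_h}$ in the non-tropical case and by the explicit indicator in the tropical case.
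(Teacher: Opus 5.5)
Your proposal is correct and is essentially the paper's proof. Like the paper, you condition on $H=h$, rewrite the coverage event as $\{I_h\leq h^{-\a}/\tau-\s^2\}$, use the conditional Poisson structure of Lemma~\ref{lemma:etappp}, and average over $H$. Your identification of $I_h$ at each epoch matches the paper's per-epoch subsections, including the extra term $h^{-\a}$, resp.\ $\Tcal_h=h^{-\a}$, at the mS-MI epoch. Your handling of the $0\vee$ truncation and of $\geq$ versus $>$ is more careful than the paper's.

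The only slip is your side remark on the MS epoch. The conditional density printed in the Corollary after Proposition~\ref{proposition:JHRS} integrates to $1/2$ and is not the law of the smallest point of $\eta_h$, so that consistency check would fail as written. The Laplace transform in Proposition~\ref{proposition:JHRS} itself gives $R_{\sst S}^2-H_{\sst S}^2\sim\mathrm{Exp}(\la\pi)$ independent of $H_{\sst S}$, i.e.\ density $2\pi\la r\,e^{-\la\pi(r^2-h^2)}$ on $r>h$, which is what your claim needs. The remark is not used in your argument anyway.
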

\begin{theorem}
The coverage probability with respect to Palm probability distribution of typical time and typical max-signal in the tropical case are given by 
\begin{align}
p^c(\tau,\la,\a)&= \E^0_{H, X}\left[F_{\Tcal}\left(0\vee (R^{-\a}/{\tau}-\s^2)\right)\right], 
\label{eq:GCP-NF2S}
\end{align}
for the tropical interference $\Tcal=X^{-\a}$, where $X\in \{R_1, R_{\sst S}\}$ and $H \in \{R, H_{\sst S}\}$.
\label{theorem:GPM-NF1S}
\end{theorem}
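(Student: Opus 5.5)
The coverage event is rewritten directly in terms of the tropical interference, and then averaged over the joint law of the two relevant distances. In the no-fading tropical setting at a typical time, the serving distance is $R$. The maximal interferer is the second-nearest BS, at distance $R_1$, since $D^{-\a}$ is decreasing in $D$. Hence the tropical interference is exactly $\Tcal=\max_i D_i^{-\a}=R_1^{-\a}$. At a typical max-signal epoch, the serving distance is $H_{\sst S}$ and the nearest interferer is at $R_{\sst S}$. By Lemma~\ref{lemma:etappp}, all other BSs lie in $(H_{\sst S},\infty)$, and $R_{\sst S}$ is by definition the smallest of them. So again $\Tcal=R_{\sst S}^{-\a}$. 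In both cases I write $(H,X)\in\{(R,R_1),(H_{\sst S},R_{\sst S})\}$; the $R$ inside (\ref{eq:GCP-NF2S}) is then read as the serving distance $H$.

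The key steps are as follows.

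\begin{enumerate}
\item Condition on $(H,X)$ and write
\[
\P^0(\mbox{STINR}\geq\tau)=\P^0\left(\Tcal\leq H^{-\a}/\tau-\s^2\right).
\]
If $H^{-\a}/\tau-\s^2<0$, the probability is $0$, which is what the truncation $0\vee(\cdot)$ encodes because $\Tcal>0$ a.s. Otherwise, the conditional probability equals $F_{\Tcal}$ evaluated at that point.

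\item Remove the conditioning on $(H,X)$ by taking the Palm expectation under the corresponding Palm measure. Since $\Tcal$ is a deterministic function of $X$, the cdf $F_\Tcal$ is to be read as the degenerate conditional cdf, i.e.\ $\one\{X^{-\a}\leq\cdot\}$. The expectation over the joint law of $(H,X)$ then gives (\ref{eq:GCP-NF2S}).

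\item Make the joint law explicit. At a typical time, Table~\ref{tab:tdensity} gives $f_{R_1\vert R=r}$, which follows from Lemma~\ref{lemma:etappp}: the nearest point of a PPP on $(r,\infty)$ with intensity $2\pi\la s\,{\rm d}s$. At a max-signal epoch, Proposition~\ref{proposition:JHRS} and its corollary give $f_{R_{\sst S}\vert H_{\sst S}=h}$ together with $f_{H_{\sst S}}$.
\end{enumerate}

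The main obstacle is justifying that the max-signal case needs the joint law of $(H_{\sst S},R_{\sst S})$ rather than the Poisson representation used in Theorem~\ref{theorem:GPM-NF}. At an MS epoch the nearest interferer is not the nearest point of $\eta_h$ in a straightforward sense: its conditional density has the factor $(r^2-h^2)^{-1/2}$, whereas Lemma~\ref{lemma:etappp} would predict the density $2\pi\la r e^{-\la\pi(r^2-h^2)}$. I would handle this by arguing that under the Palm measure of $\Vcal_{\sst S}$, the distance $R_{\sst S}$ is identified from the radial bird geometry through Proposition~\ref{proposition:JHRS}. That is, I would take the joint law from Proposition~\ref{proposition:JHRS} directly rather than from the Poisson description. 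Since the max of the interferers is attained at $R_{\sst S}$ regardless of the rest of the configuration, only this pair enters the formula, and the statement follows without further information about $\{D_i\}$.
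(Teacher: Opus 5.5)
Steps~1--2 are correct and follow the paper's argument. The paper proves this theorem ``similarly'' to Theorem~\ref{theorem:GPM-NF2}: it conditions on the distances, identifies coverage with the event $\{\Tcal\le H^{-\alpha}/\tau-\sigma^2\}$, and averages. You read $F_{\Tcal}$ as the degenerate cdf given $X$; the paper's later computations, via $\P_{R_1\vert r}$ and $\P_{R_{\sst S}\vert h}$, read it as the conditional cdf given the serving distance. By the tower property the two readings agree, and the displayed formula needs nothing about the joint law of $(H,X)$.

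Your ``main obstacle'', however, is not real, and its premise is false. The conditional density $\lambda^{1/2} r(r^2-h^2)^{-1/2}e^{-\lambda\pi(r^2-h^2)}$ that you quote is not a probability density. With $u=r^2-h^2$,
\[
\int_h^\infty \lambda^{1/2} r\,(r^2-h^2)^{-1/2}e^{-\lambda\pi(r^2-h^2)}\,{\rm d}r=\frac{\sqrt{\lambda}}{2}\int_0^\infty u^{-1/2}e^{-\lambda\pi u}\,{\rm d}u=\frac{1}{2}.
\]
Now do what you propose and take the joint law from Proposition~\ref{proposition:JHRS}. The joint density is $4\pi\lambda^{3/2}\,r\,e^{-\lambda\pi r^2}$ on $\{0<h<r\}$, as the following check shows. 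Integrate $e^{-\gamma h^2-\beta r^2}$ against it. The $r$-integral gives $2\pi\lambda^{3/2}e^{-(\beta+\lambda\pi)h^2}/(\beta+\lambda\pi)$. The $h$-integral then returns exactly $(1+\beta/(\lambda\pi))^{-1}(1+(\gamma+\beta)/(\lambda\pi))^{-1/2}$. The $r$-marginal of this density is the Nakagami$(3/2)$ density of Corollary~\ref{cor:RS}. Dividing by $f_{H_{\sst S}}(h)=2\lambda^{1/2}e^{-\lambda\pi h^2}$ gives
\[
f_{R_{\sst S}\vert H_{\sst S}=h}(r)=2\pi\lambda\, r\, e^{-\lambda\pi(r^2-h^2)}\,\mathbf{1}_{\{r>h\}}.
\]
This is exactly the law of the nearest point of $\eta_h$ in Lemma~\ref{lemma:etappp}, and it coincides with the law of $R_1$ given $R=h$.

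The geometry gives the same thing directly. Given $H_{\sst S}=h$, the other head points are Poisson with intensity $2\lambda$ outside $U^0_h$. The map $(t,h')\mapsto\sqrt{t^2+h'^2}$ sends them to a Poisson process on $(h,\infty)$ with intensity $2\pi\lambda r\,{\rm d}r$. So Proposition~\ref{proposition:JHRS} and the Poisson description agree, and the MS case can be handled exactly like the typical-time case.

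This does not affect your proof of the displayed formula. But if you used the $(r^2-h^2)^{-1/2}$ density in Step~3, every explicit expression would come out wrong. With the correct law, for instance, the STIR coverage at a typical MS epoch for $\tau>1$ is $\E^0\bigl[e^{-\lambda\pi(\tau^{2/\alpha}-1)H_{\sst S}^2}\bigr]=\tau^{-1/\alpha}$.
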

In the case of typical max-interference (MI) and typical min-interference (mI), there exists an interferer at distance $H \in \{H_{\sst I}, H_{\sst \Ical}\}$ and a serving BS at a distance $R\in \{R_{\sst I}, R_{\sst \Ical}\}$ such that $R\leq H$ almost surely. All other BSs are farther than $H$ and the locations of the latter are Poisson distributed. In this case, the interference $I_H$ (or $\Tcal_H$ for tropical interference) is computed slightly differently with respect to the distance $H$ to the nearest interferer. The performance metrics are given in:
\begin{theorem}
The coverage probability with respect to Palm probability distribution of epoch of interest, is given by 
\begin{align}
p^c(\tau,\la,\a)&= \E^0_{H, R}\left[F_{I_{H}}\left(0\vee (R^{-\a}/{\tau}-\s^2)\right)\right], 
\label{eq:GCP-NF2}
\end{align}
where $R\in \{R_{\sst I}, R_{\sst \Ical}\}$ and $H \in \{H_{\sst I}, H_{\sst \Ical}\}$.
\label{theorem:GPM-NF2}
\end{theorem}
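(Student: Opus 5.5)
The plan is to condition on the pair $(H,R)$ of the distance to the nearest interferer and the distance to the serving station, under the Palm probability of $\Vcal_{\sst I}$ (respectively $\Vcal_{\sst \Ical}$). The joint law of this pair is given by Proposition~\ref{proposition:JHRI} (respectively Proposition~\ref{proposition:JHRTI}). In the no-fading case, Table~\ref{table:CP_metrics} gives $\mbox{SINR}=R^{-\a}/(\s^2+I_H)$. For MI the interference is
\[
I_H=H^{-\a}+\sum_i D_i^{-\a}.
\]
For mI it is
\[
I_H=2H^{-\a}+\sum_i D_i^{-\a},
\]
since two interferers sit at distance $H_{\sst \Ical}$ at a tropical interference handover. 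In the tropical versions the sums are replaced by maxima, so $\Tcal_H=H^{-\a}$ because every $D_i>H$. In all cases the event $\{\Scal\geq \tau\}$ is rewritten as $\{I_H\leq R^{-\a}/\tau-\s^2\}$. This event is empty when the right-hand side is negative, which is why the argument is written as $0\vee(R^{-\a}/\tau-\s^2)$.

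The key step is to show that, conditionally on $(H,R)=(h,r)$, the law of $I_H$ depends only on $h$ and not on $r$. This is what allows $F_{I_H}$ to be written as the conditional distribution function of $I_h$ given only $H=h$. For this I would use Lemma~\ref{lemma:etappp}, applied at the MI and mI epochs. Given $H=h$, the point process $\eta_h=\sum_i\delta_{D_i}$ of the remaining distances is Poisson on $(h,\infty)$ with density $2\pi\la r$. I also need it to be conditionally independent of $R$.

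To justify this independence, I would go back to the Palm construction, in the style of the proof of Proposition~\ref{proposition:JHRI}. At a typical MI epoch at time $s$, the configuration is described by the head points of $\Hcal$ inside the half ball $U^s_h$; by (\ref{eq:Hcal_I1}) there is exactly one such point, and it determines $R$. The distances $D_i$ come from the head points outside $\overline{U^s_h}$, by Property~\ref{in-out}. By the Slivnyak--Mecke formula together with the independence property of the Poisson process $\Hcal$ on disjoint sets, the restriction of $\Hcal$ to $\H^+\setminus\overline{U^s_h}$ is independent of its restriction to $U^s_h$, given $h$. Hence $\eta_h$ is conditionally independent of $R$ given $H=h$.

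With this in hand, I condition on $(H,R)$ and apply Fubini:
\[
\P^0(\Scal\geq\tau)=\E^0_{H,R}\big[\P(I_H\leq 0\vee(R^{-\a}/\tau-\s^2)\mid H,R)\big],
\]
which equals (\ref{eq:GCP-NF2}). The Shannon rate then follows from (\ref{eq:RcalG}).

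The main obstacle is the conditional independence step. Lemma~\ref{lemma:etappp} is stated only conditionally on the single distance $H$, so I would need to re-derive it under the joint Palm disintegration in $(h,r)$. This also requires care with the boundary of $U^s_h$ at the mI epoch, where two heads lie on $\partial U^s_h$.
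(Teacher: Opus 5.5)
Your proposal is correct and follows the paper's proof: condition on $(H,R)=(h,r)$, rewrite the coverage event as $\{I_h \le r^{-\a}/\tau-\s^2\}$, and average jointly over $(H,R)$. Your Mecke-formula argument makes explicit a step the paper uses tacitly when it writes $F_{I_h}$ with no dependence on $r$: it shows that $\eta_h$, built from head points outside $\overline{U^s_h}$, is conditionally independent of $R$, which is determined by the single head point inside $U^s_h$.
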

In all our computations, we derive the distribution $F_{I_h}$ or $F_{\Tcal_h}$ of the additive interference or tropical interference, respectively, given the distance $H=h$ in different typical epochs. Due to the absence of fading factor, the proofs are much simpler.
\begin{remark}
We derive the expression for the coverage probabilities at different typical epochs in Appendix~\ref{proofs-wf} and Appendix~\ref{section:PWFa}, including some closed form expressions in the special cases highlighted in Section~\ref{sec-closedforms}. The system performance for all other metrics, namely, \textnormal{SIR} and \textnormal{STIR}, can be derived by adapting the formulas (\ref{eq:GCP}), (\ref{eq:GCP2}), (\ref{eq:GCP-NF}) and (\ref{eq:GCP-NF2}) accordingly, by setting $\s=0$. 
On the other hand, the coverage probability in the \textnormal{SNR} regime can be derived directly from the distribution of the distance to the serving BS at the corresponding typical epochs of interest. 
\end{remark}
For ease of reading, we present a table of notation for the coverage probabilities and the data rates with respect to different time epochs in Table~\ref{tab:tnotation}, as continuation of Table~\ref{tab:abb} and Table~\ref{tab:tdensity}.
\begin{table}[ht!]
\centering
\begin{spacedtable}{1.2}{2pt}
\begin{tabular}{|c|c|c|c|c|c|}
\hline
 & & & Interference & Cov. prob. & Data rate \\
Epoch & Variables & Distribution & (additive) & (additive) & (additive)\\ 
& & (Table~\ref{tab:tdensity}) & (tropical) & (tropical) & (tropical) \\ \hline
{\em mS-MI} & $H_{\sst \Vcal}$ & $\mbox{\textbf{Na}}\left(\frac{3}{2}, \frac{3}{2\la\pi}\right)$ & $I_{H_{\sst \Vcal}}$ &$p_{\sst \Vcal}^c$ & $\Rcal_{\sst \Vcal}$    \\
&   $H_{\sst \Vcal}$ & & $\Tcal_{H_{\sst \Vcal}}$ & $p_{\sst{\Vcal;\Tcal}}^c$ & $\Rcal_{\sst{\Vcal;\Tcal}}$\\\hline
{\em MS}  & $H_{\sst S}$ & $\mbox{\textbf{Na}}\left(\frac{1}{2}, \frac{1}{2\la\pi}\right)$ & $I_{H_{\sst S}}$ & $p^c_{\sst S}$ & $\Rcal_{\sst S}$  \\
&  $R_{\sst S}$ & $\mbox{\textbf{Na}}\left(\frac{3}{2}, \frac{3}{2\la\pi}\right)$ & $\Tcal_{H_{\sst S}}$ & $p^c_{{\sst S};\sst{\Tcal}}$ & $\Rcal_{{\sst S};\sst{\Tcal}}$ \\\hline
{\em MI} & $R_{\sst I}$ & $\mbox{\textbf{U}}[0,H^2_{\sst I}]$ & $I_{H_{\sst I}}$ & $p^c_{\sst I}$ &  $\Rcal_{\sst I}$  \\ 
& $H_{\sst I}$ & $\mbox{\textbf{Na}}\left(\frac{3}{2}, \frac{3}{2\la\pi}\right)$ & $\Tcal_{H_{\sst I}}$ & $p^c_{{\sst I};\sst{\Tcal}}$& $\Rcal_{{\sst I};\sst{\Tcal}}$ \\ \hline
{\em mI} & $R_{\sst \Ical}$ & $\mbox{\textbf{U}}[0,H^2_{\sst \Ical}]$ & $I_{H_{\sst \Ical}}$ & $p^c_{\sst \Ical}$ & $\Rcal_{\sst \Ical}$ \\
& $H_{\sst \Ical}$, $H_{\sst \Ical}$ & $\mbox{\textbf{Na}}\left(\frac{5}{2}, \frac{5}{2\la\pi}\right)$ & $\Tcal_{H_{\sst \Ical}}$ & $p^c_{{\sst \Ical};\sst{\Tcal}}$  &  $\Rcal_{{\sst \Ical};\sst{\Tcal}}$ \\\hline
{\em t}  & $R$ & \mbox{\textbf{Ray}} $\left(\frac{1}{\sqrt{2\la\pi}}\right)$ & $I_R$ & $p^c_t$ & $\Rcal_t$  \\
& &  & $\Tcal_R$ & $p^c_{t;\sst{\Tcal}}$ & $\Rcal_{t;\sst{\Tcal}}$  \\\hline
\end{tabular}
\end{spacedtable}
\captionsetup{width=0.95\linewidth}
\caption{(a). In the second column, we just mention the variables corresponding to the signal and nearest interferer(s) and the third column reminds their distributions. (b). In the fourth column, $I$ stands for interference and $\Tcal$ for tropical interference. (c). The notation for coverage probability and data rate $p^c_{*}, \Rcal_*$ denotes the non-tropical case and $p^c_{*;\sst{\Tcal}}, \Rcal_{*;\sst{\Tcal}}$ with an extra subscript $\sst{\Tcal}$ to denote the tropical case, where $*\in \{\sst{\Vcal, S, I, \Ical, t}$\hspace{-0.001em}$\}$.}
\label{tab:tnotation}
\end{table}
\begin{remark}[Alternative definition of HoPP $\Vcal$] 
In the no fading case, under the interference limited regime, observe that at a handover epoch, the \textnormal{STIR} is equal to $1$, whereas \textnormal{STIR}$(t)>1$ at any other time instant $t$, almost surely. It signifies that one can alternatively define the handover point process $\Vcal$ as 
\[
\Vcal:=\sum_{t\in \R} \delta_t\, \one{\left\{\mbox{\normalfont{STIR}}(t)=1\right\}}.
\] 
\label{remark:ADHO}
\end{remark}
\subsection{Comparison of performance metrics}~\label{subsection:comparison}

We now state another important result of this work that compares the coverage probability and the average data rates among different typical epochs, the proof of which is given in Appendix~\ref{section:CPE}
\begin{theorem}
The following comparison of the coverage probabilities holds true for any QoS $\tau\geq 0$:
\begin{enumerate}
\item  \label{CP-F1} with or without fading: 
$p_{\sst \Vcal}^c  \leq p_{t}^c \leq p_{\sst S}^c$ and $p^c_{\sst \Ical}  \leq p^c_{\sst I}$,
\item \label{CP-F2} with fading: $p_{\sst{\Vcal;\Tcal}}^c  \leq p_{\sst{t;\Tcal}}^c \leq p_{{\sst S;\Tcal}}^c$ and $p^c_{{\sst \Ical;\Tcal}}  \leq p^c_{{\sst I;\Tcal}}$,
\item \label{CP-F3} without fading:  $p^c_{{\sst \Ical;\Tcal}}  \leq p^c_{{\sst I;\Tcal}}$.
\end{enumerate}
\label{theorem:SINR-c}
\end{theorem}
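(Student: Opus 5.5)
The plan is to write every coverage probability as the expectation of a conditional coverage function $g$ of the typical distances, show that $g$ is non-increasing in the distance variable, and then conclude by stochastic domination. For the comparison $p_{\sst \Vcal}^c\le p_t^c\le p_{\sst S}^c$ (and its tropical analogue in item~\ref{CP-F2}), Theorems~\ref{theorem:GPM-WF} and~\ref{theorem:GPM-NF} give $p^c=\E^0[g(H)]$ with $H\in\{H_{\sst \Vcal},R,H_{\sst S}\}$. By Lemma~\ref{lemma:etappp}, the same function $g$ applies at all three epochs, because given $H=h$ the other stations form a Poisson process of intensity $2\pi\la r\,{\rm d}r$ on $(h,\infty)$ in every case.

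\textbf{Step 1: ordering of the distances.} I will not rely on the Laplace/mgf orders of Lemma~\ref{lemma:Laplace_order}. Instead I will use the stronger fact, read off Table~\ref{tab:tdensity}, that $H_{\sst S}^2$, $R^2$ and $H_{\sst \Vcal}^2$ are Gamma distributed with common rate $\la\pi$ and shapes $\tfrac12,1,\tfrac32$. Gamma laws with a common rate are ordered in likelihood ratio in the shape parameter, hence stochastically. This gives $H_{\sst S}\le_{st}R\le_{st}H_{\sst \Vcal}$, and so $\E^0[g(H)]$ is ordered as claimed once $g$ is shown to be non-increasing.

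\textbf{Step 2: monotonicity of $g$ by scaling.} Rescale distances by $h$. The Poisson process on $(h,\infty)$ with intensity $2\pi\la r\,{\rm d}r$ maps to a Poisson process on $(1,\infty)$ with intensity $2\pi\la h^2 r\,{\rm d}r$. For the additive interference this gives $I_h \stackrel{d}{=} h^{-\a} I'_1(\la h^2)$, and for the tropical interference $\Tcal_h\stackrel{d}{=}h^{-\a}\Tcal'_1(\la h^2)$. Here $I'_1(\cdot)$ and $\Tcal'_1(\cdot)$ denote the interference generated by the rescaled process, and both are stochastically increasing in the intensity by superposition coupling. Substituting into the conditional coverage:
\begin{itemize}
\item with fading, $g(h)=\E\big[e^{-\mu\tau\s^2h^\a}e^{-\mu\tau I'_1(\la h^2)}\big]$;
\item without fading, $g(h)=\P\big(1\ge \tau(\s^2h^\a+I'_1(\la h^2))\big)$;
\end{itemize}
and the same two expressions hold with $\Tcal'_1$ in place of $I'_1$. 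Each expression is non-increasing in $h$. This proves item~\ref{CP-F1} for $\Vcal,t,S$ and all of item~\ref{CP-F2} except the MI/mI part.

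For the tropical no-fading case at $t$ and MS, the nearest interferer is not governed by the same $h$ (Theorem~\ref{theorem:GPM-NF1S}). This is consistent with the statement not claiming those inequalities, so nothing further is needed there.

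\textbf{Step 3: the MI/mI comparisons.} Here I will use Corollaries~\ref{corollary:RbyH} and~\ref{corollary:RbyHIcal}, which give $R=\sqrt{U}\,H$ with $U\sim{\rm U}[0,1]$ independent of $H$. The marginals are $H_{\sst I}^2\sim\Gamma(\tfrac32,\la\pi)$ and $H_{\sst \Ical}^2\sim\Gamma(\tfrac52,\la\pi)$, so $H_{\sst I}\le_{st}H_{\sst \Ical}$ (also Lemma~\ref{lemma:LRD_order}). I will couple both epochs with the same $U$ and compare the conditional coverages $g_{\sst I}(u,h)$ and $g_{\sst \Ical}(u,h)$.

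\emph{With fading.} The nearest-interferer factors are $(1+\tau u^{\a/2})^{-1}$ for MI and, at most, $(1+\tau u^{\a/2})^{-2}$ for mI. In the tropical case the latter is $\P\big(\rho\ge\tau u^{\a/2}(\rho'\vee\rho'')\big)$, which is at most the single-interferer probability. Both factors are independent of $h$. The Poisson part beyond $h$, after the same rescaling as in Step~2, is non-increasing in $h$, and so is the noise term. Hence for each fixed $u$ we get $g_{\sst \Ical}(u,\cdot)\le g_{\sst I}(u,\cdot)$ with both non-increasing, and stochastic domination in $h$ finishes the argument.

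\emph{Without fading, tropical (item~\ref{CP-F3}).} The maximal interferer is the one at distance $H$, since $D_i>H$. Coverage is therefore the event $u^{-\a/2}\ge\tau(1+\s^2h^\a)$, which is identical in form for MI and mI and non-increasing in $h$.

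\emph{Without fading, additive.} The extra interferer at distance $H$ in mI only increases interference, and rescaling again shows monotonicity in $h$.

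\textbf{Main obstacle.} The step I expect to be delicate is verifying monotonicity in $h$ for the MI/mI case when the noise and the Poisson interference are both present. One must check that, after conditioning on $U=u$, the rescaled interference is indeed stochastically increasing in $h$ jointly with the noise term. The rescaling argument should handle this, but the bookkeeping of the factor $u^{\a/2}$ against $h^\a$ needs care.
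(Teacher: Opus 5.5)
Your argument is correct and has the same structure as the paper's proof. The paper also writes each coverage probability as $\E^0[G(H)]$ for a conditional coverage function $G$, and uses that the densities of $H_{\sst S},R,H_{\sst \Vcal}$ (resp.\ $H_{\sst I},H_{\sst \Ical}$) cross exactly once. That single crossing is the same fact as your likelihood-ratio order of the Gamma laws of the squared distances. For MI/mI it likewise averages out $R^2/H^2\sim\mathrm{U}[0,1]$ and uses a pointwise bound $G_2\leq G_1$.

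Where you go further is the monotonicity of $G$ in $h$. The paper's single-crossing inequality needs $G$, and the averaged $G_2$, to be non-increasing, but it justifies the step only by ``$G(c)\geq 0$''. With fading, monotonicity is visible from the explicit exponentials. Without fading it is not evident from the form $h\mapsto F_{I_h}(0\vee(h^{-\a}/\tau-\s^2))$, because both the distribution and its argument move with $h$. Your rescaling $D_i\mapsto D_i/h$ plus superposition coupling settles this uniformly for fading/no fading and additive/tropical. You are also right that the Laplace order alone would not suffice; stochastic order is what is needed. The rescaling also removes the ``main obstacle'' you anticipate. After multiplying the coverage event by $r^\a=u^{\a/2}h^\a$, the noise term $\s^2u^{\a/2}h^\a$ and the Poisson term $u^{\a/2}I'_1(\la h^2)$ (or its tropical analogue) are both non-decreasing in $h$ for fixed $u$. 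The nearest-interferer contribution becomes $u^{\a/2}$ times a fading variable (or $u^{\a/2}$, or $2u^{\a/2}$, without fading), which does not depend on $h$.

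Two statements need correcting, though neither breaks the proof.

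\begin{itemize}
\item \textbf{Handover epoch.} The conditional coverage is not the same function $g$ at all three epochs. At a handover there is a second station at distance exactly $h$, and the paper's $I_h$ includes $\rho h^{-\a}$. So $g_{\sst \Vcal}=g/(1+\tau)$ with fading, $g_{\sst \Vcal}\leq g$ without fading, and in the tropical case the maximum includes $\rho h^{-\a}$. This only helps, via
\[
\E^0[g_{\sst \Vcal}(H_{\sst \Vcal})]\leq\E^0[g(H_{\sst \Vcal})]\leq\E^0[g(R)].
\]
\item \textbf{Tropical mI with fading.} The maximum does not factor into a nearest-interferer factor times a Poisson factor. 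The pointwise comparison $g_{\sst \Ical}(u,h)\leq g_{\sst I}(u,h)$ holds instead because $\rho'h^{-\a}\vee\rho''h^{-\a}\vee\max_i\rho_iD_i^{-\a}$ dominates $\rho'h^{-\a}\vee\max_i\rho_iD_i^{-\a}$ under the natural coupling. Monotonicity in $h$ then follows from the same rescaling.
\end{itemize}
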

\begin{remark}
In the tropical no-fading scenario, there are different orderings among $p_{\Vcal;\sst{\Tcal}}^c, p_{t;\sst{\Tcal}}^c, p_{{\sst S};\sst{\Tcal}}^c$ for different combination of values of the parameters $\tau, \la, \a$, which can be seen through their corresponding plots provided in Figure~\ref{fig:T-HotMSNF} and also proven at the end of Appendix~\ref{section:CPE}.
\end{remark}
\begin{remark}
The first results in part~\ref{CP-F1} and part~\ref{CP-F2} is a consequence of the Laplace order $H_{\sst \Vcal}\geq_{\textnormal{L}} R \geq_{\textnormal{L}}H_{\sst S}$.  
The second results in part~\ref{CP-F1}, part~\ref{CP-F2} and the result in part~\ref{CP-F3}, may look a bit counter-intuitive, as in view of the Laplace transform order  $H_{\sst\Ical}\geq_{\textnormal{L}} H_{\sst I}$ shown in  Lemma~\ref{lemma:Laplace_order}, at the typical max interference epoch (MI) seem to exert lower coverage. The coverage is even lower in typical tropical interference handover (mI), because there are two equally strong interferers present at a typical tropical handover epoch. The later fact again reflects onto the comparison of rates as established in the second part Corollary~\ref{cor:SINR-rate}. 
\end{remark}
\begin{corollary}
The following comparisons of the average Shannon rates hold true:
\begin{enumerate}
    \item with or without fading: $\Rcal_{\sst \Vcal}\leq \Rcal_{t} \leq \Rcal_{\sst S}$ and $\Rcal_{\sst \Ical}\leq  \Rcal_{\sst I}$,
    \item with fading: $\Rcal_{\sst{\Vcal;\Tcal}}\leq \Rcal_{t;\sst{\Tcal}} \leq \Rcal_{{\sst S};\sst{\Tcal}}$ and $\Rcal_{{\sst \Ical};\sst{\Tcal}} \leq  \Rcal_{{\sst I;\Tcal}}$,
    \item without fading: $\Rcal_{{\sst \Ical};\sst{\Tcal}} \leq  \Rcal_{{\sst I;\Tcal}}$.
\end{enumerate}
\label{cor:SINR-rate}
\end{corollary}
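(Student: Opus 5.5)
The corollary follows from Theorem~\ref{theorem:SINR-c} through the rate formula (\ref{eq:RcalG}). For any metric $\Scal$ and any typical epoch, the Palm-averaged Shannon rate is
\[
\Rcal = B\int_0^\infty \frac{1}{1+z}\,\P^0(\Scal\geq z)\,{\rm d}z = B\int_0^\infty \frac{1}{1+z}\,p^c(z)\,{\rm d}z .
\]
The weight $z\mapsto B/(1+z)$ is nonnegative, so the map $p^c(\cdot)\mapsto\Rcal$ is monotone. If $p^c_{a}(\tau)\leq p^c_{b}(\tau)$ holds pointwise for every $\tau\geq 0$, integrating against the weight gives $\Rcal_a\leq\Rcal_b$. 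This holds in the extended sense, with both sides possibly infinite, because the integrands are nonnegative and the integrals are well defined in $[0,\infty]$.

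\textbf{Step 1.} Justify (\ref{eq:RcalG}) under each Palm measure. Write $\ln(1+\Scal)=\int_0^\infty \frac{1}{1+z}\one_{\{\Scal> z\}}\,{\rm d}z$ and apply Tonelli's theorem, which is legitimate since everything is nonnegative. The distinction between $\geq$ and $>$ does not matter: the two versions differ only on the at most countable set of $z$ where $\Scal$ has an atom, which is Lebesgue-null.

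\textbf{Step 2.} Apply Step 1 to each pairwise inequality of Theorem~\ref{theorem:SINR-c}.
\begin{itemize}
\item Part~\ref{CP-F1} gives $p^c_{\sst\Vcal}\leq p^c_t\leq p^c_{\sst S}$ and $p^c_{\sst\Ical}\leq p^c_{\sst I}$, with or without fading. Integrating yields item 1 of the corollary.
\item Part~\ref{CP-F2} yields the tropical comparisons with fading, which is item 2.
\item Part~\ref{CP-F3} yields $\Rcal_{\sst\Ical;\Tcal}\leq\Rcal_{\sst I;\Tcal}$ without fading, which is item 3.
\end{itemize}
The same argument applies verbatim to SIR and STIR, where $\s=0$.

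\textbf{Main obstacle.} The only delicate point is that the rates may be infinite. In the noise-free, no-fading case, for instance, one should check that the inequalities remain meaningful. They do in $[0,\infty]$. Finiteness also holds whenever $\s^2>0$ or under the integrability of $\ln(1+\Scal)$ available from the explicit distance densities in Table~\ref{tab:tdensity}. The substantive difficulty therefore lies entirely in Theorem~\ref{theorem:SINR-c}, which we assume; the corollary itself needs only the monotonicity argument above.
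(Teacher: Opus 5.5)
Your proposal is correct and follows the paper's route: the rate comparisons come from integrating the pointwise coverage inequalities of Theorem~\ref{theorem:SINR-c} against the nonnegative weight $B/(1+z)$ in (\ref{eq:RcalG}). Your Tonelli justification and your remark on atoms (relevant, e.g., for the atom of STIR at $1$ at handover epochs) make explicit what the paper leaves implicit; the paper also mentions an equivalent route through the Laplace order of Corollary~\ref{cor:Laplace-S} combined with Hamdi's lemma.
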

Let us now write the performance metrics as $\Scal_{*}$ for SINR and $\Scal_{\sst *;\Tcal}$ for STINR, where $*\in  \{\sst{\Vcal, S, I, \Ical, t}$\hspace{-0.01em}$\}$. A natural stochastic domination $\leq_{\textnormal{st}}$ among these metrics follows from Theorem~\ref{theorem:SINR-c}, when their distributions are compared under their respective Palm probability spaces.
\begin{corollary}
The following stochastic domination between \textnormal{SINR} and \textnormal{STINR} holds true for both the non-tropical and tropical cases:
\begin{enumerate}
    \item with or without fading: $\Scal_{\sst \Vcal}\leq_{\textnormal{st}} \Scal_{t} \leq_{\textnormal{st}} \Scal_{\sst S}$ and $\Scal_{\sst \Ical}\leq_{\textnormal{st}}  \Scal_{\sst I}$,
    \item with fading: $\Scal_{\sst{\Vcal;\Tcal}}\leq_{\textnormal{st}} \Scal_{t;\sst{\Tcal}} \leq_{\textnormal{st}} \Scal_{{\sst S};\sst{\Tcal}}$ and $\Scal_{{\sst \Ical};\sst{\Tcal}} \leq_{\textnormal{st}}  \Scal_{{\sst I};\sst{\Tcal}}$,
    \item without fading: $\Scal_{{\sst \Ical};\sst{\Tcal}} \leq_{\textnormal{st}}  \Scal_{{\sst I};\sst{\Tcal}}$.
\end{enumerate}
\label{cor:stS}
\end{corollary}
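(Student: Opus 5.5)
The statement to prove is Corollary~\ref{cor:stS}. The plan is to read it directly off Theorem~\ref{theorem:SINR-c}, using the definition of the usual stochastic order. For two real random variables $X$ and $Y$ defined on possibly different probability spaces, $X\leq_{\textnormal{st}}Y$ means that $\P(X>z)\leq\P(Y>z)$ for all $z\in\R$. Each $\Scal_*$ is considered under its own Palm measure $\P^0_*$, so its tail function is exactly the coverage probability $p^c_*(\tau)=\P^0_*(\Scal_*\geq\tau)$ viewed as a function of $\tau$.

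The first step is to fix a pair from each item of the corollary, for instance $\Scal_{\sst\Vcal}$ and $\Scal_t$. Part~\ref{CP-F1} of Theorem~\ref{theorem:SINR-c} gives $p^c_{\sst\Vcal}(\tau)\leq p^c_t(\tau)$ for every $\tau\geq0$, and this holds with or without fading.

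The second step handles the two technical points in passing from this inequality to stochastic order.
\begin{enumerate}
\item The theorem is stated with $\geq\tau$, while the order uses strict inequalities. I would bridge this by taking limits $\tau_n\downarrow z$. The events $\{\Scal\geq\tau_n\}$ increase to $\{\Scal>z\}$, so monotone continuity of the two Palm measures carries the inequality over to strict tails. Alternatively, the SINR has a continuous law under each Palm measure, since the distances have densities.
\item The thresholds $\tau<0$ need no argument, because all the metrics are nonnegative and both tails then equal $1$.
\end{enumerate}

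The third step is to repeat the same argument for every pair listed in the corollary:
\begin{itemize}
\item the additive case with or without fading, using part~\ref{CP-F1} of Theorem~\ref{theorem:SINR-c};
\item the tropical case with fading, using part~\ref{CP-F2};
\item the tropical mI/MI pair without fading, using part~\ref{CP-F3}.
\end{itemize}
The chains such as $\Scal_{\sst\Vcal}\leq_{\textnormal{st}}\Scal_t\leq_{\textnormal{st}}\Scal_{\sst S}$ then follow because the stochastic order is transitive.

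I expect no step to be a real obstacle. The only point needing care is that the compared variables live under different Palm probabilities. The usual stochastic order depends only on the marginal laws, so this is harmless. If a realization is wanted, Strassen's theorem, i.e.\ the quantile coupling, provides one on a common space, but the corollary does not require it.
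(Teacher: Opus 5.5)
Your proposal is correct and follows the paper's own route: the paper just notes that the stochastic dominations follow from Theorem~\ref{theorem:SINR-c} once each coverage probability $p^c_*(\tau)$ is read as the tail function of $\Scal_*$ under its own Palm measure. Your additional steps make explicit the routine details the paper leaves implicit: passing from $\{\Scal\geq\tau\}$ to $\{\Scal>z\}$ by monotone limits, the trivial thresholds $z<0$, and the observation that $\leq_{\textnormal{st}}$ involves only marginal laws, so the different Palm spaces cause no difficulty.
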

Note that for any of the  performance metrics $\Scal$, the Laplace transform of $\Scal$ can be written for any $\g\geq 0$ as
\begin{align}
\E^0_{\#}\left[e^{-\g \Scal}\right] & = \g\int e^{-\g x} F_{\Scal}(x){\rm d}x  \nn\\
&= \g\int e^{-\g x} \left(1-p^c(x)\right){\rm d}x,
\label{eq:laplace-S}
\end{align}
where $\#\in  \{\sst{\Vcal, S, I, \Ical, t}$\hspace{-0.01em}$\}$, $p^c(x)=p^c_*(x,\mu,\la,\a)$ or $p^c_{\sst *;\Tcal}(x,\mu,\la,\a)$ for the system with fading and $p^c(x)=p^c_*(x,\la,\a)$ or $p^c_{\sst *;\Tcal}(x,\la,\a)$ for the system without fading and $*\in  \{\sst{\Vcal, S, I, \Ical, t}$\hspace{-0.01em}$\}$. Laplace order comparison of the performance metrics  $\Scal_{*}$ for SINR and $\Scal_{\sst *;\Tcal}$ for STINR follows from (\ref{eq:laplace-S}),  Theorem~\ref{theorem:SINR-c}, and also from the stochastic domination in Corollary~\ref{cor:stS}.
\begin{corollary}
The following Laplace orderings between \textnormal{SINR} and \textnormal{STINR}, compared under their respective Palm probability spaces, hold true for both the non-tropical and tropical cases:
\begin{enumerate}
    \item with or without fading: $\Scal_{\sst \Vcal}\leq_{\textnormal{L}} \Scal_{t} \leq_{\textnormal{L}} \Scal_{\sst S}$ and $\Scal_{\sst \Ical}\leq_{\textnormal{L}}  \Scal_{\sst I}$,
    \item without fading: $\Scal_{\sst{\Vcal;\Tcal}}\leq_{\textnormal{L}} \Scal_{t;\sst{\Tcal}} \leq_{\textnormal{L}} \Scal_{{\sst S};\sst{\Tcal}}$ and $\Scal_{{\sst \Ical};\sst{\Tcal}} \leq_{\textnormal{L}}  \Scal_{{\sst I};\sst{\Tcal}}$,
    \item without fading: $\Scal_{{\sst \Ical};\sst{\Tcal}} \leq_{\textnormal{L}}  \Scal_{{\sst I};\sst{\Tcal}}$.
\end{enumerate}
\label{cor:Laplace-S}
\end{corollary}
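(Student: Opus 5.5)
The plan is to obtain Corollary~\ref{cor:Laplace-S} directly from the integral representation (\ref{eq:laplace-S}), with the stochastic dominations of Corollary~\ref{cor:stS} (equivalently, the coverage comparisons of Theorem~\ref{theorem:SINR-c}) as the only input. Fix two epochs $*$ and $\#$ for which Theorem~\ref{theorem:SINR-c} gives $p^c_*(x)\leq p^c_\#(x)$ for every threshold $x\geq 0$, in the relevant regime (additive or tropical, with or without fading). By definition, $F_{\Scal_*}(x)=1-p^c_*(x)\geq 1-p^c_\#(x)=F_{\Scal_\#}(x)$. The event $\{\Scal\geq x\}$ versus $\{\Scal>x\}$ makes no difference here, since the two differ only at countably many $x$ (atoms), which are Lebesgue-null and so invisible in the integral.

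First I will justify (\ref{eq:laplace-S}) for a nonnegative random variable $\Scal$. Write $e^{-\g \Scal}=\int_{\Scal}^\infty \g e^{-\g x}{\rm d}x$ and apply Tonelli to get
\[
\E^0_\#[e^{-\g\Scal}]=\g\int_0^\infty e^{-\g x}\P^0_\#(\Scal\leq x)\,{\rm d}x .
\]
Then I multiply the pointwise inequality $F_{\Scal_*}\geq F_{\Scal_\#}$ by the positive weight $\g e^{-\g x}$ and integrate over $[0,\infty)$. This gives $\E^0_*[e^{-\g\Scal_*}]\geq \E^0_\#[e^{-\g\Scal_\#}]$ for every $\g\geq 0$, with equality trivially at $\g=0$. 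That is exactly $\Scal_*\leq_{\textnormal{L}}\Scal_\#$ in the sense of the Laplace order definition. Equivalently, $x\mapsto e^{-\g x}$ is decreasing, and $\leq_{\textnormal{st}}$ implies the reverse inequality of expectations of decreasing functions. Chaining these inequalities gives the three-term statements, for instance $\Scal_{\sst\Vcal}\leq_{\textnormal{L}}\Scal_t\leq_{\textnormal{L}}\Scal_{\sst S}$.

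I will then go through the items in turn, matching each with the corresponding item of Theorem~\ref{theorem:SINR-c}.
\begin{itemize}
\item Item 1 uses part~\ref{CP-F1}.
\item Item 3 uses part~\ref{CP-F3}.
\item Item 2, for the chain $\Vcal,t,S$ in the tropical case, uses part~\ref{CP-F2}.
\end{itemize}

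The main obstacle is item 2. It is stated ``without fading'', but Theorem~\ref{theorem:SINR-c} only provides the tropical chain $p_{\sst{\Vcal;\Tcal}}^c\leq p_{t;\sst\Tcal}^c\leq p_{\sst S;\Tcal}^c$ \emph{with} fading. The remark following the theorem states that in the tropical no-fading case the ordering among these three fails for some $(\tau,\la,\a)$, so no pointwise coverage comparison is available there. Moreover, a Laplace order cannot be recovered from such a sign-changing difference without explicit computation. I would therefore first read item 2 as the fading case, matching part~\ref{CP-F2} and Corollary~\ref{cor:stS}, where the argument above goes through verbatim. If the no-fading reading is really intended, I would try to use the explicit no-fading tropical coverage formulas from Theorem~\ref{theorem:GPM-NF} and Theorem~\ref{theorem:GPM-NF1S} to check the sign of $\int_0^\infty e^{-\g x}\bigl(p^c_\#(x)-p^c_*(x)\bigr){\rm d}x$ directly. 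I expect this check to fail in general, for the same reason the pointwise order fails.
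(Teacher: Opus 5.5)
This is the paper's own argument: the Laplace order is read off the representation (\ref{eq:laplace-S}) using the pointwise coverage comparisons of Theorem~\ref{theorem:SINR-c} (equivalently Corollary~\ref{cor:stS}), and you are right to read item~2 as the \emph{fading} case, since part~\ref{CP-F2} is what supplies the tropical handover/typical-time/max-signal chain, while the comparison $\Scal_{\sst\Ical;\Tcal}\leq_{\textnormal{L}}\Scal_{\sst I;\Tcal}$ without fading is item~3. One caution, on your closing guess only: the paper's remark that this chain fails without fading is computed from a conditional density of $R_{\sst S}$ given $H_{\sst S}$ that integrates to $1/2$, whereas Proposition~\ref{proposition:JHRS} gives $R_{\sst S}^2-H_{\sst S}^2\sim\mathrm{Exp}(\la\pi)$ independent of $H_{\sst S}$, and with that law the coupling $R^2=H_{\sst S}^2+E'$ (with $E'$ an independent copy of $H_{\sst S}^2$) together with a common gap $R_1^2-R^2=R_{\sst S}^2-H_{\sst S}^2$ gives $\Scal_{t;\sst\Tcal}\leq\Scal_{\sst S;\Tcal}$ pathwise, because $a\mapsto a^{-\a/2}/(\s^2+(a+w)^{-\a/2})$ is nonincreasing, so the no-fading check you describe would more likely succeed than fail.
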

\begin{remark}
The comparison of Shannon rates in Corollary~\ref{cor:SINR-rate} can be seen as a consequence of the Laplace order in Corollary~\ref{cor:Laplace-S}, with the help of Hamdi's lemma~\cite{Hamdi} stated as
\[
\E\left[\ln(1+X)\right] = \int_0^\infty \frac{1}{z} \left(1-\E\left[e^{-zX}\right]\right) e^{-z}{\rm d}z,
\]
for any non-negative random variable $X$, using the analytical fact that $\int_0^\infty \frac{1}{z} \left(1-e^{-zx}\right) e^{-z}{\rm d}z= \ln(1+x)$.
\end{remark}
\subsection{Comparison of interference power: Stochastic domination \& Laplace transform order}
Let us denote the additive interference power as $I_{*}$ and maxitive interference power as $\Tcal_{*}$, for $*\in  \{\sst{\Vcal, S, I, \Ical, t}$\hspace{-0.01em}$\}$. Then we have the following ordering, the proof of which is given in Appendix~\ref{Int-order}.

\begin{theorem}
The interference power in the non-tropical and tropical cases satisfies the following orderings:
\begin{enumerate}
    \item with or without fading: $I_{\sst \Vcal}\geq_{\textnormal{L}} I_{t} \geq_{\textnormal{L}} I_{\sst S}$ and $I_{\sst \Ical}\geq_{\textnormal{L}}  I_{\sst I}$,
    \item with fading: $\Tcal_{\sst{\Vcal}}\geq_{\textnormal{st}} \Tcal_{t} \geq_{\textnormal{st}} \Tcal_{\sst S}$ and $\Tcal_{\sst \Ical} \geq_{\textnormal{st}}  \Tcal_{\sst I}$,
    \item without fading: $\Tcal_{\sst{\Vcal}}\stackrel{d}{=} \Tcal_{\sst S} \geq_{\textnormal{lrd}} \Tcal_{t}  $ and $\Tcal_{\sst I} \geq_{\textnormal{lrd}}  \Tcal_{\sst \Ical}$,
\end{enumerate}
compared under their respective Palm probability spaces.
\label{theorem:Int-order}
\end{theorem}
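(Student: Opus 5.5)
The plan is to reduce every comparison to a comparison of the law of the nearest-interferer (or serving) distance at the respective epoch. The conditional law of the interference given that distance is explicit thanks to Lemma~\ref{lemma:etappp}. Write $\phi(h):=\E[e^{-\g\rho h^{-\a}}]=\mu/(\mu+\g h^{-\a})$ and let $G_h(\g):=\exp\bigl(-2\pi\la\int_h^\infty(1-\phi(r))\,r\,{\rm d}r\bigr)$ be the Laplace transform of the shot noise generated by $\eta_h$. For a configuration with $k$ interferers sitting at distance exactly $h$ plus $\eta_h$ beyond, the conditional Laplace transform is $\phi(h)^kG_h(\g)$. For the tropical version, the conditional CDF is $\P(\rho h^{-\a}\le x)^k\exp\bigl(-2\pi\la\int_h^\infty \P(\rho r^{-\a}>x)\,r\,{\rm d}r\bigr)$. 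Without fading we set $\rho\equiv1$. The key monotonicity fact is that for fixed $k$, both $h\mapsto\phi(h)^kG_h(\g)$ and the tropical CDF are nondecreasing in $h$, since $\phi$ increases and the integration domain shrinks. Equivalently, the interference is conditionally stochastically decreasing in $h$.

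For the chain among mS-MI, typical time and MS, I would use the following representations. At mS-MI, $I_{\sst\Vcal}=\rho'H_{\sst\Vcal}^{-\a}+\sum_{\eta_{H_{\sst\Vcal}}}$. At a typical time, the nearest interferer is at $R_1$ with Poisson points beyond it, so $I_t=\rho'R_1^{-\a}+\sum_{\eta_{R_1}}$. At MS, the interferers are $\eta_{H_{\sst S}}$, whose closest point is $R_{\sst S}\stackrel{d}{=}H_{\sst\Vcal}$, with Poisson points beyond.

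Lemma~\ref{lemma:LRD_order} gives $H_{\sst\Vcal}\le_{\rm lrd}R_1$, hence $H_{\sst\Vcal}\le_{\rm st}R_1$. Integrating the monotone function $h\mapsto\phi(h)G_h(\g)$ against this order then gives $I_{\sst\Vcal}\ge_{\rm L}I_t$. The same argument applied to the tropical CDFs gives $\Tcal_{\sst\Vcal}\ge_{\rm st}\Tcal_t$.

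For $I_t\ge_{\rm L}I_{\sst S}$, I would compare the "$\eta$ beyond the serving distance" representations. These are $I_t=\sum_{\eta_R}$ and $I_{\sst S}=\sum_{\eta_{H_{\sst S}}}$, and the comparison needs $H_{\sst S}\le_{\rm st}R$. This follows from $\P(H_{\sst S}>x)=\mathrm{erfc}(x\sqrt{\la\pi})\le e^{-\la\pi x^2}=\P(R>x)$, which is a standard bound. Monotonicity of $h\mapsto G_h$ then gives the claim, and similarly in the tropical-with-fading case.

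For the no-fading tropical part 3, $\Tcal=X^{-\a}$ with $X$ the nearest interferer distance. The relevant laws are $R_{\sst S}\stackrel{d}{=}H_{\sst\Vcal}$ from Corollary~\ref{cor:RS}, then $R_1$, then $H_{\sst I}$ and $H_{\sst\Ical}$. The equality in distribution is immediate. The lrd statements transfer from Lemma~\ref{lemma:LRD_order} through the decreasing bijection $x\mapsto x^{-\a}$. I would check the direction directly from the explicit density ratios in Table~\ref{tab:tdensity}: for instance $f_{R_1}/f_{H_{\sst\Vcal}}\propto h$, and $f_{H_{\sst\Ical}}/f_{H_{\sst I}}\propto h^2$. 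These ratios have to be rewritten in the variable $y=h^{-\a}$, including the Jacobian.

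The main obstacle is the pair $I_{\sst\Ical}\ge_{\rm L}I_{\sst I}$ (and $\Tcal_{\sst\Ical}\ge_{\rm st}\Tcal_{\sst I}$). Here the two effects compete: $H_{\sst\Ical}\ge_{\rm st}H_{\sst I}$ pushes the interference down, while the extra interferer at $H_{\sst\Ical}$ pushes it up. So the monotone-coupling argument does not apply. I would write both sides explicitly as
\[
\E[e^{-\g I_{\sst I}}]=\int_0^\infty 4\pi\la^{3/2}h^2e^{-\la\pi h^2}\phi(h)G_h\,{\rm d}h,
\]
and
\[
\E[e^{-\g I_{\sst\Ical}}]=\int_0^\infty \tfrac83\pi^2\la^{5/2}h^4e^{-\la\pi h^2}\phi(h)^2G_h\,{\rm d}h .
\]
The idea is to use $\partial_hG_h=2\pi\la h(1-\phi(h))G_h$ together with $\partial_h(e^{-\la\pi h^2})=-2\la\pi h\,e^{-\la\pi h^2}$ to integrate by parts, so that the $h^4$ weight is traded for an $h^2$ weight. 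The difference should then reduce to an integral of $h^2e^{-\la\pi h^2}G_h$ times an expression in $\phi$ and $h\phi'$. I would try to show that this expression has a sign, using $0\le\phi\le1$ and $\phi'\ge0$. In the tropical case I would repeat the same computation with $\phi$ replaced by the CDF factor $\P(\rho h^{-\a}\le x)$. If the direct sign argument fails, my fallback is a coupling within the radial bird picture: realize the mI configuration as the MI configuration with one additional point at distance $H$, and compare the laws through the thinning structure behind Propositions~\ref{prop:IVHd} and~\ref{proposition:TIH}.
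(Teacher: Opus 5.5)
\textbf{Gaps: the $t$ versus $S$ step has a sign error, and the MI/mI comparison comes out reversed.} Two parts of your plan are sound. Writing $I_t=\rho' R_1^{-\a}+\sum_{\eta_{R_1}}$ and using $H_{\sst\Vcal}\le_{\textnormal{lrd}}R_1$ with the nondecreasing map $h\mapsto\phi(h)G_h(\g)$ gives $I_{\sst\Vcal}\ge_{\textnormal{st}}I_t$, and likewise $\Tcal_{\sst\Vcal}\ge_{\textnormal{st}}\Tcal_t$. Part 3, via Lemma~\ref{lemma:LRD_order} and the decreasing map $x\mapsto x^{-\a}$, is also fine; the Jacobian cancels in the ratio. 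Your route for $\Vcal$ versus $t$ is sounder than the paper's. The paper compares against $R$ with $G(h)=e^{-\la\pi h^2\kappa(\g h^{-\a}/\mu,\a)}$, which is nondecreasing in $h$. So the density-crossing argument of Theorem~\ref{theorem:SINR-c} works against the factor $\mu/(\mu+\g h^{-\a})\le 1$ and does not close.

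\textbf{The step $I_t\ge_{\textnormal{L}}I_{\sst S}$.} Since $h\mapsto G_h(\g)$ is nondecreasing and $H_{\sst S}\le_{\textnormal{st}}R$, you get $\E^0[e^{-\g I_{\sst S}}]=\E[G_{H_{\sst S}}(\g)]\le\E[G_R(\g)]=\E^0[e^{-\g I_t}]$. That is $I_{\sst S}\ge_{\textnormal{L}}I_t$, the reverse of the claim. This cannot be repaired. By your own representation (Lemma~\ref{lemma:etappp}, Corollary~\ref{cor:RS}), at MS the nearest interferer is at $R_{\sst S}\stackrel{d}{=}H_{\sst\Vcal}$, with a Poisson process of density $2\pi\la r$ beyond it whose law does not depend on $H_{\sst S}$. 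Hence $I_{\sst S}\stackrel{d}{=}I_{\sst\Vcal}$ and $\Tcal_{\sst S}\stackrel{d}{=}\Tcal_{\sst\Vcal}$, with or without fading, which matches part 3. Your first step is strict for $\g>0$, because $\phi$ is strictly increasing and $H_{\sst\Vcal}$ and $R_1$ differ in law. So the claims $I_t\ge_{\textnormal{L}}I_{\sst S}$ and $\Tcal_t\ge_{\textnormal{st}}\Tcal_{\sst S}$ are false.

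\textbf{The MI/mI pair.} Your integration by parts settles the sign, but the opposite way. Set $x=\la\pi h^2$, $\tilde\phi(x)=\phi(h)$ and $\tilde G(x)=G_h(\g)$, so that $\tilde G'=(1-\tilde\phi)\tilde G$. By Propositions~\ref{prop:IVHd} and~\ref{proposition:TIH}, $\la\pi H_{\sst I}^2\sim\Gamma(3/2)$ and $\la\pi H_{\sst\Ical}^2\sim\Gamma(5/2)$. Integrating the $x^{1/2}$ weight up to $x^{3/2}$ gives
\[
\E^0\left[e^{-\g I_{\sst\Ical}}\right]-\E^0\left[e^{-\g I_{\sst I}}\right]=\frac{1}{\Gamma(5/2)}\int_0^\infty x^{3/2}e^{-x}\,\tilde G(x)\,\tilde\phi'(x)\,{\rm d}x>0,
\]
that is, $I_{\sst\Ical}\le_{\textnormal{L}}I_{\sst I}$.

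There is also an explicit coupling behind this. Let $A\sim\Gamma(3/2)$ and let $E_k\sim\mathrm{Exp}(1)$ be i.i.d. In the variable $\la\pi r^2$, the MI interferers sit at $A$, $A+E_1$, $A+E_1+E_2,\dots$. Since $A+E_1\sim\Gamma(5/2)$, the mI interferers can be placed at $A+E_1$ (twice), $A+E_1+E_2,\dots$. So mI is MI with the nearest interferer pushed out onto the second one. This gives $I_{\sst\Ical}\le I_{\sst I}$ and $\Tcal_{\sst\Ical}\le\Tcal_{\sst I}$ pathwise, consistent with part 3's $\Tcal_{\sst I}\ge_{\textnormal{lrd}}\Tcal_{\sst\Ical}$. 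Your fallback of adding a point at $H_{\sst I}$ does not preserve the law, because $H_{\sst I}$ and $H_{\sst\Ical}$ have different distributions.

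\textbf{Consequence.} Within the paper's own framework, four of the stated inequalities hold in the reverse direction: $I_t\ge_{\textnormal{L}}I_{\sst S}$, $I_{\sst\Ical}\ge_{\textnormal{L}}I_{\sst I}$, $\Tcal_t\ge_{\textnormal{st}}\Tcal_{\sst S}$ and $\Tcal_{\sst\Ical}\ge_{\textnormal{st}}\Tcal_{\sst I}$. The paper's one-line density-crossing proof misses this because the relevant $G$ is nondecreasing, as the paper itself notes after (\ref{eq:Int1}). You should prove the reversed orderings for these pairs rather than the ones stated.
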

\begin{remark}
These orderings are natural and remain consistent with the ordering among the coverage probabilities at different typical epochs in Theorem~\ref{theorem:SINR-c}. The lrd ordering can be proved as a corollary of  Lemma~\ref{lemma:LRD_order}. 
\end{remark}
\subsection{Scale invariance in the interference limited regime}~\label{subsection:SI}
We have the following scale invariance property of the coverage probability and data rate with respect to the intensity $\la$ of the BSs and the fading parameter $\mu$ (in an environment with fading), in the interference limited regime, i.e., $\s=0$. In all these cases, the ratio $\frac{H^{-\a}}{I_{H}}$ is scale invariant with respect to $\la$, where $H$ is the distance to the nearest BS and $I_H$ is the additive or maxitive interference. As a result we have the scale invariance property of the coverage probabilities at different typical epochs, in the interference limited regime. The proof is given in Appendix~\ref{section:SI}. 
\begin{theorem}
For any of the typical epochs, the coverage probabilities $p^c_{*}$ and $p^c_{*;\sst{\Tcal}}$, are scale invariant with respect to the intensity parameter $\la$ (also with respect to fading parameter $\mu$ in the scenario with fading), where $*\in \{\sst{\Vcal, S, I, \Ical, t}$\hspace{-0.001em}$\}$.
\label{theorem:si}
\end{theorem}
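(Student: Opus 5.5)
The proof will be a scaling argument. Set $\s=0$ and let $\kappa:=\sqrt{\la}$. Every distance variable in the paper has a density of the form $\la^{c}g(\sqrt{\la}\,r)$: see Table~\ref{tab:tdensity}, including the conditional laws of $R_{\sst S}$, $R_{\sst I}$, $R_{\sst \Ical}$ and $R_1$, and the joint Laplace transforms of Propositions~\ref{proposition:JHRS}, \ref{proposition:JHRI} and~\ref{proposition:JHRTI}, which depend on $(\g,\b)$ only through $\g/\la$ and $\b/\la$. So the first step is to show the following. Under each Palm measure, the joint law of the rescaled pair $(\kappa H,\kappa R)$ (or of the single serving distance) does not depend on $\la$. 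This is checked directly on each entry: for example, $\kappa H_{\sst \Vcal}$ has density $4\pi u^2e^{-\pi u^2}$, and $h^{-2}R_{\sst I}^2$ is uniform independently of $\la$ by Corollary~\ref{corollary:RbyH}.

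The second step handles the remaining interferers. By Lemma~\ref{lemma:etappp}, conditioned on $H=h$, the process $\eta_h$ is Poisson on $(h,\infty)$ with intensity $2\pi\la r\,{\rm d}r$. By the mapping theorem, its image under $r\mapsto \kappa r$ is Poisson on $(\kappa h,\infty)$ with intensity $2\pi u\,{\rm d}u$. Hence, conditionally on $\kappa H=u_0$, the rescaled process $\kappa\eta_H$ has a law free of $\la$, and the pair (rescaled nearest distances, $\kappa\eta_H$) has a joint law independent of $\la$.

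The third step writes the SIR and STIR as homogeneous functions of degree $0$ in all distances. Taking the MI epoch as an example,
\[
\frac{\rho R_{\sst I}^{-\a}}{\rho' H_{\sst I}^{-\a}+\sum_i\rho_iD_i^{-\a}}=\frac{\rho (\kappa R_{\sst I})^{-\a}}{\rho' (\kappa H_{\sst I})^{-\a}+\sum_i\rho_i(\kappa D_i)^{-\a}},
\]
and the same identity holds with the sum replaced by a maximum, and similarly for every row of Table~\ref{table:CP_metrics} and for the no-fading versions. So the law of each metric is that of a fixed functional of $\la$-free random objects, and $p^c(\tau)$ does not depend on $\la$. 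By (\ref{eq:RcalG}), neither does the rate. For $\mu$, the fading marks are i.i.d.\ Exp$(\mu)$ and independent of geometry, so $\mu\rho,\mu\rho_i$ are Exp$(1)$. Multiplying numerator and denominator by $\mu$ removes $\mu$, again by degree-$0$ homogeneity, now in the fading variables, which holds for both sum and max. Alternatively, one can substitute into the formulas (\ref{eq:GCP}) and (\ref{eq:GCP2}): there $\mu$ cancels between $\mu\tau H^{\a}$ and the Laplace functional of the interference.

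The main obstacle is rigor in step two: ensuring that the conditional Poisson description of Lemma~\ref{lemma:etappp} is stated jointly with the serving/nearest-interferer distances (for MI/mI, conditioning on $H$ while $R$ is a further coordinate independent of $\eta_H$ given $H$), so that the full joint law scales. I would handle this by conditioning on $(H,R)$ and noting that, at every epoch, $\eta_h$ depends only on $h$.
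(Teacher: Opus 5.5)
Your proof is correct, but it is organized differently from the paper's. The paper argues case by case from its coverage formulas. With fading, it sets $\s=0$ in the expressions of Appendix~\ref{proofs-wf}, lets $\mu$ cancel between $\mu\tau H^{\a}$ and the Laplace functional, and uses that $\la H^2$ has a $\la$-free law. This gives explicit functions of $(\tau,\a)$ alone, e.g.\ $p^c_{\sst \Vcal}=(1+\tau)^{-1}(1+\kappa(\tau,\a))^{-3/2}$, $p^c_t=(1+\kappa(\tau,\a))^{-1}$, and $z$-integrals for $p^c_{\sst I},p^c_{\sst \Ical}$. The tropical fading case is only asserted there. Without fading, the paper simply states that the ratio $H^{-\a}/I_H$ is independent of $\la$, and computes the tropical cases by hand, obtaining $\tau^{-2/\a}$ and similar.

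Your argument has two ingredients. First, the Palm law of $(\sqrt{\la}H,\sqrt{\la}R,\sqrt{\la}\,\eta_H)$ is $\la$-free, by the scaling form of the distance laws, Lemma~\ref{lemma:etappp} and the mapping theorem. Second, every metric is a degree-$0$ homogeneous function of the distances, and of the fading marks for $\mu$. This supplies exactly the justification behind the paper's no-fading assertion. It also handles all epochs, additive and maxitive interference, with and without fading, at once, and extends unchanged to any fading law that is a scale family. The paper's route, in exchange, yields explicit closed forms (such as the $\a=4$ relations to $p^c_t$), which your argument does not produce.

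Two small remarks. Rename your $\kappa=\sqrt{\la}$, since $\kappa(\tau,\a)$ is already in use. At the MS epoch, obtain $R_{\sst S}$ as $\min\eta_{H_{\sst S}}$ from Lemma~\ref{lemma:etappp} instead of quoting the tabulated conditional density. As printed, that density integrates to $1/2$; the law consistent with Proposition~\ref{proposition:JHRS} is $2\pi\la r e^{-\la\pi(r^2-h^2)}$ on $(h,\infty)$. Since you only use the scaling form of these laws, this does not affect the validity of your proof.
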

\begin{corollary}
For any of the typical epochs, the average data rates $\Rcal_{*}$ and $\Rcal_{*;\sst{\Tcal}}$ satisfy the scale invariance property in the interference limited scenario, with respect to the intensity parameter $\la$ (also with respect to fading parameter $\mu$ in the environment with fading), where $*\in \{\sst{\Vcal, S, I, \Ical, t}$\hspace{-0.001em}$\}$.
\end{corollary}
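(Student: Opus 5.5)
The statement to prove is the corollary just above: scale invariance of the average data rates. I will derive it directly from Theorem~\ref{theorem:si} through the representation (\ref{eq:RcalG}) of the Shannon rate as an integral of the coverage probability. For any of the metrics $\Scal_{*}$ or $\Scal_{*;\Tcal}$ in the interference limited regime $\s=0$, and any epoch $*\in\{\sst{\Vcal,S,I,\Ical,t}\}$, we have
\[
\Rcal_{*}(\mu,\la,\a)=B\int_0^\infty \frac{1}{1+z}\,p^c_{*}(z,\mu,\la,\a)\,{\rm d}z ,
\]
and the same holds with $p^c_{*;\Tcal}$ in the tropical case. The integrand depends on $(\la,\mu)$ only through $p^c_*(z,\cdot)$. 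By Theorem~\ref{theorem:si}, the function $z\mapsto p^c_*(z,\mu,\la,\a)$ coincides with $z\mapsto p^c_*(z,\mu',\la',\a)$ for all $z\geq 0$ and all admissible $\la,\la',\mu,\mu'>0$. The integrals are therefore equal, so $\Rcal_*$ does not depend on $\la$ (nor on $\mu$ with fading).

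Before using (\ref{eq:RcalG}) I will justify it. The identity $\E[\ln(1+\Scal)]=\int_0^\infty \P^0(\Scal> z)(1+z)^{-1}{\rm d}z$ follows from Fubini's theorem applied to $\ln(1+x)=\int_0^x(1+z)^{-1}{\rm d}z$. This is valid for any non-negative random variable, with both sides possibly equal to $+\infty$. Consequently the equality of the two rates holds in $[0,\infty]$ without any integrability assumption. The distinction between $\geq$ and $>$ in $p^c$ is harmless here, since it affects the integrand on at most countably many $z$.

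As a sanity check, and as an alternative route that avoids the theorem, I would note the underlying scaling argument. Under the Palm measure, the map $x\mapsto\sqrt{\la}\,x$ sends every configuration at intensity $\la$ to one at intensity $1$. This holds because all the distance laws in Table~\ref{tab:tdensity} and the Poisson process of Lemma~\ref{lemma:etappp} scale in this way. The factor $\la^{\a/2}$ cancels between numerator and denominator of SIR/STIR. Likewise, exponential fading scales by $1/\mu$ in both numerator and every interference term, so it cancels as well. Hence the law of $\Scal$ itself is invariant, and so is $\E^0[B\ln(1+\Scal)]$.

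The only potential obstacle is making sure Theorem~\ref{theorem:si} is applied pointwise in $\tau$ across the entire range $\tau\in[0,\infty)$, including the mI/MI epochs where the signal distance $R$ and the interferer distance $H$ are jointly distributed. Since that theorem is stated for all $\tau$ and all epochs, this step reduces to citation.
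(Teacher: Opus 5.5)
Your proof is correct and follows the paper's route: the corollary comes from inserting the $\la$- (and $\mu$-) invariant coverage probabilities of Theorem~\ref{theorem:si} into the rate formula (\ref{eq:RcalG}). Your Tonelli justification of that formula in $[0,\infty]$, together with the observation that $\geq$ versus $>$ changes the integrand only at countably many $z$, supplies a step the paper leaves implicit. Your scaling sanity check is essentially the paper's own reasoning for the no-fading part of Theorem~\ref{theorem:si}, namely invariance of the law of $H^{-\a}/I_H$; you state it more precisely as distributional invariance under $x\mapsto\sqrt{\la}\,x$ and $\rho\mapsto\mu\rho$. It also has the advantage of covering the fading and tropical cases at once, rather than relying on the explicit closed forms.
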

\subsection{Attenuation functions: example and counter example}~\label{subsection:example-AF}
\subsubsection{\textbf{Bounded path-loss function}}~\label{baf} All our comparison results in Theorem~\ref{theorem:SINR-c}, Corollary~\ref{cor:SINR-rate}, Corollary~\ref{cor:Laplace-S} and Theorem~\ref{theorem:Int-order} can be shown to hold true for the bounded path-loss function $\ell(r)= \left(1+r\right)^{-\a}$, as in~\cite{AlAmmouri-etal}, where $\a\geq 2$ is the path-loss exponent. Our results can be verified along the same type of analysis, see Appendix~\ref{Example} for an instance. The type of attenuation function is very useful to eliminate the question of singularity of infinite signal power from a BS infinitesimally close to the user, in case of power law attenuation. In general, the bounded path-loss model naturally lacks the scale invariance property of the performance metrics in the interference limited regime, due to the extra additive term inside the attenuation function. 

\subsubsection{\textbf{Step attenuation function}}~\label{step-fun} On the other hand, the comparison results do not hold true for other attenuation functions, for example $\ell(r)= p\, \one_{\{r\leq d\}}$, where $p$ is a constant power and $d$ is a fixed cut-off distance. For example, a disordered behavior, depending on the user QoS $\tau$, among $p_{\sst \Vcal}^c, p_{t}^c,  p_{\sst S}^c$ and $p^c_{\sst \Ical}, p^c_{\sst I}$ with additive interference, and also for the tropical interference case, is shown in Appendix~\ref{Example}. As we will see therein, we also loose the scale invariance property, in this case.   
\section{Closed forms for coverage probabilities}~\label{sec-closedforms}
Much of our computations are focused on the determination of closed form or simple integral form expressions for the coverage probabilities at various typical epochs. We list out all such expressions in different regimes and typical epochs, with or without fading, and having specific values of the path-loss exponent $\a$. Nevertheless, the coverage probabilities in all these cases can be simulated numerically.
\subsection{With fading} These closed forms are mainly for SINR, SIR and SNR regimes in the presence of fading. 
\subsubsection{\textnormal{SINR} regime}
In the \textnormal{SINR} regime, for the specific case of $\a=2$ we have $p^c_*=0$, in any of the typical epochs, i.e., $*\in \{\sst{\Vcal, S, I, \Ical, t}$\hspace{-0.001em}$\}$. For $\a=4$ we have 
\begin{align}
p^c_* &=\frac{1}{(1+\tau)^{\one_{\{*=\Vcal\}}}}\E^0_{H_*}\left[e^{-\mu\tau H_*^4\s^2 -\pi\la H_*^2 \kappa(\tau,4)} \right],\nn
\end{align}
where $\kappa(\tau,4)= \tau^{\half} \arctan(\tau^{\half})$ and $*\in \{\sst{\Vcal, S, t}$\hspace{-0.001em}$\}$. In the other two typical epochs,
\begin{align}
 p^c_*&{=} \frac{\tau^{-1/4}}{4}\int_0^\tau  \!\!\!\frac{z^{-3/4}}{(1{+}z)^{1+\one_{\{*=\sst \Ical\}}}}\E^0_{H_*}\!\! \left[e^{-\mu z H_*^4 \s^2-\pi\la H_*^2  \kappa(z, 4)}\right]{\rm d}z,\nn
\end{align}
where $\kappa(z,4)= z^{\half} \arctan(z^{\half})$ and $*\in \{\sst{I, \Ical}$\hspace{-0.001em}$\}$.
\subsubsection{\textnormal{SIR} regime} For $\a{=}2$, we have  $p^c_{*}{=}0$, at any typical epoch, i.e., $*\in \{\sst{\Vcal, S, I, \Ical, t}$\hspace{-0.001em}$\}$. For $\a=4$, we have
\[
p^c_{\sst \Vcal} = (1{+}\tau)^{-1}\left(1{+}\tau^{\half} \arctan(\tau^{\half})\right)^{-3/2},
\]
\[
p_{t}^c =\left(1{+}\tau^{\half} \arctan(\tau^{\half})\right)^{-1}\!\!,\; p_{\sst S}^c =\left(1{+}\tau^{\half} \arctan(\tau^{\half})\right)^{-1/2}\!\!, 
\]
\[
p^c_{\sst I}=\frac{\tau^{-1/2}}{2}\!\!\!\int_0^{\tau} \!\! \frac{z^{-1/2}}{1+z} \left(1{+}z^\half  \arctan(z^\half)\right)^{-3/2}{\rm d}z,
\]
\[
p^c_{\sst \Ical}=\frac{\tau^{-1/2}}{2}\!\! \int_0^{\tau}\!\!\! \frac{z^{-1/2}}{(1{+}z)^2} \left(1{+}z^\half \arctan(z^\half)\right)^{-5/2}{\rm d}z.
\]
The formula of $p^c_t$ naturally matches that in~\cite{Andrews-etal}. Moreover, we have the following relations among the coverage probabilities in terms of $p^c_{\sst t}(\tau)$, in the interference limited regime, as
\[
p^c_{\sst \Vcal}(\tau)= \frac{(p^c_{\sst t}(\tau))^{3/2}}{1+\tau},\;\; p^c_{\sst S}(\tau)=  (p^c_{\sst t}(\tau))^{1/2},\]
\[
p^c_{\sst I}(\tau)=\frac{\tau^{-1/2}}{2}\!\!\!\int_0^{\tau} \!\! \frac{z^{-1/2}}{1+z} (p^c_{\sst t}(z))^{3/2}{\rm d}z, 
\]
\[
p^c_{\sst \Ical}(\tau)=\frac{\tau^{-1/2}}{2}\!\!\!\int_0^{\tau} \!\! \frac{z^{-1/2}}{(1+z)^2} (p^c_{\sst t}(z))^{5/2}{\rm d}z,
\]
by writing them as a function of $\tau$ only.
\subsubsection{\textnormal{SNR} regime}
In particular, for the coverage probability in the \textnormal{SNR} regime with $\a=2$, we have from (\ref{eq:GCP}) that
\[
p^c_*= \E^0_H\left[e^{-\mu\tau H^2\s^2}\right] = \left(1+\frac{\mu\tau\s^2}{\la\pi}\right)^{-\zeta},
\]
where $*\in \{\sst{\Vcal, S, t}$\hspace{-0.001em}$\}$, $\zeta=3/2, 1/2, 1$ for $H= H_{\sst \Vcal}, H_{\sst S}, R$, using the Laplace transform of $H_{\sst \Vcal}^2, H_{\sst S}^2, R^2$, under their respective Palm probability spaces. Also from (\ref{eq:GCP2}),  Proposition~\ref{proposition:JHRI} and Proposition~\ref{proposition:JHRTI}, we have
\[
p^c_* {=}\E^0_H\left[e^{-\mu\tau H^2\s^2}\right]{=} \frac{2\la\pi}{\mu\tau\s^2}\!\!\left[1{-}\left(1{+}\frac{\mu\tau\s^2}{\la\pi}\right)^{-\zeta}\right],
\]
where $*\in \{\sst{I, \Ical}$\hspace{-0.001em}$\}$, $\zeta=1/2$ and $3/2$, for $H=R_{\sst I}$ and $R_{\sst \Ical}$, respectively.
\subsection{Without fading} In no-fading scenario, the closed form or integral form expressions we have are for any value of $\a$, and these are mainly for the STINR, STIR and SNR regimes.
\subsubsection{\textnormal{STINR} regime} 
\begin{align}
p^c_{\sst \Vcal;\Tcal}&= 
\begin{cases}
     F^0_{H_{\sst \Vcal}}\left(\left(\frac{1-\tau}{\tau\s^2}\right)^{\frac{1}{\a}}\right) & \text{ if } \tau<1\\
     0 & \text{ if } \tau\geq 1.
\end{cases}\nn
\end{align}
At typical max-interference and min-interference we have,
for $*\in \{\sst{I, \Ical}$\hspace{-0.001em}$\}$
\[
p^c_{*;\sst \Tcal}= \tau^{-\frac{2}{\a}}\E^0_{H_{*}}\left[\left(1+\s^2 H_{*}^{\a}\right)^{-\frac{2}{\a}}\right].
\] 
\subsubsection{\textnormal{STIR} regime}
For any $\tau\geq 0$
\[
p^c_{\sst \Vcal;\Tcal} =\begin{cases}
    1 \mbox{ if } \tau < 1\\
    0 \mbox{ if } \tau\geq  1,
\end{cases}
p^c_{\sst t;\Tcal}=
\begin{cases}
\tau^{-2/\a} & \text{ if }\tau>1\\
1 &\text{ if } \tau\leq 1,
\end{cases}
\]
\begin{equation}
p^c_{\sst S;\Tcal} =
\begin{cases}
\frac{1}{2\sqrt{\pi}}\E^0_{H_{\sst S}}\left[\G\left(0, \la\pi (\tau^{2/\a}-1)H^2_{\sst S}\right)\right] & \text{ if }\tau>1\\
1 &\text{ if } \tau\leq 1.
\end{cases}\nn
\end{equation}
For any $\tau\geq 1$, $p^c_{\sst I;\Tcal}= \tau^{-2/\a}= p^c_{\sst \Ical;\Tcal}$.
\begin{remark}
Based on the alternative definition of handover using \textnormal{STIR} in Remark~\ref{remark:ADHO}, and the coverage probability $p^c_{\sst \Vcal;\Tcal}$, we see that, at handover epochs, the user is covered almost surely, unless the QoS requirement is $\tau\geq 1$ and the user is not covered almost surely in that case.  
\end{remark}
\subsubsection{\textnormal{SNR} regime} The coverage probability, in general written as $p^c_*$, is given by
\[
p^c_* = \P^0\left(H\leq (\tau \s^2)^{-\frac{1}{\a}}\right),
\]
for $H= H_{\sst \Vcal}, H_{\sst S}, R_{\sst I}, R_{\sst \Ical}, R$ and $*\in \{\sst{\Vcal, S, I, \Ical, t}$\hspace{-0.001em}$\}$, under their respective Palm probability measures $\P^0$.

\section{Numerical results and discussions}\label{sec-numerical}
In this part we present the numerical illustrations of our results on the performance evaluation in terms of coverage probabilities and their comparison at different typical epochs, in both cases with and without fading. The closed form or semi explicit integral form derived by leveraging the Poissonianity, enable us to create plots of these performance metrics quite conveniently. The coverage probabilities are plotted against the user QoS requirement variable $\tau$ in dB scale. The system parameters are the intensity of BSs $\la$, the exponent $\a$ of the path-loss attenuation function, $\mu$ the fading parameter and $\s$ the parameter for the thermal noise. 

The plots and their comparisons are made for different $\la$ and $\a$ values. Figure~\ref{fig:HO-t-MS} plots coverage probabilities against the SINR threshold $\tau$ in dB scale, at typical handover, typical time and typical max-signal, when $\a=3,4$, $\mu=2$, $\s^2=10^{-6}$, and $\la=10^{-4}$ (or $5\times 10^{-4}$ in some cases) corresponding to LEO constellation scale and $\la=1$ (or $10^{-1}$ in some cases) corresponding to cellular scale, respectively. Figure~\ref{fig:mIvsMI} is similar for the cases of typical max-interference and min-interference.  A similar comparison of plots is also provided in Figure~\ref{fig:HO-t-MSNF} and Figure~\ref{fig:MI-mINF} with the same values of the parameters. 

In the tropical cases, we have similar comparison plots of coverage probabilities in Figure~\ref{fig:T-HotMSWF} and Figure~\ref{fig:T-mIvsMIWF} in the case with fading, Figure~\ref{fig:T-HotMSNF} and Figure~\ref{fig:T-mIvsMINF} in the case without fading. All these plots validate our comparison of coverage probabilities stated in Theorem~\ref{theorem:SINR-c}, except for the comparison of coverage probabilities $p_{\sst \Vcal;\Tcal}^c, p_{\sst t;\Tcal}^c$ and $p_{\sst S;\Tcal}^c$ at typical handover, typical time and typical max-signal, respectively, for the case of tropical SINR without fading, as shown at the end of Appendix~\ref{section:CPE} and also validated in Figure~\ref{fig:T-HotMSNF}. Our plot shows a different comparison among $p_{\sst \Vcal;\Tcal}^c, p_{\sst S;\Tcal}^c$ and $ p_{\sst t;\Tcal}^c$ holds for the threshold parameter $\tau\geq 0$ [dB]. Performance is better at the typical time epoch for high density $(\la)$ of the BSs, which is not the case in low density regimes.

For brevity we have presented the numerical comparison just for the ordering among the coverage probabilities, presented in Theorem~\ref{theorem:SINR-c}, but not for the Shannon rates (Corollary~\ref{cor:SINR-rate}) and the interferences (Theorem~\ref{theorem:Int-order}).
\begin{figure}[ht!]
    \centering
    \subfloat[]{\includegraphics[width=0.24\textwidth]{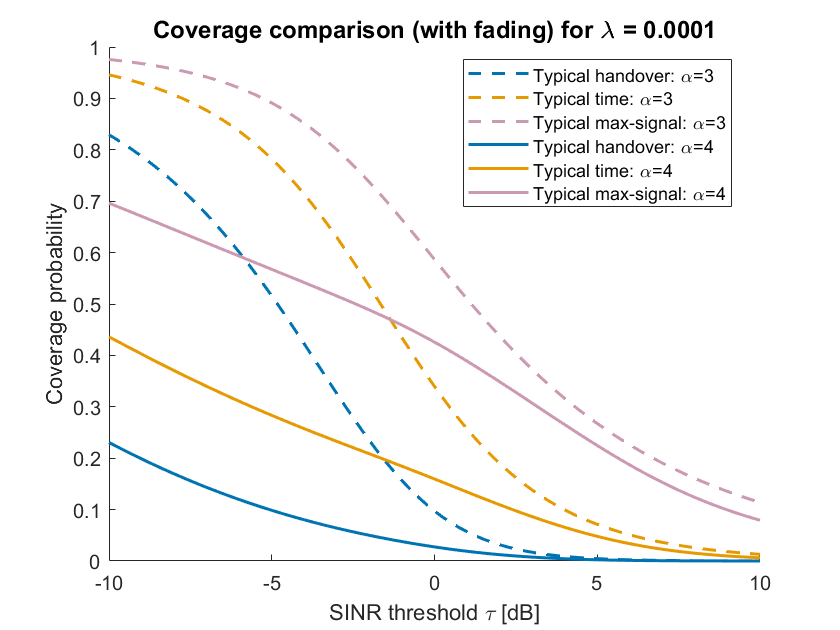}}
    \subfloat[]{
\includegraphics[width=0.24\textwidth]{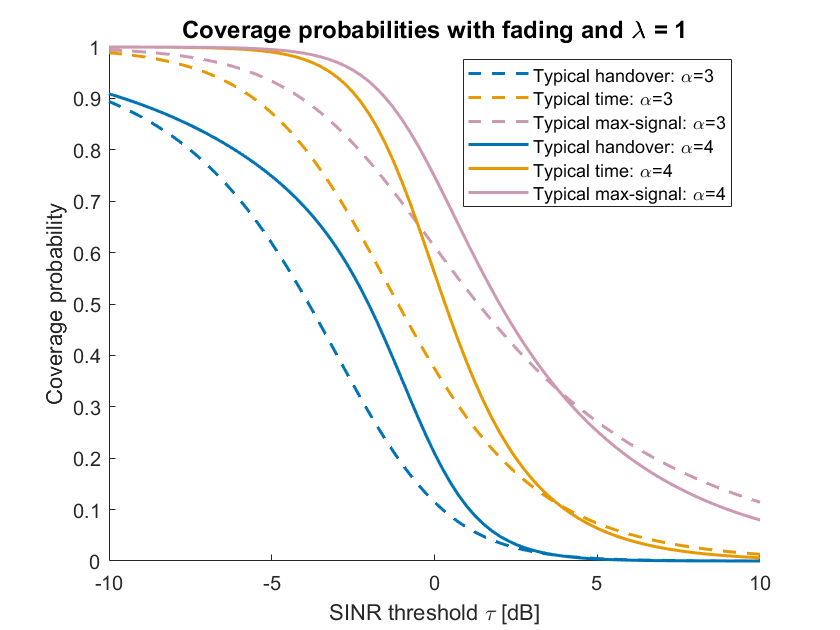}}
\caption{{\em Non-tropical case (with fading)}: Comparison of coverage probabilities at typical min-signal max-interference (handover), typical time, typical max-signal, with respect to SINR threshold or user QoS requirement $\tau$ in dB unit, in the case: (a) $\la=10^{-4}$ (LEO constellation scale), (b) $\la=1$ (cellular scale) for $\alpha=3$ and $4$ in the case with fading, between -10 to 10 dB.} 
    \label{fig:HO-t-MS}
\end{figure}
\begin{figure}[ht!]
    \centering
    \subfloat[]{\includegraphics[width=0.24\textwidth]{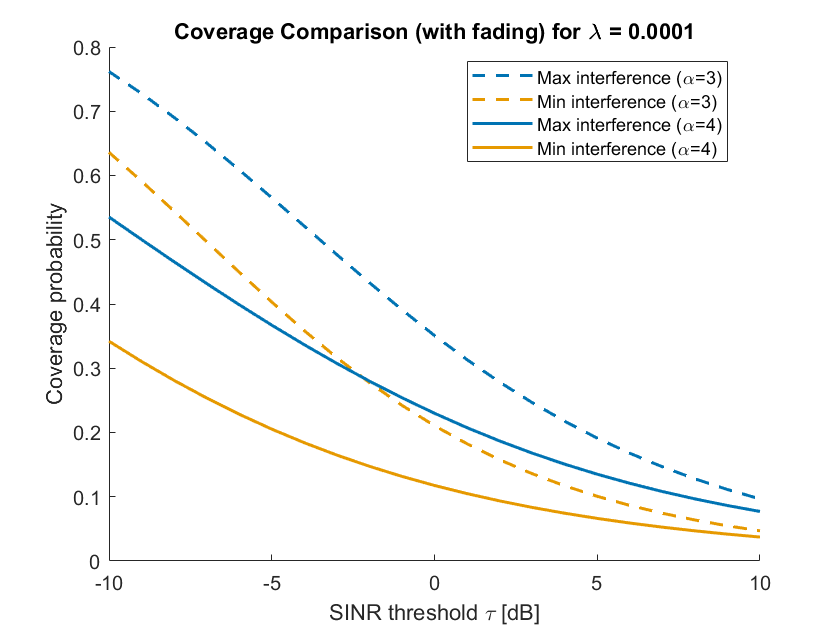}}
    \subfloat[]{
\includegraphics[width=0.24\textwidth]{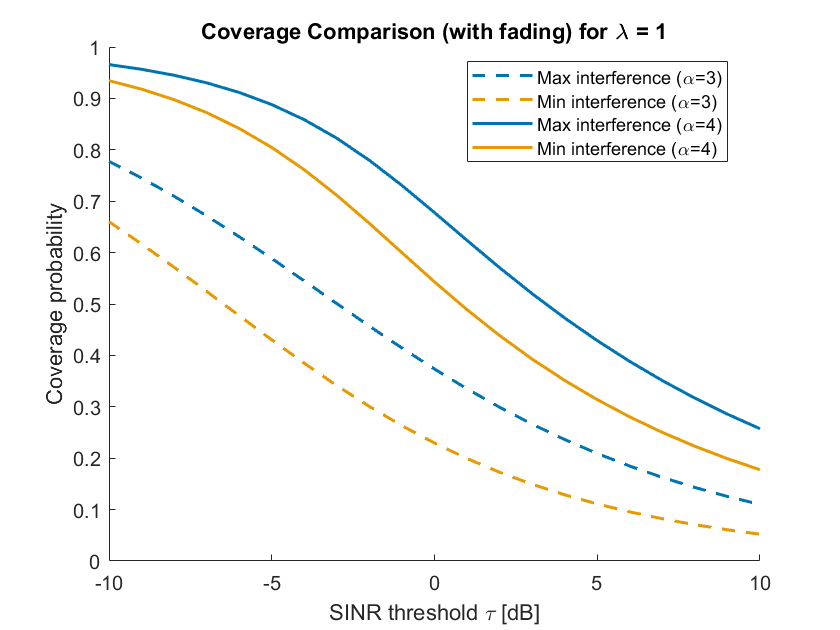}}
\caption{{\em Non-tropical case (with fading)}: Comparison of coverage probabilities at typical max-interference and typical min-interference, with respect to SINR threshold or user QoS requirement $\tau$ in dB unit, in the case: (a) $\la=10^{-4}$ (LEO constellation scale), (b) $\la=1$ (cellular scale) for $\alpha=3$ and $4$ in the case with fading, between -10 to 10 dB.} 
\label{fig:mIvsMI}
\end{figure}
\begin{figure}[ht!]
    \centering
    \subfloat[]{\includegraphics[width=0.24\textwidth]{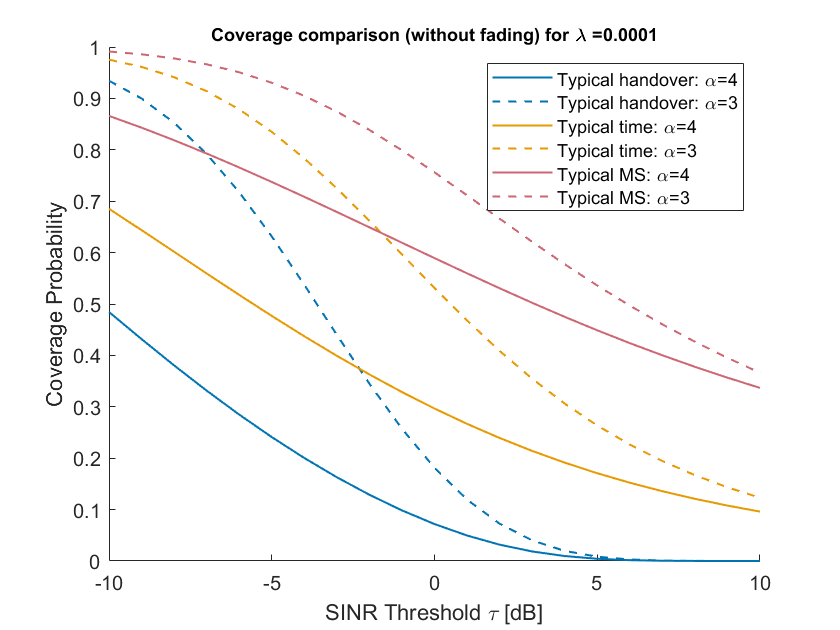}}
    \subfloat[]{
\includegraphics[width=0.24\textwidth]{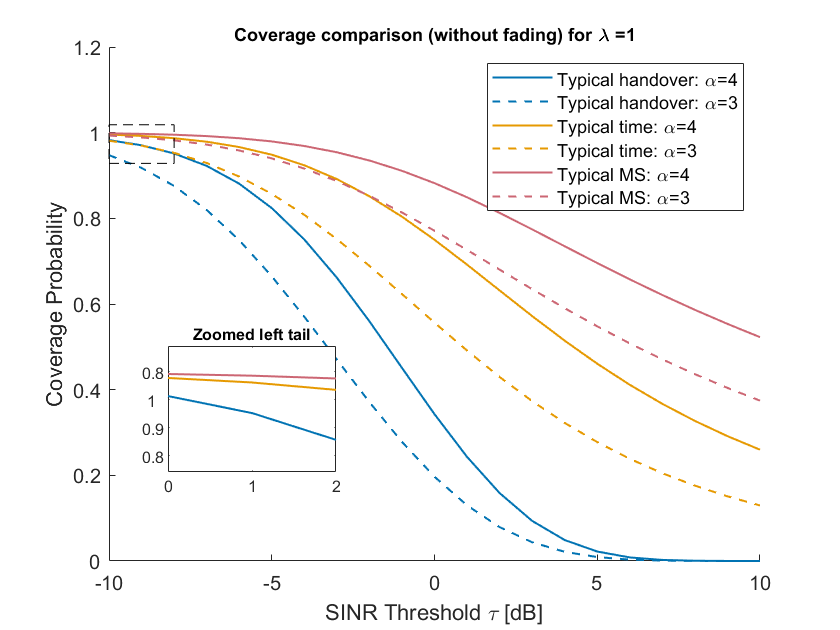}}
\caption{{\em Non-tropical case (without fading)}: Comparison of coverage probabilities at typical min-signal max-interference (handover), typical time and typical max-signal, with respect to SINR threshold or user QoS requirement $\tau$ in dB unit, in the case: (a) $\la=10^{-4}$ (LEO constellation scale), (b) $\la=1$ (cellular scale) for $\alpha=3$ and $4$ in the case without fading.} 
    \label{fig:HO-t-MSNF}
\end{figure}
\begin{figure}[ht!]
    \centering
    \subfloat[]{\includegraphics[width=0.24\textwidth]{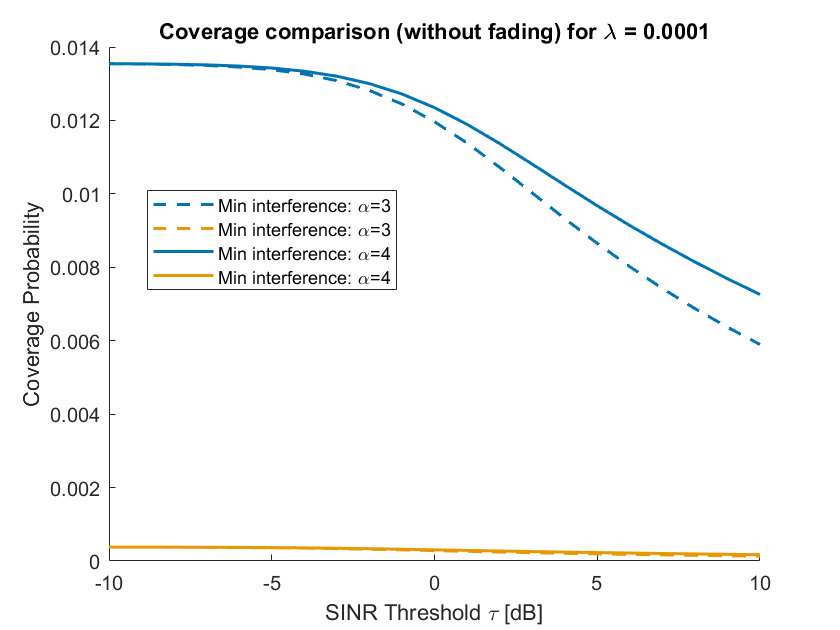}}
    \subfloat[]{
\includegraphics[width=0.24\textwidth]{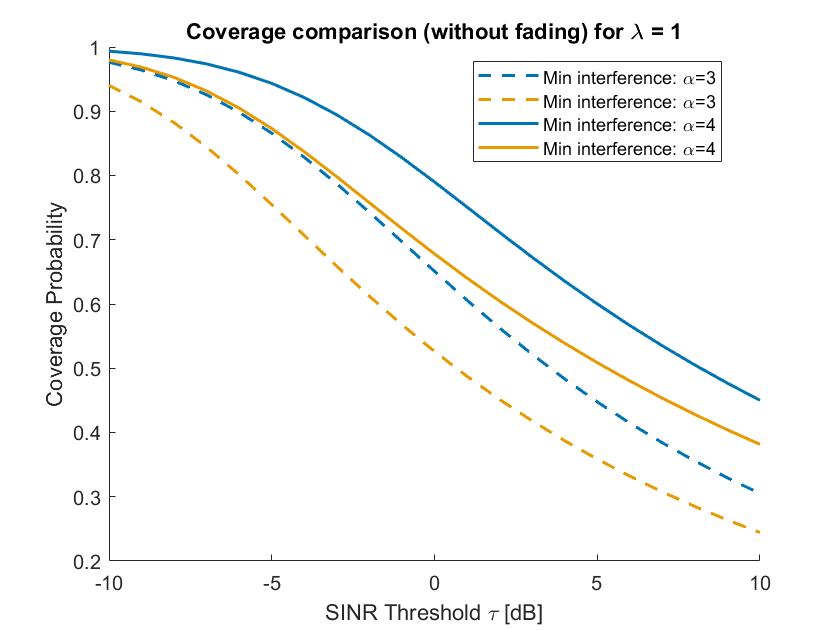}}
\caption{{\em Non-tropical case (without fading)}: Comparison of coverage probabilities at typical max-interference and typical min-signal, with respect to SINR threshold or user QoS requirement $\tau$ in dB unit, in the case: (a) $\la=10^{-4}$ (LEO scale), (b) $\la=1$ (cellular scale), for $\alpha=3$ and $4$ in the case without fading.} 
    \label{fig:MI-mINF}
\end{figure}
\begin{figure}[ht!]
    \centering
    \subfloat[]{\includegraphics[width=0.24\textwidth]{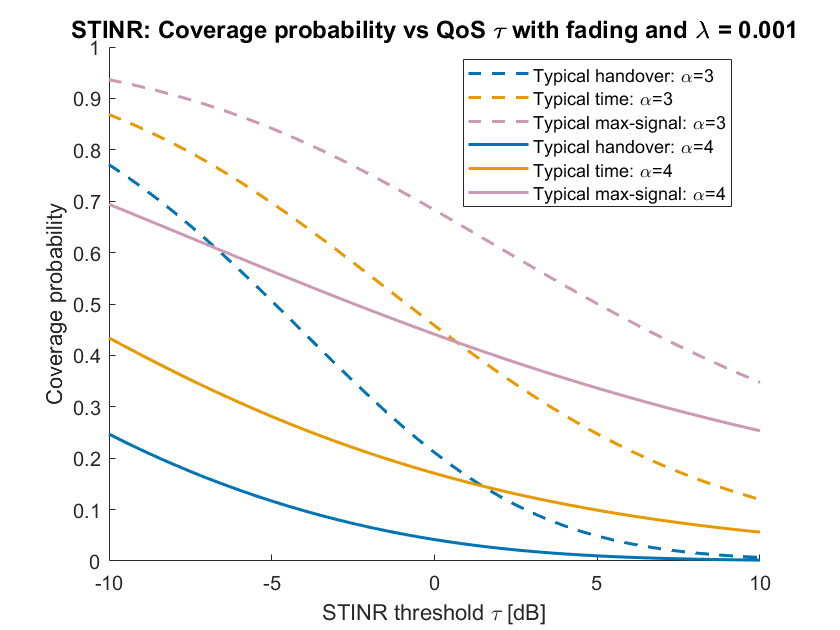}}
    \subfloat[]{
\includegraphics[width=0.24\textwidth]{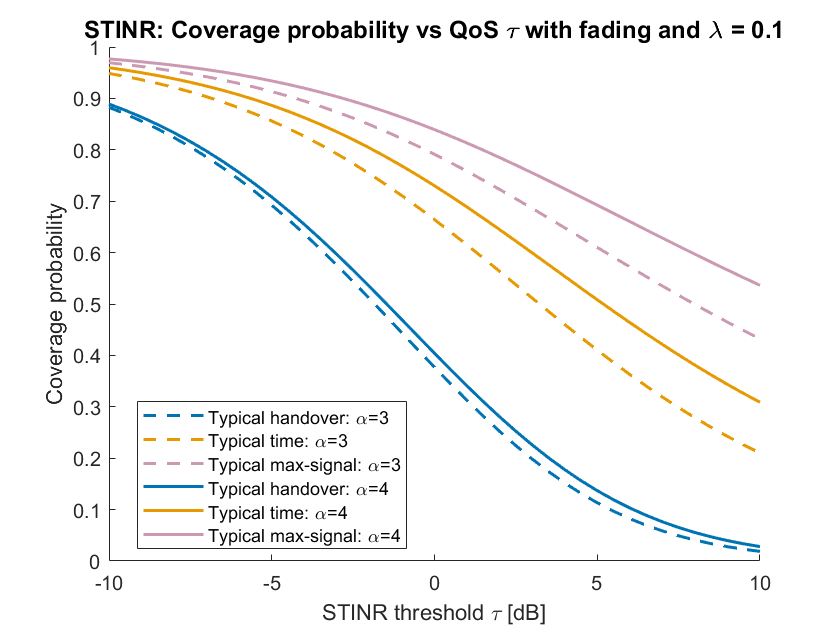}}
\caption{{\em Tropical case (with fading)}: Comparison of coverage probabilities at typical min-signal max-interference (handover), typical time and typical max-signal with respect to STINR threshold or user QoS requirement $\tau$ in dB unit, in the case: (a) $\la=10^{-3}$, (b) $\la=10^{-1}$ for $\alpha=3$ and $4$ in the case with fading.} 
\label{fig:T-HotMSWF}
\end{figure}
\begin{figure}[ht!]
    \centering
    \subfloat[]{\includegraphics[width=0.24\textwidth]{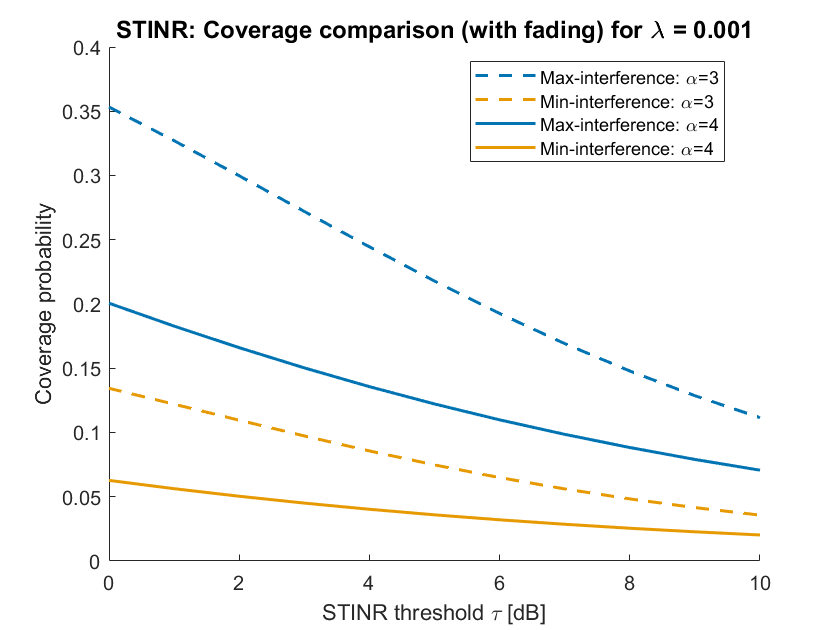}}
    \subfloat[]{
\includegraphics[width=0.24\textwidth]{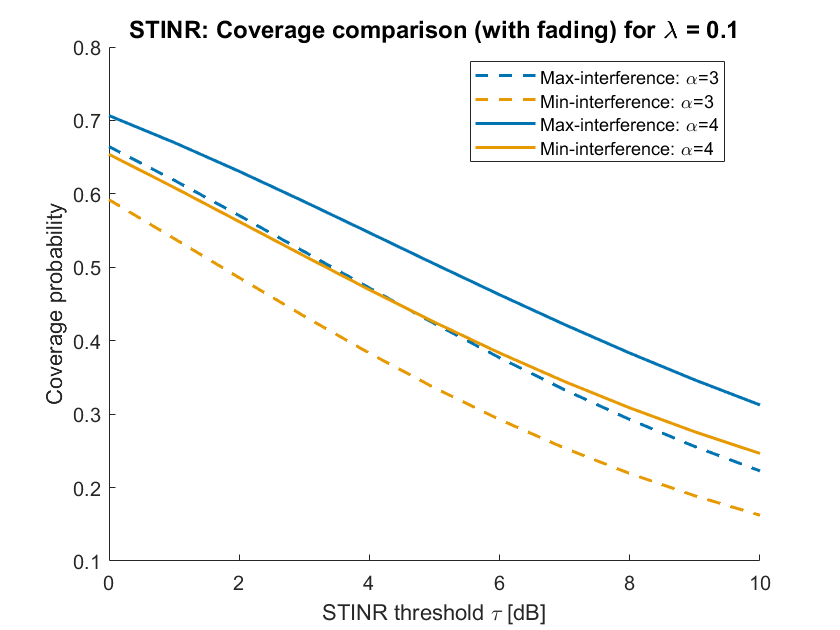}}
\caption{{\em Tropical case (with fading)}: Comparison of coverage probabilities at typical max-interference and typical min-interference, with respect to STINR threshold or user QoS requirement $\tau$ in dB unit, in the case: (a) $\la=10^{-3}$, (b) $\la=10^{-1}$ for $\alpha=3$ and $4$ in the case with fading.} 
\label{fig:T-mIvsMIWF}
\end{figure}
\begin{figure}[ht!]
    \centering
    \subfloat[]{\includegraphics[width=0.24\textwidth]{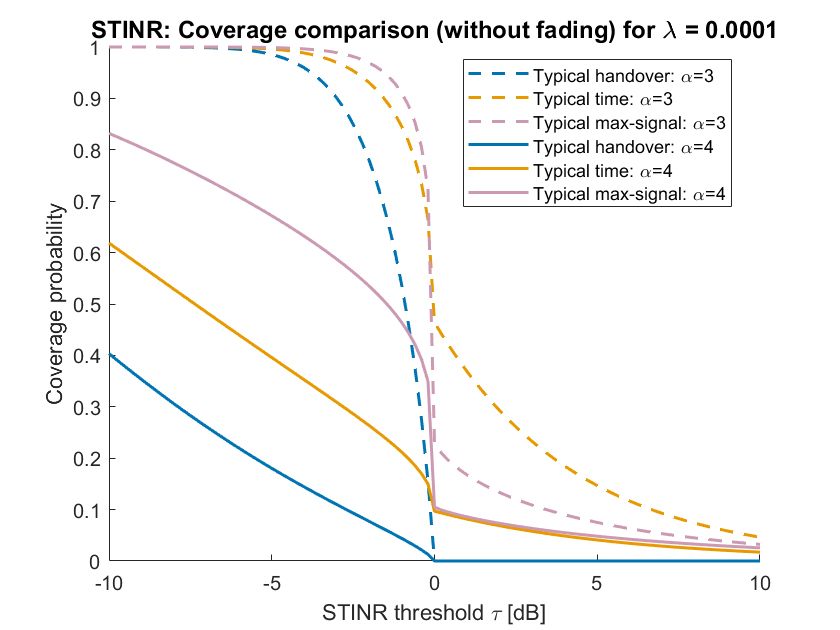}}
    \subfloat[]{
\includegraphics[width=0.24\textwidth]{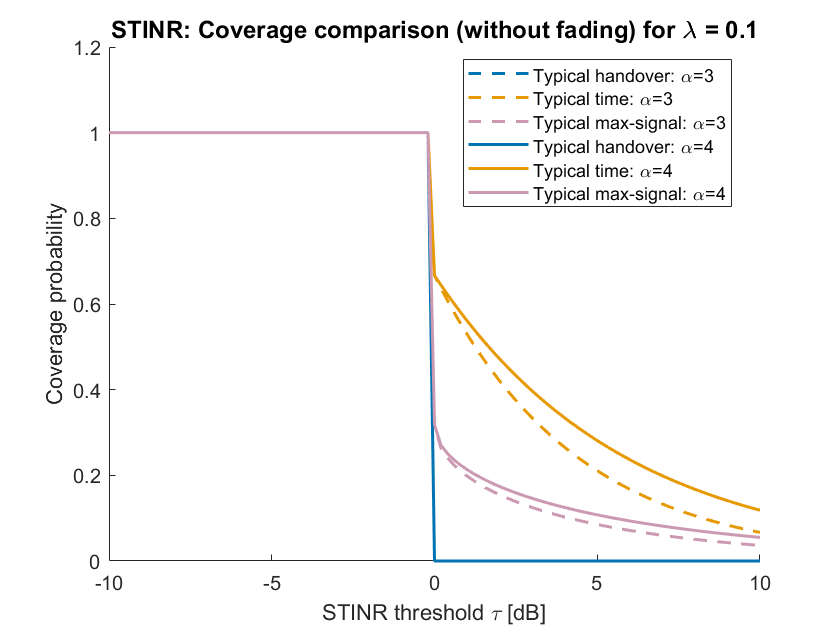}}
\caption{{\em Tropical case (without fading)}: Comparison of coverage probabilities at  typical min-signal max-interference (handover), typical time and typical max-signal, with respect to STINR threshold or user QoS requirement $\tau$ in dB unit, in the case: (a) $\la=10^{-4}$, (b) $\la=10^{-1}$ for $\alpha=3$ and $4$ in the case without fading. Here we have different ordering between $p_{\sst \Vcal; \Tcal}^c, p_{\sst S;\Tcal}^c$ and $ p_{\sst t;\Tcal}^c$, for $\la=10^{-4}$ and $10^{-1}$.} 
\label{fig:T-HotMSNF} 
\end{figure}
\begin{figure}[ht!]
    \centering
    \subfloat[]{\includegraphics[width=0.24\textwidth]{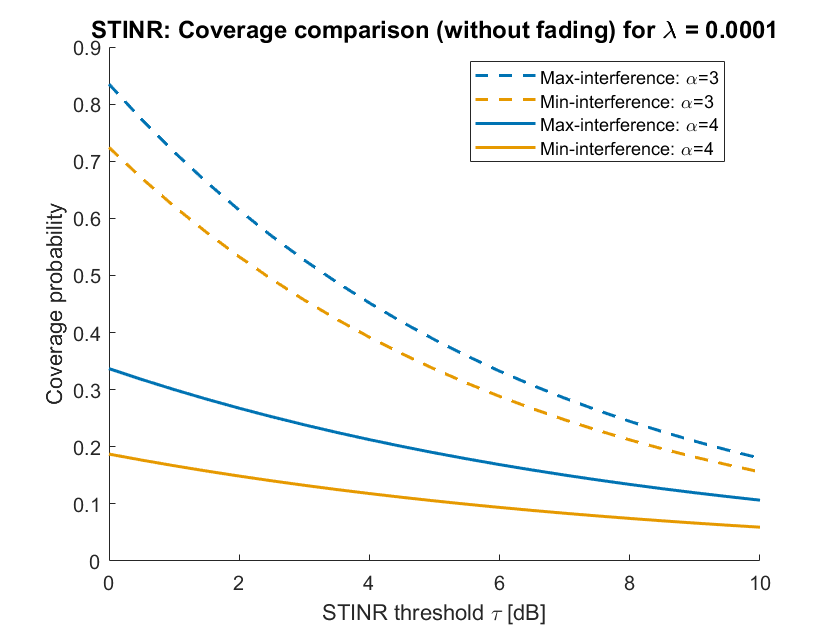}}
    \subfloat[]{
\includegraphics[width=0.24\textwidth]{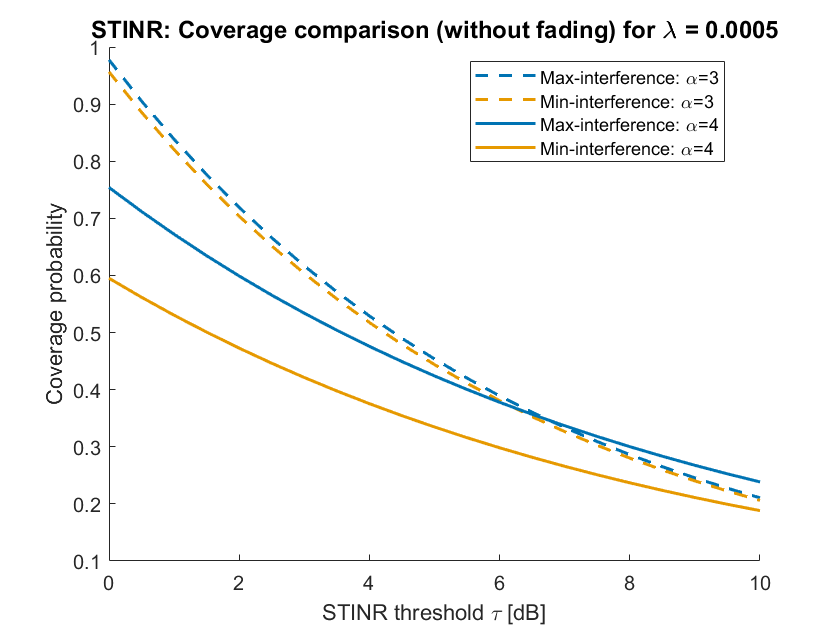}}
\caption{{\em Tropical case (without fading)}: Comparison of coverage probabilities at typical max-interference, typical min-interference, with respect to STINR threshold or user QoS requirement $\tau$ in dB unit, in the case: (a) $\la=10^{-4}$, (b) $\la=5\times 10^{-4}$ for $\alpha=3$ and $4$ in the case without fading.} 
\label{fig:T-mIvsMINF} 
\end{figure}
%
%
\section{Conclusions and future research} 
This work addresses the question of performance evaluation in a Poisson based dynamical cellular communication system at various typical epochs, at which a UE tend to suffer or benefit from the presence of interferer(s) or server close or far from it. Building on top of the mobility model described in~\cite{FB-SKJ} using stochastic geometry, we have derived the coverage probabilities at these epochs, depending on the system parameter, and made comparison between them. Our approach based on the determination of the characteristics of several typical distances that are essential for the performance evaluation. The ergodic data rate among different typical epochs correspond to different seasons of Shannon rate with respect to our seasonal analogy of the system performance. This article contains a complete study of system performance in a Poisson BSs setting in the absence or presence of fading and noise, along with simple assumption of Rayleigh fading and classical use case of power law path-loss attenuation function. The in-built tractability of the Poisson analysis allows one to analyze many other fading distribution and path-loss attenuation function. 

As a future direction of research, we plan to characterize various typical epochs of interest, investigate the performance evaluation at those typical epochs and their comparisons in other dynamical wireless system models, mentioned in~\cite{FB-SKJ}, for example 3D Poisson setting, 2D \& 3D Cox setting, Poisson and Cox setting under the spherical set up, with single and multiple speed and altitude scenarios, with or without visibility constraint. The Markov chain construction of the successive handover process developed in~\cite[Theorem 6.1]{FB-SKJ}, allows one to investigate the evolution of the seasons as a Markov chain, which we keep as a future work.
\appendices
\renewcommand{\thesection}{\Roman{section}}
\renewcommand{\thesectiondis}{\Roman{section}}

\section{Proofs: Preliminaries}~\label{section:A-prelim}
\subsection{Proof of Proposition~\ref{proposition:Lk}}~\label{subsection:Lk}
The result is a corollary of Property~\ref{in-out}. Indeed, if there are more than $(k-1)$ point in $U^{s}_{h}$, then the radial bird at those points intersects the vertical line $t=s$ at a level below $h$, contradicting the fact that $(s,h)\in \Lcal^{(k)}_e$. For part~\ref{b}, if $(s,h)$ is an intersection point of two radial birds, then $\overline{U^s_h}$ will contain two extra points corresponding to the two radial birds. \hfill $\square$
\subsection{Proof of Proposition~\ref{proposition:JHRS}}~\label{subsection:JHRS}
Here we use the intensity of the point process $\Vcal_{\sst S}$, $\la_{\sst S}= \sqrt{\la}$ from~\cite[Lemma~5.8]{FB-SKJ}.  The joint Laplace transform of $H_{\sst S}^2, R_{\sst S}^2$ with parameter $\g, \b$ under the Palm probability measure of $\Vcal_{\sst S}$ is 
\begin{align}
\lefteqn{\E^0_{\Vcal_{\sst S}}\left[e^{-\g H_{\sst S}^2-\b R_{\sst S}^2}\right]}\nn\\
&= \frac{1}{\la_{\sst S}}\E\left[\sum_{(T^{\sst S}_i, H^{\sst S}_i)\in \Hcal_{\sst S}: T^{\sst S}_i\in [0,1]}e^{-\gamma  \left(H^{\sst S}_i\right)^2-\b (R_i^{\sst S})^2}\right]\nn\\
&= \frac{1}{\sqrt{\la}}\E\left[\sum_{(T_i,H_i)\in \Hcal : T_i\in [0,1]}e^{-\gamma  H_i^2-\b (R_i^{\sst S})^2}\one{\left\{\Hcal(U^{T_i}_{H_i})=0\right\}}\right.\nn\\
&\hspace{1in} \times \one{\left\{\exists (T_j,H_j)\in \Hcal: (T_i,H_i)\in U^{T_i}_{R_i^{\sst S}}\right\}}\Bigg]\nn\\
&= \frac{1}{\sqrt{\la}}\E\left[\sum_{(T_i,H_i)\in \Hcal : T_i\in [0,1]} \hspace{-0.32in} e^{-\g H_i^2}\one_{\left\{\Hcal(U^{T_i}_{H_i})=0\right\}} \hspace{-0.06in} \sum_{(T_j,H_j)\in \Hcal\vert_{(U^{T_i}_{H_i})^c}} \hspace{-0.25in}e^{-\b R_{i,j}^2}\right.\nn\\
& \hspace{1in}\times \one{\left\{\Hcal\left(U^{T_i}_{R_{i,j}}\setminus U^{T_i}_{H_i}\right)=1\right\}}\Bigg]\nn\\
&= \frac{1}{\sqrt{\la}} \E\left[\sum_{\substack{(T_i,H_i)\in \Hcal\\ T_i\in [0,1]}} \hspace{-0.15in} e^{-\g H_i^2} \hspace{-0.06in} \sum_{(T_j,H_j)\in \Hcal\vert_{(U^{T_i}_{H_i})^c}} \hspace{-0.25in}e^{-\b R_{i,j}^2} \one_{\Hcal(U^{T_i}_{R_{i,j}})=1}\right],\nn
\end{align}
where $(R_i^{S})^2 \equiv R_{i,j}^2= (T_i-T_j)^2+H_j^2$. By applying the multivariate Campbell-Mecke formula for the factorial power of order 2 of $\Hcal$, i.e., $\Hcal^{2,\neq}$, we get
\begin{align}
\lefteqn{\E^0_{\Vcal_{\sst S}}\left[e^{-\g H_{\sst S}^2-\b R_{\sst S}^2}\right]}\nn\\
&=\!\!\frac{4\la^2}{\sqrt{\la}} \int_0^1\!\!\int_0^\infty \!\!\!\int_{(U^{t_i}_{h_i})^c} 
\hspace{-0.07in}e^{-\g h_i^2} e^{-(\b+\la\pi) ((t_i-t_j)^2+h_j^2)} {\rm d}h_j {\rm d}t_j {\rm d}h_i {\rm d}t_i\nn\\
&=4\la^{\frac 3 2} \int_0^\infty \int_{(U^0_{h_i})^c} \hspace{-0.07in} e^{-\g h_i^2} e^{-(\b+\la\pi) (t^2+h_j^2)}  {\rm d}h_j {\rm d}t {\rm d}h_i\nn\\
&=2\la^{3/2} \pi\int_0^\infty e^{-\g h_i^2} K_{\b+\la\pi}(h_i) {\rm d}h_i,
\label{eq:Jbeta0}
\end{align}
where
\begin{align}
K_{\b+\la\pi}(h_i)&:=\frac{2}{\pi}\int_{(U^0_{h_i})^c} 
e^{-(\b+\la\pi) (t^2+h_j^2)}  {\rm d}h_j {\rm d}t\nn\\
&=\frac{2}{\pi}\int_0^\pi\int_{h_i}^\infty  e^{-(\b+\la\pi) r^2} r{\rm d}r{\rm d}\theta \nn\\
&= \frac{1}{\b+\la\pi}e^{-(\b+\la\pi) h_i^2}.
\label{eq:Jbeta1}
\end{align}
Substituting from (\ref{eq:Jbeta1}) to (\ref{eq:Jbeta0}), we get
\begin{align}
\E^0_{\Vcal_{\sst S}}\left[e^{-\g H_{\sst S}^2-\b R_{\sst S}^2}\right]
&=\frac{2\la^{3/2} \pi}{\b+\la\pi} \int_0^\infty e^{-(\g+\b+\la\pi) h_i^2} {\rm d}h_i\nn\\
&=\frac{\la^{3/2} \pi^{3/2}}{\b+\la\pi} (\g+\b+\la\pi)^{-\half}\nn\\
&{=} \left(1{+}\frac{\beta}{\la\pi}\right)^{-1}\!\! \left(1{+}\frac{\g+\beta}{\la\pi}\right)^{-\half}.\nn
\end{align}
Note that for $\g=0$, we have $\E^0_{\Vcal_{\sst S}}\left[e^{-\b R_{\sst S}^2}\right]=\left(1+\frac{\beta}{\la\pi}\right)^{-3/2}$ and hence $R_{\sst S}$ has a Nakagami distribution with parameters $\left(\frac{3}{2}, \frac{3}{2\la\pi}\right)$. \hfill $\square$  
\subsection{Proof of Lemma~\ref{lem:IVH}}~\label{subsection:L-IVH}
Using the alternative definition of $\Vcal_{\sst I}$ in (\ref{eq:Acal_I2}) the intensity of the point process $\Vcal_{\sst I}$ is
\begin{align}
\la_{\sst I}&=\E\left[\sum_{T^{\sst I}_j\in \Vcal_{\sst I}\,\mbox{:}\, 0\leq T^{\sst I}_j\leq 1} \!\!\!\! 1\right] 
= \E\left[\sum_{\substack{(T_i,H_i)\in \Hcal\\0\leq T_i\leq 1}} \!\!\!\!\one{\left\{\Hcal(U^{T_i}_{H_i})=1\right\}} \right].\nn
\end{align}
Applying Campbell-Mecke formula, we have
\begin{align}
\la_{\sst I}&=2\la\int_0^1\int_0^\infty \la\pi h^2 e^{-\la\pi h^2} {\rm d}h {\rm d}t\nn\\
&= 2\la\int_0^\infty \la\pi h^2 e^{-\la\pi h^2} {\rm d}h\nn\\
&=  \la\int_0^\infty \!\!\!\!\!\!x e^{-x} \frac{1}{\sqrt{\la\pi x}}{\rm d}x = \sqrt{\frac{\la}{\pi}}  \int_0^\infty \!\!\!x^{\frac{3}{2}-1} e^{-x}{\rm d}x 
= \frac{\sqrt \la}{2}.\nn \hfill \square
\end{align}
\subsection{Proof of Proposition~\ref{prop:IVHd}}~\label{subsection:T-IVHd}
The Laplace transform of $H_{\sst I}^2$ under the Palm probability measure of $\Vcal_{\sst I}$ is
\begin{align}
\hspace{-0.065in}\E^0_{\Vcal_{\sst I}}\left[e^{-\gamma H_{\sst I}^2}\right]
&= \frac{1}{\la_{\sst I}}\E\left[\sum_{(T^{\sst I}_j, H^{\sst I}_j)\in \Hcal_{\sst I}: T^{\sst I}_j\in [0,1]  }e^{-\gamma  \left(H^{\sst I}_j\right)^2}\right]\nn\\
&{=} \frac{1}{\la_{\sst I}}\E\!\!\left[\sum_{\substack{(T_i,H_i)\in \Hcal\\ T_i\in [0,1]}} \hspace{-0.3in} e^{-\gamma  H_i^2}\one{\left\{\Hcal(U^{T_i}_{H_i})=1\right\}}\right].
\label{eq:H2I}
\end{align}
Applying the Campbell-Mecke formula and using $\la_{\sst I}=\half\sqrt{\la}$ from Lemma~\ref{lem:IVH}, the last term in (\ref{eq:H2I}) equals
\begin{align}
\lefteqn{\frac{2\la}{\la_{\sst I}} \int_0^1\int_0^\infty \la\pi h^2 e^{-(\g+\la\pi) h^2} {\rm d}h {\rm d}t}\nn\\
&=4\sqrt{\la} \int_0^\infty \la\pi h^2 e^{-(\g+\la\pi) h^2} {\rm d}h {\rm d}t\nn\\
&= 4\sqrt{\la} \frac{\la\pi}{\g+\la\pi} \int_0^\infty x e^{-x} \frac{1}{2\sqrt{(\g+\la\pi)x}}{\rm d}x\nn\\
&= 2\pi \left(\frac{\la}{\g+\la\pi}\right)^{\frac{3}{2}} \G\left(\frac{3}{2}\right)= \left(1+\frac{\g}{\la\pi}\right)^{-\frac{3}{2}}.\nn 
\end{align}
This shows that  $H_{\sst I}$ follows a Nakagami distribution with parameters $\left(\frac{3}{2}, \frac{3}{2\la\pi}\right)$. \hfill $\square$
\subsection{Proof of Proposition~\ref{proposition:JHRI}}~\label{subsection:T-JHRI}
Given $(T^{\sst I}_i,H^{\sst I}_i)\in \Hcal_{\sst I}$, if $(T_j,H_j)\in \Hcal\vert_{U^{T_i^{\sst I}}_{H_i^{\sst I}}}$, then 
$R^{\sst I}_i:=\left((T_i^{\sst I}-T_j)^2+ H_j^2\right)^\half \leq H_i^{\sst I}$.
The joint Laplace transform of $H_{\sst I}^2, R_{\sst I}^2$ is given by 
\begin{align}
\lefteqn{\E^0_{\Vcal_{\sst I}}\left[e^{-\g H_{\sst I}^2-\b R_{\sst I}^2}\right]}\nn\\
&= \frac{1}{\la_{\sst I}}\E\left[\sum_{(T^{\sst I}_i, H^{\sst I}_i)\in \Hcal_{\sst I}: T^{\sst I}_i\in [0,1]}e^{-\gamma  \left(H^{\sst I}_i\right)^2-\b (R_i^{\sst I})^2}\right]\nn\\
&= \frac{1}{\la_{\sst I}}\E\left[\sum_{\substack{(T_i,H_i)\in \Hcal\\ T_i\in [0,1]}}e^{-\gamma  H_i^2-\b (R_i^{\sst I})^2}\one_{\left\{\Hcal(U^{T_i}_{H_i})=1\right\}}\right.\nn\\
&\hspace{1in} \times \one{\left\{\exists (T_j,H_j)\in \Hcal: (T_j,H_j)\in U^{T_i}_{H_i}\right\}}\Bigg]\nn\\
&= \frac{1}{\la_{\sst I}}\E\left[\sum_{\substack{(T_i,H_i)\in \Hcal\\ T_i\in [0,1]}} \hspace{-0.15in} e^{-\g H_i^2} 
\one_{\left\{\Hcal(U^{T_i}_{H_i})=1\right\}} \hspace{-0.06in} \sum_{(T_j,H_j)\in \Hcal\vert_{U^{T_i}_{H_i}}} \hspace{-0.25in}e^{-\b R_{i,j}^2}\right],\nn
\end{align}
where, $R_{i,j}^2= (T_i-T_j)^2+H_j^2$. By applying the multivariate Campbell-Mecke formula for the factorial power of order 2 of $\Hcal$, i.e., $\Hcal^{2,\neq}$ similar to the Palm distribution in~\cite[Theorem~5.4]{FB-SKJ}, we obtain that  
\begin{align}
\lefteqn{\E^0_{\Vcal_{\sst I}}\left[e^{-\g H_{\sst I}^2-\b R_{\sst I}^2}\right]}\nn\\
&=\frac{4\la^2}{\la_{\sst I}} \int_0^1\int_0^\infty \!\!\!\int_{U^{t_i}_{h_i}} 
\hspace{-0.07in}e^{-(\g+\la\pi) h_i^2} e^{-\b ((t_i-t_j)^2+h_j^2)} {\rm d}h_j {\rm d}t_j {\rm d}h_i {\rm d}t_i\nn\\
&=8\la^{\frac 3 2} \int_0^\infty \int_{U^0_{h_i}} \hspace{-0.07in} e^{-(\g+\la\pi) h_i^2} 
e^{-\b (t^2+h_j^2)}  {\rm d}h_j {\rm d}t {\rm d}h_i\nn\\
&=4\la^{3/2} \pi\int_0^\infty e^{-(\g+\la\pi) h_i^2} J_{\b}(h_i) {\rm d}h_i,
\label{eq:HIR-2}
\end{align}
where
\begin{align}
J_{\b}(h_i)&:=\frac{2}{\pi}\int_{ U^0_{h_i}} 
e^{-\b (t^2+h_j^2)}  {\rm d}h_j {\rm d}t\nn\\
&=\frac{2}{\pi}\int_0^\pi\int_0^{h_i} e^{-\beta r^2} r{\rm d}r{\rm d}\theta = \frac{1}{\beta}\left(1-e^{-\beta h_i^2}\right).
\label{eq:Jbeta}
\end{align}
This implies that, conditioned on $H_{\sst I}= h_{\sst I}$, $R^2_{\sst I}$ is uniformly distributed on $[0,h^2_{\sst I}]$. Hence, from (\ref{eq:HIR-2}) and (\ref{eq:Jbeta}), we have
\begin{align}
\lefteqn{\E^0_{\Vcal_{\sst I}}\left[e^{-\g H_{\sst I}^2} e^{-\b R_{\sst I}^2}\right]}\nn\\
&=\frac{4\la^{3/2}\pi}{\beta} \int_0^\infty e^{-(\g+\la\pi) h_{\sst I}^2} \left(1-e^{-\beta h_{\sst I}^2}\right) {\rm d}h_{\sst I}\nn\\
&=\frac{4\la^{3/2}\pi}{\beta}\left[ \int_0^\infty e^{-(\g+\la\pi) h_{\sst I}^2}  {\rm d}h_{\sst I} {-}\int_0^\infty e^{-(\g+\la\pi+\beta) h_{\sst I}^2} {\rm d}h_{\sst I} \right]\nn\\
&=\frac{2\la^{3/2}\pi^{3/2}}{\beta}\left[(\g+\la\pi)^{-\half}-(\g+\la\pi+\beta)^{-\half}\right]\nn\\
&=\frac{2\la\pi}{\beta}\left[\left(1+\frac{\g}{\la\pi}\right)^{-\half}-\left(1+\frac{\g+\beta}{\la\pi}\right)^{-\half}\right],\nn
%
\end{align}
as required in this result. \hfill $\square$
\subsection{Proof of Lemma~\ref{lem:VTI}}~\label{subsection:L-VTI}
For any two points $(t_1,h_1), (t_2,h_2)$  with $t_2<t_1$, we have that the square of the distance to the intersection of two radial birds at $(t_1,h_1), (t_2,h_2)$ is given by
\begin{align}
\hat h^2 &= \frac{1}{4}\left[(t_1-t_2)^2+2(h_1^2+h_2^2)+\frac{(h_2^2-h_1^2)^2}{(t_1-t_2)^2}\right].
\label{eq:hat-h}
\end{align}
Applying the multivariate Campbell-Mecke formula for the factorial power of order 2 of $\Hcal$, i.e., $\Hcal^{2,\neq}$, the intensity of $\Vcal_{\sst \Ical}$ is
\begin{align}
\la_{\sst \Ical} &= 4\la^2\int_0^1\int_{0}^{\infty}\int_{0}^{\infty}\int_{-\infty}^{t_1} \la \pi \hat h^2 e^{-\la \pi \hat h^2} {\rm d} t_2\, {\rm d} h_2\, {\rm d} h_1 \, {\rm d} t_1 \nn\\
&= 4\la^2\int_{0}^{\infty}\int_{0}^{\infty}\int_{0}^{\infty} \la \pi \hat h^2 e^{-\la \pi \hat h^2} {\rm d} t\, {\rm d} h_1\, {\rm d} h_2,
\label{eq:h2}
\end{align}
where $\hat h$ is a function of $t=t_1-t_2$, $h_1$ and $h_2$. We write the inner integral as
\begin{align}
\lefteqn{\int_{0}^{\infty} \la \pi \hat h^2 e^{-\la \pi \hat h^2} {\rm d} t}\nn\\
&= e^{-\frac{\la\pi}{2}(h_1^2+h_2^2)}  \left(I(a,b) + \frac{\la\pi}{2} (h_1^2+h_2^2) J(a,b) \right),
\label{eq:IJ}
\end{align}
where $a=\frac{\la\pi}{4}$ and $b=\frac{\la\pi}{4} (h_2^2-h_1^2)^2$ and 
\begin{align}
I(a,b)=\int_0^\infty \left(a t^2+\frac{b}{t^2}\right) e^{-\left(a t^2+\frac{b}{t^2}\right)}{\rm d}t,
\label{eq:Iab}
\end{align}
\begin{align}
J(a,b)=\int_0^\infty e^{-\left(a t^2+\frac{b}{t^2}\right)}{\rm d}t = \frac{\sqrt{\pi}}{2\sqrt{a}}e^{-2\sqrt{ab}}.
\label{eq:Jab}
\end{align}
The integral $J(a,b)$ in (\ref{eq:Jab}) is computed using Cauchy-Schl\"{o}milch transformation~\cite{Amdeberhan-etal}. We first compute an integral of the form (\ref{eq:Iab}). Using Leibniz's rule for differentiation under the integral sign for two parameter $a$ and $b$, we have
\begin{align}
I(a,b)&= -a\frac{\partial}{\partial a} J(a,b)- b\frac{\partial}{\partial b} J(a,b)\nn\\
& {=}{-}\frac{\sqrt{\pi}}{2} e^{-2\sqrt{ab}}\left[a\left({-}\half a^{-3/2}{-} \sqrt{b}/a\right)+b \left({-}\frac{1}{\sqrt{b}}\right) \right]\nn\\
& =\!\! \frac{\sqrt{\pi}}{2} e^{-2\sqrt{ab}}\!\!\left[\frac{1}{2\sqrt{a}}{+} 2\sqrt{b} \right] 
\!=\!\frac{\sqrt{\pi}}{4\sqrt{a}} e^{-2\sqrt{ab}}\!\!\left(1{+} 4\sqrt{ab} \right). \nn
\end{align}
Then using  $a=\frac{\la\pi}{4}$ and $b=\frac{\la\pi}{4} (h_2^2-h_1^2)^2$ in (\ref{eq:IJ}), we have
\begin{align}
\lefteqn{\int_{0}^{\infty} \la \pi \hat h^2 e^{-\la \pi \hat h^2} {\rm d} t}\nn\\
&= e^{-\frac{\la\pi}{2}(h_1^2+h_2^2)} e^{-2\sqrt{ab}} \frac{\sqrt{\pi}}{4\sqrt{a}}  \left(1+ 4\sqrt{ab} + \la\pi (h_1^2+h_2^2) \right)\nn\\
&{=} e^{-\frac{\la\pi}{2}\left(h_1^2+h_2^2+|h_1^2-h_2^2|\right)} \frac{1}{2\sqrt{\la}}  \left[1+ \la\pi(|h_1^2-h_2^2|{+}h_1^2+h_2^2) \right]\nn\\
&= \frac{1}{2\sqrt{\la}} e^{-\la\pi(h_1^2\vee h_2^2)}   \left(1+ 2\la\pi(h_1^2\vee h_2^2)\right).
\label{eq:IJ1}
\end{align}
Using the value of the inner integral from (\ref{eq:IJ1}) in (\ref{eq:h2}) we have
\begin{align}
\la_{\sst \Ical}&{=} 2\la^{3/2}\!\!\int_{0}^{\infty}\!\!\!\!\int_{0}^{\infty}\!\!\!\!  e^{-\la\pi(h_1^2\vee h_2^2)}   \left(1+ 2\la\pi(h_1^2\vee h_2^2)\right)  {\rm d} h_1\, {\rm d} h_2\nn\\
&= \frac{2\sqrt{\la}}{\pi} \int_{0}^{\infty}\!\!\!\! \int_{0}^{\infty}\!\!\!\! e^{-h_1^2\vee h_2^2}   \left(1{+} 2(h_1^2\vee h_2^2)\right){\rm d} h_1 {\rm d} h_2.
\label{eq:h3}
\end{align}
Since the integrand is a symmetric function of $h_1,h_2$, it is enough to compute the integral for $h_2\in [0,\infty)$ and $h_1\in [0,h_2]$. Using this, it can be proved that
\[
\int_{(\R^+)^2}\!\!\!\!\! \!\!\! e^{-(h_1^2\vee h_2^2)} {\rm d} h_1\, {\rm d} h_2 = 1 =
\int_{(\R^+)^2} \!\!\!\!\! (h_1^2\vee h_2^2) e^{-(h_1^2\vee h_2^2)}    {\rm d} h_1\, {\rm d} h_2.
\]
This shows from (\ref{eq:h3}) that $\la_{\sst \Ical} = \frac{6\sqrt{\la}}{\pi}$. \hfill $\square$
\subsection{Proof of Proposition~\ref{proposition:TIH}}~\label{subsection:T-TIH}
The Laplace transform of $H_{\sst \Ical}^2$ is obtained under the Palm probability measure of $\Vcal_{\sst \Ical}$ similarly to the Palm distribution in~\cite[Theorem~5.4]{FB-SKJ} as follows:
\begin{align}
\lefteqn{\E^0_{\Vcal_{\sst \Ical}}\left[e^{-\gamma H_{\sst \Ical}^2}\right]}\nn\\
&= \frac{1}{\la_{\sst \Ical}}\E\left[\sum_{\substack{(T_i,H_i)\in \Hcal\\ T_i\in [0,1]}} \sum_{\substack{(T_j,H_j)\in \Hcal\\ T_j<T_i}}e^{-\gamma  \hat H^2}\one_{\left\{\Hcal(U^{\hat S}_{\hat H})=1\right\}}\right]\nn\\
&{=}\frac{4\la^2}{\la_{\sst \Ical}}\int_0^1 \!\! \!\int_{0}^{\infty} \!\!\! \int_{0}^{\infty} \!\! \!\int_{-\infty}^{t_1} \!\!\!\!\! \la \pi \hat h^2 e^{-(\g+\la \pi) \hat h^2} {\rm d}  t_2\, {\rm d} h_2\, {\rm d} h_1 \, {\rm d} t_1 \nn\\
&=\frac{2\la^{3/2}\pi}{3}\int_{0}^{\infty} \!\! \int_{0}^{\infty} \!\!\int_0^{\infty} \!\! \la \pi \hat h^2 e^{-(\g+\la \pi) \hat h^2} {\rm d} t\, {\rm d} h_2\, {\rm d} h_1,
\label{eq:HTI1}
\end{align}
by applying the multivariate Campbell-Mecke formula for the factorial power of order 2 of $\Hcal$, where $\la_{\sst \Ical} = \frac{6\sqrt{\la}}{\pi}$, from Lemma~\ref{lem:VTI}, and $\hat h$ is function of $t=t_1-t_2, h_1$ and $h_2$ as derived in (\ref{eq:hat-h}). We evaluate the inner integral in (\ref{eq:HTI1}) as
\begin{align}
\lefteqn{\frac{\la \pi}{\g+\la\pi}\int_0^{\infty} (\g+\la \pi) \hat h^2 e^{-(\g+\la \pi) \hat h^2} {\rm d} t}\nn\\
&= \frac{\sqrt{\pi}}{2\sqrt{\g+\la\pi}} e^{-(\g+\la\pi)(h_1^2\vee h_2^2)}   \left(1+ 2(\g+\la\pi)(h_1^2\vee h_2^2)\right)\nn\\
&{=}\frac{1}{2\sqrt{\la}} \!\!\left(\frac{\la \pi}{\g{+}\la\pi}\right)^{1/2}    \!\!\! e^{-(\g{+}\la\pi)(h_1^2\vee h_2^2)}\! \left(1{+}2(\g{+}\la\pi)(h_1^2\vee h_2^2)\right),
\label{eq:HTI2}
\end{align}
using similar computation up to (\ref{eq:IJ1}). Substituting from (\ref{eq:HTI2}) to (\ref{eq:HTI1}), we have 
\begin{align}
\E^0_{\Vcal_{\sst \Ical}}\left[e^{-\gamma H_{\sst \Ical}^2}\right] &= \frac{\la\pi}{3} \left(\frac{\la \pi}{\g+\la\pi}\right)^{3/2}\int_{0}^{\infty}\!\!\!\int_{0}^{\infty} \!\!\! e^{-(\g+\la\pi)(h_1^2\vee h_2^2)}  \nn\\
&\hspace{0.4in}\times \left(1+ 2(\g+\la\pi)(h_1^2\vee h_2^2)\right)\,{\rm d} h_2\, {\rm d} h_1\nn\\
&{=}\frac{\la\pi}{3}\!\! \left(\!\frac{\la \pi}{\g{+}\la\pi}\!\right)^{3/2}\!\!\!\!\!\!\frac{3}{\g{+}\la\pi}{=} \left(1{+}\frac{\g}{\la \pi}\right)^{-5/2},
\label{eq:LIcal}
\end{align}
since the double integral in the first step in (\ref{eq:LIcal}) equals to $\frac{3}{\g+\la\pi}$ similarly to (\ref{eq:h3}). \hfill $\square$
\subsection{Proof of Proposition~\ref{proposition:JHRTI}}~\label{subsection:T-JHRTI}
Let $(\hat S, \hat H)$ be the intersection of radial birds located at $(T_i,H_i)$ and $(T_j,H_j)$. The joint distribution of $H_{\sst \Ical}^2, R_{\sst \Ical}^2$ is given by the joint Laplace transform obtained similarly to the Palm distribution in~\cite[Theorem~5.4]{FB-SKJ}, as follows: 
\begin{align}
\lefteqn{ \E^0_{\Vcal_{\sst \Ical}}\left[e^{-\gamma H_{\sst \Ical}^2- \beta R_{\sst \Ical}^2}\right]}\nn\\
&= \frac{1}{\la_{\sst \Ical}}\E\left[\sum_{\substack{(T_i,H_i)\in \Hcal\\ T_i\in [0,1]}} \sum_{\substack{(T_j,H_j)\in \Hcal\\ T_j<T_i}}e^{-\gamma  H_{\sst \Ical}^2-\beta R_{\sst \Ical}^2}\one{\left\{\Hcal(U^{\hat S}_{\hat H})=1\right\}}\right.\nn\\
&\hspace{1.8in}\times  \one{\left\{\exists (T_k,H_k)\in \Hcal\vert_{U^{\hat S}_{\hat H}}\right\}}\Bigg]\nn\\
&= \frac{1}{\la_{\sst \Ical}}\E\left[\sum_{\substack{(T_i,H_i)\in \Hcal\\ T_i\in [0,1]}} \sum_{\substack{(T_j,H_j)\in \Hcal\\ T_j<T_i}}e^{-\gamma  H_{\sst \Ical}^2}\one{\left\{\Hcal(U^{\hat S}_{\hat H})=1\right\}} \right.\nn\\
&\hspace{1.8in}\times \left.\sum_{(T_k,H_k)\in \Hcal\vert_{U^{\hat S}_{\hat H}}} \hspace{-0.2in}e^{-\b R_{\hat S,k}^2}  \right]\nn\\
&=\frac{4\la^2}{\la_{\sst \Ical}}\!\!\int_0^1\!\!\int_{0}^{\infty}\!\!\!\int_{0}^{\infty} \!\!\! \int_{-\infty}^{t_1} \!\!\!\! e^{-(\g+\la \pi) \hat h^2} \la\pi J_{\beta}(\hat s,\hat h) {\rm d} t_2\, {\rm d} h_2\, {\rm d} h_1 \, {\rm d} t_1,
\label{eq:HTIRI}
\end{align}
where $\hat H \equiv H_{\sst \Ical}$ and $R^2_{\sst \Ical}=R_{\hat S,k}^2:= (\hat S-T_k)^2+H_k^2$, $J_{\beta}(\hat s,\hat h)=\frac{2}{\pi}\int_{U^{\hat s}_{\hat h}}  e^{-\beta \hat r^2} {\rm d}  t_3\, {\rm d} h_3$, $\hat r^2=(\hat s-t_3)^2+h_3^2$ is a realisation of $R_{\sst \Ical}$ and $\hat s=\frac{t_1^2+h_1^2- t_2^2-h_2^2}{2(t_1-t_2)}$.
Let $\hat s-t_3= u\cos\theta$ and $h_3=u\sin \theta$. Then similarly to (\ref{eq:Jbeta}) we can compute the integral
\begin{align}
J_{\beta}(\hat s,\hat h)&=\frac{2}{\pi}\int_{U^{\hat s}_{\hat h}}  e^{-\beta \hat r^2} {\rm d}  t_3\, {\rm d} h_3\nn\\
&{=}\frac{2}{\pi} \int_0^\pi\!\!\! \int_0^{\hat h} \!\!e^{-\b u^2} u{\rm d}u\,{\rm d}\theta {=} \frac{1}{\beta}\left(1{-}e^{-\beta \hat h^2}\right).
\end{align}
Conditioned on $H_{\sst \Ical}=\hat h$, we have $R^2_{\sst \Ical}$ is uniform on $[0,\hat h^{2}]$. Then the integral in (\ref{eq:HTIRI}) becomes
\begin{align}
\lefteqn{\E^0_{\Vcal_{\sst \Ical}}\left[e^{-\g H_{\sst \Ical}^2} e^{-\b R_{\sst \Ical}^2}\right] }\nn\\
&=\frac{4\la^3\pi}{\la_{\sst \Ical}\b}\int_{0}^{\infty}\int_{0}^{\infty}\int_0^{\infty}  e^{-(\g+\la \pi) \hat h^2} \left(1-e^{-\beta \hat h^2}\right) {\rm d} t\, {\rm d} h_2\, {\rm d} h_1\nn\\
&=\frac{2\la\pi}{3\b} \left[\left(1+\frac{\g}{\la\pi}\right)^{-3/2}-\left(1+\frac{\g+\beta}{\la\pi}\right)^{-3/2}\right],\nn
\end{align}
using the fact that 
$\la_{\sst \Ical}=\frac{6\sqrt{\la}}{\pi}$ from Lemma~\ref{lem:VTI} and for any $\eta\geq 0$
\[
\pi\la^{3/2}\!\! \int_{0}^{\infty}\!\! \int_{0}^{\infty}\!\!\int_{0}^{\infty} \!\! e^{-(\la \pi+\eta) \hat{h}^2} {\rm d} t\, {\rm d} h_2\, {\rm d} h_1= \left(1+\frac{\eta}{\la\pi}\right)^{-3/2}\!\!\!. \hfill \square
\]
\section{Proofs: case with fading}~\label{proofs-wf}
This section contains the proof of Theorem~\ref{theorem:GPM-WF} and the derivation of these performance metrics for individual typical epochs with SINR and STINR, with respect to the corresponding Palm probability measure.  

\subsection{Proof of Theorem~\ref{theorem:GPM-WF}}
The coverage probability with respect to the Palm probability distribution of the epoch of interest is
\begin{align}
p^c(\tau,\mu,\la,\a)&= \E^0_{H}[\P(\Scal>\tau\vert H)].
\label{eq:OP1}
\end{align}
Conditioned on $H=h$ the inner term (\ref{eq:OP1}) is
\begin{align}
\P(\Scal>\tau\vert H=h)
&= \E_{I_{h}}\left[\P(\Scal>\tau\vert I_h)\right]\nn\\
&=\E_{I_{h}}\left[\P\left(\rho >\tau  h^{\a}\left(\s^2+I_{h}\right)\right)\right]\nn\\
&=\E_{I_{h}}\left[e^{-\mu\tau h^{\a}\left(\s^2+I_{ h}\right)}\right]\nn\\
&= e^{-\mu\tau h^{\a}\s^2}\Lcal_{I_{h}}(\mu\tau h^{\a}).\nn
\end{align}
Then the coverage probability is given by the formula
\begin{align}
p^c(\tau,\mu,\la,\a)&= \E^0_{H}\left[e^{-\mu\tau H^{\a}\s^2} \Lcal_{I_{H}}(\mu\tau H^{\a})\right]. \nn \;\;\;\qquad\quad  \hfill \square
\end{align}
\subsection{Proof of Theorem~\ref{theorem:GPM-WF2}}
The proof follows the same steps as in Theorem~\ref{theorem:GPM-WF}, together with taking into account the randomness due to the distance $H$ to the nearest interferer as well, jointly with the distance $R$ to the serving BS. Given $H=h$ and $R=r$, the coverage probability is
\begin{align}
\P(\Scal>\tau\vert H=h, R=r)
&= \E_{I_{h}}\left[\P(\Scal>\tau\vert I_h)\right]\nn\\
&=\E_{I_{h}}\left[\P\left(\rho >\tau  r^{\a}\left(\s^2+I_{h}\right)\right)\right]\nn\\
&=\E_{I_{h}}\left[e^{-\mu\tau r^{\a}\left(\s^2+I_{ h}\right)}\right]\nn\\
&= e^{-\mu\tau r^{\a}\s^2}\Lcal_{I_{h}}(\mu\tau r^{\a}).\nn
\end{align}
Jointly averaging of $H$ and $R$, we have
\begin{align}
p^c(\tau,\mu,\la,\a)&= \E^0_{H,R}\left[e^{-\mu\tau R^{\a}\s^2} \Lcal_{I_{H}}(\mu\tau R^{\a})\right]. \nn \qquad\hfill \square
\end{align} 
For each of the typical epoch of interest, {\em ms-MI, MS, MI, mI} and {\em t}, in each of the subsections, we just state the expression of the metrics before proving them using the formula obtained in Theorem~\ref{theorem:GPM-WF} and Theorem~\ref{theorem:GPM-WF2}, without formally stating them as a theorem or corollary. From now on, for simplicity we write $p^c_{*}\equiv p^c_{*}(\tau,\mu,\la,\a)$, $p^c_{\sst *;\Tcal}\equiv p^c_{\sst *;\Tcal}(\tau,\mu,\la,\a)$ and $\Rcal_{*}\equiv \Rcal_{*}(\mu,\la,\a)$, $\Rcal_{\sst *;\Tcal}\equiv \Rcal_{\sst *;\Tcal}(\mu,\la,\a)$, which are understood to a function of the set of parameters $\{\tau,\mu,\la,\a\}$ and $\{\mu,\la,\a\}$, respectively, where $*\in \{\sst{\Vcal, S, I, \Ical, t}$\hspace{-0.01em}$\}$.
\begin{remark}
In all our proofs, we just derive the expression for $p^c_{*}(\tau,\mu,\la,\a)$ and $p^c_{\sst *;\Tcal}(\tau,\mu,\la,\a)$, and use it in the general formula in (\ref{eq:RcalG})
for both $\Rcal_{*}(\mu,\la,\a)$ and $\Rcal_{\sst *;\Tcal}(\mu,\la,\a)$, where $*\in \{\sst{\Vcal, S, I, \Ical, t}$\hspace{-0.01em}$\}$.
\end{remark}
\subsection{Min-signal-max-interference (mS-MI): typical handover}
In this case the distance to the serving BS is given by the handover distance $H_{\sst \Vcal}$. The Palm probability distribution of a typical handover distance $H_{\sst \Vcal}$ is given by a Nakagami distribution with parameter $\left(\frac{3}{2}, \frac{3}{2\la\pi}\right)$, having density (\ref{eq:pdfhatH0}) established in~\cite[Lemma~5.6]{FB-SKJ}.

Conditioned on the typical handover distance $H_{\sst \Vcal}=h$, the distance of all other BSs is given by a Poisson point process $\eta_h=\sum_i \delta_{D_i}$ on $[h, \infty)$ with intensity measure having density $2\pi\la r {\rm d}r$, as seen in Lemma~\ref{lemma:etappp} (see also~\cite[Lemma~4.34]{FB-SKJ}). Here is a result about the probability of coverage. In the following, we use the subscript $\Vcal$ to denote the typical handover.
\subsubsection{Theorem~\ref{theorem:GPM-WF} (for \textnormal{SINR} at typical mS-MI)} 
The probability of coverage in the system with threshold $\tau$ at a typical handover is given by 
\begin{align}
p_{\sst \Vcal}^c &{=} \frac{1}{1{+}\tau}\E^0_{H_{\sst \Vcal}}\left[ e^{-\mu\tau H_{\sst \Vcal}^{\a}\s^2-\pi\la H_{\sst \Vcal}^2 \kappa(\tau,\a)}\right],
\label{eq:hocp1}
\end{align}
where $\kappa(\tau,\a):=\tau^{2/\a} \int_{\tau^{-2/\a}}^\infty\frac{1}{1+z^{\a/2}} {\rm d}z$ as in (\ref{eq:kappa}).

\begin{proof}
Using the formula (\ref{eq:GCP}) we have  
\begin{align}
p_{\sst \Vcal}^c  &=\int_{0}^\infty e^{-\mu\tau h^{\a}\s^2}\Lcal_{I_{h}}(\mu\tau h^{\a})f_{H_{\sst \Vcal}}(h){\rm d}h,
\label{eq:cp1}
\end{align}
where $\Lcal_{I_{h}}(\g)$ is the Laplace transform of the total interference 
\[
I_{h}:=\rho h^{-\a}+\sum_i \rho_i D_i^{-\a},
\]
given the handover distance $H_{\sst \Vcal}=h$. Using the Poisson point process $\eta_{h}$ of distances of all other stations, we compute the  Laplace transform of $I_{h}$ as follows:
\begin{align}
\E_{I_{h}}\left[e^{-\g I_{h}}\right] &= \E_{\eta_h, \rho, \{\rho_i\}}\left[\E\left[e^{-\g I_{h}} \vert \eta_h, \rho, \{\rho_i\}\right]\right]\nn\\
&=\E_{\rho}\left[e^{-\g\rho h^{-\a}}\right]\E_{\eta_h,  \{\rho_i\}}\left[e^{-\g\sum_{i\in \N} \rho_i D_i^{-\a}}\right]\nn\\
&=\frac{\mu}{\mu+\g h^{-\a}}\E_{\eta_h,  \{\rho_i\}}\left[\prod_{i\in\N}e^{-\g \rho_i D_i^{-\a} }\right]\nn\\
&= \frac{\mu}{\mu+\g h^{-\a}}
\E_{\eta_h}\left[\prod_{i\in\N}\E_{\rho_i}\left[e^{-\g \rho_i D_i^{-\a} }\right]\right].
\label{eq:cp3}
\end{align}
Computing the PGFl with respect to the point process $\eta_h$, we have
\begin{align}
\lefteqn{\E_{\eta_h}\left[\prod_{i\in\N}\E_{\rho_i}\left[e^{-\g \rho_i D_i^{-\a} }\right]\right]}\nn\\
&=\exp\left(-2\pi\la\int_{h}^\infty\left(1-\E_\rho[e^{-\g \rho r^{-\a}}]\right)r{\rm d}r\right)\nn\\
&= \exp\left(-2\pi\la\int_{h}^\infty\frac{\g r^{-\a}}{\mu+\g r^{-\a}}r{\rm d}r\right).
\label{eq:cp3a}
\end{align}
Substituting the Laplace transform of $I_h$ with $\g=\mu\tau h^{\a}$ from (\ref{eq:cp3}) to (\ref{eq:cp1}), we get
\begin{align}
p_{\sst \Vcal}^c 
&=\int_{0}^\infty e^{-\mu\tau h^{\a}\s^2}\Lcal_{I_{h}}(\mu\tau h^{\a})f_{H_{\sst \Vcal}}(h){\rm d}h\nn\\
&{=}\frac{1}{1{+}\tau}\int_{0}^\infty \!\!\! e^{-\mu\tau h^{\a}\s^2 -2\pi\la \int_{h}^\infty\frac{\tau h^{\a}}{r^{\a}+\tau h^{\a}}r{\rm d}r} f_{H_{\sst \Vcal}}(h){\rm d}h\nn\\
&{=}\frac{1}{1{+}\tau}\int_{0}^\infty \!\!\!\! e^{-\mu\tau h^{\a}\s^2 -\pi\la h^2 \kappa(\tau,\a)} f_{H_{\sst \Vcal}}(h){\rm d}h,\nn
\end{align}
where we have computed the integral 
\begin{equation}
\int_{h}^\infty\frac{\tau h^{\a}}{r^{\a}+\tau h^{\a}}r{\rm d}r=\half h^2 \kappa(\tau,\a),
\label{eq:kappa-a}
\end{equation}
where $\kappa(\tau,\a):=\tau^{2/\a} \int_{\tau^{-2/\a}}^\infty\frac{1}{1+z^{\a/2}} {\rm d}z$ as in (\ref{eq:kappa}).
\end{proof}
\begin{remark}[Special case: $\a=2$ and $4$]
For the attenuation exponent $\a=2$, we have 
\[\kappa(\tau,2)=\tau \int_{\tau^{-1}}^\infty\frac{1}{1+z} {\rm d}z=\infty.\]
As a result the coverage probability $p^c_{\sst \Vcal}(\tau,\mu,\la,2)=0$. Also for $\a=4$, we have that 
\[
\kappa(\tau,4)=\tau^{\half} \int_{\tau^{-\half}}^\infty\frac{1}{1+z^2} {\rm d}z 
= \tau^{\half} \arctan(\tau^{\half}).
\]
Using this, the coverage probability in (\ref{eq:hocp1}) simplifies to
\[
p^c_{\sst \Vcal}(\tau,\mu,\la,4)= \frac{1}{1{+}\tau}\E^0_{H_{\sst \Vcal}} \left[e^{-\mu\tau H_{\sst \Vcal}^{\a}\s^2 -\pi\la H_{\sst \Vcal}^2 \tau^{\half} \arctan(\tau^{\half})} \right],
\]
in the fading environment with $\a=4$.
\end{remark}
\begin{remark}
In an interference limited regime with fading exponent $\a=4$, the coverage probability has a exact closed form 
expression as
\begin{align}
p^c_{\sst \Vcal}(\tau,\mu,\la,4) &= \frac{1}{1{+}\tau}\E^0_{H_{\sst \Vcal}} \left[ e^{-\pi\la H_{\sst \Vcal}^2 \tau^{\half} \arctan(\tau^{\half} )} \right]\nn\\
&= (1{+}\tau)^{-1}\left(1{+}\tau^{\half} \arctan(\tau^{\half})\right)^{-3/2},
\label{eq:CP-CF}
\end{align}
using the Laplace transform of $H_{\sst \Vcal}^2$ from~\cite[Lemma~5.6]{FB-SKJ}. 
Note from Theorem~\ref{theorem:si} that, $p^c_{\sst \Vcal}$ is scale invariant with respect to $\la,\mu$.
\end{remark}
\subsubsection{Theorem~\ref{theorem:GPM-WF} (for tropical \textnormal{SINR} at typical mS-MI)}
Under the tropical regime of SINR, the coverage probability at a threshold $\tau$ at a typical handover are
\begin{align}
p_{\sst \Vcal;\Tcal}^c  &{=} \E^0_{H_{\sst \Vcal}}\!\left[e^{-\mu\tau H_{\sst \Vcal}^{\a}\s^2} \!\!\mu\tau H_{\sst \Vcal}^{\a} \!\! \int \!\! e^{- \mu\tau H_{\sst \Vcal}^{\a} x} \left(1{-}e^{-\mu H_{\sst \Vcal}^{\a}x}\right)\right.\nn\\
& \hspace{0.5in}\times \left.e^{-2\la \pi K_\a(\mu,x, H_{\sst \Vcal})} {\rm d}x\right].\nn
\end{align}
{\em Proof.} In this case the tropical interference, given $H_{\sst \Vcal}=h$, is $\Tcal_{h}:=\rho h^{-\a}\vee\max_i \rho_i D_i^{-\a}$. Using the formula (\ref{eq:GCP}) in Theorem~\ref{theorem:GPM-WF}, we have
\begin{align}
p_{\sst{\Vcal;\Tcal}}^c 
&=\int_{0}^\infty e^{-\mu\tau h^{\a}\s^2} \Lcal_{\Tcal_{h}}(\mu\tau h^{\a})f_{H_{\sst \Vcal}}(h){\rm d}h.\nn
\end{align}
Given a handover at a distance $h$, the probability distribution of the tropical interference  is
\begin{align}
\lefteqn{\P(\Tcal_{h}\leq x)}\nn\\
&=\E_{\eta_{h},\rho, \{\rho_i\}}\left[\P(I_{h}\leq x)\vert \eta_{h},\rho, \{\rho_i\}\right]\nn\\
&=\E_{\eta_{h}}\left[\E_{\rho, \{\rho_i\}}\left( \one_{\rho h^{-\a}\leq x}\prod_{i\in \N}\one_{\rho_iD_i^{-\a}\leq x}\right)\vert \eta_{h}\right]\nn\\
&=\P_{\rho}\left(\rho h^{-\a}\leq x\right)\E_{\eta_h}\!\!\left[\prod_{i\in \N}\exp\left(\log\E_{\rho_i}\left[\one_{\rho_iD_i^{-\a}\leq x}\right]\right)\vert \eta_{h}\right]\nn\\
&=\P_{\rho}\left(\rho \leq h^{\a}x\right) \E_{\eta_h}\!\!\left[\exp\left( \sum_{i\in \N}\log\E_{\rho_i}\left[\one_{\rho_iD_i^{-\a}\leq x}\right]\right)\vert \eta_{h}\right]\nn\\
&=\left(1{-}e^{-\mu h^{\a}x}\right)\exp\left(-2\la \pi\int_{h}^{\infty}\!\!\!\left(1-\P_\rho(\rho r^{-\a}\leq x)\right)r{\rm d}r\right)\nn\\
&=\left(1{-}e^{-\mu h^{\a}x}\right) \exp\left(-2\la \pi\int_{h}^{\infty}e^{-\mu r^\a x}r{\rm d}r\right)\nn\\
&=\left(1{-}e^{-\mu h^{\a}x}\right) e^{-2\la \pi K_\a(\mu,x, h)},
\label{eq:STINR-h}
\end{align}
where $K_\a(\mu,x, h)$ is as defined in   (\ref{eq:kalpha}). Thus the Laplace transform of the interference $\Tcal_h$ is 
\begin{align}
\Lcal_{\Tcal_{h}}(\g)&= \g \int e^{- \g x} F_{\Tcal_{h}}(x){\rm d}x.
\label{eq:LapIh}
\end{align}
Then from (\ref{eq:LapIh}) with $\g=\mu\tau h^{\a}$ and (\ref{eq:GCP}), the coverage probability is
\begin{align}
p_{\sst \Vcal; \Tcal}^c 
&=\int_{0}^\infty \!\!e^{-\mu\tau h^{\a}\s^2}\!\! \mu\tau h^{\a} \int e^{- \mu\tau h^{\a} x} \left(1{-}e^{-\mu h^{\a}x}\right)\nn\\
& \hspace{0.5in}\times e^{-2\la \pi K_\a(\mu,x, h)} {\rm d}x f_{H_{\sst \Vcal}}(h){\rm d}h.\nn  \hspace{0.7in} \hfill \square
\end{align}
%
\begin{remark}[Special case: $\a=2$ and $4$]
For $\a=2$, we have  $K_2(\mu,x, h) = \frac{1}{2\mu x} e^{-\mu h^2 x}$. For $\a=4$ we have
$K_4(\mu,x, h) 
= \frac{1}{4\sqrt{\mu x}}  \Gamma\left(\half,\mu h^4 x\right)$.
\end{remark}
\subsection{Typical time}
At any typical time, let $R$ be the random variable for the nearest BS from the user. It is well known in literature that $R$ follows a Rayleigh distribution with parameter $1/{\sqrt{2\pi\la}}$, i.e., it has  density
\[
f_R(r):=2\la\pi r \, e^{-\la\pi r^2}.
\]
\subsubsection{Theorem~\ref{theorem:GPM-WF} (for \textnormal{SINR} at a typical time)}~\label{subsubsection:t}
In this case the coverage probability with user QoS $\tau$ is
\begin{align}
p_{t}^c  &= \E^0_R\left[e^{-\mu\tau R^{\a}\s^2-\pi\la R^2 \kappa(\tau,\a)}\right],\nn
\end{align}
where the function $\kappa$ is as defined in (\ref{eq:kappa}). 

{\em Proof.} To apply the formula for the $\tau$-coverage probability (\ref{eq:GCP}) we determine the Laplace transform $\Lcal_{I_{r}}(\g)$ with $\g=\mu\tau r^{\a}$ as 
\begin{align}
\Lcal_{I_{r}}(\mu\tau r^{\a})&=\exp\left(-2\pi\la\int_{r}^\infty\frac{\mu\tau r^{\a} v^{-\a}}{\mu+\mu\tau r^{\a} v^{-\a}}v{\rm d}v\right)\nn\\
&=\exp\left(-2\pi\la\int_{r}^\infty\hspace{-0.14in}\frac{\tau r^{\a}}{v^{\a}{+}\tau r^{\a}}v{\rm d}v\right)= e^{-\pi\la r^2 \kappa(\tau,\a)},\nn
\end{align}
where $\kappa$ is as in (\ref{eq:kappa}). Then, from  (\ref{eq:GCP}), the coverage probability is
\begin{align}
p_{t}^c  &= \int_{0}^\infty e^{-\mu\tau r^{\a}\s^2}  e^{-\pi\la r^2 \kappa(\tau,\a)}f_{R}(r){\rm d}r.\nn \hspace{0.7in} \hfill \square
\end{align}
\begin{remark}
Similarly to (\ref{eq:CP-CF}) we also obtain closed form in the interference limited regime for the case $\a=4$ as
$p_{t}^c =\left(1{+}\tau^{\half} \arctan(\tau^{\half})\right)^{-1}$, using the fact that $\E\left[e^{-\g R^2}\right]=\left(1+\frac{\g}{\la\pi}\right)^{-1}$.
\end{remark}
\subsubsection{Theorem~\ref{theorem:GPM-WF} (for STINR at a typical time)}~\label{subsubsection:Tt}
At a typical time, the probability of coverage with user QoS $\tau$ is given by 
\begin{align}
p_{t;\sst{\Tcal}}^c {=} \mu\tau\E_{R}^0\!\!\left[ R^\a \!\!\int \!\! e^{-\mu\tau R^\a (\s^2+x)-2\la \pi K_\a(\mu,x,R)}{\rm d}x\right].
\label{eq:}
\end{align}
\begin{proof}
We find the coverage probability using the formula (\ref{eq:GCP}) we first determine the Laplace transform of $\Tcal_r$ given $R=r$. For this, the probability distribution of $\Tcal_r$ is
\begin{align}
\P(\Tcal_{r}\leq x) &=\E_{\eta_{r}, \{\rho_i\}}\left[\P(\Tcal_{r}\leq x)\vert \eta_{r}, \{\rho_i\}\right]\nn\\
&=\E_{\eta_{r}}\left[\E_{\{\rho_i\}}\left(\prod_{i\in \N}\one_{\rho_i D_i^{-\a}\leq x}\right)\vert \eta_{r}\right]\nn\\
&=\E_{\eta_{r}}\left[\prod_{i\in \N}\exp\left(\log\E_{\rho_i}\left[\one_{\rho_iD_i^{-\a}\leq x}\right]\right)\vert \eta_{r}\right]\nn\\
&=\E\left[\exp\left( \sum_{i\in \N}\log\E_{\rho_i}\left[\one_{\rho_iD_i^{-\a}\leq x}\right]\right)\vert \eta_{r}\right].\nn
\end{align}
Using the PGFl of the Poisson point process $\eta_r$, we have 
\begin{align}
\P(\Tcal_{r}\leq x) &=\exp\left(-2\la \pi\int_{r}^{\infty}\left(1-\P_\rho(\rho v^{-\a}\leq x)\right)v{\rm d}v\right)\nn\\
&=\exp\left(-2\la \pi\int_{r}^{\infty}\P_\rho(\rho> v^\a x)v{\rm d}v\right)\nn\\
&=e^{-2\la \pi\int_{r}^{\infty}e^{-\mu v^\a x}v{\rm d}v} =e^{-2\la \pi K_\a(\mu,x, h)},
\label{eq:STINR-R}
\end{align}
where the function $K_\a(\mu,x, h)$ is defined in (\ref{eq:kalpha}). The Laplace transform of the tropical interference is 
\begin{align}
\E_{\Tcal_{r}}\left[e^{-\g \Tcal_{r}}\right]
&= \g\int e^{-\g x} \P(\Tcal_r\leq x){\rm d}x\nn\\
&= \g\int e^{-\g x} e^{-2\la \pi K_\a(\mu,x, r)}{\rm d}x,
\label{eq:LT-t-T}
\end{align}
using $\g=\mu\tau r^\a$, the coverage probability turns out to be
\begin{align}
p_{t;\sst{\Tcal}}^c &{=}\mu\tau\!\!\int_{0}^\infty \!\! r^\a e^{-\mu\tau r^\a \s^2}\int \!\! e^{-\mu\tau r^\a x-2\la \pi K_\a(\mu,x,r)}{\rm d}x f_{R}(r){\rm d}r.\nn
\end{align}
\end{proof}
\subsection{Typical max-signal (MS)}
Let $H_{\sst S}$ be the distance to the typical visible head point. It follows from~\cite[Lemma~5.11]{FB-SKJ} that the distance $H_{\sst S}$ to the typical visible head follows a Nakagami distribution with parameter $\left(\half, \frac{1}{2\la\pi}\right)$, i.e.,
$f_{H_{\sst S}}(h)= 2\la^\half e^{-\la\pi h^2}$.
\subsubsection{Theorem~\ref{theorem:GPM-WF} (for \textnormal{SINR} at a typical MS)}~\label{subsubsection:MS}
The $\tau$-coverage probability is
\begin{align}
p_{\sst S}^c  &= \E^0_{H_{\sst S}}\left[e^{-\mu\tau H_{\sst S}^{\a}\s^2-\pi\la H_{\sst S}^2\kappa(\tau,\a)}\right].
\label{eq:cp-V}
\end{align}
\begin{proof}
To find the $\tau$-coverage probability using (\ref{eq:GCP}) we first compute the Laplace transform of the interference $I_h$ given $H_{\sst S}=h$ as 
\begin{align}
\E_{I_{h}}[e^{-\g I_{h}}]
&=\E_{\eta_h,\{\rho_i\}}\left[e^{-\g\sum_{i\in \N} \rho_i D_i^{-\a}} \vert \eta_{h},  \{\rho_i\}\right]\nn\\
&=  \E_{\eta_{h}}\left[\prod_{i\in\N}\E_{\rho_i}[e^{-\g \rho_i D_i^{-\a} }]\vert \eta_{h} \right].
\label{eq:pgfl}
\end{align}
Applying the PGFl with respect to the point process $\eta_h$ in (\ref{eq:pgfl}), we have 
\begin{align}
\E_{I_{h}}[e^{-\g I_{h}}] &=\exp\left(-2\pi\la\int_{h}^\infty\left(1-\E_\rho[e^{-\g \rho r^{-\a}}]\right)r{\rm d}r\right)\nn\\
&=\exp\left(-2\pi\la\int_{h}^\infty\frac{\g r^{-\a}}{\mu+\g r^{-\a}}r{\rm d}r\right) \label{eq:V2a}
\\
&\stackrel{(\g=\mu\tau h^\a)}{=}\exp\left(-2\pi\la\int_{h}^\infty \frac{\tau h^{\a} r^{-\a}}{1+\tau h^{\a} r^{-\a}}r{\rm d}r\right)\nn\\
&=e^{-2\pi\la\int_{h}^\infty\frac{\tau h^{\a}}{r^{\a}+\tau h^{\a}}r{\rm d}r} = e^{-\pi\la h^2 \kappa(\tau,\a)},
\label{eq:V2}
\end{align}
where $\kappa$ as in (\ref{eq:kappa}). Then using (\ref{eq:GCP}), the $\tau$-coverage probability is 
\begin{align}
p_{\sst S}^c  &=\int_{0}^\infty e^{-\mu\tau h^{\a}\s^2-\pi\la h^2\kappa(\tau,\a)}f_{H_{\sst S}}(h){\rm d}h.\nn
\end{align}
\end{proof}
\begin{remark}
Similarly to (\ref{eq:CP-CF}) we also obtain a closed form in the interference limited regime for the case $\a=4$ as
\[
p_{\sst S}^c =\left(1{+}\tau^{\half} \arctan(\tau^{\half})\right)^{-1/2}, 
\]
using the fact from~\cite[Lemma~5.11]{FB-SKJ} that 
$\E\left[e^{-\g H_{\sst S}^2}\right]=\left(1+\frac{\g}{\la\pi}\right)^{-\half}$.
\end{remark}
\subsubsection{Theorem~\ref{theorem:GPM-WF} (for STINR at a typical MS)}~\label{subsubsection:MST}
In this case the $\tau$-coverage probability is
\begin{align}
p_{{\sst S};\sst{\Tcal}}^c &{=}  \E^0_{H_{\sst S}}\!\!\left[e^{-\mu\tau H_{\sst S}^{\a}\s^2} \!\! \mu\tau H_{\sst S}^{\a}  \int e^{-\mu\tau H_{\sst S}^{\a} x-2\la \pi K_\a(\mu,x,H_{\sst S})}{\rm d}x\right].\nn
\end{align}
\begin{proof}
Given $H_{\sst S}=h$, the tropical interference is $\Tcal_h= \max_{i} \rho_i D_i^{-\a}$ and its distribution is
\begin{align}
\P(\Tcal_h\leq x) &=\E_{\eta_{h}, \{\rho_i\}}\left[\P(\Tcal_h \leq x)\vert \eta_{h}, \{\rho_i\}\right]\nn\\
&=\E_{\eta_{h}}\left[\E_{\{\rho_i\}}\left(\prod_{i\in \N}\one_{ \rho_i D_i^{-\a} \leq x}\right)\vert \eta_{h_{\sst I}}\right]\nn\\
&=   \E_{\eta_{h}}\left[\prod_{i\in \N}\E_{\rho_i}\left[\one_{ \rho_i D_i^{-\a} \leq x}\right]\vert \eta_{h}\right]\nn\\
&=   \exp\left(-2\la\pi\int_{h}^\infty \P_{\rho}(\rho>v^\a x) v{\rm d}v\right)\nn\\
&=  \exp\left(-2\la\pi\int_{h}^\infty e^{-\mu v^\a x} v{\rm d}v\right)\nn\\
&=  e^{-2\la\pi K_\a(\mu,x, h)}.
\label{eq:dnst-trop-sinr-V}
\end{align}
where $K_\a(\mu,x, h)$ is defined in (\ref{eq:kalpha}). Using the distribution of $\Tcal_h$ from (\ref{eq:dnst-trop-sinr-V}), we obtain the Laplace transform of $\Tcal_h$ as
\begin{align}
\Lcal_{I_h}(\mu\tau h^{\a}) &=  \mu\tau h^{\a}  \int e^{-\mu\tau h^{\a} x-2\la \pi K_\a(\mu,x, h)}{\rm d}x.\nn
\end{align}
We obtain the coverage probability by using (\ref{eq:GCP}).
\end{proof}
\subsection{Typical max-interference (MI)}
Recall that the distance to the nearest interferer and the serving BS is denoted by $H_{\sst I}$ and $R_{\sst I}$, where $H_{\sst I}\stackrel{d}{\sim} \mbox{Na}\left(\frac{3}{2}, \frac{3}{2\la\pi}\right)$ from Proposition~\ref{prop:IVHd} and, from Proposition~\ref{proposition:JHRI}, $R^2_{\sst I}\stackrel{d}{\sim} \mbox{U}[0,h^2]$, given $H_{\sst I}=h$.
\subsubsection{Theorem~\ref{theorem:GPM-WF2} (for \textnormal{SINR} at a typical MI)}~\label{subsubsection:MI}
The coverage probability  for user QoS $\tau$ is
\begin{align}
p^c_{\sst I} &{=} \frac{2\tau^{-2/\a}}{\a}\int_0^\tau \!\frac{z^{2/\a-1}}{1{+}z}\E^0_{H_{\sst I}}\!\! \left[e^{-\mu z H_{\sst I}^\a \s^2-\pi\la H_{\sst I}^2  \kappa(z, \a)}\right]{\rm d}z.\nn
\end{align}
\begin{proof}
In this case, the total interference is 
\[
I_{H_{\sst I}}:=\rho H_{\sst I}^{-\a}+ \sum_{i\in \N} \rho_i D_i^{-\a},
\]
where $H_{\sst I}$ is the distance to the nearest interferer. Given $H_{\sst I}=h$ and distance to the serving BS $R_{\sst I}=r$, the Laplace transform of the interference is 
\begin{align}
\E_{I_h}\left[e^{-\g I_h}\right]
&=\E_{\eta_{h}, \rho, \{\rho_i\}}\left[e^{-\g(\rho h^{-\a}+\sum_{i\in \N} \rho_i D_i^{-\a})} \vert \eta_{h}, \rho, \{\rho_i\}\right]\nn\\
&=\E_{\rho}\left[e^{-\g\rho h^{-\a}}\right]
\E_{\eta_{h}, \{\rho_i\}}\left[\prod_{i\in\N}e^{-\g \rho_i D_i^{-\a} }\vert \eta_{h}, \{\rho_i\}\right]\nn\\
&= \frac{\mu}{\mu+\g h^{-\a}} \E_{\eta_{h}}\left[\prod_{i\in\N}\E_{\rho_i}[e^{-\g \rho_i D_i^{-\a} }]\vert \eta_{h} \right]\nn\\
&= \frac{\mu}{\mu+\g h^{-\a}} e^{-2\pi\la\int_{h}^\infty\frac{\g u^{-\a}}{\mu+\g u^{-\a}}u{\rm d}u}\label{eq:gamma-int}\\
&\stackrel{\g=\mu\tau r^{\a}}{=} \frac{1}{1+\tau r^{\a} h^{-\a}} e^{-2\pi\la\int_{h}^\infty\frac{\tau r^{\a} u^{-\a}}{1+\tau r^{\a} u^{-\a}}u{\rm d}u}\nn\\
&= \frac{h^{\a}}{h^{\a}+\tau r^{\a}} e^{-2\pi\la\int_{h}^\infty\frac{\tau r^{\a}}{u^{\a}+\tau r^{\a}}u{\rm d}u}.\label{eq:OPI-2a}
\end{align}
Hence the coverage probability is
\begin{align}
\!\! p_{\sst I}^c &{=} E^0_{H_{\sst I}, R_{\sst I}}\!\! \left[ \!\frac{H_{\sst I}^\a}{H_{\sst I}^\a{+}\tau R_{\sst I}^\a} e^{-\mu\tau R_{\sst I}^{\a}\s^2-2\pi\la\int_{H_{\sst I}}^\infty\frac{\tau R_{\sst I}^{\a}}{u^\a{+} \tau R_{\sst I}^{\a}}u{\rm d}u}\right].
\end{align}
Considering the random variable $R_{\sst I}^2$, the integral in the exponent in (\ref{eq:OPI-2a}) equals 
\begin{align}
\int_h^\infty\!\!\! \frac{\tau r^{\a/2}}{u^{\a}+\tau r^{\a/2}}u{\rm d}u &= \frac{h^2}{2}\kappa\left(\tau (r/h^2)^{\a/2}, \a\right),
\label{eq:hralpha}
\end{align}
using (\ref{eq:kappar-h}). Hence the coverage probability turns out to be 
\begin{align}
p^c_{\sst I} & {=}\int_0^\infty\!\!\! \int_0^{h^2} \!\!\!  \frac{e^{-\mu\tau r^{\a/2}\s^2}}{1+\tau (r/{h^2})^{\a/2}} e^{-\pi\la h^2  \kappa(\tau (r/h^2)^{\a/2}, \a)}\nn\\
&\hspace{0.7in} \times \frac{1}{h^2}{\rm d}r f_{H_{\sst I}}(h){\rm d}h\nn\\
&\stackrel{\frac{r}{h^2}=w}{=}  \!\!\!\int_0^1\!\!\! \int_0^\infty\!\!   \frac{e^{-\mu\tau h^\a w^{\a/2}\s^2}}{1+\tau w^{\a/2}} e^{-\pi\la h^2  \kappa(\tau w^{\a/2}, \a)} f_{H_{\sst I}}(h){\rm d}h{\rm d}w \nn\\
&{=} \int_0^1 \!\!\!\frac{1}{1{+}\tau w^{\a/2}}\E^0_{H_{\sst I}}\!\! \left[e^{-\mu\tau w^{\a/2} H_{\sst I}^\a \s^2-\pi\la H_{\sst I}^2  \kappa(\tau w^{\a/2}, \a)}\right]{\rm d}w\nn\\
&{=} \frac{2\tau^{-2/\a}}{\a} \!\!\! \int_0^\tau \!\!\frac{z^{2/\a-1}}{1{+}z}\E^0_{H_{\sst I}}\!\! \left[e^{-\mu z H_{\sst I}^\a \s^2-\pi\la H_{\sst I}^2  \kappa(z, \a)}\right]{\rm d}z,\label{eq:OPI-4}
\end{align}
by taking $\tau w^{\a/2}=z$. 
\end{proof}
\subsubsection{Theorem~\ref{theorem:GPM-WF2} (for STINR at a typical MI)}~\label{subsubsection:MIT}
In this case the $\tau$-coverage probability is
\begin{align}
p^c_{\sst I;\Tcal} &{=} \E^0_{H_{\sst I}, R_{\sst I}}\left[ e^{-\mu\tau R_{\sst I}^{\a}\s^2} 
\mu\tau R_{\sst I}^{\a} \int e^{- \mu\tau R_{\sst I}^{\a} x} \left(1-e^{-\mu H_{\sst I}^\a x}\right) \right.\nn\\
&\hspace{0.7in}\left.\times e^{-2\la\pi K_\a(\mu,x,H_{\sst I})}{\rm d}x \right].\nn
\end{align}
\begin{proof}
Given the distance to the nearest interferer $H_{\sst I}=h$, the tropical interference is  $\Tcal_h= \rho h^{-\a}\vee \max_{i} \rho_i D_i^{-\a}$. The probability distribution of $\Tcal_h$ is
\begin{align}
\P(\Tcal_h\leq x) &=\E_{\eta_{h},\rho, \{\rho_i\}}\left[\P(\Tcal_h\leq x)\vert \eta_{h}, \{\rho_i\}\right]\nn\\
&=\E_{\eta_{h}}\left[\E_{\rho, \{\rho_i\}}\left(\rho h^{-\a}\prod_{i\in \N}\one_{ \rho_i D_i^{-\a} \leq x}\right)\vert \eta_{h}\right]\nn\\
&= \P_{\rho}(\rho h^{-\a}\leq x) \E_{\eta_{h}}\left[\prod_{i\in \N}\E_{\rho_i}\left[\one_{ \rho_i D_i^{-\a} \leq x}\right]\vert \eta_{h}\right]\nn\\
&=\left(1-e^{-\mu h^\a x}\right) e^{-2\la\pi\int_{h}^\infty \P_{\rho}(\rho>v^\a x) v{\rm d}v}\nn\\
&=\left(1-e^{-\mu h^\a x}\right) e^{-2\la\pi\int_{h}^\infty e^{-\mu v^\a x} v{\rm d}v}\nn\\
&=\left(1-e^{-\mu h^\a x}\right) e^{-2\la\pi K_\a(\mu,x, h)},
\label{eq:dnst-trop-sinr-T}
\end{align}
where $K_\a$ is as defined in (\ref{eq:kalpha}). We compute the Laplace transform $\Lcal_{\Tcal_h}(\g)$, for $\g=\mu\tau r^{\a}$ given $R_{\sst I}=r$, similarly to (\ref{eq:LapIh}) as  
\begin{align}
\E\left[e^{-\g \Tcal_{h}}\right]&{=} \mu\tau r^{\a}\!\! \int\!\! e^{- \mu\tau r^{\a} x} \left(1{-}e^{-\mu h^\a x}\right) e^{-2\la\pi K_\a(\mu,x, h)}{\rm d}x.
\label{eq:LT-TI}
\end{align}
We obtain the coverage probability using the Laplace transform (\ref{eq:LT-TI}) in the formula (\ref{eq:GCP2}) as 
\begin{align}
p^c_{\sst I;\Tcal} &{=} \E^0_{H_{\sst I}, R_{\sst I}}\left[ e^{-\mu\tau R_{\sst I}^{\a}\s^2} 
\mu\tau R_{\sst I}^{\a} \int e^{- \mu\tau R_{\sst I}^{\a} x} \left(1-e^{-\mu H_{\sst I}^\a x}\right) \right.\nn\\
&\hspace{0.7in}\left.\times e^{-2\la\pi K_\a(\mu,x,H_{\sst I})}{\rm d}x \right]\nn\\
&{=} \int_0^1 \E_{H_{\sst I}}\left[ e^{-\mu\tau H_{\sst I}^{\a} w^{\a/2}\s^2} 
\mu\tau H_{\sst I}^{\a} w^{\a/2} \int e^{- \mu\tau H_{\sst I}^{\a} w^{\a/2} x} \right.\nn\\
&\hspace{0.4in}\left.\times \left(1-e^{-\mu H_{\sst I}^\a x}\right)  e^{-2\la\pi K_\a(\mu,x,H_{\sst I})}{\rm d}x \right] {\rm d}w,\nn
\end{align}
by taking $R^2_{\sst I}/h^2$ as a uniform random variable on $[0,1]$, given $H_{\sst I}=h$. 
\end{proof}
\subsection{Typical min-interference (mI): typical tropical interference handover}
Recall that the distance to the nearest interferer and the serving BS is denoted by $H_{\sst \Ical}$ and $R_{\sst \Ical}$, where $H_{\sst \Ical}\stackrel{d}{\sim} \mbox{Na}\left(\frac{3}{2}, \frac{3}{2\la\pi}\right)$ from Proposition~\ref{proposition:TIH} and, from Proposition~\ref{proposition:JHRTI}, $R^2_{\sst \Ical}\stackrel{d}{\sim} \mbox{U}[0,h^2]$, given $H_{\sst \Ical}=h$.
\subsubsection{Theorem~\ref{theorem:GPM-WF2} (for \textnormal{SINR} at a typical mI)}~\label{subsubsection:mI}
In this case, the $\tau$-coverage probability is
\begin{align}
\!\! p_{\sst \Ical}^c &{=} \frac{2\tau^{-2/\a}}{\a}\int_0^\tau \!\frac{z^{2/\a-1}}{(1{+}z)^2}\E^0_{H_{\sst \Ical}}\!\! \left[e^{-\mu z H_{\sst \Ical}^\a \s^2-\pi\la H_{\sst \Ical}^2  \kappa(z,\a)}\right]{\rm d}z.\nn
\end{align}
\begin{proof}
Suppose the distance to the typical handover is $H_{\sst \Ical}$. Given $H_{\sst \Ical}=h$, let $R_{\sst \Ical}$ be the distance to serving station. In this case the total interference is
\[
I_{h}:=(\rho^++\rho^-) h^{-\a}  + \sum_{i\in \N} \rho_i D_i^{-\a}.
\]
Given $H_{\sst \Ical}=h$ and $R_{\sst \Ical}=r$, the Laplace transform of the interference is
\begin{align}
\E\left[e^{-\g I_{h}}\right] &= \left(\frac{\mu}{\mu+\g h^{-\a}}\right)^2 e^{-2\pi\la\int_{h}^\infty\frac{\g v^{-\a}}{\mu+\g v^{-\a}}v{\rm d}v}\label{eq:LTmI1}\\
&=\left(\frac{h^\a}{h^\a+\tau r^\a}\right)^2 e^{-2\pi\la\int_{h}^\infty\frac{\tau r^{\a}}{v^\a+ \tau r^{\a}}v{\rm d}v},\nn
\end{align}
by substituting $\g=\mu\tau r^\a$. Using the formula (\ref{eq:GCP2}), the coverage probability is 
\begin{align}
p_{\sst \Ical}^c 
&{=}E^0_{H_{\sst \Ical}, R_{\sst \Ical}}\!\! \left[ \!\left(\frac{H_{\sst \Ical}^\a}{H_{\sst \Ical}^\a{+}\tau R_{\sst \Ical}^\a}\right)^2 \!\! e^{-\mu\tau R_{\sst \Ical}^{\a}\s^2-2\pi\la\int_{H_{\sst \Ical}}^\infty\frac{\tau R_{\sst \Ical}^{\a}}{v^\a{+} \tau R_{\sst \Ical}^{\a}}v{\rm d}v}\right].
\label{eq:cpTTIH2}
\end{align}
By considering the random variable $R^2_{\sst \Ical}$ and performing an analysis similarly to (\ref{eq:OPI-4}), we have
\begin{align}
p_{\sst \Ical}^c &{=} \int_0^1 \!\!\!\frac{1}{(1{+}\tau w^{\a/2})^2}\E^0_{H_{\sst \Ical}}\!\! \left[e^{-\mu\tau w^{\a/2} H_{\sst \Ical}^\a \s^2-\pi\la H_{\sst \Ical}^2  \kappa(\tau w^{\a/2}, \a)}\right]{\rm d}w \nn\\
&= \frac{2\tau^{-2/\a}}{\a}\int_0^\tau \!\frac{z^{2/\a-1}}{(1{+}z)^2}\E^0_{H_{\sst \Ical}}\!\! \left[e^{-\mu z H_{\sst \Ical}^\a \s^2-\pi\la H_{\sst \Ical}^2  \kappa(z,\a)}\right]{\rm d}z,\nn
\end{align}
where $\tau w^{\a/2}=z$ and $\kappa(z,\a)$ is defined in using (\ref{eq:kappar-h}).
\end{proof}
\subsubsection{Theorem~\ref{theorem:GPM-WF2} (for STINR at a typical mI)}~\label{subsubsection:mIT}
The $\tau$-coverage probability is
\begin{align}
p^c_{\sst \Ical;\Tcal} &{=} \E^0_{H_{\sst \Ical},R_{\sst \Ical}}\left[e^{-\mu\tau R_{\sst \Ical}^{\a}\s^2}
\mu\tau R_{\sst \Ical}^{\a} \int e^{- \mu\tau R_{\sst \Ical}^{\a} x} \left(1{-}e^{-\mu H_{\sst \Ical}^{\a} x}\right)^2\right.\nn\\
&\hspace{0.7in}\left.\times e^{-2\la\pi K_\a(\mu,x,H_{\sst \Ical})}{\rm d}x\right].\nn 
\end{align}
\begin{proof}
Given $H_{\sst \Ical}=h$, let $R_{\sst \Ical}$ be the distance to serving station. In this case the tropical interference is 
\[
\Tcal_h:= \rho^+ h^{-\a}\vee  \rho^- h^{-\a}  \vee \max_{i\in \N} \rho_i D_i^{-\a}.
\]
We compute the distribution  of $\Tcal_h$, given $H_{\sst \Ical}=h$ similarly to (\ref{eq:dnst-trop-sinr-T}), as
\begin{align}
\P(\Tcal_h\leq x)&=\left(1-e^{-\mu h^\a x}\right)^2 e^{-2\la\pi K_\a(\mu,x, h)}.
\nn
\end{align}
Hence, similarly to (\ref{eq:LT-TI}), the Laplace transform of $\Tcal_h$ is
\begin{align}
\E\left[e^{-\g \Tcal_{h}}\right] &{=} \mu\tau r^{\a} \!\!\int\!\! e^{- \mu\tau r^{\a} x} \left(1{-}e^{-\mu h^\a x}\right)^2 e^{-2\la\pi K_\a(\mu,x, h)}{\rm d}x.\nn
\end{align}
We obtain the coverage probability using the Laplace transform of the interference $\Tcal_h$ in the formula (\ref{eq:GCP2}), with $\g=\mu\tau r^\a$, given $(H_{\sst \Ical}, R_{\sst \Ical})= (h,r)$, as
\begin{align}
p^c_{\sst \Ical;\Tcal} &{=} \E^0_{H_{\sst \Ical},R_{\sst \Ical}}\left[e^{-\mu\tau R_{\sst \Ical}^{\a}\s^2}
\mu\tau R_{\sst \Ical}^{\a} \int e^{- \mu\tau R_{\sst \Ical}^{\a} x} \left(1{-}e^{-\mu H_{\sst \Ical}^{\a} x}\right)^2\right.\nn\\
&\hspace{0.7in}\left.\times e^{-2\la\pi K_\a(\mu,x,H_{\sst \Ical})}{\rm d}x\right].\nn
\end{align}
\end{proof}
\section{Proofs: case without fading}~\label{section:PWFa}
As an opening remark for this section, we use the same set of notations for the distance variables, coverage probabilities and data rates, as in the case with fading. Recall from Lemma~\ref{lemma:etappp} that at any of the typical epochs of interest, the point process $\eta_h$ of the distance of all other BS is Poisson on $(h, \infty)$ with intensity measure $\hat{\mu}$ with density ${\rm d}\hat{\mu}:=2\pi\la r\, {\rm d}r$, given $H=h$, for $H \in \{H_{\sst \Vcal}, H_{\sst S}, R_{\sst I}, R_{\sst \Ical}, R\}$. 

\subsection{Proof of Theorem~\ref{theorem:GPM-NF}}
The coverage probability is given by
\begin{align}
p^c(\tau,\la,\a)&= \E^0_{H}[\P(\Scal>\tau\vert H)],
\label{eq:WFP1}
\end{align}
where, given $H=h$, the inner term in (\ref{eq:WFP1}) is
\begin{align}
\P(\Scal>\tau\vert H=h)
&= \E_{I_{h}}\left[\P(\Scal>\tau\vert I_h)\right]\nn\\
&=\P_{I_{h}}\left(\tau  \left(\s^2+I_{h}\right)< h^{-\a}\right)\nn\\
&=\P_{I_{h}}\left(I_{h}<h^{-\a}/{\tau}-\s^2\right)\nn\\
&=F_{I_{h}}\left(0\vee (h^{-\a}/{\tau}-\s^2)\right).
\nn
\end{align}
Then the baseline formula for the coverage probability is 
\begin{align}
p^c(\tau,\la,\a)&= \E^0_{H}\left[F_{I_{H}}\left(0\vee (H^{-\a}/{\tau}-\s^2)\right)\right].\nn \qquad \hfill \square
\end{align}
\subsection{Proof of Theorem~\ref{theorem:GPM-NF2}}
The proof follows the same steps as those in Theorem~\ref{theorem:GPM-NF} by considering the distance $R$ to the serving BS and the distance $H$ to the nearest interferer as well. Given $H=h$ and $R=r$, the coverage probability is
\begin{align}
\P(\Scal>\tau\vert H=h, R=r)
&= \E_{I_{h}}\left[\P(\Scal>\tau\vert h, I_h)\right]\nn\\
&=\P_{I_{h}}\left(\tau  \left(\s^2+I_{h}\right)< r^{-\a}\right)\nn\\
&=\P_{I_{h}}\left(I_{h}<r^{-\a}/{\tau}-\s^2\right)\nn\\
&=F_{I_{h}}\left(0\vee (r^{-\a}/{\tau}-\s^2)\right).
\label{eq:NFCP}
\end{align}
By jointly averaging over the distances $R$ and $H$, from (\ref{eq:NFCP}), we have
\begin{align}
p^c(\tau,\la,\a)&= \E^0_{H,R}\left[F_{I_{H}}\left(0\vee (R^{-\a}/{\tau}-\s^2)\right)\right].\nn  \qquad \hfill\square
\end{align}
\subsection{Proof of Theorem~\ref{theorem:GPM-NF1S}} 
The result can be proved similarly to the last one. \hfill $\square$
\begin{remark}
In the \textnormal{SNR} regime, the coverage probability turns out to be
\[
p^c(\tau,\la,\a) = \P^0\left(H\leq (\tau \s^2)^{-1/\a}\right),
\]
for $H= H_{\sst \Vcal}, H_{\sst S}, R_{\sst I}, R_{\sst \Ical}, R$, under their respective Palm probability measures $\P^0$.
\end{remark}
For simplicity we write $p^c_{*}\equiv p^c_{*}(\tau,\la,\a)$, $p^c_{\sst *;\Tcal}\equiv p^c_{\sst *;\Tcal}(\tau,\la,\a)$ and $\Rcal_{*}\equiv \Rcal_{*}(\la,\a)$, $\Rcal_{\sst *;\Tcal}\equiv \Rcal_{\sst *;\Tcal}(\la,\a)$, which are understood to a function of the set of parameters $\{\tau,\la,\a\}$ and $\{\la,\a\}$, respectively, where $*\in \{\sst{\Vcal, S, I, \Ical, t}$\hspace{-0.01em}$\}$.
\begin{remark}
In all our proofs, we just derive the expression for $p^c_{*}(\tau,\la,\a)$ and $p^c_{\sst *;\Tcal}(\tau,\la,\a)$. The average data rate can be derived using the general formula in (\ref{eq:RcalG}) 
for both $\Rcal_{*}(\la,\a)$ and $\Rcal_{\sst *;\Tcal}(\la,\a)$, where $*\in \{\sst{\Vcal, S, I, \Ical, t}$\hspace{-0.001em}$\}$.
\end{remark}
\begin{remark}
Suppose the contribution of the total interference created by the BSs beyond distance $h$, for some $h\geq 0$, is $I'_h:=\sum_{i\in \N}D_i^{-\a}$. The distribution of $I'_h$ is the same at all typical epochs. Indeed, the Laplace transform of which is given by 
\begin{align}
\E\left[e^{-\nu I'_h}\right]
&{=} \E_{\eta_{h}}\!\!\left[\prod_{i\in\N}e^{-\nu D_i^{-\a} }\vert \eta_h\right] {=} e^{-2\pi\la\int_{h}^\infty\left(1-e^{-\nu r^{-\a}}\right) r{\rm d}r}.
\label{eq:LI}
\end{align}
Using the integral inside the exponent as in (\ref{eq:NFI}) we have 
\begin{align}
\Lcal_{I'_h}(\nu)
& = e^{-\pi\la L_\nu(h, \a)},
\label{eq:LI1}
\end{align}
where $ L_\nu(h, \a):=-  h^2 (1{-}e^{-\nu h^{-\a}}) {+}  \nu^{2/\a}\gamma\left(1{-}\frac{2}{\a}, \nu h^{-\a}\right)$.
\end{remark}
\begin{remark}
Note from (\ref{eq:NFI}) that for $\a=2$ and $4$ we have 
\begin{align}
L_\nu(h, 2)= -  h^2\left(1-e^{-\nu h^{-2}}\right)+ \nu\gamma\left(0, \nu h^{-2}\right),\nn
\end{align}
\begin{align}
L_\nu(h, 4)= - h^2\left(1-e^{-\nu h^{-4}}\right)+  \nu^{1/2}\gamma\left(1/2, \nu h^{-4}\right),\nn
\end{align}
in terms of the incomplete gamma function defined as $\gamma(a,b):=\int_0^b z^{a-1} e^{-z}{\rm d}z$.
\end{remark}
\subsection{Min-signal-max-interference (mS-MI): typical handover}
The typical handover distance $H_{\sst \Vcal}$ is given by a Nakagami distribution with parameter $\left(\frac{3}{2}, \frac{3}{2\la\pi}\right)$ and with density (\ref{eq:pdfhatH0}).
\subsubsection{Theorem~\ref{theorem:GPM-NF} (for \textnormal{SINR} at typical mS-MI)} 
In this case the interference is $I_{h}:=h^{-\a}+\sum_i D_i^{-\a}$, given $H_{\sst \Vcal}=h$. Suppose $F_{I_h}$ is the distribution of the interference $I_h$. By applying the formula (\ref{eq:GCP-NF}) the coverage probability is
\begin{align}
p^c_{\sst \Vcal} &=  \E^0_{H_{\sst \Vcal}}\left[F_{I_{H_{\sst \Vcal}}}\left(0\vee (H_{\sst \Vcal}^{-\a}/{\tau}-\s^2)\right)\right],  
\label{eq:cp-hp-nf}
\end{align}
where the distribution of $I_h=h^{-\a}+I'_h$, given $H_{\sst \Vcal}=h$, can be found using the Laplace transform
\begin{align}
\Lcal_{I_h}(\nu)
& = e^{-\nu h^{-\a}}\Lcal_{I'_h}(\nu) \stackrel{(\ref{eq:LI1})}{=} e^{-\nu h^{-\a} -\pi\la L_\nu(h, \a)}.
\end{align}
\subsubsection{Theorem~\ref{theorem:GPM-NF} (for STINR at typical mS-MI)} 
Given a handover at a distance $H_{\sst \Vcal}=h$, the tropical interference in this case is given by the other BS at a distance $h$, i.e., $\Tcal_h=h^{-\a}$. The coverage probability is 
\begin{align}
p^c_{\sst \Vcal;\Tcal} &{=} \int_0^\infty \one_{h^{-\a}>\tau(\s^2+h^{-\a})} f_{H_{\sst \Vcal}}(h){\rm d}h\nn\\
&{=} \!\!\int_0^\infty\!\!\!\!\! \one_{h<\left(\frac{1-\tau}{\tau\s^2}\right)^\frac{1}{\a}} f_{H_{\sst \Vcal}}(h){\rm d}h
{=} F_{H_{\sst \Vcal}}\left(\left(\frac{1{-}\tau}{\tau\s^2}\right)^\frac{1}{\a}\right).
\label{eq:WFTP2}
\end{align}
which is computed using the density of $H_{\sst \Vcal}$ from (\ref{eq:pdfhatH0}).
\subsection{Typical time}
The distance to the nearest BS follows a Rayleigh density
\[
f_R(r):=2\la\pi r \, e^{-\la\pi r^2}.
\]
\subsubsection{Theorem~\ref{theorem:GPM-NF} (for \textnormal{SINR} at typical time)} 
In this case the distance $R$ to the nearest point is Rayleigh distributed and the interference is 
\[
I_{r}:=\sum_i D_i^{-\a},
\] 
given $R=r$. The coverage probability is given by the formula (\ref{eq:GCP-NF}) as 
\begin{align}
p^c_{t} &= \int_{0}^\infty F_{I_{r}}\left(\left(\frac{ r^{-\a}}{\tau}-\s^2\right)\vee 0\right) f_{R}(r){\rm d}r. 
\label{eq:WFTtime1} 
\end{align}
where $F_{I_r}$ is the distribution of $I_r$ when $R=r$, is given by the Laplace transform as in (\ref{eq:LI1}) 
\[
\Lcal_{I_r}(\nu)
 = e^{-\pi\la L_\nu(r,\a)}.
\] 
%
\subsubsection{Theorem~\ref{theorem:GPM-NF1S} (for STINR at typical time)} 
In the tropical case the interference is given by the second closest station. Let $R_1$ be the distance to the second closest station. In this case the tropical interference is given by $\Tcal= R_1^{-\a}$. From (\ref{eq:GCP-NF2S}) the coverage probability turns out to be
\begin{align}
p^c_{t;\sst{\Tcal}}
&= \int_{0}^\infty F_{\sst \Tcal}\left(\left(r^{-\a}/{\tau}-\s^2\right)\vee 0\right) f_{R}(r){\rm d}r~\label{eq:WFTropV}\\
&= \int_{0}^\infty \!\!\P_{R_1\vert r}\left(R_1{>}\left( \left(r^{-\a}/{\tau}{-}\s^2\right)\vee 0\right)^{-\frac{1}{\a}}\vee r\right) f_{R}(r){\rm d}r\nn\\
&= \int_{0}^{(\tau \s^2)^{-1/\a}} \hspace{-0.4in}\P_{R_1\vert r}\left(R_1>\left(r^{-\a}/{\tau}-\s^2\right)^{-1/\a}\vee r\right) f_{R}(r){\rm d}r\nn\\
&{=} \int_{0}^{(\tau \s^2)^{-1/\a}} \!\!\!\! \!\!  \int_{\left(h^{-\a}/{\tau}-\s^2\right)^{-1/\a}\vee h}^\infty \!\!\! f_{R_1\vert h}(r){\rm d}r f_{R}(h){\rm d}h.\label{eq:TNFt}
\end{align}
Note that $h^{-\a}/{\tau}-\s^2< h^{-\a}$ for all $\tau\geq 1$ and under this case we have
\begin{align}
p^c_{{\sst t;\Tcal}} &= \int_{0}^{(\tau \s^2)^{-1/\a}} \!\!\!\! \!  \int_{\left(h^{-\a}/{\tau}-\s^2\right)^{-1/\a}}^\infty f_{R_1\vert h}(r){\rm d}r f_{R}(h){\rm d}h.\nn%
\end{align}
For $\tau< 1$, $h^{-\a}/{\tau}-\s^2\geq h^{-\a}$ if and only if $h\leq \left(\frac{\s^2 \tau}{1-\tau}\right)^{-1/\a}$ and hence
\begin{align}
p^c_{{\sst t;\Tcal}} &= \int_{0}^{\left(\frac{\s^2 \tau}{1-\tau}\right)^{-1/\a}} \!\!\!\! \!  \int_{h}^\infty f_{R_1\vert h}(r){\rm d}r f_{R}(h){\rm d}h\nn\\
& \;\; + \int_{\left(\frac{\s^2 \tau}{1-\tau}\right)^{-1/\a}}^{(\tau \s^2)^{-1/\a}}   \int_{\left(h^{-\a}/{\tau}-\s^2\right)^{-1/\a}}^\infty f_{R_1\vert h}(r){\rm d}r f_{R}(h){\rm d}h\nn\\
&= \P\left(R\leq (1-\tau)^{1/\a}\left(\s^2 \tau\right)^{-1/\a}\right)\nn\\
& \;\; + \int_{\left(\frac{\s^2 \tau}{1-\tau}\right)^{-1/\a}}^{(\tau \s^2)^{-1/\a}}   \int_{\left(h^{-\a}/{\tau}-\s^2\right)^{-1/\a}}^\infty f_{R_1\vert h}(r){\rm d}r f_{R}(h){\rm d}h,\nn
\end{align}
since $\left(\frac{\s^2 \tau}{1-\tau}\right)^{-1/\a}\leq (\s^2 \tau)^{-1/\a}$ for $\tau<1$.
\subsection{Typical max-signal (MS)}
Let $H_{\sst S}$ be the distance to the typical visible head point. It follows from~\cite[Lemma~5.11]{FB-SKJ} that the distance $H_{\sst S}$ to the typical visible head follows a Nakagami distribution with parameter $\left(\half, \frac{1}{2\la\pi}\right)$, i.e.,
\[
f_{H_{\sst S}}(h)= 2\la^\half e^{-\la\pi h^2}.
\]
\subsubsection{Theorem~\ref{theorem:GPM-NF} (for \textnormal{SINR} at typical MS)} 
Given the distance to the serving BS $H_{\sst S}=h$, the total interference is $I_h=\sum_i D_i^{-\a}$, and suppose its distribution is $F_{I_h}$ given by the Laplace transform
\[
\Lcal_{I_h}(\nu)
= e^{-\pi\la L_\nu(h,\a)}.
\] 
We determine the coverage probability using (\ref{eq:GCP-NF}) as
\begin{align}
p^c_{\sst S} &= \int_{0}^\infty F_{I_{h}}\left(0\vee \left(\frac{ h^{-\a}}{\tau}-\s^2\right)\right) f_{H_{\sst S}}(h){\rm d}h. 
\label{eq:WFV1}
\end{align}
\subsubsection{Theorem~\ref{theorem:GPM-NF1S} (for STINR at typical MS)} 
The tropical interference is given by the second closest station, distance of which is $R_{\sst S}$. Given $H_{\sst S}=h$, we have $\Tcal_h= R_{\sst S}^{-\a}$, where $R_{\sst S}\geq h$ is the distance to the nearest interferer. The distribution of interference is
\begin{align}
\P(\Tcal_h\leq x)=\P(R_{\sst S}^{-\a}\leq x)=\P(R_{\sst S}>x^{-1/\a}).\nn
\end{align}
From (\ref{eq:GCP-NF2S}) the coverage probability $p^c_{{\sst S;\Tcal}}$ turns out to be
\begin{align}
\lefteqn{\P\left(R_{\sst S}>\left((H_{\sst S}^{-\a}/{\tau}-\s^2)\vee 0\right)^{-1/\a} \vee H_{\sst S}\right)}\nn\\
&= \int_{0}^\infty \!\!\!\! \!\!\P_{R_{\sst S}\vert h}\left(R_{\sst S}{>}\left((h^{-\a}/{\tau}{-}\s^2)\vee 0\right)^{-1/\a} \vee h\right)\!\! f_{H_{\sst S}}(h){\rm d}h\nn\\
&= \int_{0}^{(\tau \s^2)^{-1/\a}} \!\!\!\! \!\!\!\!\!\!\!\! \!\!\!\! \P_{R_{\sst S}\vert h}\left(R_{\sst S}>\left(h^{-\a}/{\tau}{-}\s^2\right)^{-1/\a}\vee h\right) \!\!f_{H_{\sst S}}(h){\rm d}h\nn\\
&= \int_{0}^{(\tau \s^2)^{-1/\a}} \!\!\!\! \! \! \int_{\left(h^{-\a}/{\tau}-\s^2\right)^{-1/\a}\vee h}^\infty \!\!\! f_{R_{\sst S}\vert h}(r){\rm d}r f_{H_{\sst S}}(h){\rm d}h.
\label{eq:TNFS}
\end{align}
Note that $h^{-\a}/{\tau}-\s^2< h^{-\a}$ for all $\tau\geq 1$ and in this case, we have
\begin{align}
p^c_{{\sst S;\Tcal}} &= \int_{0}^{(\tau \s^2)^{-1/\a}} \!\!\!\! \!  \int_{\left(h^{-\a}/{\tau}-\s^2\right)^{-1/\a}}^\infty f_{R_{\sst S}\vert h}(r){\rm d}r f_{H_{\sst S}}(h){\rm d}h.\nn
\end{align}
For $\tau< 1$, $h^{-\a}/{\tau}-\s^2\geq h^{-\a}$ if $h\leq \left(\frac{\s^2 \tau}{1-\tau}\right)^{-1/\a}$ and hence
\begin{align}
p^c_{{\sst S;\Tcal}} &= \int_{0}^{\left(\frac{\s^2 \tau}{1-\tau}\right)^{-1/\a}} \!\!\!\! \!  \int_{h}^\infty f_{R_{\sst S}\vert h}(r){\rm d}r f_{H_{\sst S}}(h){\rm d}h\nn\\
& \;\; + \int_{\left(\frac{\s^2 \tau}{1-\tau}\right)^{-1/\a}}^{(\tau \s^2)^{-1/\a}}   \int_{\left(h^{-\a}/{\tau}-\s^2\right)^{-1/\a}}^\infty f_{R_{\sst S}\vert h}(r){\rm d}r f_{H_{\sst S}}(h){\rm d}h\nn\\
&= \P\left( H_{\sst S}\leq (1-\tau)^{1/\a}\left(\s^2 \tau\right)^{-1/\a}\right)\nn\\
& \;\; + \int_{\left(\frac{\s^2 \tau}{1-\tau}\right)^{-1/\a}}^{(\tau \s^2)^{-1/\a}}   \int_{\left(h^{-\a}/{\tau}-\s^2\right)^{-1/\a}}^\infty f_{R_{\sst S}\vert h}(r){\rm d}r f_{H_{\sst S}}(h){\rm d}h,\nn
\end{align}
since $\left(\frac{\s^2 \tau}{1-\tau}\right)^{-1/\a}\leq (\s^2 \tau)^{-1/\a}$ for $\tau<1$.
\subsection{Typical max-interference (MI)}
Suppose the nearest BS and the nearest interferer are at distance $R_{\sst I}$ and $H_{\sst I}$, respectively. 
\subsubsection{Theorem~\ref{theorem:GPM-NF2} (for \textnormal{SINR} at typical MI)} 
In this case given $H_{\sst I}$, the distance to the closest interferer at its nearest position, the total interference is 
\[
I_{H_{\sst I}}:= H_{\sst I}^{-\a}+ \sum_{i\in \N}  D_i^{-\a},
\]
and given $R_{\sst I} =r$, we have $D_i\geq H_{\sst I}\geq r$, for all $D_i\in \eta_{h}$. Given the typical invisible head at distance $H_{\sst I}=h$ the Laplace transform of $I_h$ is
\begin{align}
\Lcal_{I_h}(\nu)
&\stackrel{(\ref{eq:LI1})}{=}e^{-\nu h^{-\a}-\pi\la L_\nu(h, \a)}.
\label{eq:Iint-sinr}
\end{align}
Let $F_{I_{h}}$ be the distribution of the interference $I_{h}$. Using that and the formula (\ref{eq:GCP-NF2}) we find the coverage probability as 
\begin{align}
p^c_{\sst I} &= \E^0_{H_{\sst I},R_{\sst I}}\left[F_{I_{H_{\sst I}}}\left(\left(\frac{ R_{\sst I}^{-\a}}{\tau}-\s^2\right)\vee 0\right) \right].
\label{eq:cpTIH}
\end{align}
\subsubsection{Theorem~\ref{theorem:GPM-NF2} (for STINR at typical MI)} 
In this case, we can only consider the case where $\tau\geq 1$. The tropical interference is given by the distance to the second closest BS $H_{\sst I}$,  $\Tcal= H_{\sst I}^{-\a}$. The distribution of interference is
\begin{align}
F_{\Tcal}(x)&= \P\left(H_{\sst I}^{-\a}\leq x\right)=\P\left(H_{\sst I}\geq x^{-1/\a}\right).\nn
\end{align}
From formula (\ref{eq:GCP-NF2}), we have
\begin{align}
p^c_{{\sst I ;\Tcal}} &=  \E^0_{H_{\sst I},R_{\sst I}}\left[F_{\Tcal}\left(\left(\frac{ R_{\sst I}^{-\a}}{\tau}-\s^2\right)\vee 0\right) \right]\nn\\
&= \P^0\left(H_{\sst I}\geq \left(\left(\frac{ R_{\sst I}^{-\a}}{\tau}-\s^2\right)\vee 0\right)^{-1/\a}\right)\nn\\
&= \P^0\left(H_{\sst I}\geq \left(\left(\frac{ R_{\sst I}^{-\a}}{\tau}-\s^2\right)\vee 0\right)^{-1/\a}\!\!\! ; R^{-\a}_{\sst I}\leq \s^2\tau \right)\nn\\
&\; \; +\P^0\left(H_{\sst I}\geq \left(\left(\frac{ R_{\sst I}^{-\a}}{\tau}-\s^2\right)\vee 0\right)^{-1/\a}\!\!\! ; R_{\sst I}^{-\a}\geq\s^2\tau \right)\nn\\
&=\P^0(H_{\sst I}=\infty; R^{-\a}_{\sst I}\leq \s^2\tau)\nn\\
&\;\;\;\; +\P^0\left(R_{\sst I}\leq \left(\tau(H_{\sst I}^{-\a}+\s^2)\right)^{-1/\a}\!\!\! ; R_{\sst I}^{-\a}\geq\s^2\tau \right)\nn\\
&=\P^0\left(R_{\sst I}\leq \left(\tau(H_{\sst I}^{-\a}+\s^2)\right)^{-1/\a} \right).
\label{eq:tr-Ical-nf}
\end{align}
Note that
\begin{align}
\lefteqn{\P^0\left( R_{\sst I}\leq \left(\tau(H_{\sst I}^{-\a}+\s^2)\right)^{-1/\a}\right) }\nn\\
&= \tau^{-2/\a}\E^0_{H_{\sst I}}\left[H^{-2}_{\sst I}\left(H_{\sst I}^{-\a}+\s^2\right)^{-2/\a}\right]\nn\\
&= \tau^{-2/\a}\E^0_{H_{\sst I}}\left[\left(1+\s^2 H_{\sst I}^{\a}\right)^{-2/\a}\right].
\label{eq:WFtropI}
\end{align}
\subsection{Typical min-interference (mI): tropical interference handover}
Suppose the nearest BS and nearest interferer are at distance $R_{\sst \Ical}$ and $H_{\sst \Ical}$. 
\subsubsection{Theorem~\ref{theorem:GPM-NF2} (for \textnormal{SINR} at typical mI)} 

In this case, given $H_{\sst \Ical}$, the total interference is 
\[
I_{H_{\sst \Ical}}:=2H_{\sst \Ical}^{-\a} + \sum_{i\in \N} D_i^{-\a}. 
\]
Then using (\ref{eq:GCP-NF2}), the coverage probability is given by 
\begin{align}
p^c_{\sst \Ical} &= \E^0_{H_{\sst \Ical},R_{\sst \Ical}}\left[F_{I_{H_{\sst \Ical}}}\left(\frac{ R_{\sst \Ical}^{-\a}}{\tau}-\s^2\right) \right],
\label{eq:TI-cpT}
\end{align}
where the distribution of the interference $I_h$ given $H_{\sst \Ical}= h$, is computed similarly to (\ref{eq:Iint-sinr}) using the Laplace transform as
\begin{align}
\Lcal_{I_h}(\nu)
&\stackrel{(\ref{eq:LI1})}{=}e^{-2\nu h^{-\a}-\pi\la L_\nu(h, \a)}.
\nn
\end{align}
\subsubsection{Theorem~\ref{theorem:GPM-NF2} (for STINR at typical mI)} 
This is only under the case where $\tau\geq 1$.
In this case, the tropical interference is $\Tcal:= H_{\sst \Ical}^{-\a}$. The distribution of the interference is
\begin{align}
F_{\Tcal}(x)&= \P\left(H_{\sst \Ical}^{-\a}\leq x\right)=\P\left(H_{\sst \Ical}\geq x^{-1/\a}\right).
\nn
\end{align}
The coverage probability is computed similarly to (\ref{eq:tr-Ical-nf}) and (\ref{eq:WFtropI}) and using formula (\ref{eq:GCP-NF2}) as
\begin{align}
p^c_{{\sst \Ical;\Tcal}} &=  \E^0_{H_{\sst \Ical},R_{\sst \Ical}}\left[F_{\Tcal}\left(\left(\frac{ R_{\sst \Ical}^{-\a}}{\tau}-\s^2\right) \vee 0 \right)\right]\nn\\
&=\P^0\left(H_{\sst \Ical}\geq \left(\left(\frac{ R_{\sst \Ical}^{-\a}}{\tau}-\s^2\right)\vee 0\right)^{-\frac{1}{\a}}\right)\nn\\
&=  \P^0\left(R_{\sst \Ical}\leq \left(\tau( H_{\sst \Ical}^{-\a}+\s^2)\right)^{-1/\a}\right)\nn\\
&= \tau^{-2/\a}\E^0_{H_{\sst \Ical}}\left[H^{-2}_{\sst \Ical}\left(H_{\sst \Ical}^{-\a}+\s^2\right)^{-2/\a}\right]\nn\\
&= \tau^{-2/\a}\E^0_{H_{\sst \Ical}}\left[\left(1+\s^2 H_{\sst \Ical}^{\a}\right)^{-2/\a}\right].
\label{eq:cp-nftrop}
\end{align}
\begin{remark}
One can consider the interference from both the closest interferer at typical min interference epoch. This lead to discontinuity in the SINR as well as the Shannon rate.
\end{remark}
\section{Proofs: Comparison of performance metrics}~\label{section:CPE}
\subsection{Theorem~\ref{theorem:SINR-c} (with fading)}~\label{subsection:CF}

\textbf{Part~\ref{CP-F1}.}~\label{subsection:CPE-WF}
All these inequalities can be obtained by comparing the coverage probabilities  
\[
p_{\sst \Vcal}^c = \frac{1}{1+\tau}\int_{0}^\infty e^{-\mu\tau h^{\a}\s^2-\pi\la h^2 \kappa(\tau,\a)} f_{H_{\sst \Vcal}}(h){\rm d}h,
\]
\[
p_{t}^c = \int_{0}^\infty e^{-\mu\tau r^{\a}\s^2-\pi\la r^2 \kappa(\tau,\a)} f_{R}(r){\rm d}r,
\]
\[
p_{\sst S}^c  =\int_{0}^\infty e^{-\mu\tau h^{\a}\s^2-\pi\la h^2\kappa(\tau,\a)}f_{H_{\sst S}}(h){\rm d}h,
\]
and comparing the densities of $H_{\sst \Vcal}, R, H_{\sst S}$. Note that the integrand in all the integrals are same and we define
\begin{align}
G(h):=e^{-\mu\tau h^{\a}\s^2-\pi\la h^2\kappa(\tau,\a)},
\label{eq:Gh}
\end{align}
writing it in same variable $h$. Recall the probability densities of the random variables  $H_{\sst \Vcal}$, $R$ and $H_{\sst S}$, 
$f_{H_{\sst \Vcal}}(h)=4\pi\la^{3/2} h^2 e^{-\la\pi h^2}, f_R(h)=2\la\pi h \, e^{-\la\pi h^2}$ and $f_{H_{\sst S}}(h)=2\la^{1/2} e^{-\la\pi h^2}$, for $h\geq 0$, respectively. 
Observe that $f_{H_{\sst \Vcal}}(h)\leq f_R(h)$ on $[0, 1/{2\sqrt{\la}}]$ and $f_{H_{\sst \Vcal}}(h)>f_R(h)$ on $(1/{2\sqrt{\la}}, \infty)$. Hence 
\begin{align}
p_t^c -(1+\tau) p_{\sst \Vcal}^c 
&= \int_0^{1/2\sqrt{\la}} G(h) \left[f_R(h)-f_{H_{\sst \Vcal}}(h)\right]{\rm d}h\nn\\
&\hspace{0.2in}- \int_{1/2\sqrt{\la}}^\infty G(h) \left[f_{H_{\sst \Vcal}}(h)- f_R(h)\right]{\rm d}h\nn\\
&\geq G(1/2\sqrt{\la}) \int_0^\infty  \!\!\!\!\left[f_{H_{\sst \Vcal}}(h)- f_R(h)\right]{\rm d}h =0,\nn 
\end{align}
which implies that $p_t^c \geq (1+\tau) p_{\sst \Vcal}^c \geq  p_{\sst \Vcal}^c$, since $G(1/2\sqrt{\la})\geq 0$ and $\tau\geq 0$. 

For the other inequality, note that $f_R(h)\leq f_{H_{\sst S}}(h)$ on $[0, 1/{\pi\sqrt{\la}}]$ and $f_R(h)>f_{H_{\sst S}}(h)$ on $(1/{\pi\sqrt{\la}}, \infty)$. Then by similar comparison we have that $p_{\sst S}^c \geq p_{t}^c $.

For the second part, we compare the coverage probabilities 
\begin{align}
p^c_{\sst I}&{=} \E^0_{H_{\sst I}, R_{\sst I}}\left[ \frac{H_{\sst I}^{\a}}{H_{\sst I}^{\a}{+}\tau R_{\sst I}^{\a}} e^{-\mu\tau R_{\sst I}^{\a}\s^2-2\pi\la\int_{H_{\sst I}}^\infty\frac{\tau R_{\sst I}^{\a}}{r^{\a}{+}\tau R_{\sst I}^{\a}}r{\rm d}r} \right], \nn
\end{align}
\begin{align}
p_{\sst \Ical}^c 
&{=}E^0_{H_{\sst \Ical}, R_{\sst \Ical}}\!\!\left[\left(\frac{H_{\sst \Ical}^\a}{H_{\sst \Ical}^\a{+}\tau R_{\sst \Ical}^\a}\right)^2\!\! e^{-\mu\tau R_{\sst \Ical}^{\a}\s^2-2\pi\la\int_{H_{\sst \Ical}}^\infty\frac{\tau R_{\sst \Ical}^{\a}}{v^\a{+} \tau R_{\sst \Ical}^{\a}}v{\rm d}v}\right].\nn
\end{align}
The densities of $H_{\sst I}$ and $H_{\sst \Ical}$ are
\[
f_{H_{\sst I}}(h)=4\pi\la^{3/2} h^2 e^{-\la\pi h^2}, f_{H_{\sst \Ical}}(h)=\frac{8}{3}\pi^2\la^{5/2} h^4 e^{-\la\pi h^2},
\]
respectively for all $h\geq 0$. Note that $f_{H_{\sst \Ical}}(h)\leq f_{H_{\sst I}}(h)$ on $[0, {\sqrt{3/{2\la\pi}}}]$ and $f_{H_{\sst \Ical}}(h)>f_{H_{\sst I}}(h)$ on $(\sqrt{3/{2\la\pi}}, \infty)$. Define
\[
G_1(h,r):= \frac{h^\a}{h^\a+\tau r^\a} e^{-\mu\tau r^{\a}\s^2- 2\pi\la\int_{h}^\infty\frac{\tau r^{\a}}{v^\a+ \tau r^{\a}}v{\rm d}v},
\]
\[
G_2(h,r):=\left(\frac{h^\a}{h^\a+\tau r^\a}\right)^2e^{-\mu\tau r^{\a}\s^2- 2\pi\la\int_{h}^\infty\frac{\tau r^{\a}}{v^\a+ \tau r^{\a}}v{\rm d}v}.
\]
Observe that $G_2(h,r)\leq G_1(h,r)$ for all pair $(h,r)$, because of the extra factor of the form $\frac{h^\a}{h^\a+\tau r^\a}\leq 1$, in $G_2(h,r)$, for all $h,r,\tau\in \R^+$. Let us define
\begin{align}
G_1(h):=\E_{R_{\sst I}\vert h}[G_1(h,R_{\sst I}\vert h)], 
G_2(h):= \E_{R_{\sst \Ical}\vert h}[G_2(h,R_{\sst \Ical}\vert h)],
\label{eq:G1G2-def}
\end{align}
where  $G_2(h)\leq G_1(h)$ for all $h\geq 0$. Then
\begin{align}
p_{\sst I}^c - p_{\sst \Ical}^c &= \int_0^\infty \int_0^h G_1(h,r) f_{R_{\sst I}\vert h}(r){\rm d}r f_{H_{\sst I}}(h) {\rm d}h \nn\\
&\hspace{0.2in} -\int_0^\infty \int_0^h G_2(h,r)f_{R_{\sst \Ical}\vert h}(r){\rm d}r f_{H_{\sst \Ical}}(h) {\rm d}h\nn\\
&= \int_0^\infty \E_{R_{\sst I}\vert h}[G_1(h,R_{\sst I}\vert h)]f_{H_{\sst I}}(h) {\rm d}h \nn\\
&\hspace{0.2in} -\int_0^\infty \E_{R_{\sst \Ical}\vert h}[G_2(h,R_{\sst \Ical}\vert h)] f_{H_{\sst \Ical}}(h) {\rm d}h\nn\\
&\stackrel{(\ref{eq:G1G2-def})}{=} \int_0^\infty G_1(h) f_{H_{\sst I}}(h) {\rm d}h  -\int_0^\infty G_2(h) f_{H_{\sst \Ical}}(h) {\rm d}h\nn\\
&\geq \int_0^\infty G_2(h) f_{H_{\sst I}}(h) {\rm d}h  -\int_0^\infty G_2(h) f_{H_{\sst \Ical}}(h) {\rm d}h\nn\\
&= \int_0^{\sqrt{3/{2\la\pi}}} G_2(h) \left[f_{H_{\sst I}}(h)- f_{H_{\sst \Ical}}(h)\right] {\rm d}h  \nn\\
&\hspace{0.2in} -\int_{\sqrt{3/{2\la\pi}}}^\infty G_2(h) \left[f_{H_{\sst \Ical}}(h)- f_{H_{\sst I}}(h)\right] {\rm d}h\nn\\
&\geq G_2(\sqrt{3/{2\la\pi}}) \int_0^\infty\!\!  \left[f_{H_{\sst I}}(h){-} f_{H_{\sst \Ical}}(h)\right] {\rm d}h =0,\nn
\end{align}
since $G_2(\sqrt{3/{2\la\pi}})\geq 0$. This proves the result for the non-tropical case. \hfill$\square$

\textbf{Part~\ref{CP-F2}.}
The results $p_{\sst{\Vcal;\Tcal}}^c  \leq p_{t;\sst{\Tcal}}^c \leq p_{{\sst S};\sst{\Tcal}}^c \mbox{ and } p^c_{{\sst \Ical ; \Tcal}}  \leq p^c_{{\sst I;\Tcal}}$, in the tropical case can be proved similarly. \hfill$\square$
\subsection{Theorem~\ref{theorem:SINR-c} (without fading)}~\label{subsection:CPE-NF}

\textbf{Part~\ref{CP-F1}.}
The proof essentially follows by comparing the the random variables $H_{\sst \Vcal}$, $R$ and $H_{\sst S}$.
Note that $F_{I_{H_{\sst \Vcal}}}(x)\leq F_{I_R}(x)=F_{I_{\sst S}}(x)$ for any $x\geq 0$, since 
\[
I_{H_{\sst \Vcal}=h}= h^{-\a} + \sum_i D_i^{-\a}, I_{R=h}= \sum_i D_i^{-\a} \stackrel{d}{=} I_{H_{\sst S}=h}.
\]
We prove the result by comparing the coverage probabilities 
\[
p^c_{\sst \Vcal} =  \E^0_{H_{\sst \Vcal}}\left[F_{I_{H_{\sst \Vcal}}}\left(0\vee (H_{\sst \Vcal}^{-\a}/{\tau}-\s^2)\right)\right], 
\]
\[
p^c_t =  \E^0_R\left[F_{I_R}\left(0\vee (R^{-\a}/{\tau}-\s^2)\right)\right], 
\]
\[
p^c_{\sst S} =  \E^0_{H_{\sst S}}\left[F_{I_{H_{\sst S}}}\left(0\vee (H_{\sst S}^{-\a}/{\tau}-\s^2)\right)\right],
\]
and using the technique similar to the first part of Theorem~\ref{theorem:SINR-c}, with
$G(h)= F_{I_{H=h}}\left(0\vee (h^{-\a}/{\tau}-\s^2)\right)$, for any $H\in \{H_{\sst \Vcal}, R, H_{\sst S}\}$.

For this part, we use a comparison argument similar to second part of Theorem~\ref{theorem:SINR-c} for 
\[
p^c_{\sst I} = \E^0_{H_{\sst I},R_{\sst I}}\left[F_{I_{H_{\sst I}}}\left(0\vee\left( R_{\sst I}^{-\a}/{\tau}-\s^2\right)\right) \right],
\]
\[
p^c_{\sst \Ical} = \E^0_{H_{\sst \Ical},R_{\sst \Ical}}\left[F_{I_{H_{\sst \Ical}}}\left(0\vee\left( R_{\sst \Ical}^{-\a}/{\tau}-\s^2\right)\right) \right],
\]
from (\ref{eq:cpTIH}) and (\ref{eq:TI-cpT}), respectively. Define
\[
G_1(h,r):= F_{I_{H_{\sst I}=h}}\left(0\vee\left(r^{-\a}/{\tau}-\s^2\right)\right),
\]
\[
G_2(h,r)= F_{I_{H_{\sst \Ical}=h}}\left(0\vee\left(r^{-\a}/{\tau}-\s^2\right)\right).
\]
Observe that by definition of the interference $I_{H_{\sst I}=h}$ and $I_{H_{\sst \Ical}=h}$ at a distance $h$ and the expressions in (\ref{eq:cpTIH}) and (\ref{eq:TI-cpT}) we have $G_2(h,r)\leq G_1(h,r)$ for all $h\geq 0$. Also
\begin{align}
G_2(h) &:= \E_{R_{\sst \Ical}\vert h}[G_2(h,R_{\sst \Ical})]\nn\\
&\leq \E_{R_{\sst I}\vert h}[G_1(h,R_{\sst I})]=: G_1(h).
\label{eq:G1G2-I}
\end{align}
Then by comparing the two densities 
\[
f_{H_{\sst I}}(h)=4\pi\la^{3/2} h^2 e^{-\la\pi h^2}, f_{H_{\sst \Ical}}(h)=\frac{8}{3}\pi^2\la^{5/2} h^4 e^{-\la\pi h^2},
\]
we obtain that
\begin{align}
p_{\sst I}^c - p_{\sst \Ical}^c 
%
%
%
&= \int_0^\infty \E_{R_{\sst I}\vert h}[G_1(h,R_{\sst I})]f_{H_{\sst I}}(h) {\rm d}h \nn\\
&\hspace{0.2in} -\int_0^\infty \E_{R_{\sst \Ical}\vert h}[G_2(h,R_{\sst \Ical})] f_{H_{\sst \Ical}}(h) {\rm d}h.
\label{eq:G1G21-I}
\end{align}
Using the definition (\ref{eq:G1G2-I}),
the last expression in (\ref{eq:G1G21-I}) is equal to
\begin{align}
\lefteqn{\int_0^\infty G_1(h) f_{H_{\sst I}}(h) {\rm d}h  -\int_0^\infty G_2(h) f_{H_{\sst \Ical}}(h) {\rm d}h}\nn\\
&\geq \int_0^\infty G_2(h) f_{H_{\sst I}}(h) {\rm d}h  -\int_0^\infty G_2(h) f_{H_{\sst \Ical}}(h) {\rm d}h\nn\\
&= \int_0^{\sqrt{3/{2\la\pi}}} G_2(h) \left[f_{H_{\sst I}}(h)- f_{H_{\sst \Ical}}(h)\right] {\rm d}h  \nn\\
&\hspace{0.2in} -\int_{\sqrt{3/{2\la\pi}}}^\infty G_2(h) \left[f_{H_{\sst \Ical}}(h)- f_{H_{\sst I}}(h)\right] {\rm d}h\nn\\
&\geq G_2(\sqrt{3/{2\la\pi}}) \!\!\int_0^\infty \!\!\!\!  (f_{H_{\sst I}}(h)- f_{H_{\sst \Ical}}(h)) {\rm d}h =0,\nn
\end{align}
where by using the definition of $G_2$, it can be shown that $G_2(\sqrt{3/{2\la\pi}})\geq 0$. Indeed, using the fact that for $h=\sqrt{3/{2\la\pi}}$, $R_{\sst \Ical}^2\stackrel{d}{\sim} \mbox{U}\left[0, \sqrt{3/{2\la\pi}}\right]$, from (\ref{eq:TI-cpT}) we have
\begin{align}
\lefteqn{G_2(\sqrt{3/{2\la\pi}})}\nn\\
& = \sqrt{2\la\pi/3} \int_0^{\sqrt{3/{2\la\pi}}} \hspace{-0.2in} F_{I_{H_{\sst \Ical}=\sqrt{3/{2\la\pi}}}}\left(0\vee\left(\frac{r^{-\a/2}}{\tau}-\s^2\right)\right){\rm d}r\nn\\
& = \sqrt{2\la\pi/3} \int_0^{(\tau \s^2)^{-1/\a} \wedge \sqrt{3/{2\la\pi}}} 
\hspace{-0.2in}
F_{I_{H_{\sst \Ical}=\sqrt{3/{2\la\pi}}}}\left(\frac{r^{-\a/2}}{\tau}-\s^2\right){\rm d}r,\nn
\end{align}
which is non-negative, and hence we have the result. 
\hfill $\square$

\textbf{Part~\ref{CP-F3}.}
The result $p^c_{{\sst \Ical ; \Tcal}}  \leq p^c_{{\sst I;\Tcal}}$ in the tropical case can be proved similarly. For this part, we use a comparison argument similar to second part of Theorem~\ref{theorem:SINR-c} for 
\[
p^c_{\sst I} = \E^0_{H_{\sst I},R_{\sst I}}\left[F_{I_{H_{\sst I}}}\left(0\vee\left( R_{\sst I}^{-\a}/{\tau}-\s^2\right)\right) \right],
\]
\[
p^c_{\sst \Ical} = \E^0_{H_{\sst \Ical},R_{\sst \Ical}}\left[F_{I_{H_{\sst \Ical}}}\left(0\vee\left( R_{\sst \Ical}^{-\a}/{\tau}-\s^2\right)\right) \right].
\]
Define
\[
G_1(h,R_{\sst \Ical}):= F_{I_{H_{\sst I}=h}}\left(0\vee\left(R_{\sst I}^{-\a}/{\tau}-\s^2\right)\right),
\]
\[
G_2(h,R_{\sst \Ical})= F_{I_{H_{\sst \Ical}=h}}\left(0\vee\left(R_{\sst \Ical}^{-\a}/{\tau}-\s^2\right)\right).
\]
Observe that by definition of $I_{H_{\sst I}=h}$ and $I_{H_{\sst \Ical}=h}$ and the expressions in (\ref{eq:WFtropI}) and (\ref{eq:cp-nftrop}) we have $G_2(h,R_{\sst I})\leq G_1(h,R_{\sst \Ical})$ for all $h$. Moreover from (\ref{eq:cp-nftrop}) we have
\begin{align}
G_2(h) &:= \E_{R_{\sst \Ical}\vert h}[G_2(h,R_{\sst \Ical})]\nn\\
&= \tau^{-2/\a}\left(1+\s^2 h^{\a}\right)^{-2/\a}\nn\\
& = \E_{R_{\sst I}\vert h}[G_1(h,R_{\sst I})]=: G_1(h).
\label{eq:G1G2}
\end{align}
Then by comparing the two densities
\[
f_{H_{\sst I}}(h)=4\pi\la^{3/2} h^2 e^{-\la\pi h^2}, f_{H_{\sst \Ical}}(h)=\frac{8}{3}\pi^2\la^{5/2} h^4 e^{-\la\pi h^2},
\]
we have
\begin{align}
p_{\sst I}^c - p_{\sst \Ical}^c 
%
%
%
&= \int_0^\infty \E_{R_{\sst I}\vert h}[G_1(h,R_{\sst I})]f_{H_{\sst I}}(h) {\rm d}h \nn\\
&\hspace{0.2in} -\int_0^\infty \E_{R_{\sst \Ical}\vert h}[G_2(h,R_{\sst \Ical})] f_{H_{\sst \Ical}}(h) {\rm d}h.
\label{eq:G1G21}
\end{align}
Using the definition (\ref{eq:G1G2}),
the last term in (\ref{eq:G1G21}) is equal to
\begin{align}
\lefteqn{\int_0^\infty G_1(h) f_{H_{\sst I}}(h) {\rm d}h  -\int_0^\infty G_2(h) f_{H_{\sst \Ical}}(h) {\rm d}h}\nn\\
&= \int_0^\infty G_2(h) f_{H_{\sst I}}(h) {\rm d}h  -\int_0^\infty G_2(h) f_{H_{\sst \Ical}}(h) {\rm d}h\nn\\
&= \int_0^{\sqrt{3/{2\la\pi}}} G_2(h) \left[f_{H_{\sst I}}(h)- f_{H_{\sst \Ical}}(h)\right] {\rm d}h  \nn\\
&\hspace{0.2in} -\int_{\sqrt{3/{2\la\pi}}}^\infty G_2(h) \left[f_{H_{\sst \Ical}}(h)- f_{H_{\sst I}}(h)\right] {\rm d}h\nn\\
&\geq G_2(\sqrt{3/{2\la\pi}}) \!\!\int_0^\infty \!\!\!\!  (f_{H_{\sst I}}(h)- f_{H_{\sst \Ical}}(h)) {\rm d}h =0,\nn
\end{align}
where, by definition of $G_2$, it can be shown that $G_2(\sqrt{3/{2\la\pi}})\geq 0$. Indeed for $h=\sqrt{3/{2\la\pi}}$, from (\ref{eq:cp-nftrop}) we have
\begin{align}
G_2(\sqrt{3/{2\la\pi}})
&= \tau^{-2/\a}\left[\left(1+\s^2 h^{\a}\right)^{-2/\a}\right]_{h=\sqrt{3/{2\la\pi}}}\nn\\
& = \tau^{-2/\a}\left[\left(1+\s^2\left({3}/{2\la\pi}\right)^{\a/2}\right)^{-2/\a}\right],\nn
\end{align}
which is non-negative for any $\la, \tau, \s>0$ and $\a$. Hence we have the result that $p_{\sst I}^c \geq  p_{\sst \Ical}^c$. 

In contrast, the ordering among $p_{\sst \Vcal;\Tcal}^c, p_{\sst t;\Tcal}^c$ and $p_{\sst S;\Tcal}^c$ depends on the parameter values $\tau, \la$ and $\a$, which can be justified as follows and also validated in Figure~\ref{fig:T-HotMSNF}. From (\ref{eq:TNFt}) and (\ref{eq:TNFS}), $p^c_{\sst t;\Tcal}- p^c_{\sst S;\Tcal}$ is positive or negative depending on the sign of the partial integral of the difference  
$f_{R_1\vert h}(r) f_{R}(h){-} f_{R_{\sst S}\vert h}(r) f_{H_{\sst S}}(h)$ of joint probability densities,
which depends on the parameters $\la, \s$ and $\a$. 

Similarly, using (\ref{eq:WFTP2}) and (\ref{eq:TNFt}), the difference 
\begin{align}
p^c_{\sst \Vcal;\Tcal}- p^c_{\sst t;\Tcal} &=  \int_0^\infty  \one{\left\{h<\left({(1-\tau)}/{\tau\s^2}\right)^\frac{1}{\a}\right\}} f_{H_{\sst \Vcal}}(h){\rm d}h\nn\\
& \ - \int_{0}^{(\tau \s^2)^{-\frac{1}{\a}}} \!\!\!\! \! \int_{\left(\frac{h^{-\a}}{\tau}{-}\s^2\right)^{-\frac{1}{\a}}\vee h}^\infty \hspace{-0.13in}f_{R_{\sst S}\vert h}(r){\rm d}r f_{H_{\sst S}}(h){\rm d}h,\nn
\end{align}
is positive or negative depending on the parameters. \hfill $\square$
\section{Comparison of interference}~\label{Int-order}
\subsection{Theorem~\ref{theorem:Int-order} (with fading)}
For  $I_{\sst \Vcal}\geq_{\textnormal{L}} I_{t} \geq_{\textnormal{L}} I_{\sst S}$ we compare the Laplace transforms
\begin{align}
\E^0\left[e^{-\g I_{\sst \Vcal}}\right] &= \E^0_{H_{\sst \Vcal}}\!\!\left[ \frac{\mu}{\mu{+}\g H_{\sst \Vcal}^{-\a}}e^{-2\pi\la\int_{H_{\sst \Vcal}}^\infty\frac{\g r^{-\a}}{\mu+\g r^{-\a}}r{\rm d}r}\right],
\label{eq:int-1}
\end{align}
\begin{align}
\E^0\left[e^{-\g I_t}\right] &= \E^0_{R}\left[e^{-2\pi\la\int_{R}^\infty\frac{\g r^{-\a}}{\mu+\g r^{-\a}}r{\rm d}r}\right],\nn
\end{align}
\begin{align}
\E^0\left[e^{-\g I_{\sst S}}\right] &= \E^0_{H_{\sst S}}\left[ e^{-2\pi\la\int_{H_{\sst S}}^\infty\frac{\g r^{-\a}}{\mu+\g r^{-\a}}r{\rm d}r}\right],\nn
\end{align}
similarly to the proof of Theorem~\ref{theorem:SINR-c} in Subsection~\ref{subsection:CF}, in the case of an environment with fading, by comparing the densities of $H_{\sst \Vcal}, H_{\sst S}$ and $R$, using the function $G$ analogous to (\ref{eq:Gh}) as 
\begin{align}
G(h)&:= e^{-\la\pi h^2 \kappa(\g h^{-\a}/\mu, \a)},
\end{align}
using the function $\kappa$ from~(\ref{eq:Int2}), along with the fact that $\frac{\g}{\mu+\g h^{-\a}}\leq 1$, for all $h\geq 0$.

To prove $I_{\sst \Ical}\geq_{\textnormal{L}}  I_{\sst I}$, we compare 
\begin{align}
\E^0\left[e^{-\g I_{\sst I}}\right] &= \E^0_{H_{\sst I}} \left[\frac{\mu}{\mu+\g H_{\sst I}^{-\a}} e^{-2\pi\la\int_{H_{\sst I}}^\infty\frac{\g u^{-\a}}{\mu+\g u^{-\a}}u{\rm d}u}\right],\nn
\end{align}
\begin{align}
\E^0\left[e^{-\g I_{\sst \Ical}}\right] &= \E^0_{H_{\sst \Ical}} \left[\left(\frac{\mu}{\mu+\g H_{\sst \Ical}^{-\a}}\right)^2 e^{-2\pi\la\int_{H_{\sst \Ical}}^\infty\frac{\g u^{-\a}}{\mu+\g u^{-\a}}u{\rm d}u}\right],
\nn
\end{align}
from (\ref{eq:gamma-int}) and (\ref{eq:LTmI1}), by comparing the densities of $H_{\sst I}$ and $H_{\sst \Ical}$, and using the $G$ function as
\begin{align}
G(h) &= \frac{\mu}{\mu+\g h^{-\a}} e^{-2\pi\la\int_h^\infty\frac{\g u^{-\a}}{\mu+\g u^{-\a}}u{\rm d}u},
\label{eq:gamma-intG}
\end{align}
along with the fact that $\frac{\g}{\mu+\g h^{-\a}}\leq 1$, for all $h\geq 0$.

For proving $\Tcal_{\sst{\Vcal}}\geq_{\textnormal{st}} \Tcal_{t} \geq_{\textnormal{st}} \Tcal_{\sst S}$, we compare
\begin{align}
\P^0(\Tcal_{\sst \Vcal} \leq x)&=\E^0_{H_{\sst \Vcal}}\left[\left(1{-}e^{-\mu H_{\sst \Vcal}^{\a}x}\right) e^{-2\la \pi K_\a(\mu,x, H_{\sst \Vcal})}\right],\nn
\end{align}
\begin{align}
\P^0(\Tcal_t \leq x)&=\E^0_{R}\left[e^{-2\la \pi K_\a(\mu,x, R)}\right],\nn
\end{align}
\begin{align}
\P^0(\Tcal_{\sst S} \leq x)&=\E^0_{H_{\sst S}}\left[ e^{-2\la \pi K_\a(\mu,x, H_{\sst S})}\right],\nn
\end{align}
by again comparing the densities of $H_{\sst \Vcal}, H_{\sst S}$ and $R$. In the proof we use the function $G$ similarly to (\ref{eq:Gh})  as $G(h):=e^{-2\la\pi K_\a(\mu,x, h)}$ and the fact that $(1-e^{-\mu h^\a x})\leq 1$ for any $h\geq 0$.

For $\Tcal_{\sst \Ical} \geq_{\textnormal{st}}  \Tcal_{\sst I}$, we compare
\begin{align}
\P^0(\Tcal_{\sst I} \leq x)&=\E^0_{H_{\sst I}}\left[\left(1{-}e^{-\mu H_{\sst I}^{\a}x}\right)\!\ e^{-2\la \pi K_\a(\mu,x,H_{\sst I})}\right],\nn
\end{align}
\begin{align}
\P^0(\Tcal_{\sst \Ical} \leq x)&{=}\E^0_{H_{\sst \Ical}}\left[\left(1{-}e^{-\mu H_{\sst \Ical}^{\a}x}\right)^2 \!\!\! e^{-2\la \pi K_\a(\mu,x,H_{\sst \Ical})}\right],\nn
\end{align}
by comparing the densities of $H_{\sst I}$ and $H_{\sst \Ical}$, and using the $G$ function as $G(h):= \left(1{-}e^{-\mu h^{\a}x}\right)e^{-2\la\pi K_\a(\mu,x, h)}$.\hfill $\square$
\subsection{Theorem~\ref{theorem:Int-order} (without fading)}
The comparison of the interference at different typical epochs in the no-fading scenario can be proved using same type of comparisons as in the case with fading and using the proof of Theorem~\ref{theorem:SINR-c} in no-fading case. For the first part, $I_{\sst \Vcal}\geq_{\textnormal{L}} I_{t} \geq_{\textnormal{L}} I_{\sst S}$, we compare
\begin{align}
\E^0\left[e^{-\nu I_{\sst \Vcal}}\right] &= \E^0_{H_{\sst \Vcal}}\left[e^{-\nu H_{\sst \Vcal}^{-\a} -\pi\la L_\nu(H_{\sst \Vcal}, \a)}\right],\nn
\end{align}
\begin{align}
\E^0\left[e^{-\nu I_{t}}\right] = \E^0_{R}\left[e^{ -\pi\la L_\nu(R, \a)}\right],\nn
\end{align}
\begin{align}
\E^0\left[e^{-\nu I_{\sst S}}\right] &= \E^0_{H_{\sst S}}\left[e^{-\pi\la L_\nu(H_{\sst S}, \a)}\right],\nn
\end{align}
by a similar comparison of the densities of $H_{\sst \Vcal}, R$ and $H_{\sst S}$, using the function $G(h)= e^{-\pi\la L_{\nu}(h,\a)}$, where $L_{\nu}(r,\a)$ is as defined in~(\ref{eq:NFI}). Also for proving $I_{\sst \Ical}\geq_{\textnormal{L}} I_{\sst I}$, we compare
\begin{align}
\E^0\left[e^{-\nu I_{\sst I}}\right] &= \E^0_{H_{\sst I}}\left[e^{-\nu H_{\sst I}^{-\a} -\pi\la L_\nu(H_{\sst I}, \a)}\right],\nn
\end{align}
\begin{align}
\E^0\left[e^{-\nu I_{\sst \Ical}}\right] &= \E^0_{H_{\sst \Ical}}\left[e^{-2\nu H_{\sst \Ical}^{-\a} -\pi\la L_\nu(H_{\sst \Ical}, \a)}\right], \nn
\end{align}
using the densities of $H_{\sst I}$ and $H_{\sst \Ical}$ and the function $G(h)= e^{-\nu h^{-\a}-\pi\la L_{\nu}(h,\a)}$. \hfill $\square$
\section{Proofs: Scale invariance}~\label{section:SI}
\subsection{Theorem~\ref{theorem:si} (with fading)}

Recall the coverage probabilities for the typical handover case in (\ref{eq:hocp1}) with $\s=0$ as
\begin{align}
p_{\sst \Vcal}^c &= \frac{1}{1{+}\tau}\E^0_{H_{\sst \Vcal}}\left[ e^{-\pi\la H_{\sst \Vcal}^2 \kappa(\tau,\a)}\right] = \frac{1}{1{+}\tau} (1+\kappa(\tau,\a))^{-3/2},\nn
\end{align}
using the Laplace transform of $H_{\sst I}^2$ from Proposition~\ref{prop:IVHd}. The coverage probability $p_{\sst \Vcal}^c$ in this case depends only on $\tau, \a$. By a similar computation we can show that 
\[
p_{t}^c = (1+\kappa(\tau,\a))^{-1} \mbox{ and }p_{\sst S}^c  = (1+\kappa(\tau,\a))^{-\half},
\]
both of which depend only on $\tau, \a$. This shows the scale invariance property of the coverage probabilities with respect to the parameter $\la,\mu$.

For the case of typical max interference epoch, the coverage probability with $\s=0$ is
\begin{align}
p^c_{\sst I} 
&{=} \frac{2\tau^{-2/\a}}{\a}\int_0^\tau \!\frac{z^{2/\a-1}}{1{+}z}\E^0_{H_{\sst I}}\!\! \left[e^{-\pi\la H_{\sst I}^2  \kappa(z, \a)}\right]{\rm d}z. 
\label{eq:si2}
\end{align}
from (\ref{eq:OPI-4}).
Using the density of $H_{\sst I}$, after exchanging the integrals in (\ref{eq:si2}), we have
\begin{align}
p^c_{\sst I} 
&{=} \frac{2\tau^{-2/\a}}{\a}\int_0^\tau \!\frac{z^{2/\a-1}}{1{+}z} (1+\kappa(z, \a))^{-3/2}  {\rm d}z,
\label{eq:si4}
\end{align}
using the Laplace transform of $H_{\sst I}^2$ from Proposition~\ref{prop:IVHd}. The last expression for $p^c_{\sst I}$ in (\ref{eq:si4}) is independent of $\la, \mu$. Similarly we can show from (\ref{eq:cpTTIH2}) that 
\begin{align}
p_{\sst \Ical}^c =\frac{2\tau^{-2/\a}}{\a}\int_0^\tau \!\frac{z^{2/\a-1}}{(1{+}z)^2} \left(1{+}\kappa(z, \a)\right)^{-5/2}{\rm d}z,
\label{eq:si3}
\end{align}
which is independent of $\la,\mu$ and hence we have the result. The same scale invariance can be shown to hold for the tropical case as well. 
\hfill $\square$
\begin{remark}
Similarly to (\ref{eq:CP-CF}) we also obtain closed form for the coverage probability in the interference limited regime from (\ref{eq:si4}), $p_{\sst I}^c=0$ with $\a=2$, since $\kappa(z,2)=\infty$. For $\a=4$ we have $\kappa(z,4)= z^\half  \arctan(z^\half)$ and hence
\begin{align}
p_{\sst I}^c &=\frac{\tau^{-1/2}}{2}\int_0^\tau \!\frac{z^{-1/2}}{1{+}z} \left(1{+}z^\half  \arctan(z^\half)\right)^{-3/2} {\rm d}z.
\label{eq:CPI-CF}
\end{align}
\end{remark}
\begin{remark}
Similarly to (\ref{eq:CPI-CF}) we also obtain closed form for $p_{\sst \Ical}^c$ in the interference limited regime from (\ref{eq:si3}), $p_{\sst \Ical}^c=0$ with $\a=2$. For $\a=4$
\begin{align}
p_{\sst \Ical}^c &=\frac{\tau^{-1/2}}{2}\!\! \int_0^{\tau}\!\!\! \frac{z^{-1/2}}{(1{+}z)^2} \left(1{+}z^\half \arctan(z^\half)\right)^{-5/2}{\rm d}z,
\nn
\end{align}
\end{remark}
\subsection{Theorem~\ref{theorem:si} (without fading)}
At the typical handover we have
\begin{align}
p^c_{\sst \Vcal} &=\P^0\left(H_{\sst \Vcal}^{-\a}/{I_{H_{\sst \Vcal}}}\geq \tau \right).\nn
\end{align}
Since the ratio $\frac{H_{\sst \Vcal}^{-\a}}{I_{H_{\sst \Vcal}}}$ is independent of $\la$, so is the coverage probability. The same reasoning applies for the scale invariance of the coverage probabilities in the four other cases:
\begin{itemize}
\item typical time:  $p^c_{t} =\P^0\left(R^{-\a}/{I_R}\geq \tau \right)$, 
\item typical MS: $p^c_{\sst S} =\P^0\left(H_{\sst S}^{-\a}/{I_{H_{\sst S}}}\geq \tau \right)$, 
\item typical MI: $p^c_{\sst I} =\P^0\left(H_{\sst I}^{-\a}/{I_{H_{\sst I}}}\geq \tau \right)$, 
\item typical mI: $p^c_{\sst \Ical} =\P^0\left(H_{\sst \Ical}^{-\a}/{I_{H_{\sst \Ical}}}\geq \tau \right)$,
\end{itemize}
from (\ref{eq:cp-hp-nf}), (\ref{eq:WFTtime1}), (\ref{eq:WFV1}) and (\ref{eq:cpTIH}), respectively. For typical mS-MI, from (\ref{eq:WFTP2}), we have
\begin{align}
p^c_{\sst \Vcal;\Tcal} 
&{=} \int_0^\infty \one_{h^{-\a}>\tau h^{-\a}} f_{H_{\sst \Vcal}}(h){\rm d}h =
\begin{cases}
    1 \mbox{ if } \tau<1\\
    0 \mbox{ if } \tau= 1,
\end{cases}\nn
\end{align}
since $\tau\leq 1$. At a typical time from (\ref{eq:WFTropV}), for any $\tau\geq 0$ we have
\begin{align}
p^c_{\sst{t;\Tcal}} &= \int_{0}^\infty
\P\left(R_1>\tau^{1/\a} r\right)
f_{R}(r){\rm d}r \nn\\
&=\int_{0}^\infty \int_{\tau^{1/\a}r\vee r}^\infty 2\la\pi h \ e^{-\lambda\pi (h^2-r^2)} {\rm d}h f_{R}(r){\rm d}r.\nn
\end{align}
For $\tau\leq 1$, we have $\tau^{1/\a}\leq 1$ and hence $p^c_{t;\sst{\Tcal}} =1$ in this case. For $\tau>1$, we have $\tau^{1/\a} > 1$ and  
\begin{align}
p^c_{\sst{t;\Tcal}}  &=\int_{0}^\infty \int_{\tau^{1/\a}r}^\infty 2\la\pi h \ e^{-\lambda\pi (h^2-r^2)} {\rm d}h f_{R}(r){\rm d}r\nn\\
&=\int_{0}^\infty \int_{\la\pi\tau^{2/\a}r^2}^\infty  e^{-z} e^{\lambda\pi r^2} {\rm d}z f_{R}(r){\rm d}r\nn\\
&=\int_{0}^\infty e^{-\lambda\pi r^2(\tau^{2/\a}-1)} f_R(r) {\rm d}r\nn\\
&=\int_{0}^\infty e^{-\lambda\pi r^2\tau^{2/\a}} 2\la\pi r {\rm d}r = \tau^{-2/\a}.\nn
\end{align}
We can similarly prove from (\ref{eq:WFTropV}) that $p^c_{\sst S;\Tcal}$ is also independent of $\la$, as follows:
\begin{align}
p^c_{\sst S;\Tcal} &= \int_{0}^\infty
\P\left(R_{\sst S}>\tau^{1/\a} h\right) f_{H_{\sst S}}(h){\rm d}h \nn\\
 &=\int_{0}^\infty
\int_{\tau^{1/\a} h \vee h}^\infty  f_{R_{\sst S}\vert H_{\sst S}=h}(r){\rm d}r
f_{H_{\sst S}}(h){\rm d}h.\nn
\end{align}
For $\tau\leq 1$, we have $p^c_{\sst S;\Tcal}=1$. For $\tau>1$ we have
\begin{align}
p^c_{\sst S;\Tcal} &= \int_{0}^\infty
\int_{\tau^{1/\a} h}^\infty  f_{R_{\sst S}\vert H_{\sst S}=h}(r){\rm d}r
f_{H_{\sst S}}(h){\rm d}h\nn\\
&= \int_{0}^\infty
\int_{\tau^{1/\a} h}^\infty  \frac{\la^{\half} r}{(r^2-h^2)^{\half}} e^{-\la\pi(r^2-h^2)}{\rm d}r
f_{H_{\sst S}}(h){\rm d}h\nn\\
&= \int_{0}^\infty \int_{\la\pi(\tau^{2/\a}-1) h^2}^\infty \frac{1}{2\sqrt{\pi}} z^{-1} e^{-z} {\rm d}z f_{H_{\sst S}}(h){\rm d}h\nn\\
&= \frac{1}{2\sqrt{\pi}}\E^0_{H_{\sst S}}\left[\G\left(0, \la\pi (\tau^{2/\a}-1)H^2_{\sst S}\right)\right].\nn
\end{align}
In case of $\tau\leq 1$, at the typical max-interference, we have from (\ref{eq:WFtropI}) that 
\begin{align}
p^c_{{\sst I ;\Tcal}} 
&=\P^0\left(H_{\sst I}\geq  R_{\sst I}\tau^{1/\a}\right)\nn\\
&=\int_0^\infty \P^0(h^{-2}R^2_{\sst I}\leq \tau^{-2/\a}) f_{H_{\sst I}}(h){\rm d}h\nn\\
&=\int_0^\infty \tau^{-2/\a} f_{H_{\sst I}}(h){\rm d}h = \tau^{-2/\a},\nn
\end{align}
using Corollary~\ref{corollary:RbyH}. The final expression is independent of $\la$. For the typical min interference, from (\ref{eq:TI-cpT}) we have
\begin{align}
p^c_{\sst \Ical;\Tcal} &= \E^0_{H_{\sst \Ical},R_{\sst \Ical}}\left[F_{I_{H_{\sst \Ical}}}\left( R_{\sst \Ical}^{-\a}/\tau\right) \right]\nn\\
&=\P^0\left(H_{\sst \Ical}^{-\a}\leq R_{\sst \Ical}^{-\a}/\tau\right)\nn\\
&=\int_0^\infty \P^0\left(h^{-2}R^2_{\sst \Ical}\leq \tau^{-2/\a}\right) f_{H_{\sst \Ical}}(h){\rm d}h\nn\\
&=\int_0^\infty \tau^{-2/\a} f_{H_{\sst \Ical}}(h){\rm d}h = \tau^{-2/\a},\nn
\end{align}
using Corollary~\ref{corollary:RbyHIcal}, where the last term is independent of $\la$. \hfill $\square$
\section{Other attenuation functions}~\label{Example}
\subsection{Bounded  path-loss function} In the case of bounded attenuation function $\ell(r)=(1+r)^{-\a}$ as in \ref{baf}, given $H=h$ we have the coverage probability as
\begin{align}
\P(\Scal>\tau\vert H=h)
&= \E_{I_{h}}\left[\P(\Scal>\tau\vert I_h)\right]\nn\\
&=\E_{I_{h}}\left[\P\left(\rho >\tau  (1+h)^{\a}\left(\s^2+I_{h}\right)\right)\right]\nn\\
&=\E_{I_{h}}\left[e^{-\mu\tau (1+h)^{\a}\left(\s^2+I_{ h}\right)}\right]\nn\\
&= e^{-\mu\tau (1+h)^{\a}\s^2}\Lcal_{I_{h}}(\mu\tau (1+h)^{\a}).\nn
\end{align}
For example, in the case of signal handover $I_h:=\rho (1+h)^{-\a}+\sum_{i} \rho_i (1+D_i)^{-\a}$. Computing the PGFl with respect to the point process $\eta_h$ we have  the Laplace transform of $I_h$ for any $\gamma\geq 0$ as
\begin{align}
\E_{I_{h}}\left[e^{-\g I_{h}}\right]
&= \frac{\mu}{\mu+\g (1+h)^{-\a}} e^{-2\pi\la\int_{h}^\infty\frac{\g (1+r)^{-\a}}{\mu+\g (1+r)^{-\a}}r{\rm d}r}.
\label{eq:BA1}
\end{align}
Hence for $\g=\mu\tau (1+h)^{\a}$, we re-write the integral in the exponent in (\ref{eq:BA1}) as 
\[
\int_{h}^\infty\frac{\tau}{\tau+ (1+r)^\a(1+h)^{-\a}}  r{\rm d}r := M_\a(h,\tau)
\]
and we have the required Laplace transform as  
\begin{align}
\E_{I_{h}}\left[e^{-\mu\tau (1+h)^{\a} I_{h}}\right] &= \frac{1}{1+\tau} e^{-2\la\pi M_\a(h,\tau)}.\nn
\end{align}
The coverage probability is 
\[
p^c_{\sst \Vcal}= \frac{1}{1+\tau} \E^0_{H_{\sst \Vcal}}\left[e^{-\mu\tau (1+H_{\sst \Vcal})^{\a}\s^2} e^{-2\la\pi M_\a(H,\tau)}\right].
\]
Similarly for the typical time and typical MS, we have
\[
p^c_{\sst t}= \E^0_{R}\left[e^{-\mu\tau (1+R)^{\a}\s^2} e^{-2\la\pi M_\a(R,\tau)}\right],
\]
\[
p^c_{\sst S}= \E^0_{H_{\sst S}}\left[e^{-\mu\tau (1+H_{\sst S})^{\a}\s^2} e^{-2\la\pi M_\a(H_{\sst S},\tau)}\right].
\]
A comparison result similar to Theorem~\ref{theorem:SINR-c} can be shown to hold true, which is $p^c_{\sst \Vcal}\leq p^c_{\sst t}\leq p^c_{\sst S}$. Similarly, for the other two typical epochs 
\begin{align}
p_{\sst I}^c 
&{=}E^0_{H_{\sst I}, R_{\sst I}}\!\! \left[Q_\a(H_{\sst I}, R_{\sst I},\tau) e^{-\mu\tau (1+R_{\sst I})^{\a}\s^2}\right.\nn\\
&\hspace{0.7in}\times \left. e^{-2\pi\la\int_{H_{\sst I}}^\infty Q_\a(R_{\sst I},u,\tau)u{\rm d}u}\right],\nn
\end{align}
\begin{align}
p_{\sst \Ical}^c 
&{=}E^0_{H_{\sst \Ical}, R_{\sst \Ical}}\!\! \left[\left(Q_\a(H_{\sst \Ical}, R_{\sst \Ical},\tau)\right)^2 e^{-\mu\tau (1+R_{\sst \Ical})^{\a}\s^2}\right.\nn\\
&\hspace{0.7in}\times \left. e^{-2\pi\la\int_{H_{\sst \Ical}}^\infty Q_\a(R_{\sst \Ical},u,\tau)u{\rm d}u}\right],\nn
\end{align}
where $Q_\a(x,y,\tau):=\frac{(1+x)^\a}{(1+x)^\a+\tau (1+y)^\a}$. It can be proved similarly that $p_{\sst \Ical}^c \leq p_{\sst I}^c$. 

\subsection{Step attenuation function} Consider the step attenuation function $\ell(r)=p \one_{r\leq d}$ as in~\ref{step-fun}. The SINR is defined as 
\[
\Scal\equiv \mbox{SINR}:= \frac{\rho \, p \one_{H\leq d}}{\s^2+\sum_{i} \rho_i\, p \one_{H\leq D_i\leq d}}.
\]
Then the coverage probability with respect to the Palm probability distribution of epoch of interest is
\begin{align}
p^c(\tau,\mu,\la,\a)&= \E^0_{H}[\P(\Scal>\tau\vert H)],
\label{eq:st1}
\end{align}
where $f_H$ is the density of the distance $H$ to the serving station. Conditioned on $H=h$ the inner term (\ref{eq:st1}) is
\begin{align}
\P(\Scal>\tau\vert h)
&= \E_{I_{h}}\left[\P(\Scal>\tau\vert h, I_h)\right] \left(1-e^{-\la\pi d^2}\right)\nn\\
&=\E_{I_{h}}\left[\P\left(\rho >\frac{\tau}{p} \left(\s^2+I_{h}\right)\right)\right]\left(1-e^{-\la\pi d^2}\right) \nn\\
&=\E_{I_{h}}\left[e^{-\frac{\mu\tau}{p}\left(\s^2+I_{ h}\right)}\right] \left(1-e^{-\la\pi d^2}\right)\nn\\
&= e^{-\frac{\mu\tau\s^2}{p}}\!\! \left(1{-}e^{-\la\pi d^2}\right)\! \E_{I_{h}}\!\left[e^{-\frac{\mu\tau}{p} I_h}\right].
\nn
\end{align}
Using the PGFl of the max-shot noise of a Poisson point process, we get
\begin{align}
\E_{I_{h}}\left[e^{-\frac{\mu\tau}{p} I_h}\right]&=
\begin{cases}
    \frac{1}{1+\tau} e^{-2\la\pi(d-h)\tau/(1+\tau)}, \text{ for typical ms-MI,}\\
    e^{-2\la\pi(d-h)\tau/(1+\tau)}, \text{ for typical MS,}\\
    e^{-2\la\pi(d-h)\tau/(1+\tau)}, \text{ for typical time}.
\end{cases}\nn
\end{align}
As a result the coverage probabilities are
\begin{align}
p^c(\tau,\mu,\la,\a) &=
\begin{cases}
    \frac{C}{1+\tau}   \E^0_{H_{\sst \Vcal}}\left[e^{\g H_{\sst \Vcal}}\right], \text{ for typical ms-MI,}\\
    C \E^0_{H_{\sst S}}\left[e^{\g H_{\sst S}}\right], \text{ for typical MS,}\\
    C \E^0_{R}\left[e^{\g R}\right], \text{ for typical time},
\end{cases}\nn
\end{align}
where $\g= 2\la\pi\tau/(1+\tau)$ and 
\[
C\equiv C(\tau,d,\mu,\la,\s):= e^{-\frac{\mu\tau\s^2}{p}}\left(1-e^{-\la\pi d^2}\right) e^{-2\la\pi \tau/(1+\tau)}.
\]
Even though we have $H_{\sst \Vcal}\geq_{\textnormal{mgf}} R\geq_{\textnormal{mgf}} H_{\sst S}$ from Lemma~\ref{lemma:Laplace_order}, this shows that no distinct ordering holds true among $p_{\sst \Vcal}^c, p_{t}^c$ and $p_{\sst S}^c$, for all values of $\tau$. Similarly using 
\[
\mbox{SINR}:= \frac{\rho \, p \one_{R\leq d}}{\s^2+\sum_{i} \rho_i\, p \one_{H\leq D_i\leq d}}, 
\]
with $R\in \{R_{\sst I}, R_{\sst \Ical}\}$ and $H\in \{H_{\sst I}, H_{\sst \Ical}\}$ at typical mI and MI, respectively, one can show different ordering between $p^c_{\sst \Ical} $ and $ p^c_{\sst I}$ depending for different values of $\tau$, even though we have $H_{\sst\Ical}\geq_{\textnormal{mgf}} H_{\sst I}$ and $R_{\sst\Ical}\geq_{\textnormal{mgf}} R_{\sst I}$, from Lemma~\ref{lemma:Laplace_order}. 

It is also evident that in the interference limited regime, the coverage probabilities are not scale invariant  with respect to $\la$, as the constant 
\[
C(\tau,d,\mu,\la,\s)\vert_{\s=0}= \left(1-e^{-\la\pi d^2}\right) e^{-2\la\pi \tau/(1+\tau)}
\]
is not. On the other hand, the $\mu$-scale invariance holds true. 
\section{Special integrals}
\subsection{The integral in (\ref{eq:kappa-a})}
%
We determine the integral in (\ref{eq:kappa-a}) of the form $\int_{h}^\infty\frac{\tau h^{\a}}{r^{\a}+\tau h^{\a}}r{\rm d}r$ for some $h$, $\tau$ and $\a$ positive.
Taking a change of variable $\tau^{-2/\a} h^{-2}r^2=z$ we obtain that 
\begin{align}
\int_{h}^\infty\frac{\tau h^{\a}}{r^{\a}+\tau h^{\a}}r{\rm d}r &= \half \tau^{2/\a} h^2 \int_{\tau^{-2/\a}}^\infty\frac{1}{1+z^{\a/2}} {\rm d}z\nn\\
&= \half h^2 \kappa(\tau,\a),\nn
\end{align}
where 
\begin{equation}
\kappa(\tau,\a):=\tau^{2/\a} \int_{\tau^{-2/\a}}^\infty\frac{1}{1+z^{\a/2}} {\rm d}z.
\label{eq:kappa}
\end{equation}
\subsection{The integral (\ref{eq:int-1})}
We compute the integral $\int_h^\infty\frac{\g r^{-\a}}{\mu+\g r^{-\a}}r{\rm d}r$ in the exponent in (\ref{eq:int-1}) by a change of variable $(\g/\mu)^{-2/\a} r^2=z$
\begin{align}
\lefteqn{\int_h^\infty\frac{\g r^{-\a}}{\mu+\g r^{-\a}}r{\rm d}r}\nn\\
&= \frac{1}{\mu} \int_h^\infty\frac{\g/\mu}{\g/\mu+ r^{\a}}r{\rm d}r  \nn\\
&= \frac{1}{2\mu}  (\g/\mu)^{2/\a}\int_{(\g/\mu)^{-2/\a}h^2}^\infty  (1+ z^{\a/2})^{-1}{\rm d}z\label{eq:Int1}\\
&= \frac{h^2}{2\mu}  (\g h^{-\a}/\mu)^{2/\a}\int_{(\g h^{-\a}/\mu)^{-2/\a}}^\infty  (1+ z^{\a/2})^{-1}{\rm d}z\nn\\
&= \frac{1}{2\mu} h^2 \kappa(\g h^{-\a}/\mu, \a),
\label{eq:Int2}
\end{align}
from (\ref{eq:kappa}). Note from (\ref{eq:Int1}) that $h^2 \kappa(\g h^{-\a}/\mu, \a)$ is a non-increasing function of $h$ for fixed $\a, \g$ and $\mu$.
\subsection{The integral in (\ref{eq:hralpha})} 
For the integral $\int_h^\infty\frac{\tau r^{\a}}{u^{\a}+\tau r^{\a}}u{\rm d}u$ in (\ref{eq:hralpha}), by the change of variable $\tau^{-2/\a} r^{-2}u^2=z$ we have
\begin{align}
\int_h^\infty\!\!\! \frac{\tau r^{\a}}{u^{\a}+\tau r^{\a}}u{\rm d}u &{=}  \frac{\tau^{2/\a} r^2}{2}\!\! \int_{\tau^{-2/\a} (r/h)^{-2}}^\infty\!\frac{1}{z^{\a/2}+1}{\rm d}z\nn\\
&{=}  \frac{h^2}{2} (\tau (r/h)^\a)^{2/\a} \!\! \int_{\tau (r/h)^\a)^{-2/\a}}^\infty\!\frac{1}{z^{\a/2}+1}{\rm d}z\nn\\
&\stackrel{(\ref{eq:kappa})}{=}  \frac{h^2}{2} \kappa(\tau (r/h)^\a, \a).
\label{eq:kappar-h}
\end{align}
\subsection{The integral in (\ref{eq:STINR-h})} For the integral  $\int_{h}^{\infty}e^{-\mu r^\a x}r{\rm d}r$ in (\ref{eq:STINR-h})  we have
\begin{align}
\int_{h}^{\infty}e^{-\mu r^\a x}r{\rm d}r &= \frac{1}{\a} \int_{\mu h^\a x}^{\infty}e^{-z} \left(\frac{z}{\mu x}\right)^{2/\a} \frac{1}{z}{\rm d}z\nn\\
&= \frac{1}{\a} (\mu x)^{-2/\a} \int_{\mu h^\a x}^{\infty} z^{2/\a-1}e^{-z}{\rm d}z\nn\\
&= \frac{1}{\a} (\mu x)^{-2/\a} \Gamma\left(2/\a,\mu h^\a x\right)\nn\\
&:=K_\a(\mu,x, h),
\label{eq:kalpha}
\end{align}
where $\Gamma(\cdot,\cdot)$ is the upper incomplete gamma function defined as $\Gamma(a,b):=\int_b^\infty z^{a-1} e^{-z}{\rm d}z$
\subsection{The integral in (\ref{eq:LI})}
The integral $\int_{h}^\infty\left(1-e^{-\nu r^{-\a}}\right) r{\rm d}r$ in (\ref{eq:LI}) is computed as
\begin{align}
\lefteqn{\int_{h}^\infty\left(1-e^{-\nu r^{-\a}}\right) r{\rm d}r}\nn\\
&=  \frac{1}{\a} \nu^{2/\a}\int_0^{\nu h^{-\a}}\left(1-e^{-z}\right) z^{-\frac{2}{\a}-1} {\rm d}z\nn\\
&=  -\frac{1}{2}  h^2 (1-e^{-\nu h^{-\a}}) + \frac{1}{2} \nu^{2/\a}\gamma\left(1-\frac{2}{\a}, \nu h^{-\a}\right) \nn\\
&:= \half L_\nu(h, \a),
\label{eq:NFI}
\end{align}
where $\gamma(\cdot,\cdot)$ is the lower incomplete gamma function defined as $\gamma(a,b):=\int_0^b z^{a-1} e^{-z}{\rm d}z$. 
\section*{Acknowledgement}
The authors would like to thank Jeffrey Andrews for many valuable suggestions, which improved the article substantially. The authors are thankful to the ERC-NEMO grant, under the European Union’s Horizon 2020 research and innovation program, grant agreement number 788851 to INRIA Paris. This research was also funded in part by the France 2030 BPI ``5G NTN mmWave'' project to T{\'e}l{\'e}com Paris.
\bibliographystyle{IEEEtran}
\bibliography{ref}
\end{document}